\providecommand{\germ}{\mathfrak}
\newcommand{\xib}{\xi_{{\rm bc}}}
\newcommand{\supp}{{\rm Supp}}
\newcommand{\ind}{{\rm ind}}
\newcommand{\Sp}{\mathrm{Sp}}
\newcommand{\cO}{{\mathcal O}}
\newcommand{\inv}{\theta}
\newcommand{\Cusp}{{\rm Cusp}}
\newcommand{\Supp}{\mathrm{Supp}\,}
\newcommand{\jm}{{\bf r}}
\newcommand{\ip}{{\bf i}}
\def\Irr{{\rm Irr}}
\def\Jord{{\rm Jord}}
\def\Res{{\rm Res}}
\def\GL{{\rm GL}}
\def\Alg{\Pi}
\def\mult{\mathfrak m}
\def\disc{{\rm disc}}
\newcommand{\C}{\mathbb{C}}
\newcommand{\Z}{\mathbb{Z}}
\newcommand{\R}{\mathbb{R}}
\newcommand{\N}{\mathbb{N}}
\newcommand{\NN}{\mathcal{N}}
\newcommand{\mbf}[1]{{\mathbf{#1}}}
\newcommand{\Stab}{\operatorname{Stab}}
\newcommand{\e}{\mbf{e}}
\newcommand{\f}{\mbf{f}}
\newcommand{\J}{\mathcal{J}}                     
\newcommand{\OO}{\mathcal{O}}              
\newcommand{\Ind}{{\rm Ind}}
\renewcommand{\subset}{\subseteq}
\newcommand{\bs}{\backslash}
\newcommand{\WR}{\mathcal{W}}
\newcommand{\diag}{\operatorname{diag}}
\newcommand{\LQ}{\operatorname{LQ}}
\newcommand{\U}{{\operatorname{U}}}
\newcommand{\rec}{{\operatorname{rec}}}
\newcommand{\cusp}{{\operatorname{cusp}}}
\newcommand{\temp}{{\operatorname{temp}}}
\newcommand{\abs}[1]{\left|{#1}\right|}
\newcommand{\triv}{{\bf 1}}
\newcommand{\inj}{\iota}
\newcommand{\sprod}[2]{\left\langle#1,#2\right\rangle}
\newcommand{\set}{{\mathfrak{c}}}
\newcommand{\grph}{{\mathfrak G}}
\newcommand{\weyl}{{\mathfrak W}}
\newcommand{\proj}{\operatorname{p}}
\renewcommand{\SS}{\mathcal{S}}
\newcommand{\HH}{\mathcal{H}}
\newcommand{\Index}{\mathcal{I}}
\newcommand{\Hom}{\operatorname{Hom}}
\newcommand{\E}{\mathcal{E}}
\renewcommand{\mod}{{\operatorname{mod}}}
\newcommand{\ds}{\operatorname{ds}}
\newcommand{\go}[1]{\overset{#1}\longrightarrow}
\newcommand{\sm}[4]{{\bigl(\begin{smallmatrix}{#1}&{#2}\\{#3}&{#4}
\end{smallmatrix}\bigr)}}
\newtheorem{theorem}{Theorem}
\newtheorem*{theorem*}{Theorem}
\newtheorem{claim}{Claim}
\newtheorem{lemma}{Lemma}
\newtheorem{conjecture}{Conjecture}
\newtheorem{proposition}{Proposition}
\newtheorem{corollary}{Corollary}
\theoremstyle{definition}
\newtheorem{definition}{Definition}
\newtheorem{example}{Example}
\theoremstyle{remark}
\newtheorem{remark}{Remark}
\title{On $\Sp$-distinguished representations of the quasi-split unitary groups}
\author{A Mitra}
\email{00.arnab.mitra@gmail.com}
\author{Omer Offen}
\email{offen@brandeis.edu}
\thanks{Arnab Mitra, partially supported by postdoctoral fellowships funded by the Department of Mathematics, Technion.}
\date{\today}
\begin{document}

\setcounter{tocdepth}{1}
\date{\today}
\keywords{Symplectic periods, discrete series, stable base change map}
\subjclass[2010]{Primary 22E50, Secondary 11F70}

\begin{abstract}
We study $\Sp_{2n}(F)$-distinction for representations of the quasi-split unitary group $U_{2n}(E/F)$ in $2n$ variables with respect to a quadratic extension $E/F$ of $p$-adic fields.
A conjecture of Dijols and Prasad predicts that no tempered representation is distinguished. We verify this for a large family of representations in terms of the M{\oe}glin-Tadi\'c classification of the discrete series.
We further study distinction for some families of non-tempered representations. In particular, we exhibit $L$-packets with no distinguished members that transfer under stable base change to $\Sp_{2n}(E)$-distinguished representations of $\GL_{2n}(E)$.
\end{abstract}

\maketitle
\tableofcontents

\section{Introduction}\label{s_intro}
Let $G$ be a $p$-adic reductive group and $H$ a closed subgroup. A smooth, complex valued representation $(\pi,V)$ of $G$ (henceforth simply a representation $\pi$ of $G$) is called $H$-distinguished if there exists a non-zero linear functional $\ell$ on $V$ such that 
\[
\ell(\pi(h)v)=\ell(v),\ \ \ h\in H,\ \ \ v\in V.
\]
Distinguished representations play a central role in harmonic analysis of homogeneous spaces (see  for instance \cite{MR1075727}) and in the study of period integrals of automorphic forms, special values of $L$-functions and characterization of the image of a functorial transfer. 
Inspired by the work of Sakellaridis and Venkatesh \cite{MR3764130}, much attention is given to the case where $G/H$ is spherical and in particular, if $G/H$ is a $p$-adic symmetric space.

In this work we focus on the symmetric space $G/H$ where $G=U_{2n}=U_{2n}(E/F)$ is the quasi-split unitary group in $2n$ variables with respect to a quadratic extension $E/F$ of $p$-adic fields and $H=\Sp_{2n}(F)$.
The following is conjectured in \cite[Conjecture 2]{DP}.
\begin{conjecture}[Dijol-Prasad]\label{con dpi}
 An $L$-packet of irreducible representations of $U_{2n}$ that is associated to an Arthur parameter contains an $\Sp_{2n}(F)$-distinguished member if and only if its stable base change is an irreducible representation of $\GL_{2n}(E)$ that is $\Sp_{2n}(E)$-distinguished. 
\end{conjecture}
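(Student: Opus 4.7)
The plan is to establish the two implications of the equivalence separately, since they rely on quite different inputs.

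\emph{Forward direction.} Assume $\pi$ in the L-packet $\Pi$ is $\Sp_{2n}(F)$-distinguished and let $\pi' \in \GL_{2n}(E)$ be the stable base change of $\Pi$. To show that $\pi'$ is $\Sp_{2n}(E)$-distinguished, the idea is to combine the Heumos--Rallis classification of $\Sp_{2n}(E)$-distinguished representations of $\GL_{2n}(E)$---which characterizes such representations in terms of Langlands parameters factoring through a symplectic dual group---with the explicit description of the stable base change of an Arthur parameter on $U_{2n}$. The task then reduces to showing that $\Sp_{2n}(F)$-distinction of $\pi$ forces the polarity (symplectic versus orthogonal) on the parameter of $\pi'$ to be of the symplectic type required by Heumos--Rallis. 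An alternative, more analytic route would be a local relative trace formula comparison between $(U_{2n},\Sp_{2n}(F))$ and $(\GL_{2n}(E),\Sp_{2n}(E))$, matching invariant distributions given by matrix coefficients integrated against the symplectic period.

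\emph{Reverse direction.} Given an L-packet $\Pi$ with irreducible $\Sp_{2n}(E)$-distinguished base change $\pi'$, the plan is to produce a distinguished member of $\Pi$ explicitly. Via Mok's endoscopic classification, members of $\Pi$ are parametrized by characters of a component group attached to the Arthur parameter; the Heumos--Rallis condition on $\pi'$ should single out a canonical candidate character, and one then needs to verify that the corresponding member of $\Pi$ is indeed $\Sp_{2n}(F)$-distinguished. Verification would proceed by a Mackey-type analysis of $\Sp_{2n}(F)$-orbits on the flag variety of $U_{2n}$, combined with the M{\oe}glin-Tadi\'c classification of the discrete series, inducting on rank via Jacquet modules to reduce to distinction statements on smaller quasi-split unitary groups.

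\emph{Main obstacle.} The reverse direction is the main difficulty. Since the companion conjecture of Dijols--Prasad predicts that no tempered member of $\Pi$ is distinguished, any distinguished representation in $\Pi$ must appear as a proper non-tempered Langlands quotient. Distinction is not in general preserved by passage to subquotients, so one must argue that a putative $\Sp_{2n}(F)$-invariant functional on the relevant standard module does not vanish on the Langlands quotient. Disentangling this behaviour through the combinatorial data of the M{\oe}glin-Tadi\'c parametrization is delicate, and it would not be surprising if the implication turns out to require genuine restriction on the Arthur parameter or to fail outright in some non-tempered cases---which is presumably why the paper restricts to specific sub-families rather than addressing the full conjecture.
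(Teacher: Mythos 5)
The statement you were asked about is not a theorem of the paper at all: it is the Dijols--Prasad conjecture (\cite[Conjecture 2]{DP}), which the paper records, partially tests, and leaves open. There is therefore no ``paper proof'' to compare against, and your text is not a proof either -- it is a programme, and you say so yourself in the final paragraph. Both directions of your outline defer exactly the points that are open. In the forward direction you invoke a ``Heumos--Rallis classification of $\Sp_{2n}(E)$-distinguished representations in terms of parameters factoring through a symplectic dual group''; no such result exists. What Heumos--Rallis prove is that generic representations of $\GL_N$ admit no symplectic period, and the available classification of $\Sp$-distinguished \emph{unitary} representations (Klyachko models, cf.\ \cite[Theorem 10.3]{MR3628792}) is in terms of Speh data: $U(\delta,m)$ is $\Sp$-distinguished iff $m$ is even. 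This is a condition on the Arthur $\SL_2$-factor, not a symplectic/orthogonal polarity of the Langlands parameter, so the ``reduce to matching polarities'' step has no known statement to reduce to. In the reverse direction, the step ``verify that the member of $\Pi$ singled out by a character of the component group is $\Sp_{2n}(F)$-distinguished'' is precisely the hard open problem; a Mackey/geometric-lemma analysis of the standard module gives at best necessary conditions (this is how the paper proves \emph{non}-distinction), and as you note, distinction of an induced representation does not descend to its Langlands quotient.

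The paper's own results show why any argument must use the Arthur hypothesis in an essential, as-yet-unavailable way: Theorem \ref{speh intro}(2) (equivalently Propositions \ref{prop:bc_dist} and \ref{prop:odd2}) exhibits conjugate self-dual Speh representations $U(\delta,m)$ with $m\equiv 2 \pmod 4$ that are $\Sp_{2n}(E)$-distinguished while the unique member of the base change fiber is \emph{not} $\Sp_{2n}(F)$-distinguished; these packets simply have no Arthur parameter (Remark \ref{rem_dp_conj}), so the conjecture survives, but a proof along your lines would have to explain where the Arthur condition enters -- your sketch never does. Likewise the companion prediction that no tempered member is distinguished is only verified here for subfamilies of the discrete series (Theorem \ref{main intro}) and certain tempered representations (Proposition \ref{prop_van_temp}), so it cannot be taken as an input. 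In short: the conjecture remains open, and your proposal, while a reasonable survey of possible strategies, contains no step that closes either implication and rests in one place on a misstated classification.
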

We remark that stable base change is a functorial transfer (in particular, a finite to one map) from irreducible representations of $U_{2n}$ to irreducible representations of $\GL_{2n}(E)$ that takes tempered representations to tempered representations (see \cite{MR3338302}).
The fibers of the stable base change map form the $L$-packets of irreducible representations of $U_{2n}$.
 
In particular, it is expected that no tempered representation of $U_{2n}$ is $\Sp_{2n}(F)$-distinguished. For cuspidal representations this is proved independently in \cite{DP} and in \cite{MR3590280}. In fact, more generally, if $\pi$ is an irreducible representation of $U_{2n}$ with non-trivial partial cuspidal support then $\pi$ is not $\Sp_{2n}(F)$-distinguished. (To say that $\pi$ has trivial partial cuspidal support means that for a realization of $\pi$ as a subrepresentation of a representation induced from cuspidal representations on a standard parabolic subgroup, this parabolic subgroup is contained in the Siegel parabolic subgroup.)

Since a similar vanishing result holds for period integrals of cuspidal automorphic representations, a natural approach to local non-distinction, at least for the discrete series, is a globalization argument of invariant linear forms as in \cite[\S 16]{MR3764130}. It would be interesting to see if invariant linear forms on discrete series representations can be globalized in our context (i.e., realized as local components of the period integral of some cuspidal automorphic representation). However, we do not know how to generalize the argument of loc. cit. to our setting. Instead, we apply the M{\oe}glin-Tadi\'c classification of discrete series \cite{MR1896238} and show that many discrete series (and tempered non-discrete) representations of $U_{2n}$ are not $\Sp_{2n}(F)$-distinguished. Exhausting the entire tempered spectrum, however, will require different methods. 

We further study distinction for an interesting family of non-tempered representations, namely the stable base change fiber of the class of Speh representations of $\GL_{2n}(E)$. Recall that Speh representations are the building blocks of the unitary dual of general linear groups \cite[Theorem D]{MR870688}. In particular, we exhibit $L$-packets of irreducible representations of $U_{2n}$ with no $\Sp_{2n}(F)$-distinguished member that transfer under stable base change to an irreducible representation of $\GL_{2n}(E)$ that is $\Sp_{2n}(E)$-distinguished. This does not contradict Conjecture \ref{con dpi} since those $L$-packets admit no Arthur parameters.

We now formulate the main results of this work. 
M{\oe}glin and Tadi\'c classified in \cite{MR1896238} the discrete series representations of classical groups in terms of certain combinatorial data. In particular, to an irreducible discrete series representation $\pi$ of $U_{2n}$ they attach a triple $(\pi_\cusp, \Jord(\pi),\epsilon_\pi)$. If the partial cuspidal support $\pi_\cusp$ of $\pi$ is non-trivial, as remarked above, $\pi$ is not $\Sp_{2n}(F)$-distinguished. We therefore focus our attention on representations $\pi$ with trivial partial cuspidal support. For the definition of the other components of the triple in this case see \S \ref{ss_dsc}. We only remark here that $\Jord(\pi)$ stands for a certain finite set of pairs $(\rho,a)$ attached to $\pi$ where $\rho$ is a conjugate (with respect to $E/F$) self-dual cuspidal representation of $\GL_m(E)$ for some $m\in \N$ and $a\in \N$ and with our assumption that $\pi$ has trivial partial cuspidal support we may think of $\epsilon_\pi$ as a function from $\Jord(\pi)$ to $\{\pm 1\}$.
\begin{theorem}\label{main intro}
Let $\pi$ be an irreducible discrete series representation of $U_{2n}$ with trivial partial cuspidal support. Assume that there exists $\rho$ such that the set 

\[
\Jord_\rho(\pi)=\{a\in \N: (\rho,a)\in \Jord(\pi)\}
\] 
is not empty and at least one of the following three properties holds: 
\begin{enumerate}
\item $t$ is odd;
\item $x_{2i-1}>x_{2i}+2$ for some $i\le t/2$;
\item $\epsilon_\pi(\rho,x_{t+2})=\epsilon_\pi(\rho,x_{t+1})$.
\end{enumerate}
Here, we write $\Jord_\rho(\pi)=(x_1,\dots,x_k)$ where $x_1>\cdots>x_k$, and let $t=1$ if $k=1$, and $t<k$ be such that $\epsilon_\pi(\rho,x_i)\ne \epsilon_\pi(\rho,x_{i+1})$, $i=1,\dots,t-1$ and $\epsilon_\pi(\rho,x_t)=\epsilon_\pi(\rho,x_{t+1})$, otherwise. Then $\pi$ is not $\Sp_{2n}(F)$-distinguished.
\end{theorem}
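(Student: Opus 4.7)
My plan is to combine the M{\oe}glin-Tadi\'c construction of $\pi$ with a geometric analysis of $\Sp_{2n}(F)$-orbits on partial flag varieties of $U_{2n}$, and to argue by induction on $n$. The classification in \cite{MR1896238} realizes $\pi$ as a distinguished subrepresentation of an induced representation of the form $\delta(\rho;x_{j+1},x_j)\rtimes\pi'$, where $\delta(\rho;x_{j+1},x_j)$ is a Steinberg-type segment built from a consecutive pair of Jordan blocks $(\rho,x_j),(\rho,x_{j+1})$ and $\pi'$ is a discrete series on a smaller quasi-split unitary group whose Jordan data is obtained by removing this pair. Provided the removed pair satisfies the alternation $\epsilon_\pi(\rho,x_j)\ne\epsilon_\pi(\rho,x_{j+1})$ and a small-gap condition $x_j-x_{j+1}\le 2$, the partial cuspidal support of $\pi'$ remains trivial. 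Conditions (1)--(3) in the statement are exactly the configurations where such a pairing cannot be iterated all the way to exhaust $\Jord_\rho(\pi)$.

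The second ingredient is a geometric lemma for $\Sp_{2n}(F)$-distinction of induced representations: for a parabolic subgroup $P=MN$ of $U_{2n}$, the filtration on $\Ind_P^{U_{2n}}\sigma$ by $\Sp_{2n}(F)$-orbits on $P\bs U_{2n}$ reduces $\Sp_{2n}(F)$-distinction to the existence, for some orbit representative $w$, of an $\Sp_{2n}(F)\cap P^w$-invariant functional on a twist of $\sigma$. Combined with the M{\oe}glin-Tadi\'c embedding, $\Sp_{2n}(F)$-distinction of $\pi$ would propagate either to an $\Sp$-distinction statement for the smaller $\pi'$ by an appropriate symplectic subgroup of the smaller $U$, or to an $\Sp$-distinction statement on the $\GL$ factor carrying $\delta(\rho;x_{j+1},x_j)$; the latter, suitably iterated, forces the cuspidal $\rho$ to be $\Sp$-distinguished in a sense ruled out by the cuspidal non-distinction results of \cite{DP,MR3590280}.

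With these two tools, the case analysis proceeds as follows. In case (2), the pair $(x_{2i-1},x_{2i})$ with $x_{2i-1}>x_{2i}+2$ factors off as a non-Steinberg standard module on the $\GL$ side whose $\Sp$-distinction is directly obstructed by a parity/central-character consideration for the segment. In case (3), pair the first $t$ blocks alternating in $\epsilon$ to reduce to a smaller $\pi'$; the assumption $\epsilon_\pi(\rho,x_{t+2})=\epsilon_\pi(\rho,x_{t+1})$ means the truncated Jordan data for $\pi'$ continues to satisfy one of (1)--(3), and the induction on $n$ closes. In case (1) with $t$ odd, iterated pairing of the first $t-1$ blocks reduces to a smaller discrete series with a single unpaired block at $x_t$ and same-$\epsilon$ neighbor at $x_{t+1}$, i.e.\ again in case (1) or (3) on a smaller group; the recursion terminates on a low-rank base case whose non-distinction is verified directly from the cuspidal non-distinction results. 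The main obstacle is the orbit analysis in the geometric lemma for non-Siegel parabolics: precise computation of the intersections $\Sp_{2n}(F)\cap P^w$ and verification that, for each non-open orbit, the contribution either vanishes for parity/central-character reasons or reduces to a strictly smaller instance to which the inductive hypothesis applies.
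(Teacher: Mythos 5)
Your proposal diverges from the paper's argument in ways that leave genuine gaps. The central one is the propagation step: you want to transfer $\Sp_{2n}(F)$-distinction of $\pi$ through the M{\oe}glin--Tadi\'c embedding $\pi\hookrightarrow\delta\rtimes\pi'$, but an invariant functional on a \emph{subrepresentation} neither extends to the induced representation nor is detected by its orbit filtration; the geometric lemma (Theorem \ref{thm geoml}) only says that distinction of the full induced representation forces a relevant orbit. This is exactly why the paper dualizes: by the M{\oe}glin--Vign\'eras--Waldspurger involution (Theorem \ref{thm mvw}, Corollary \ref{cor cont}) $\pi$ is distinguished if and only if $\pi^\vee$ is, and the dualized M{\oe}glin--Tadi\'c recipe realizes $\pi^\vee$ as a \emph{quotient} of representations induced from (sub-parabolics of) the Siegel parabolic, so distinction does pass to the induced representation and the orbit analysis applies. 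Without this step, or some substitute, your reduction to $\pi'$ or to the $\GL$-factor is unjustified. Note also that the paper needs no induction on $n$ and no non-Siegel parabolics: triviality of the partial cuspidal support keeps everything on the Siegel side, where the stabilizers and modulus characters are computed explicitly (Lemma \ref{lem modulus comp}, Corollaries \ref{cor adm mod} and \ref{cor dist adm}); for parabolics with a genuine $U_{2m}$-factor these computations are not available in the paper, and you yourself flag them as an open obstacle.

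The combinatorial mechanism is also misstated. With the paper's conventions, a pair of Jordan blocks is stripped off through an embedding $\pi\hookrightarrow L([a_{i+1}+1,a_i]_{(\rho)})\rtimes\sigma$ precisely when the consecutive $\epsilon$-values are \emph{equal}, not when they alternate; accordingly the sign graph of \S\ref{app_s:signs} deletes adjacent equal signs, and the segments entering the quotient realization are of the form $[-a_x,a_y]_{(\rho)}$ governed by an entire path, not by adjacent pairs alone. Conditions (1)--(3) are not ``configurations where the pairing cannot be iterated'': in case (2), for instance, the first $t$ signs do alternate, and the actual obstruction is that one may choose the path (Lemmas \ref{lem v0} and \ref{lem v1}) so that the leading segments of the resulting induced representation violate the necessary condition of Proposition \ref{prop main}, namely that relevant orbits force the segments to pair up as $\Delta'=\nu\overline{\Delta}^\vee$ (equivalently $a_{2i}=a_{2i-1}-1$), a condition derived from Corollary \ref{cor dist adm} together with the facts that no essentially square-integrable $\GL$-representation is $\Sp$-distinguished and that $\GL(F)$-distinction forces exponent $\tfrac12$. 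Your ``parity/central-character consideration'' in case (2) and your base case ``verified from cuspidal non-distinction'' both stand in for this orbit computation, and the latter is simply insufficient: the single-block base case is a strongly positive discrete series, which is not cuspidal, so the cuspidal vanishing results of \cite{DP,MR3590280} do not settle it; one still needs the Siegel-parabolic orbit analysis there as well.
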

As explained in \S \ref{sss tpl}, the $L$-packet of an irreducible discrete series representation $\pi$ of $U_{2n}$ is determined by $\Jord(\pi)$. We emphasize that in the above theorem condition (2) gives non-distinction for entire $L$-packets.

We further remark that in light of \cite[Proposition 3.1]{MR2581039} we obtain, in particular, that no irreducible generic discrete series representation of $\U_{2n}$ is $\Sp_{2n}(F)$-distinguished and the same holds for the strongly positive discrete series.
(In fact, a strongly positive discrete series with generic partial cuspidal support is generic.) However, it was proved in \cite{MR1447416} more generally that no generic irreducible representation of $U_{2n}$ is $\Sp_{2n}(F)$-distinguished applying standard techniques of invariant distributions.

To formulate the next result we recall the definition of a Speh representation. Let $d,\,m\in \N$, $n=dm$ and $\delta$ an irreducible essentially square integrable representation of $\GL_d(E)$. The Speh representation $U(\delta,m)$ is the Langlands quotient of the representation of $\GL_n(E)$ parabolically induced from
\[
\abs{\det}^{\frac{m-1}2}\delta \otimes \abs{\det}^{\frac{m-3}2}\delta\otimes\cdots\otimes\abs{\det}^{\frac{1-m}2}\delta.
\]
We recall that when $n$ is even, the Speh representation $U(\delta,m)$ is $\Sp_n(F)$-distinguished if and only if $m$ is even (see for instance \cite[Theorem 10.3]{MR3628792}). 

For a representation $\pi$ of $\GL_n(E)$ we view $\pi$ as a representation of the Siegel Levi subgroup of $U_{2n}$ and denote by $\pi\rtimes \triv_0$ the representation of $U_{2n}$ parabolically induced from $\pi$.

We remark that if an irreducible representation of $\GL_{2n}(E)$ is in the image of stable base change then it is, in particular, conjugate self-dual.
\begin{theorem}\label{speh intro}
Let $\pi=U(\delta,m)$ be a Speh representation of $\GL_{2n}(E)$ that is conjugate self-dual. 
\begin{enumerate}
\item If $m$ is odd (i.e., if $\pi$ is not $\Sp_{2n}(E)$-distinguished) then $\pi$ is in the image of stable base change if and only if $\delta$ is so (if and only if the Langlands parameter of $\delta$ is conjugate-symplectic in the sense of \cite[\S3]{MR3202556}). In this case, there is a unique irreducible representation of $U_{2n}$ that transfers to $\pi$ under stable base change and it is not $\Sp_{2n}(F)$-distinguished.
\item If $m$ is even (i.e., if $\pi$ is $\Sp_{2n}(E)$-distinguished) then there is a unique representation $\pi_0$ of $U_{2n}$ that transfers to $\pi$ under stable base change. Suppose further that $\pi\rtimes \triv_0$ is irreducible. Then, $\pi_0$ is $\Sp_{2n}(F)$-distinguished if and only if $m$ is divisible by $4$.
 
\end{enumerate}
\end{theorem}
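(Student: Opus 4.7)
We handle stable base change and $\Sp_{2n}(F)$-distinction separately. Base change is treated via Langlands parameters; distinction is analyzed by applying the geometric lemma to an appropriate induced realization of $\pi_0$.

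\textbf{Base change and uniqueness.} The Langlands parameter of $\pi = U(\delta,m)$ is $\phi_\pi = \phi_\delta \otimes S_m$, where $S_m$ denotes the $m$-dimensional irreducible representation of $\SL_2(\C)$. For $\pi$ to lie in the image of stable base change to $U_{2n}$, the parameter $\phi_\pi$ must be conjugate-dual of the sign appropriate for $U_{2n}$, namely $-1$. Since $S_m$ has sign $(-1)^{m-1}$ and $\phi_\delta$ is irreducible, this forces $\phi_\delta$ to be conjugate-symplectic when $m$ is odd and conjugate-orthogonal when $m$ is even, which in both cases matches $\delta$ being in the image of stable base change from an appropriate smaller unitary group; for $m$ odd, $d=2n/m$ is necessarily even, so that the target $U_d$ accommodates a conjugate-symplectic parameter. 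The $L$-packet of $\phi_\pi$ is a singleton because the centralizer of $\phi_\pi$ in $\GL_{2n}(\C)$ is connected (a consequence of the irreducibility of $\phi_\delta$), so the attached component group is trivial.

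\textbf{Distinction via the geometric lemma.} In part (2) under the irreducibility hypothesis, $\pi_0 = \pi\rtimes\triv_0$. In part (1), where $m$ is odd, we realize $\pi_0$ as the unique Langlands subquotient of the standard module $|\det|^{(m-1)/2}\delta \otimes \cdots \otimes |\det|^{1}\delta \rtimes \sigma_0$, where $\sigma_0$ is the representation of $U_d$ whose stable base change is $\delta$. In either case, we apply the geometric lemma to the finite double coset space of $\Sp_{2n}(F)$ on the flag variety of the inducing parabolic $P$, obtaining a filtration on $\Hom_{\Sp_{2n}(F)}(\pi_0,\C)$ indexed by orbits. The open orbit has stabilizer in the inducing Levi identifiable with a form of the symplectic subgroup that distinguishes Speh representations on the $\GL$-side, so its contribution is controlled by a Hom space of $\pi$ relative to this stabilizer, twisted by $\modulus_P^{1/2}$. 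The closed-orbit contributions involve Jacquet modules of the inducing data, which for Speh representations are explicit via Zelevinsky--Tadi\'c combinatorics.

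\textbf{Extracting the answer and main obstacle.} For part (1), $m$ odd implies $\pi$ is not $\Sp_{2n}(E)$-distinguished, so the open-orbit contribution vanishes; the closed-orbit contributions are ruled out using the explicit Jacquet modules together with non-distinction on smaller unitary groups via Theorem \ref{main intro} and the cuspidal case from \cite{MR3590280}. For part (2) with $m$ even, the open orbit produces a non-zero candidate $\Sp_{2n}(F)$-invariant functional since $\pi$ \emph{is} $\Sp_{2n}(E)$-distinguished; whether this candidate is truly invariant depends on whether the twist from $\modulus_P^{1/2}$ matches the character by which the stabilizer acts on the canonical $\Sp_{2n}(E)$-invariant functional of $\pi$. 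We expect this comparison to produce a sign of the form $(-1)^{m/2}$, trivial precisely when $4\mid m$. The principal technical obstacle is this sign computation---carefully tracking $\modulus_P^{1/2}$ against the Galois-twisted symplectic stabilizer and comparing it with the intrinsic $\Sp_{2n}(E)$-invariance of the Speh representation. A secondary obstacle, for the non-distinction direction of part (2) when $m \equiv 2 \pmod{4}$, is ruling out invariant functionals arising from non-open orbits, which again reduces to the Jacquet module combinatorics.
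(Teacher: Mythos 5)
There are genuine gaps, and the most serious one is a misidentification of the representation $\pi_0$ in part (2). Your claim that ``under the irreducibility hypothesis, $\pi_0=\pi\rtimes\triv_0$'' cannot be right: $\pi$ is a representation of $\GL_{2n}(E)$, so $\pi\rtimes\triv_0$ lives on $U_{4n}$, not $U_{2n}$. The hypothesis ``$\pi\rtimes\triv_0$ irreducible'' enters only as a proxy (via Corollary \ref{corr:irrd}) for irreducibility of the half-sized induction: writing the Speh ladder as $(\Delta_1,\dots,\Delta_m)$ with $\Delta_{2i-1}=\nu\Delta_{2i}$, the base-change fiber element is $\pi_0=\LQ(L(\Delta_1,\dots,\Delta_{m/2})\rtimes\triv_0)$, and under the hypothesis $\pi_0=L(\Delta_1,\dots,\Delta_{m/2})\rtimes\triv_0$. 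Consequently the orbit analysis on $U_{2n}$ tests $\Sp_n(E)$-distinction of the \emph{half}-Speh on the Siegel Levi (and $\GL_n(F)$-distinction of its $\nu^{-1/2}$-twist), not $\Sp_{2n}(E)$-distinction of $\pi$ itself; your proposed mechanism --- an open-orbit functional coming from the $\Sp_{2n}(E)$-invariant form on $\pi$, with a conjectured sign $(-1)^{m/2}$ from comparing $\delta_P^{1/2}$ with the stabilizer character --- does not correspond to anything in the correct setup. The actual dichotomy is structural: the half-Speh has $m/2$ segments, hence is $\Sp_n(E)$-distinguished exactly when $m/2$ is even, and then Lemma \ref{hered} (a closed-orbit argument) gives distinction of $\pi_0$ when $4\mid m$; the converse, for $m\equiv 2\pmod 4$, is the hard part and is proved in Proposition \ref{prop:bc_dist} by an orbit-by-orbit exclusion using the Zelevinsky involution and the M{\oe}glin--Waldspurger algorithm together with results on Klyachko models of ladders --- this is precisely the content you defer as ``Jacquet module combinatorics,'' and it is not automatic.

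Part (1) and the base-change bookkeeping also have gaps. Your plan to run the geometric lemma directly on the standard module $\nu^{(m-1)/2}\delta\times\cdots\times\nu\delta\rtimes\sigma_0$ requires controlling $(L_x,\delta_x)$-distinction of Jacquet modules of $\sigma_0$ on a smaller unitary group; the paper's explicit orbit machinery (Corollary \ref{cor dist adm}) is developed only for Levis inside the Siegel parabolic, and appealing to Theorem \ref{main intro} plus the cuspidal case does not cover those conditions. The paper instead splits on the parity of the segment length of $\delta$: if it is odd, the fiber element has nontrivial partial cuspidal support and Proposition \ref{prop dp} finishes; if even, $\sigma_0=\tau^+(\rho,a)$ is itself realized as a quotient of $L([\tfrac{1-a}2,-\tfrac12]_{(\rho)})\rtimes\triv_0$, reducing everything to the Siegel case (Proposition \ref{prop:odd2}). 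Finally, two parameter-side slips: the $L$-parameter of $U(\delta,m)$ is $\oplus_{i}\nu^{x_i}\phi_\delta$ (it is the \emph{Arthur} parameter that is $\phi_\delta\boxtimes S_m$), and for $m$ even no parity condition on $\phi_\delta$ is forced --- the nonzero exponents pair hyperbolically, so any conjugate self-dual $\delta$ works (consistent with the theorem, which imposes no such condition in part (2)); your assertion that $m$ even forces $\phi_\delta$ conjugate-orthogonal is incorrect, although the uniqueness-of-fiber conclusions themselves are right (in the paper they follow from \S\ref{sss rtt} and Proposition \ref{prop DP}).
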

The condition that $\pi\rtimes \triv_0$ is irreducible is explicated in Corollary \ref{corr:irrd}. A Speh representation is a special case of a {\it ladder} representation (see \S \ref{ss:ladder}). In fact, Proposition \ref{prop:bc_dist} is a generalization of Theorem \ref{speh intro} (2) to ladder representations.

Since any irreducible representation of $U_{2n}$ occurs as a unique irreducible quotient of its standard module (see Theorem \ref{thm_lqt}), a step towards classification of the distinguished smooth dual, is the classification of distinguished standard modules. We treat the special case where the standard module is induced from a Levi subgroup of the Siegel Levi subgroup of $U_{2n}$ and the $\GL$-part is irreducible. 
\begin{theorem}\label{thm:gen_lq_int}
Let $\delta_i$ be an irreducible essentially square integrable representation of $\GL_{n_i}(E)$, $i=1,\dots,t$ such that $\exp(\delta_1)\ge \cdots\ge \exp(\delta_t)>0$ (see \S \ref{sss exp}) and the representation $\pi$ of $\GL_n(E)$ (with $n=n_1+\cdots+n_t$) parabolically induced from $\delta_1\otimes\cdots\otimes\delta_t$ is irreducible. Then the standard module $\pi\rtimes\triv_0$ of $U_{2n}$ is $\Sp_{2n}(F)$-distinguished if and only if the irreducible generic representation $\nu^{-1/2}\pi$ of $\GL_n(E)$ is $\GL_n(F)$-distinguished. When this is the case, if in addition $\exp(\delta_i)\in\frac{1}{2}\mathbb Z$, $i=1,\dots,t$, then $\nu^{-1/2}\pi$ is tempered.
\end{theorem}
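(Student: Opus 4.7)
My plan is to analyze $\Hom_{\Sp_{2n}(F)}(\pi\rtimes\triv_0,\C)$ via the orbit decomposition of $P\bs U_{2n}$ under $\Sp_{2n}(F)$, where $P=MN$ is the Siegel parabolic of $U_{2n}$ with Levi $M\simeq\GL_n(E)$. This decomposition is finite and yields a filtration of $\pi\rtimes\triv_0$ as an $\Sp_{2n}(F)$-module whose graded pieces correspond to the orbits, and each piece contributes to the Hom space through Frobenius reciprocity.

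The key geometric input is that there is a unique open orbit, whose $M$-stabilizer is the diagonally embedded $\GL_n(F)\subset\GL_n(E)=M$. A direct modulus-character book-keeping then identifies the contribution of the open orbit to $\Hom_{\Sp_{2n}(F)}(\pi\rtimes\triv_0,\C)$ with $\Hom_{\GL_n(F)}(\nu^{-1/2}\pi,\C)$. The heart of the proof is to show that every non-open orbit contributes zero. Each such orbit has its stabilizer contained in a proper parabolic $Q$ of $M=\GL_n(E)$, so its contribution becomes a Hom space on a twist of the Jacquet module $r_Q(\pi)$. Since $\pi$ is the irreducible parabolic induction of $\delta_1\otimes\cdots\otimes\delta_t$ with $\exp(\delta_1)\ge\cdots\ge\exp(\delta_t)>0$, the geometric lemma for $\GL$ provides an explicit list of the cuspidal central exponents appearing in $r_Q(\pi)$, and one must verify that none matches the character prescribed by the orbit geometry (which combines $\delta_P^{1/2}$ with the twist coming from the involution defining $\Sp_{2n}$). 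This central-exponent mismatch, exploiting the positivity $\exp(\delta_i)>0$ in an essential way, is the main technical obstacle.

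Once all non-open contributions are shown to vanish, the claimed equivalence follows immediately from the open-orbit computation. For the final assertion, assume further that $\exp(\delta_i)\in\frac12\Z$ for all $i$. Known necessary conditions for $\GL_n(F)$-distinction of an irreducible admissible representation of $\GL_n(E)$ (in the spirit of Matringe and Anandavardhanan--Kable--Tandon) restrict the possible exponents of the essentially square integrable constituents of $\nu^{-1/2}\pi$; combined with the standing hypothesis $\exp(\delta_i)>0$ and the integrality $\exp(\delta_i)\in\frac12\Z$, this forces $\exp(\delta_i)=\frac12$ for every $i$. Each $\nu^{-1/2}\delta_i$ is then square integrable, and the irreducibility of $\nu^{-1/2}\pi$ implies that it is tempered.
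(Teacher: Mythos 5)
Your proposal has a genuine gap in both the geometric premise and the key vanishing claim. First, the $P$-orbit (for the Siegel parabolic $P$) whose Levi-stabilizer is $\GL_n(F)$ is the orbit through the identity coset, and it is the \emph{closed} orbit, not the open one: $P\cdot e\subseteq P\cap X$, and a dimension count (or the Hilbert 90 argument identifying $P\cap X$ with $P\cdot e$) shows it has strictly smaller dimension than $X$ for $n\ge 2$. The open orbit is the one of symplectic type, with stabilizer essentially $\Sp_n(E)$ inside the Siegel Levi (for $n$ even); its contribution vanishes for generic $\pi$ by \cite[Theorem 3.2.2]{MR1078382}, but that is a different statement from the one you make. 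This mislabeling also affects your ``if'' direction: since the $\GL_n(F)$-orbit is closed, its graded piece is a quotient of the restriction and the functional transfers unconditionally (this is exactly Lemma \ref{hered} of the paper); if it really were the open orbit, a nonzero functional on that piece (a subrepresentation of the filtration) would not automatically yield one on the whole induced representation without a further extension argument.

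Second, and more seriously, the claim that every other orbit ``contributes zero'' by a central-exponent mismatch is false under the stated hypotheses, so the proposed identification of $\Hom_{\Sp_{2n}(F)}(\pi\rtimes\triv_0,\C)$ with the single orbit contribution $\Hom_{\GL_n(F)}(\nu^{-1/2}\pi,\C)$ cannot be proved this way. By Corollary \ref{cor dist adm}, intermediate orbits are relevant whenever the Jacquet module of $\pi$ contains pairs matched by $\pi_{\tau(i)}\cong\nu\overline{\pi_i}^\vee$; for example if $\delta_j\cong\nu\overline{\delta_i}^\vee$ with $\exp(\delta_i)=0.6$, $\exp(\delta_j)=0.4$ (perfectly compatible with $\exp(\delta_1)\ge\cdots\ge\exp(\delta_t)>0$), such an orbit is relevant and positivity of exponents does not kill it. These cross-pairings are precisely the non-fixed pairs in Matringe's criterion (\S \ref{ss matringe}) and must be \emph{allowed}, not excluded: the correct ``only if'' argument is not that all other orbits vanish, but that relevance of \emph{any} orbit forces Matringe's condition for $\nu^{-1/2}\pi$. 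This is what the paper does (Theorem \ref{thm:gen_lq}), and it does so more cleanly by applying the geometric lemma to the induction from $\delta_1\otimes\cdots\otimes\delta_t\otimes\triv_0$ on the finer parabolic rather than from the irreducible $\pi$ on the Siegel Levi, using genericity (no linked segments) and the non-$\Sp$-distinction of essentially square integrable representations to show $\set=\emptyset$ and that no proper Jacquet restriction occurs (Claims \ref{claim_gen1} and \ref{claim_gen2}). Your sketch of the final assertion (forcing $\exp(\delta_i)=\frac12$ and hence temperedness of $\nu^{-1/2}\pi$) is essentially correct, but it rests on the main equivalence, whose proposed proof does not go through as written.
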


Next we outline the structure of the paper and the methods we use to prove the above theorems.
An explicit version of the geometric lemma \cite[Theorem 5.2]{MR0579172} (adapted in \cite{MR3541705} to restriction to $H$ of parabolically induced representations of $G$ when $G/H$ is a $p$-adic symmetric space) plays a key role in the proof of all the above mentioned results. 

After setting up the notation in \S \ref{s_notn} and recalling some preliminary results in \S \ref{s pre} we explicate in \S \ref{s_geo_lem} certain necessary conditions for a parabolically induced representation of $U_{2n}$ to be $\Sp_{2n}(F)$-distinguished. 

In \S \ref{s_van_ds} we prove Theorem \ref{main intro}. More precisely, we prove in Theorem \ref{thm:main_res_ds} that if $\pi$ is a representation of $U_{2n}$ that satisfies the hypothesis of Theorem \ref{main intro} then its contragredient $\pi^\vee$ is not $\Sp_{2n}(F)$-distinguished. However, based on the realization of the contragredient via the principal involution of $U_{2n}$ \cite[\S 4, II.1]{MR1041060}, we observe in Corollary \ref{cor cont} that an irreducible representation $\pi$ of $U_{2n}$ is $\Sp_{2n}(F)$-distinguished if and only if $\pi^\vee$ is so. Theorem \ref{main intro} immediately follows.

Our proof of Theorem \ref{thm:main_res_ds} is based on the results of \cite{MR1896238} that we recall in \S \ref{ss_dsc}. For an irreducible discrete series representation of $U_{2n}$, M{\oe}glin and Tadi\'c provide a recipe to read off the triple $(\pi_\cusp,\Jord(\pi),\epsilon_\pi)$ in different ways to realize $\pi$ as a subrepresentation of a certain representation parabolically induced from an essentially square integrable representation of a Levi subgroup. 

For distinction problems, it is more convenient to realize $\pi$ as a quotient. Hence, we dualize. It is also important to consider the possible different realizations (of $\pi^\vee$ as a quotient). We prove that $\pi^\vee$ is not $\Sp_{2n}(F)$-distinguished by proving that it is the quotient of some induced representation that is not $\Sp_{2n}(F)$-distinguished. For this sake, we study in \S \ref{app_s:signs} a graph that underlies the combinatorics behind $\epsilon_\pi$ and the different realizations of $\pi$ in induced representations.
This section is purely combinatorial and independent of the rest of this work. We hope that it will be useful for other problems where explicit realizations of discrete series representations of classical groups play a role.

In \S \ref{s_van_temp} we treat tempered representations that are not in the discrete series. In Proposition \ref{prop_van_temp} we prove that many tempered irreducible representations of $U_{2n}$ are not $\Sp_{2n}(F)$-distinguished. 
Our treatment is based on the description of the tempered spectrum in \cite[\S 13]{MR1896238}. We remark, that in any case treated in Proposition \ref{prop_van_temp} non-distinction is proves for an entire $L$-packet of tempered representations. This observation is based on the description of tempered $L$-packets that we recall in \S \ref{sss tpl} and  \S \ref{sss tlp}.

In order to address the other results of the paper we recall some further preliminaries. In \S \ref{s_rep_gln} we recall the representation theory of $\GL_n(E)$, Zelevinsky's segment notation, Langlands parameters and the reciprocity map. In \S \ref{s_geo_app} we deduce further applications of the geometric lemma that are applied in the sequel. In particular, we provide in Lemma \ref{hered} a sufficient condition for a representation of $U_{2n}$ to be $\Sp_{2n}(F)$-distinguished. 
In \S \ref{s_rep_u_n} we recall Mok's reciprocity map and the properties of the stable base change transfer. 

We prove Theorem \ref{speh intro} in \S \ref{s_stab_dist} (see Theorem \ref{thm:speh_bc}). It is an immediate consequence of Propositions \ref{prop:bc_dist} and \ref{prop:odd2} that are formulated more generally in the language of ladder representations. Finally, in \S \ref{s_std_mod} we prove Theorem \ref{thm:gen_lq_int} (see Theorem \ref{thm:gen_lq} and Remark \ref{rem:gen_lq}).

\begin{remark}
Throughout the paper we assume that $F$ has characteristic zero. This assumption is only used in some of the results we rely on, namely the M{\oe}glin-Tadi\'c classification (and in particular, their basic assumption) \cite{MR1896238}, the structure of the discrete $L$-packet \cite{MR2366373} and Mok's reciprocity map \cite{MR3338302}.
\end{remark}

\subsection*{Acknowledgments}
The authors wish to thank K. Morimoto for sharing his preprint \cite{Mor} with them and Dipendra Prasad for answering their many questions. The first author also wishes to thank Sandeep Varma for several helpful conversations and the Tata Institute of Fundamental Research, Mumbai, for providing an effective environment in which a part of this work was done. 

\section{Notation}\label{s_notn}

We set the general notation in this section. More particular notation is defined in the section where it first occurs.

\subsection{The relevant groups}
\subsubsection{}
Let $F$ be a non-archimedean local field of characteristic zero with normalized absolute value $\abs{\cdot}_F$ and $E/F$ a quadratic extension.  Thus, 
\[
\abs{a}_E=\abs{a}_F^2,\, a\in F. 
\]
We denote by $a\mapsto \bar a$ the action of non-trivial element of ${\rm Gal}(E/F)$. 

\subsubsection{}
We use bold letters such as $\mbf{X}$ to denote varieties defined over $F$ and the corresponding usual font to denote the topological space of its rational points. That is, $X=\mbf{X}(F)$. 

\subsubsection{}
For a variety $\mbf{Y}$ defined over $E$ let $\mbf{Res_{E/F} (Y)}$ denote its restriction of scalars from $E$ to $F$.

\subsubsection{}
Let $\mathbf{U_{2n}}=\mathbf{U_{2n}(E/F)}$ be the quasi-split unitary group in $2n$ variables associated to the anti-hermitian form defined by 
\[
J_n=\begin{pmatrix}
            & w_n \\
-w_n   &
   \end{pmatrix}
\]
where $w_n=(\delta_{i,n+1-j})\in \GL_n$.
Explicitly,
\[
\mathbf U_{2n}= \{g\in \mbf{Res_{E/F}((GL_{2n})_E)}\mid {}^{t}\bar{g} \,J_n\, g=J_n\}.
\]

\subsubsection{}
Let $\mathbf{Sp_{2n}}$ denote the symplectic group of rank $n$ defined by the skew-symmetric matrix $J_{n}$, i.e.,
\[
\mathbf{Sp_{2n}}=\{g\in \mathbf{GL_{2n}} \mid {}^t g J_{2n} g=J_{2n}\}.
\]   
Note that 
\[
\mathbf{Sp_{2n}}=\{g\in \mathbf{U_{2n}}\mid \bar g=g\}.
\]

\subsubsection{}\label{sub par} Standard parabolic subgroups of $\mbf{U_{2n}}$ are in bijection with tuples $\alpha=(n_1,\dots,n_r;m)$ such that $n_1+\cdots+n_r+m=n$, $r,\,m\in \Z_{\ge 0}$ and $n_1,\dots,n_r\in \N$. The standard parabolic subgroup $\mbf{Q_\alpha}=\mbf{L_\alpha}\ltimes \mbf{V_\alpha}$ with standard Levi subgroup $\mbf{L_\alpha}$ and unipotent radical $\mbf{V_\alpha}$ is the subgroup consisting of block upper triangular matrices in $\mbf{U_{2n}}$ so that
\[
\mbf{L_\alpha}=\{\diag(g_1,\dots,g_r,h,g_r^*,\dots,g_1^*)\mid g_i\in \mbf{Res_{E/F}((GL_{n_i})_E)},\,i=1,\dots,r,\ h\in \mbf{U_{2m}}\}.
\]
Here $g\mapsto g^*$ is the involution on $\mbf{Res_{E/F}((GL_k)_E)}$ defined for any $k\in \N$ by $g^*=w_k {}^t\bar g^{-1} w_k$. 
In particular, $\mbf{U_{2n}}=\mbf{Q_{(\ ;n)}}=\mbf{L_{(\ ;n)}}$ corresponds to $r=0$ and $m=n$ while $\mbf{Q_{(n;0)}}$ is the Siegel parabolic subgroup.

Let $\inj_\alpha: \mbf{\Res_{E/F}((\GL_{n_1})_E)} \times\cdots\times \mbf{\Res_{E/F}((\GL_{n_r})_E)} \times \mbf{U_{2m}}\rightarrow L_\alpha$ be the isomorphism defined by 
\[
\inj_\alpha(g_1,\dots,g_r,h)=\diag(g_1,\dots,g_r,h,g_r^*,\dots,g_1^*).
\]
If $m=0$ (i.e., if $\mbf{Q_\alpha}$ is contained in the Siegel parabolic subgroup) we simply write $\inj_\alpha(g_1,\dots,g_r)$ and when clear from the context we often omit the subscript $\alpha$.

\subsubsection{} 
If $\mbf{P}=\mbf{M}\ltimes \mbf{U}$ is a standard parabolic subgroup of $\mbf{U_{2n}}$ with its standard Levi decomposition, by taking rational points we refer to $P$ (resp. $M$) as a standard parabolic (reps. Levi) subgroup of $U_{2n}$.

\subsubsection{} Let 
\[
T=\{\diag(a_1,\dots,a_n,a_n^{-1},\dots,a_1^{-1}) \mid a_1,\dots,a_n\in F^*\}
\]
be the diagonal maximal split torus in $U_{2n}$. 

For a standard Levi subgroup $M$ of $U_{2n}$ let $R(M,T)$ be the corresponding root system and let $R^+(M,T)$ (resp. $\Delta^M$) be the set of positive (resp. simple) roots with respect to the standard Borel subgroup $M\cap Q_{(1^{(n)};0)}$ of $M$ consisting of upper triangular matrices in $M$. Here $1^{(n)}$ is the $n$-tuple of ones.

\subsubsection{} More generally, let $P=M\ltimes U\subseteq Q=L\ltimes V$ be standard parabolic subgroups of $U_{2n}$ with their standard Levi decomposition and $T_M$ the connected component of the center of $M$. We denote by $R(L,T_M)$ the roots of $L$ on $T_M$ and by $R^+(L,T_M)$ (resp. $\Delta_M^L$) the subset of positive (resp. simple) roots with respect to $L\cap P$. The set $\Delta_M^L$ consists of non-zero restrictions to $T_M$ of elements of $\Delta^L$. For $\alpha\in R(L,T_M)$ we write $\alpha>0$ if $\alpha\in R^+(L,T_M)$ and $\alpha<0$ otherwise. When $L=U_{2n}$ we also set $\Delta_M=\Delta_M^L$.

\subsubsection{} 
For a standard Levi subgroup $M$ of $U_{2n}$ let
\[
W^{M}=N_{M}(T)/C_{M}(T) 
\]
be the Weyl group of $M$. In particular, the Weyl group $W^{U_{2n}}$ of $U_{2n}$ is isomorphic to the signed permutation group in $n$ letters.

Note that $C_{M}(T)=L_{(1^{(n)};0)}$ is the standard Levi subgroup of the standard Borel subgroup of $U_{2n}$ corresponding to the decomposition $(1^{(n)};0)=(1,\dots,1;0)$ of $n$. Therefore, $W^M$ is a subgroup of $W^L$ whenever $M\subseteq L$ are standard Levi subgroups of $U_{2n}$.

There is a unique element of maximal length in the set of $w\in W^{L}$ satisfying
\begin{itemize}
\item $w$ is of minimal length in $wW^M$;
\item $wMw^{-1}$ is a standard Levi subgroup of $L$.
\end{itemize}
We denote this element by $w_M^L$. It also satisfies $w_M^L\alpha<0$ for $\alpha\in R^+(L,T_M)$.

\subsubsection{} 
Let $W=W^{U_{2n}}$. For standard Levi subgroups $M$ and $L$ of $U_{2n}$ any double coset in $W^M\bs W/W^L$ admits a unique element of minimal length. Denote by ${}_MW_L$ the set of $w\in W$ such that $w$ is of minimal length in $W^M wW^L$. The set ${}_MW_L$ consists of all left $M$-reduced and right $L$-reduced elements in $W$. That is, $w\alpha\in R^+(U_{2n},T)$ for all $\alpha\in \Delta^L$ and $w^{-1}\alpha\in R^+(U_{2n},T)$ for all $\alpha\in \Delta^M$. The inclusion ${}_MW_L\subseteq W$ gives a natural bijection
\[
{}_MW_L\simeq W^M\bs W/W^L.
\]

\subsubsection{} 

Let $P=M\ltimes U$ and $Q=L \ltimes V$ be standard parabolic subgroups of $U_{2n}$ with their standard Levi decompositions. The Bruhat decomposition is the bijection 
\[
w\mapsto PwQ : {}_MW_L  \overset{\sim}\longrightarrow  P\bs U_{2n}/Q.
\]

For every $w\in {}_MW_L$ the group 
\[
P(w)=M\cap wQw^{-1}
\]
is a standard parabolic subgroup of $M$ (that is, $P(w)\supseteq M\cap Q_{(1^{(n)};0)}$). Its standard  Levi decomposition is given by $P(w)=M(w)\ltimes U(w)$, where
\[
M(w)=M\cap wLw^{-1}\ \ \ \text{and}\ \ \ U(w)=M\cap wVw^{-1}.
\]

\subsection{Representations}
Let $\mbf G$ be a linear algebraic group defined over $F$. 
By a representation of $G$ we always mean a smooth complex valued representation.

Let $\Alg(G)$ be the subcategory of representations of $G$ of finite length and $\Irr(G)$ the class of irreducible representations in $\Alg(G)$. 

If in addition $\mbf{G}$ is a connected reductive group let $\Cusp(G)$ be the set of all cuspidal representations in $\Irr(G)$.  

Let $\pi^\vee$ denote the contragredient of a representation $\pi\in \Alg(G)$. Then $(\pi^\vee)^\vee\cong\pi$ and $\pi\in \Irr(G)$ if and only if $\pi^\vee\in \Irr(G)$. 

\subsubsection{}
Let $\delta_G$ be the modulus function of $G$ with the convention that $\delta_G(g)dg$ is a right-invariant Haar measure if $dg$ is a left-invariant Haar measure on $G$.

\subsubsection{}
Let $\mbf{H}$ be a subgroup of $\mbf{G}$ and $\sigma$ a smooth, complex-valued representation of $H$.
We denote by ${\rm Ind}_H^G(\sigma)$ the normalized induced representation. It is the representation of $G$ by right translations on the space of functions $f$ from $G$ to the space of $\sigma$ satisfying 
\[
f(hg)=(\delta_H\delta_G^{-1})^{1/2}(h)\sigma(h)f(g), \ \ \ h\in H,\,g\in G
\] 
and $f$ is right invariant by some open subgroup of $G$.

The representation of $G$ on the subspace of functions with compact support modulo $H$ is denoted by ${\rm ind}_H^G(\sigma)$.
\subsubsection{}\label{sec pi}

Let $\mbf{P}=\mbf{M}\ltimes \mbf{U}$ be a parabolic subgroup of $\mbf{G}$ with Levi part $\mbf{M}$ and unipotent radical $\mbf{U}$.
The functor $\ip_{G,M}:\Alg(M)\rightarrow\Alg(G)$ of normalized parabolic induction is defined as follows. For $\rho\in \Alg(M)$ we consider $\rho$ as a representation of $P$ by composing with the projection $P/U \rightarrow M$ and set
\[
\ip_{G,M}(\rho)=\Ind_P^G(\rho).
\]
The functor $\ip_{G,M}$ is exact and we have
\begin{equation}\label{eq cont ind}
\ip_{G,M}(\rho)^\vee\cong \ip_{G,M}(\rho^\vee).
\end{equation}

\subsubsection{}
In the notation of \S \ref{sec pi}, the functor $\ip_{G,M}$ admits a left adjoint, namely, the normalized Jacquet functor $\jm_{M,G}:\Alg(G)\rightarrow\Alg(M)$. For $\sigma\in \Alg(G)$, $\jm_{M,G}(\sigma)$ is the representation of $M$ on the space of $U$-coinvariants of $\sigma$ induced by the action $\delta_{P}^{-1/2}\sigma$. It is also an exact functor and for $\sigma\in \Alg(G)$ and $\rho\in \Alg(M)$ we have the natural linear isomorphism (Frobenius reciprocity):
\begin{equation}\label{eq: 1st adj}
\Hom_G(\sigma,\ip_{G,M}(\rho))\cong \Hom_M(\jm_{M,G}(\sigma),\rho).
\end{equation}

When $G$ is either $\GL_n(E)$ or $U_{2n}$ we will further use the following standard notation for parabolic induction.

\subsubsection{} Let $G=\GL_n(E)$, $n=n_1+\cdots+n_k$ and 
\[
M=\{\diag(g_1,\dots,g_k)\mid g_i\in \GL_{n_i}(E),\ i=1,\dots,k\}.
\]
For representations $\rho_i\in \Alg(\GL_{n_i}(E))$, $i=1,\dots,k$ let $\rho=\rho_1\otimes\cdots\otimes\rho_k\in \Alg(M)$ and
\[
\rho_1\times\cdots\times\rho_k=\ip_{G,M}(\rho).
\]

\subsubsection{} Let $G=U_{2n}$, $\alpha=(n_1,\dots,n_k;m)$ a composition of $n$ as in \S \ref{sub par} and $M=L_\alpha$. For representations $\rho_i\in \Alg(\GL_{n_i}(E))$, $i=1,\dots,k$  and $\sigma\in \Alg(U_{2m})$ let $\rho=\rho_1\otimes \cdots \otimes \rho_k\otimes\sigma\in \Alg(M)$ and
\[
\rho_1\times\cdots \times \rho_k\rtimes \sigma=\ip_{G,M}(\rho).
\]

\subsubsection{}
We state basic properties of parabolic induction for the group $U_{2n}$ which we will use several times in the sequel, often without further reference.
For their proof see for instance \cite[Propositions 4.1 and 4.2]{MR1266251}\footnote{Strictly speaking, this reference is only for symplectic groups but the proof works for every classical group. See also \cite{MR1896238}.}. Let $[\pi]$ denote the semi-simplification of a representation $\pi\in\Alg(U_{2n})$. 
\begin{proposition}\label{prop_basic_ind} 
Let $\pi_i\in \Alg(\GL_{n_i}(E))$, $i=1,2$ and $\sigma\in \Alg(U_{2n})$ for some $n_1,\,n_2,\,n\in \N$. We have 
\begin{enumerate}
\item $(\pi_1\times \pi_2)\rtimes \sigma\cong \pi_1\rtimes (\pi_2\rtimes \sigma)$,
\item $(\pi_1\rtimes \sigma)^{\vee}\cong \pi_1^{\vee}\rtimes \sigma^{\vee}$,
\item $[\overline{\pi_1}^{\vee}\rtimes \sigma]=[\pi_1\rtimes \sigma]$.
\end{enumerate} \qed
\end{proposition}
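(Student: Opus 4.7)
The three identities are standard facts about parabolic induction; the proof given for symplectic groups in \cite{MR1266251} carries over with no essential change to $U_{2n}$, and my plan is to outline the strategy. Part~(1) is transitivity of parabolic induction. The composition $\alpha=(n_1,n_2;n)$ refines both $(n_1+n_2;n)$ and $(n_1;n_2+n)$, and the associated Levi $L_\alpha\cong \GL_{n_1}(E)\times\GL_{n_2}(E)\times U_{2n}$ is contained in both $L_{(n_1+n_2;n)}$ and $L_{(n_1;n_2+n)}$. Applying induction in stages to $\pi_1\otimes\pi_2\otimes\sigma$ identifies each side of~(1) with $\ip_{U_{2N},L_\alpha}(\pi_1\otimes\pi_2\otimes\sigma)$, where $N=n_1+n_2+n$.

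Part~(2) is a direct application of the general identity \eqref{eq cont ind} with $L=L_{(n_1;n)}$ and $\tau=\pi_1\otimes\sigma$, noting that $(\pi_1\otimes\sigma)^\vee\cong \pi_1^\vee\otimes\sigma^\vee$ on the Levi via $\inj$. For part~(3), the plan is to combine two general principles, working in $G=U_{2(n_1+n)}$. First, if two parabolic subgroups of $G$ share the same Levi $M$, the two resulting parabolic inductions of a given representation of $M$ have the same semisimplification. Second, for any Weyl element $w$ normalizing $M$, induction from the standard parabolic with Levi $M$ is isomorphic, after twisting the data by $w$, to induction from the $w$-conjugate parabolic. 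Combining these yields $[\ip_{G,M}(\tau)]=[\ip_{G,M}(\tau^{w})]$, where $\tau^{w}(m):=\tau(w^{-1}mw)$.

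Applied to $M=L_{(n_1;n)}$, there is an outer Weyl element $w$ normalizing $M$ that acts on $\inj(g,h)=\diag(g,h,g^*)$ by $(g,h)\mapsto (g^*,h)$, so the twist of $\tau=\pi_1\otimes\sigma$ is $\pi_1^*\otimes\sigma$ with $\pi_1^*(g):=\pi_1(g^*)=\pi_1(w_{n_1}\,{}^{t}\bar g^{-1}w_{n_1})$. The remaining step is the isomorphism $\pi_1^*\cong \overline{\pi_1}^{\vee}$ of representations of $\GL_{n_1}(E)$, which I would deduce from the Gelfand--Kazhdan identification ($\pi_1^\vee\cong\pi_1\circ\theta$ with $\theta(g)={}^tg^{-1}$), the Galois twist producing the bar, and the fact that conjugation by $w_{n_1}$ is an inner automorphism. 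The main technical point is precisely this last identification; the remainder of the proof is formal.
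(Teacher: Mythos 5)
Your outline is essentially the approach the paper itself relies on: it gives no proof, citing instead \cite[Propositions 4.1 and 4.2]{MR1266251} with the remark that the symplectic argument works for any classical group, and that argument is exactly your induction in stages for (1), the identity \eqref{eq cont ind} for (2), and the Weyl-twist plus independence-of-parabolic-in-the-Grothendieck-group argument for (3). One small caution on (3): for reducible $\pi_1$ the Gelfand--Kazhdan identification only yields $[\pi_1^{*}]=[\overline{\pi_1}^{\vee}]$ (applied constituent by constituent), not an isomorphism $\pi_1^{*}\cong\overline{\pi_1}^{\vee}$, but since the assertion concerns semisimplifications and parabolic induction is exact, this weaker equality is all that is needed.
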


\subsubsection{}
This paper is concerned with distinction of representations in the following sense.
\begin{definition}
Let $\pi$ be a representation of $G$ and $\mbf{H}$ a subgroup of $\mbf{G}$. 
\begin{itemize}
\item We say that $\pi$ is \emph{$H$-distinguished} if there exists a non-zero $H$-invariant linear form $\ell$ on the space of $\pi$, i.e., $\ell(\pi(h)v)=\ell(v)$ for all $h\in H$ and $v$ in the space of $\pi$. We denote by $\Hom_H(\pi,1)$ the space of $H$-invariant linear forms on $\pi$. 
\item More generally, for a character $\chi$ of $H$ we say that $\pi$ is $(H,\chi)$-distinguished if the space $\Hom_H(\pi,\chi)$ of $H$-equivariant linear forms on $\pi$ is non-zero.
\end{itemize}
\end{definition}
By Frobenius reciprocity (\cite[Theorem 2.28]{MR0425030}) we have a natural linear isomorphism
\begin{equation}\label{eq: frob rec}
\Hom_H(\pi,\chi \delta_H^{1/2}\delta_G^{-1/2})\cong\Hom_G(\pi,\Ind_H^G(\chi)).
\end{equation}

\subsubsection{} We set some conventions for particular cases of distinction relevant to us.
We primarily consider $\Sp_{2n}(F)$-distinguished representations of $U_{2n}$ and $\Sp_{2n}(E)$-distinguished representations of $\GL_{2n}(E)$. In both cases, for simplicity, we say that the representation is $\Sp$-distinguished.   

Again for the sake of notational simplification, we say that a representation of $\GL_n(E)$ is $\GL(F)$-distinguished if it is $\GL_{n}(F)$-distinguished.

\section{Preliminaries}\label{s pre}

\subsection{Involutions on Weyl groups}\label{sec weyl}
Let $W$ be a Weyl group of a root system with a basis $\Delta$ of simple roots. Let $s_\alpha\in W$ be the simple reflection associated to $\alpha\in \Delta$.
Denote by $W[2]=\{w\in W:w^2=e\}$ the set of involutions in $W$. 

Based on Springer's treatment of involutions in \cite{MR803346} we define a directed $\Delta$-labeled graph $\grph_W$ with vertices $W[2]$ and labeled edges $w\xrightarrow{\alpha} w'$ whenever $w'=s_\alpha w s_\alpha$ and $w\alpha\ne \pm\alpha$.
Note that if $w\xrightarrow{\alpha} w'$ then also $w'\xrightarrow{\alpha} w$. If $w_0,\dots,w_k\in W[2]$ and $\alpha_1,\dots,\alpha_{k}\in\Delta$ are such that $w_{i-1} \xrightarrow{\alpha_i} w_i$, $i=1,\dots,k$ we also write $w_0  \overset{\sigma}\curvearrowright w_k$ with $\sigma=s_{\alpha_k}\cdots s_{\alpha_1}$ for a path on the graph (the notation suppresses the dependence on the chosen word for $\sigma$).
Note that if $w  \overset{\sigma}\curvearrowright w'$ then in particular $\sigma w\sigma^{-1}=w'$.

\begin{definition}
An involution $w\in W[2]$ is called minimal if it is the longest element of the Weyl group generated by $\{s_\alpha:\alpha\in\Pi\}$ for some subset $\Pi\subseteq\Delta$ and $w\alpha=-\alpha$ for $\alpha\in\Pi$.
\end{definition}
\begin{example}\label{exm s}
The minimal involutions in the symmetric group $S_n$ (with respect to the standard basis $\Delta=\{e_i-e_{i+1}: i=1,\dots,n-1\}$) are the products of disjoint simple reflections. That is, $w=s_{i_1}\cdots s_{i_k}$ where $s_i=(i,i+1)$ and $\{i_v,i_{v+1}\}$ is disjoint from $\{i_u,i_u+1\}$ for any $1\le u\ne v \le k$.
\end{example}
For the following result of Springer see \cite[Proposition 3.3]{MR803346}.
\begin{lemma}\label{lem springer}
For any $w\in W[2]$ there exists a minimal involution $w'\in W[2]$ and $\sigma\in W$ such that $w \overset{\sigma}\curvearrowright w'$.
\qed
\end{lemma}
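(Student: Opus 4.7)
The plan is to induct on the Weyl group length $\ell(w)$. The base case $\ell(w)=0$ forces $w=e$, which is minimal with $\Pi=\emptyset$, and the empty path suffices.

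For the inductive step with $\ell(w)>0$, I would split into two scenarios according to the action of $w$ on simple roots. First, suppose some $\alpha\in\Delta$ satisfies $w\alpha<0$ and $w\alpha\ne-\alpha$; equivalently $w\alpha\ne\pm\alpha$. Then $w':=s_\alpha w s_\alpha$ is a conjugate involution and the edge $w\xrightarrow{\alpha}w'$ exists in $\grph_W$. The standard length formula gives $\ell(s_\alpha w)=\ell(w)-1$ since $w\alpha<0$, and because $w\alpha$ is a negative root distinct from $-\alpha$ the simple reflection $s_\alpha$ keeps it negative, so $\ell(w')=\ell(w)-2$. The induction hypothesis yields a path $w'\overset{\sigma'}\curvearrowright w_{\min}$ to a minimal involution; prepending the edge produces $w\overset{\sigma' s_\alpha}\curvearrowright w_{\min}$.

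In the remaining scenario, every $\alpha\in\Delta$ with $w\alpha<0$ actually satisfies $w\alpha=-\alpha$. Set $\Pi:=\{\alpha\in\Delta:w\alpha=-\alpha\}$, so $w\beta>0$ for $\beta\in\Delta\setminus\Pi$; I claim $w$ is already minimal with respect to $\Pi$, so the trivial path works. The crucial step is to show $w(R^+\setminus R^+_\Pi)\subseteq R^+$, where $R^+_\Pi:=R^+\cap\mathbb Z\Pi$, by induction on height. For a non-simple $\beta\notin R^+_\Pi$ pick $\gamma\in\Delta$ with $c:=\langle\beta,\gamma^\vee\rangle>0$ and set $\beta':=s_\gamma\beta=\beta-c\gamma$, of smaller height. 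If $\beta'\notin R^+_\Pi$ the induction hypothesis applies, and one uses either that $s_\gamma$ commutes with $w$ (when $\gamma\in\Pi$, via $ws_\gamma w^{-1}=s_{w\gamma}=s_{-\gamma}=s_\gamma$) or directly that $w\gamma>0$ (when $\gamma\notin\Pi$). In the subcase $\beta'\in R^+_\Pi$, necessarily $\gamma\notin\Pi$ (otherwise $\beta\in R^+_\Pi$), and since $w$ acts as $-1$ on $R_\Pi$ by linearity we have $w\beta=-\beta'+cw\gamma$. The key observation is that $w\gamma\notin R^+_\Pi$: otherwise $\gamma=w(w\gamma)\in w(R^+_\Pi)=R^-_\Pi$, contradicting $\gamma\in\Delta$. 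Hence $w\gamma$ contributes a strictly positive coefficient in some $\alpha_i\notin\Pi$ direction, forcing $w\beta\in R^+$. Using $w^2=e$ this upgrades to $N(w)\subseteq R^+_\Pi$, giving $w\in W_\Pi$; since $w$ acts as $-1$ on $R_\Pi$, it equals the longest element $w_\Pi$, and the defining equations $w\alpha=-\alpha$ for $\alpha\in\Pi$ confirm minimality.

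The main obstacle is orchestrating the height induction in the degenerate case, particularly the subcase $\beta'\in R^+_\Pi,\ \gamma\notin\Pi$ where $w\beta=-\beta'+cw\gamma$ is a difference and must be shown to be a positive root; the argument hinges on the incidental fact that $w\gamma$ cannot lie in $R^+_\Pi$, which is what ultimately enforces $w\in W_\Pi$ and hence minimality. The rest of the proof consists of standard length manipulations in Weyl groups.
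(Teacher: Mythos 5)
Your proof is correct. Note that the paper does not actually prove this lemma: it records it as Springer's result and cites \cite{MR803346} (Proposition 3.3), so your self-contained argument is a genuinely different route. The skeleton — induction on $\ell(w)$, with the dichotomy that either some $\alpha\in\Delta$ has $w\alpha<0$ and $w\alpha\ne-\alpha$ (giving an edge $w\xrightarrow{\alpha}s_\alpha ws_\alpha$ that drops the length by two) or every descent of $w$ satisfies $w\alpha=-\alpha$ — is sound, and the real content is your treatment of the degenerate case: the height induction proving $w(R^+\setminus R^+_\Pi)\subseteq R^+$, hence $N(w)=R^+_\Pi$, hence $w\in W_\Pi$ and $w$ is its longest element, which together with $w\alpha=-\alpha$ on $\Pi$ is exactly the paper's definition of a minimal involution. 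All subcases check out; the only step left implicit is in the subcase $\gamma\in\Pi$, $\beta'\notin R^+_\Pi$, where after writing $w\beta=s_\gamma(w\beta')$ you still need $w\beta'\ne\gamma$ before concluding positivity — immediate, since $w\beta'=\gamma$ would give $\beta'=w\gamma=-\gamma<0$. By comparison, Springer's Proposition 3.3 is stated and proved for twisted involutions relative to an automorphism of the Coxeter system, by a similar length-descent argument; citing it is shorter and yields the more general (twisted) statement, whereas your argument keeps everything elementary and self-contained in the untwisted case, at the cost of invoking the standard facts about inversion sets and standard parabolic subgroups (that $N(w)\subseteq R^+_\Pi$ forces $w\in W_\Pi$, and that an element of $W_\Pi$ sending $R^+_\Pi$ to $R^-_\Pi$ is the longest element).
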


Let $\weyl_n$ be the signed permutation group in $n$ variables. We realize it as
\[
\weyl=\weyl_n=S_n \ltimes \Xi_n
\]
where $\Xi_n$ is the group of subsets of $\{1,\dots,n\}$ with symmetric difference as multiplication. 
Clearly, $\Xi_n \cong (\Z/2\Z)^n$ and we may take $\sigma_i=\{i\}$, $i=1,\dots,n$ as generations of $\Xi_n$.

The action of $S_n$ on $\Xi_n$ is given by
\[
\tau \set \tau^{-1}=\tau(\set), \ \ \ \tau\in S_n,\ \set\in\Xi_n.
\]

It is easy to see that the set of involutions in $\weyl$ is
\[
\weyl[2]=\{\tau\set:\tau\in S_n[2],\,\tau(\set)=\set\}.
\]
We think of $\weyl$ as a Weyl group of a root system of type $C_n$ with the standard basis of simple roots $\Delta_n=\{\alpha_1,\dots,\alpha_n\}$ where $\alpha_i=e_i-e_{i+1}$, $i=1,\dots,n-1$ and $\alpha_n=2e_n$. 
Here $\weyl$ acts by 
\[
\tau(e_i)=e_{\tau(i)}, \ \ \ \tau\in S_n\ \ \ \text{and} \ \ \ \set(e_i)=\begin{cases} e_i & i\not\in \set \\ -e_i & i\in \set \end{cases}, \ \ \ \set\in\Xi_n,\ \ \ i=1,\dots,n.
\]      
Here we examine certain properties of the graph $\grph_\weyl$.
The set of minimal involutions in $\weyl$ is
\begin{equation}\label{eq min inv}
\{\rho\set_k: 0\le k\le n, \rho\text{ is a minimal involution of }S_k,\ \set_k=\set_{k,n}= \{k+1,\dots,n\}\}.
\end{equation}
Here $S_k$ is embedded in $S_n$ as the subgroup of permutations fixing $k+1,\dots,n$ point-wise. Also, by convention, $\set_{n,n}=\emptyset$.
Thus, the minimal involutions in $S_n$ are the minimal involutions of $\weyl$ associated to $k=n$.

Let $w=\tau\set\in \weyl$ with $\tau\in S_n$ and $\set\in \Xi_n$. Define the following subsets of $\{1,\dots,n\}$:
\begin{itemize}
\item $\set_+(w)=\{i\in \set\mid \tau(i)=i\}$;
\item $\set_-(w)=\{i\not\in \set\mid \tau(i)=i\}$;
\item $\set_{\ne}(w)=\{i\mid \tau(i)\ne i\}$;
\item $\set_{<}(w)=\{i\mid i<\tau(i)\}$.
\end{itemize}
Note that $\set_{\ne}(w)$ and $\set_{<}(w)$ depend only on $\tau$. 
\begin{example}\label{ex min} If $w=\rho\set_{k,n}\in \weyl$ is a minimal involution as in \eqref{eq min inv} then  according to Example \ref{exm s} we have $\set_+(w)=\set_{k,n}$ while $\{1,\dots,k\}$ is the disjoint union
\[
\{1,\dots,k\}=\set_-(w) \sqcup \sqcup_{i\in \set_{<}(w)} \{i,i+1\}
\]
and
\[
\rho=\prod_{i\in \set_{<}(w)} s_i.
\]
\end{example}
We make the following simple observations. 
\begin{lemma}\label{lem w inv}
Let $w,\,\sigma\in \weyl$ and $w'=\sigma w\sigma^{-1}$. Then 
\begin{enumerate}
\item\label{part +} $\sigma\set_+(w)\sigma^{-1}=\set_+(w')$ (in particular, $\abs{\set_+(w')}=\abs{\set_+(w)}$), 
\item\label{part -} $\sigma\set_-(w)\sigma^{-1}=\set_-(w')$ and 
\item\label{part ne} $\sigma\set_{\ne}(w)\sigma^{-1}=\set_{\ne}(w')$.
\end{enumerate}
On the left hand side of each equation multiplication is in $\weyl$.
\end{lemma}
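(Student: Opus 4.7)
The plan is to reduce all three identities to the set-theoretic statement $\set_\ast(w')=\mu(\set_\ast(w))$, where $\mu\in S_n$ is the $S_n$-part of $\sigma$ and $\ast\in\{+,-,\ne\}$. This reduction is immediate because $\Xi_n$ is abelian: writing $\sigma=\mu\eta$ with $\eta\in\Xi_n$, the fact that $\eta$ commutes with every element of $\Xi_n$ gives $\sigma X\sigma^{-1}=\mu X\mu^{-1}=\mu(X)$ for any subset $X\subseteq\{1,\dots,n\}$ regarded as an element of $\Xi_n$. So the three equations in the lemma are just asserting that the sets $\set_+(w)$, $\set_-(w)$ and $\set_{\ne}(w)$ transform under $\mu$ in the naive set-theoretic way.

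Next I would compute $w'=\sigma w\sigma^{-1}$ explicitly in the semidirect product. Writing $w=\tau\set$ and using the commutation relation $\eta\tau=\tau\cdot\tau^{-1}(\eta)$, a short manipulation yields $w'=\tau'\set'$ with $\tau'=\mu\tau\mu^{-1}$ and $\set'=\mu\bigl(\tau^{-1}(\eta)\oplus\set\oplus\eta\bigr)$, where $\oplus$ denotes symmetric difference (the group operation on $\Xi_n$).

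Part (3) is then immediate, since $\set_{\ne}(w)$ is the complement of the fixed-point set $F=\{i:\tau(i)=i\}$ of $\tau$ and the fixed-point set of $\tau'=\mu\tau\mu^{-1}$ is $\mu(F)$. For parts (1) and (2) the key observation is that for $i\in F$ one has $\tau^{-1}(i)=i$, so the subsets $\tau^{-1}(\eta)\cap F$ and $\eta\cap F$ coincide; consequently the two copies of $\eta$ in the formula for $\set'$ cancel upon intersection with $F'=\mu(F)$, leaving $\set'\cap F'=\mu(\set\cap F)=\mu(\set_+(w))$. This gives part (1), and taking the complement inside $F'$ yields part (2).

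The statement is essentially a formal exercise in the semidirect product structure, so I do not expect a serious obstacle; the only point that requires care is the cancellation of the two $\eta$-contributions after restriction to the fixed-point set of $\tau$, which is what makes the transformation laws depend only on $\mu$ and not on $\eta$.
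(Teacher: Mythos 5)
Your argument is correct and rests on exactly the mechanism underlying the paper's proof: the $S_n$-part $\mu$ of $\sigma$ acts by naive relabelling, while the $\Xi_n$-part $\eta$ only contributes $\tau^{-1}(\eta)\oplus\eta$ to $\set'$, a set disjoint from the fixed points of $\tau$, so it cannot affect $\set_+$ or $\set_-$. The only difference is presentational: you compute $\sigma w\sigma^{-1}$ in closed form for an arbitrary $\sigma=\mu\eta$, whereas the paper reduces to the two generator cases $\sigma\in S_n$ and $\sigma\in\Xi_n$ and verifies each separately; both routes are equally short and yield the same cancellation on the fixed-point set.
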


\begin{proof}
Let $w=\tau\set$ and $w'=\tau'\set'$ with $\tau,\,\tau'\in S_n$ and $\set,\,\set'\in \Xi_n$. It is enough to prove the lemma for $\sigma$ in a set of generators for $\weyl$, hence in the two cases: $\sigma\in S_n$ and $\sigma\in \Xi_n$.

Assume first that $\sigma\in S_n$. Then $\tau'=\sigma\tau\sigma^{-1}$ and therefore $\set_{\ne}(w')=\sigma(\set_{\ne}(w))=\sigma\set_{\ne}(w)\sigma^{-1}$ (hence the equality \eqref{part ne}) and $\set_+(w')\cup \set_-(w')=\sigma(\mathfrak{d})=\sigma \mathfrak{d}\sigma^{-1}$ where $\mathfrak{d}=\set_+(w)\cup \set_-(w)=\{i\mid\tau(i)=i\}$.
Since furthermore, $\set'=\sigma(\set)$ and $\set_+(w)=\set\cap \mathfrak{d}$ the equalities \eqref{part +} and \eqref{part -} also follow.

Assume now that $\sigma\in \Xi_n$. Then $\tau'=\tau$ and it follows immediately that $\set_{\ne}(w')=\set_{\ne}(w)=\sigma\set_{\ne}(w)\sigma^{-1}$ (hence the equality \eqref{part ne}). Furthermore, $\set'=\tau^{-1}(\sigma)\sigma \set$. Since the product in $\Xi_n$ is the symmetric difference it is clear that the set $\tau^{-1}(\sigma)\sigma$ contains no fixed points of $\tau$. Therefore, $\set_+(w')=\set_+(w)=\sigma\set_+(w)\sigma^{-1}$ which gives the equality \eqref{part +}. The equality \eqref{part -} follows from \eqref{part ne} and \eqref{part +}.
\end{proof}

\begin{lemma}\label{lem <}
Let $\sigma\in \weyl$ and $w,\,w'\in \weyl[2]$ be such that $w\overset{\sigma}\curvearrowright w'$. Then
\[
\sigma \set_<(w)\sigma^{-1}=\set_<(w').
\]
\end{lemma}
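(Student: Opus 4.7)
The plan is to induct on the length of the path realizing $w \overset{\sigma}{\curvearrowright} w'$, reducing immediately to the case of a single edge $w \xrightarrow{\alpha} w'$ in $\grph_\weyl$ for some simple root $\alpha\in\Delta_n$. Here $\sigma=s_\alpha$, $w'=s_\alpha w s_\alpha$, and by definition $w\alpha\ne\pm\alpha$. Write $w=\tau\set$ with $\tau\in S_n[2]$ and $\set\in\Xi_n$ satisfying $\tau(\set)=\set$. The conjugation in the conclusion is inside $\weyl$, so $\sigma\set_<(w)\sigma^{-1}$ equals the image $\sigma(\set_<(w))$ when $\sigma\in S_n$ and equals $\set_<(w)$ itself when $\sigma\in\Xi_n$ (since $\Xi_n$ is abelian). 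Composition along the path handles the passage from a single edge to arbitrary $\sigma$.

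For the edge $\alpha=\alpha_n=2e_n$, the reflection $s_{\alpha_n}=\sigma_n$ lies in $\Xi_n$, and a direct computation in the semidirect product (using $\sigma_n\tau=\tau\sigma_{\tau(n)}$) shows that the $S_n$-component of $w'$ coincides with $\tau$. Hence $\set_<(w')=\set_<(w)$, and conjugation by $\sigma_n\in\Xi_n$ on the element $\set_<(w)\in\Xi_n$ is trivial; the equality is automatic.

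For the edge $\alpha=\alpha_i=e_i-e_{i+1}$ with $i<n$, one has $s_{\alpha_i}=s_i=(i,i+1)\in S_n$, so $\tau'=s_i\tau s_i$. A short calculation of $w(\alpha_i)$ in terms of $\tau$ and the signs $\epsilon_j\in\{\pm1\}$ attached to $\set$ shows that the hypothesis $w\alpha_i\ne\pm\alpha_i$ excludes the sub-case in which $\tau$ swaps $i$ and $i+1$ (in that sub-case the invariance $\tau(\set)=\set$ forces $\epsilon_i=\epsilon_{i+1}$, whence $w\alpha_i=\pm\alpha_i$). This leaves: $\tau$ fixes both $i$ and $i+1$ (in which case $\tau'=\tau$, neither $i$ nor $i+1$ lies in $\set_<(w)=\set_<(w')$, and $s_i$ commutes with $\set_<(w)\in\Xi_n$); or at least one of $i,i+1$ is paired by $\tau$ with an index $j\notin\{i,i+1\}$ (with possibly a second partner $k\notin\{i,i+1\}$ for the other one). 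In this last situation I would observe that every index outside $\{i,i+1,j,k\}$ has identical membership in $\set_<(w)$, in $s_i(\set_<(w))$, and in $\set_<(w')$, and then verify the required bijection on the at most four indices in $\{i,i+1,j,k\}$ using only the inequalities that describe the positions of $j$ and $k$ relative to $i$ and $i+1$.

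The main obstacle is bookkeeping: the last sub-case splits further according to whether $j$ and $k$ are less than $i$ or greater than $i+1$, and a uniform formulation requires care. Nothing is deep — everything follows from the semidirect-product identities and the definition of $\set_<$ — but the exclusion of the ``bad'' sub-case (the $\tau$-swap of $i$ and $i+1$) via the edge condition $w\alpha\ne\pm\alpha$ is essential, since the equality $\sigma\set_<(w)\sigma^{-1}=\set_<(w')$ would genuinely fail for the arbitrary conjugation addressed in Lemma~\ref{lem w inv}.
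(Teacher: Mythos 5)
Your proposal is correct and follows essentially the same route as the paper's proof: reduce by induction to a single simple-reflection edge, handle $\alpha_n$ by the semidirect-product computation (since $\set_<$ depends only on the $S_n$-component, which is unchanged), and for $\alpha_i$ with $i<n$ use the edge condition $w\alpha_i\ne\pm\alpha_i$ together with $\tau(\set)=\set$ to exclude the case $\tau(i)=i+1$. The only difference is cosmetic: the paper sidesteps your four-index bookkeeping by citing the cardinality equality of Lemma \ref{lem w inv} and proving a single inclusion by contradiction (failure of the inclusion forces $\tau(i)=i+1$), whereas you verify the equality directly by cases — a check that does go through.
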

\begin{proof}
It follows from Lemma \ref{lem w inv} \eqref{part ne} that the two sets have the same cardinality and it is therefore enough to show that $\sigma \set_<(w)\sigma^{-1}\subseteq\set_<(w')$. By induction on the length of the path defined by $\sigma$ it is enough to prove the lemma in the case that $\sigma$ is a simple reflection.
Assume $\sigma=s_i=s_{\alpha_i}$ for $i<n$ and assume by contradiction that $s_i \set_<(w)s_i\not\subseteq\set_<(w')$. Writing $w=\tau\set$ with $\tau\in S_n$ and $\set\in \Xi_n$ we have $w'=\tau'\set'$ where $\tau'=s_i\tau s_i$ and $\set'=s_i(\set)$. Our assumption by contradiction (together with  Lemma \ref{lem w inv} \eqref{part ne}) means that there exists $j<\tau(j)$ such that $s_i(j)>\tau'(s_i(j))$. Since $\tau'(s_i(j))=s_i(\tau(j))$, this means that $j<\tau(j)$ but $s_i(j)>s_i(\tau(j))$. Hence $j=i$ and $\tau(j)=i+1$. Thus, $i\in \set$ if and only if $i+1\in \set$ and therefore
\[
w(e_i-e_{i+1})=\tau\set(e_i-e_{i+1})=\begin{cases} e_i-e_{i+1} & i\in\set \\ e_{i+1}-e_i & i\not\in\set\end{cases}
\]
which contradicts $w\xrightarrow{\alpha_i}w'$.

Assume now that $\sigma=\sigma_n=s_{\alpha_n}$ and recall that $\set_<(w)=\sigma_n \set_<(w)\sigma_n$. Writing $w=\tau\set$ with $\tau\in S_n$ and $\set\in \Xi_n$ we have $w'=\tau\set'$ where $\set'=\tau^{-1}(\sigma_n)\sigma_n\set$ and in particular $\set_<(w)=\set_<(w')$. The lemma follows.
\end{proof}

\subsection{The symmetric space}

To a connected reductive group $\mathbf{G}$ and an involution $\theta$ on $\mathbf{G}$ both defined over $F$ we associate the symmetric space
\[
\mathbf{X}=\mathbf{X}(\mathbf{G},\theta)=\{g\in \mathbf{G}:g\theta(g)=e\}
\]
with the $\mathbf{G}$-action by $\theta$-twisted conjugation
\[
(g,x)\mapsto g\cdot x=gx\theta(g)^{-1}, \ \ \ g\in \mathbf{G},\,x\in \mathbf{X}.
\]
For every $x\in \mathbf{X}$, let $\mathbf{G_x}=\Stab_{\mathbf{G}}(x)$ be its stabilizer, an algebraic group defined over $F$.
More generally, for any subgroup $\mbf{Q}$ of $\mbf{G}$ let $\mbf{Q_x}=\{g\in \mbf{Q} \mid g\cdot x=x\}$. 
\begin{example}\label{exm transitive}
We provide two well known examples where $X$ consists of a unique $G$-orbit.
\begin{enumerate}
\item\label{dxm alt}
Let $\mathbf{G}=\mathbf{GL_{2n}}$ and $\theta(g)=J_n {}^t g^{-1} J_n^{-1}$ (so that $\mathbf{G_e}=\mathbf{Sp_{2n}}$). Then 
\[\mathbf{X}=\{g\in \mathbf{G}: {}^t(gJ_n)=-gJ_n\}
\] 
and it is well known that $X=G\cdot e$.

\item\label{dxm gal} 
Let $\mathbf{G}=\mathbf{Res_{E/F}((GL_n)_E)}$ and $\theta(g)=\bar g$ (so that $G=\GL_n(E)$ and $G_e=\GL_n(F)$). Then by Hilbert 90 we have
$X=G\cdot e$.
\end{enumerate}
\end{example}

In this paper, our focus is mainly on the symmetric space $\mathbf{X}=\mathbf{X}(\mathbf{U_{2n}},\theta)$
where
\[
\theta(g)=\bar g.
\]
Recall that the stabilizer of the identity is $(\mathbf{U_{2n})_e}=\mbf{Sp_{2n}}$.

\begin{lemma}\label{lemma: cohom} For $\mathbf{X}=\mathbf{X}(\mathbf{U_{2n}},\theta)$  we have
\[
X=U_{2n}\cdot e.
\] 
\end{lemma}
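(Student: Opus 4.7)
The plan is a Galois-cohomological lifting argument: first lift an arbitrary $g\in X$ to an element of $\GL_{2n}(E)$ via Hilbert~90, then modify the lift to land in $U_{2n}$ using that any non-degenerate skew-symmetric bilinear form of dimension $2n$ over $F$ is equivalent to the standard one.

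Given $g\in X$, that is $g\in U_{2n}$ with $g\bar g=e$, the $\GL_{2n}(E)$ case of Example~\ref{exm transitive} (namely Hilbert~90 for $\mbf{Res_{E/F}((GL_{2n})_E)}$) furnishes $h\in \GL_{2n}(E)$ with $g=h\bar h^{-1}$. Since any two such lifts differ by right multiplication by an element of $\GL_{2n}(F)$, it suffices to find $k\in \GL_{2n}(F)$ with $hk\in U_{2n}$; one then has $(hk)\cdot e = (hk)\,\overline{hk}^{\,-1} = hk\,k^{-1}\bar h^{-1} = h\bar h^{-1} = g$, proving that $g\in U_{2n}\cdot e$.

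To produce such a $k$, set $S := {}^t\bar h\,J_n\,h \in \GL_{2n}(E)$. Substituting $g=h\bar h^{-1}$ into the unitary condition ${}^t\bar g\,J_n\,g=J_n$ yields, after a short manipulation, $S=\bar S$, so $S$ has entries in $F$; and ${}^tJ_n=-J_n$ immediately gives ${}^tS=-S$. Thus $S$ is a non-degenerate skew-symmetric form of rank $2n$ over $F$, hence by the classification of symplectic forms it is congruent to $J_n$, i.e.\ there exists $k\in \GL_{2n}(F)$ with ${}^tk\,S\,k=J_n$. Then ${}^t\overline{hk}\,J_n\,(hk) = {}^tk\,S\,k = J_n$, so $hk\in U_{2n}$, as needed.

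I do not foresee a genuine obstacle: every step is essentially forced once the Hilbert~90 lift is chosen, and the only external inputs are Hilbert~90 itself and the classification of symplectic forms over $F$. A slicker, more cohomological packaging would invoke $H^1(F,\mbf{Sp_{2n}})=1$ together with the general parameterization of $U_{2n}$-orbits on $X$ by a subset of $H^1(F,\mbf{Sp_{2n}})$, but the direct computation above seems better suited to the preliminaries section.
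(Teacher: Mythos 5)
Your proof is correct: the identity $S=\bar S$ does follow from ${}^t\bar g J_ng=J_n$ with $g=h\bar h^{-1}$, the matrix $S$ is indeed a non-degenerate alternating $F$-form, and the congruence ${}^tkSk=J_n$ with $k\in\GL_{2n}(F)$ gives $hk\in U_{2n}$ with $(hk)\cdot e=g$ as you compute. However, your route is genuinely different from the paper's. The paper argues at the level of the variety: over $E$ (hence over $\bar F$) the space $\mathbf X(\mathbf{U_{2n}},\theta)$ is identified with the symmetric space of Example \ref{exm transitive}\eqref{dxm alt}, so $\mathbf X=\mathbf G\cdot e$, and then the vanishing of $H^1(F,\mbf{\Sp_{2n}})$ (the stabilizer of $e$) is invoked to conclude $(\mathbf G/\mathbf G_e)(F)=G/G_e$, i.e.\ transitivity on $F$-points. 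You instead work entirely inside $\GL_{2n}(E)$: Hilbert 90 for the quadratic extension (Example \ref{exm transitive}\eqref{dxm gal} with $n$ replaced by $2n$) produces the lift $h$, and the uniqueness of the non-degenerate alternating form of rank $2n$ over $F$ replaces the cohomological vanishing. The two sets of inputs are cognate ($H^1(F,\mbf{\Sp_{2n}})=1$ is essentially the uniqueness of the symplectic form, and transitivity over $E$ in Example \ref{exm transitive}\eqref{dxm alt} is the same uniqueness over $E$), but your argument is more elementary and self-contained, avoiding the formalism of $F$-points of quotient varieties, while the paper's packaging is shorter and makes visible the general principle that transitivity over $\bar F$ plus trivial $H^1$ of the stabilizer yields transitivity over $F$. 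Your closing sentence about the cohomological repackaging is exactly the paper's proof. One cosmetic remark: the sentence ``since any two such lifts differ by right multiplication by $\GL_{2n}(F)$'' is not what makes the reduction work (and is not needed); the displayed computation $(hk)\cdot e=g$, valid for any $k\in\GL_{2n}(F)$, already does.
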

\begin{proof}
Note that $\mathbf{G}(E)\simeq \GL_{2n}(E)$ and with this identification, over $E$ (and in particular over an algebraic closure of $F$), $\mathbf{X}(E)$ is isomorphic to the symmetric space of Example \ref{exm transitive}\eqref{dxm alt} with $F$ replaced by $E$. Thus it follows that $\mathbf{X}=\mathbf{G}\cdot e$. Since $H^1(F,\mbf{\Sp_{2n}})$ is trivial we have $(\mathbf{G}/\mathbf{G_e})(F)=\mathbf{G}(F)/\mathbf{G_e}(F)$. The lemma follows.
\end{proof}
\begin{remark}
This subsection is valid for any field $F$.
\end{remark}

\subsection{Distinction and contragredients}\label{s_p_cont}

Next, we show that an irreducible representation of $U_{2n}$ is $\Sp_{2n}$-distinguished if and only if its contragredient is.
First we recall the following result of M{\oe}glin, Vign\'eras and Waldspurger.

Let $V=E^{2n}$ (column vectors) and $\GL_F(V)$ the group of $F$-linear automorphism of $V$. We identify $\GL_{2n}(E)$ as a subgroup of $\GL_F(V)$ via matrix multiplication. In particular,  $U_{2n}$ is identified with the subgroup of $\GL_F(V)$ preserving the anti-hermitian form 
\[
\sprod{v}{v'}={}^t\bar v J_n v',\ v,\,v'\in V.
\]
Let
\[
\delta\begin{pmatrix} v_1\\ v_2\end{pmatrix}=\begin{pmatrix} \bar v_1\\ -\bar v_2\end{pmatrix},\ v_1,\,v_2\in E^n.
\]
Then $\delta\in \GL_F(V)$ satisfies $\delta^2=e$ and
\[
\sprod{v'}{v}=\sprod{\delta(v)}{\delta(v')},\ v,\,v'\in V. 
\]
In particular, $\delta(a v)=\bar a \delta(v)$ for $a\in  E$ and $v\in V$.
Note that we have
\begin{equation}\label{eq eps}
\delta g\delta^{-1}=\epsilon \bar g\epsilon\in U_{2n}, \ \ \ g\in U_{2n}
\end{equation}
where $\epsilon=\diag(I_n,-I_n)$. In particular, for $\pi\in \Alg(U_{2n})$ the representation $\pi^\delta\in \Alg(U_{2n})$ is defined on the space of $\pi$ by $\pi^\delta(g)=\pi(\delta g\delta^{-1})$.
The following is the content of  \cite[\S 4, II.1]{MR1041060}.
\begin{theorem}[M{\oe}glin-Vign\'eras-Waldspurger]\label{thm mvw}
For $\pi\in \Irr(U_{2n})$ we have 
\[
\pi^\vee\simeq\pi^\delta.
\]
\qed
\end{theorem}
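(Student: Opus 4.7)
The plan is to deduce the isomorphism $\pi^\vee \simeq \pi^\delta$ from an identity of Harish-Chandra characters. Recall that an irreducible admissible representation of a $p$-adic reductive group is determined up to isomorphism by its distribution character on the regular semisimple locus. Since $\chi_{\pi^\delta}(g) = \chi_\pi(\delta g \delta^{-1})$ and $\chi_{\pi^\vee}(g) = \chi_\pi(g^{-1})$, the theorem reduces to the identity
\[
\chi_\pi(\delta g \delta^{-1}) = \chi_\pi(g^{-1}) \qquad \text{for all regular semisimple } g \in U_{2n}.
\]
As $\chi_\pi$ is a class function on $U_{2n}$, this in turn reduces to the purely geometric assertion that $\delta g \delta^{-1}$ and $g^{-1}$ lie in a common $U_{2n}$-conjugacy class for every regular semisimple $g \in U_{2n}$.

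The first step is to make $\delta g \delta^{-1}$ explicit. Starting from \eqref{eq eps} and using $J_n^{-1} = -J_n$ together with the unitarity relation $\bar g = -J_n ({}^t g)^{-1} J_n$ (a rearrangement of ${}^t \bar g J_n g = J_n$), a short computation yields
\[
\delta g \delta^{-1} = \epsilon \bar g \epsilon = w_{2n}\, ({}^t g)^{-1}\, w_{2n},
\]
where $w_{2n} := \epsilon J_n = \sm{0}{w_n}{w_n}{0}$ is the $2n\times 2n$ anti-identity matrix. Now in $\GL_{2n}(E)$ every element is conjugate to its transpose, so $w_{2n}({}^t g)^{-1} w_{2n}$ and $g^{-1}$ share the same characteristic polynomial and hence belong to a common $\GL_{2n}(E)$-orbit. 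The real content of the theorem is to upgrade this stable conjugacy statement to conjugacy inside the smaller group $U_{2n}$.

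This upgrade is the principal obstacle, and it is the heart of the analysis of M{\oe}glin--Vign\'eras--Waldspurger in \cite[\S 4, II.1]{MR1041060}. One natural route is torus by torus: let $\mbf{T}$ be the maximal $F$-torus of $\mbf{U_{2n}}$ containing $g$, parametrized by a conjugate-self-dual \'etale $F$-algebra of rank $2n$ with its natural Galois involution. After conjugating $\delta g \delta^{-1}$ within $U_{2n}$ if necessary (using that it has the same characteristic polynomial as $g$), one arranges that both $g^{-1}$ and $\delta g \delta^{-1}$ lie in $T$. Since they then coincide on characteristic polynomial, they are related by a class in the relative Weyl group, and the existence of an element of $N_{U_{2n}}(T)/T$ acting on $T$ as the inversion $t \mapsto t^{-1}$ is verified torus-by-torus by inspecting the structure of each such \'etale algebra and its norm-one subgroup. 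Granting this rational conjugacy statement, the character identity above follows and the theorem is established.
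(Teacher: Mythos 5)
First, note that the paper does not actually prove this statement: it is quoted verbatim from M{\oe}glin--Vign\'eras--Waldspurger \cite[\S 4, II.1]{MR1041060}, so the ``proof'' in the paper is the citation. Your outer reduction is sound in characteristic zero: since $\Theta_{\pi^\vee}(g)=\Theta_\pi(g^{-1})$ and $\Theta_{\pi^\delta}(g)=\Theta_\pi(\delta g\delta^{-1})$, linear independence of characters together with Harish-Chandra's regularity theorem (so that the character distribution is determined by its restriction to the regular semisimple set) reduces the theorem to the rational conjugacy statement that $\delta g\delta^{-1}$ and $g^{-1}$ are $U_{2n}$-conjugate for every regular semisimple $g$; and your computation $\delta g\delta^{-1}=w_{2n}\,{}^tg^{-1}w_{2n}$ is correct. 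That conjugacy statement is precisely the geometric heart of MVW's argument.

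The gap is in your sketch of that geometric statement, and it is not cosmetic. There is never an element of $N_{U_{2n}}(T)/T$ acting on a maximal torus $T$ as $t\mapsto t^{-1}$: the rational Weyl group embeds into the absolute Weyl group, which for $U_{2n}$ is the Weyl group of type $A_{2n-1}$ (that of $\GL_{2n}$), and $-1$ does not lie in it. The smallest case already shows this: for the elliptic torus $T\cong U(1)_{E/F}\times U(1)_{E/F}$ of $U_2$, the absolute Weyl group has order two and acts by swapping the factors, so $g=(t_1,t_2)$ is in general not $U_2$-conjugate to $g^{-1}=(\bar t_1,\bar t_2)$ by any Weyl element; what saves the theorem is that $\mathrm{Ad}(\delta)$, being conjugate-linear, itself induces Galois conjugation, i.e.\ inversion, on such $U(1)$-factors, so that $\delta g\delta^{-1}=g^{-1}$ on the nose there. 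In other words, the mechanism is the interplay between $\delta$-conjugation and inversion, not an inner realization of inversion. Your preceding step is also unjustified: equality of characteristic polynomials only gives stable conjugacy, so ``arranging that $\delta g\delta^{-1}$ lies in $T$'' after a $U_{2n}$-conjugation (equivalently, that $\delta T\delta^{-1}$ is rationally conjugate to $T$, and then that the relating absolute Weyl element is realized in $N_{U_{2n}(F)}(T)$) is essentially the assertion to be proved; the obstruction is measured by $\ker\bigl(H^1(F,T)\to H^1(F,U_{2n})\bigr)$, which is typically nontrivial. A correct treatment compares the hermitian-module invariants of $(V,g^{-1})$ and $(V,\delta g\delta^{-1})$ (Wall-type classification of conjugacy classes in unitary groups), which is exactly what MVW do; as written, your torus-by-torus step would fail.
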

\begin{corollary}\label{cor cont}
For $\pi\in \Irr(U_{2n})$ we have that $\pi$ is $\Sp_{2n}$-distinguished if and only if $\pi^\vee$ is $\Sp_{2n}$-distinguished.
\end{corollary}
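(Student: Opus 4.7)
The plan is to combine Theorem \ref{thm mvw} with a direct computation showing that conjugation by $\epsilon=\diag(I_n,-I_n)$ preserves $\Sp_{2n}(F)$. By Theorem \ref{thm mvw} we may replace $\pi^\vee$ by $\pi^\delta$, where $\pi^\delta(g)=\pi(\delta g\delta^{-1})$. By \eqref{eq eps} we have $\delta g\delta^{-1}=\epsilon\bar g\epsilon$ for all $g\in U_{2n}$, and for $h\in\Sp_{2n}(F)$ one has $\bar h=h$, so
\[
\pi^\delta(h)=\pi(\epsilon h\epsilon),\qquad h\in\Sp_{2n}(F).
\]

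First I would verify that $h\mapsto\epsilon h\epsilon$ is an automorphism of $\Sp_{2n}(F)$. Since $\epsilon^2=I_{2n}$ it suffices to check that conjugation by $\epsilon$ preserves the defining relation ${}^t g J_n g=J_n$. A direct block computation, using that $\epsilon$ is symmetric and that $\epsilon J_n\epsilon=-J_n$ (the off-diagonal $n\times n$ blocks of $J_n$ flip sign while the matrix is conjugated by a block-diagonal involution), gives
\[
{}^t(\epsilon g\epsilon)\,J_n\,(\epsilon g\epsilon)=\epsilon\,{}^t g\,(\epsilon J_n\epsilon)\,g\,\epsilon=-\epsilon\,({}^t g J_n g)\,\epsilon,
\]
which equals $-\epsilon J_n\epsilon=J_n$ whenever $g\in\Sp_{2n}(F)$. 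Hence conjugation by $\epsilon$ is a group automorphism of $\Sp_{2n}(F)$.

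Now fix a linear functional $\ell$ on the space of $\pi$. The computation
\[
\ell\bigl(\pi^\delta(h)v\bigr)=\ell\bigl(\pi(\epsilon h\epsilon)v\bigr)
\]
combined with the bijective change of variable $h'=\epsilon h\epsilon$ on $\Sp_{2n}(F)$ shows that $\ell\in\Hom_{\Sp_{2n}(F)}(\pi^\delta,\triv)$ if and only if $\ell\in\Hom_{\Sp_{2n}(F)}(\pi,\triv)$. Thus $\pi^\delta$ is $\Sp_{2n}(F)$-distinguished iff $\pi$ is, and by Theorem \ref{thm mvw} the same holds for $\pi^\vee$, proving the corollary.

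I do not foresee a real obstacle: the only substantive point is the matrix identity $\epsilon J_n\epsilon=-J_n$, after which everything reduces to an automorphism-of-the-subgroup argument combined with the MVW involution. The proof works uniformly for all $\pi\in\Irr(U_{2n})$ with no additional hypothesis.
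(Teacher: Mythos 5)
Your proof is correct and follows essentially the same route as the paper: invoke the MVW theorem to replace $\pi^\vee$ by $\pi^\delta$, use \eqref{eq eps} together with $\bar h=h$ for $h\in\Sp_{2n}(F)$, and observe that conjugation by $\epsilon$ preserves $\Sp_{2n}(F)$ (your explicit computation $\epsilon J_n\epsilon=-J_n$ is just an unpacking of the paper's remark that $\epsilon\in\operatorname{GSp}_{2n}$ normalizes $\Sp_{2n}$).
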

\begin{proof}
Clearly $\pi$ is $\Sp_{2n}$-distinguished if and only if $\pi^\delta$ is $\delta^{-1} \Sp_{2n}\delta$-distinguished. However, by \eqref{eq eps} we have $\delta^{-1} h\delta=\epsilon h\epsilon$ for $h\in \Sp_{2n}$ and since $\epsilon\in \operatorname{GSp_{2n}}$ it normalizes $\Sp_{2n}$. Therefore $\delta^{-1} \Sp_{2n}\delta=\Sp_{2n}$ and the corollary follows from Theorem \ref{thm mvw}.
\end{proof}

\subsection{A necessary condition for distinction}

For $\pi\in\Irr(U_{2n})$ there exists a unique $0\le m\le n$ and a unique $\sigma\in\Cusp(U_{2m})$ up to isomorphism such that $\pi$ is a quotient of $\tau\rtimes \sigma$ for some representation $\tau\in \Pi(\GL_{n-m}(E))$. The representation $\sigma$ is called the partial cuspidal support  of $\pi$ and denoted by $\pi_\cusp$.

We say that $\pi$ has trivial partial cuspidal support if $m=0$ (and then $\sigma=\triv_0$ is the trivial representation of the trivial group so that $\tau\rtimes \triv_0$ is induced from the Siegel parabolic subgroup of $U_{2n}$).

The following is one of the the main results of \cite{DP}. It is also immediate from the main result of \cite{MR3590280} and the double coset analysis in \S \ref{s_geo_lem}.

\begin{proposition}\label{prop dp}
Let $\pi\in \Irr(U_{2n})$ be $\Sp_{2n}$-distinguished. Then $\pi$ has trivial partial cuspidal support. \qed
\end{proposition}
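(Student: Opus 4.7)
The plan is to realize $\pi$ as a quotient of a parabolically induced representation and rule out distinction of that induced representation via a Mackey analysis on the symmetric space. Fix $0\le m\le n$ and $\sigma=\pi_\cusp\in\Cusp(U_{2m})$, so that $\pi$ is a quotient of $\tau\rtimes\sigma$ for some $\tau\in\Alg(\GL_{n-m}(E))$. Any non-zero $\Sp_{2n}(F)$-invariant linear form on $\pi$ pulls back along the surjection $\tau\rtimes\sigma\twoheadrightarrow\pi$ to one on $\tau\rtimes\sigma$, so it suffices to show that $\tau\rtimes\sigma$ is not $\Sp_{2n}(F)$-distinguished whenever $m>0$.

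Next, I would invoke the geometric lemma for parabolic induction restricted to a symmetric subgroup, in the form developed in \cite{MR3541705} and to be made explicit in \S \ref{s_geo_lem}. By Lemma \ref{lemma: cohom} we have $X=U_{2n}\cdot e$, so $\Sp_{2n}(F)$-orbits on $U_{2n}/Q_\alpha$ (with $\alpha=(n-m;m)$) are in bijection with $Q_\alpha$-orbits on $X$ under twisted conjugation. The restriction of $\Ind_{Q_\alpha}^{U_{2n}}(\tau\otimes\sigma)$ to $\Sp_{2n}(F)$ then carries a finite filtration whose subquotients are indexed by these orbits, and the $\Sp_{2n}(F)$-invariant functionals on each subquotient correspond, via Frobenius reciprocity \eqref{eq: frob rec}, to $(L_\alpha)_x$-equivariant functionals on $\tau\otimes\sigma$ twisted by an explicit modulus character.

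The heart of the argument is an orbit-by-orbit stabilizer calculation. For each $Q_\alpha$-orbit $Q_\alpha\cdot x$ on $X$, one computes the $\GL_{n-m}(E)$-projection and the $U_{2m}$-projection of the stabilizer $(L_\alpha)_x\subseteq \GL_{n-m}(E)\times U_{2m}$. Cuspidality of $\sigma$ kills every orbit whose $U_{2m}$-projection is contained in a proper parabolic of $U_{2m}$, because any $(L_\alpha)_x$-equivariant functional on $\tau\otimes\sigma$ factoring through such a projection would amount to a non-trivial Jacquet quotient of $\sigma$. One then checks that on every surviving orbit the $U_{2m}$-projection of $(L_\alpha)_x$ is (conjugate to) $\Sp_{2m}(F)$, so that a non-zero functional would force $\sigma$ itself to be $\Sp_{2m}(F)$-distinguished. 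The combinatorial classification of the orbits is naturally expressed via signed permutations and should fit the framework set up in \S \ref{sec weyl}; this explicit orbit/stabilizer analysis is where the technical work lies and is the main obstacle.

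To conclude, one invokes the cuspidal non-distinction theorem: by the main result of \cite{DP} (and independently of \cite{MR3590280}), no element of $\Cusp(U_{2m})$ with $m>0$ is $\Sp_{2m}(F)$-distinguished. Hence every orbit contributes zero to $\Hom_{\Sp_{2n}(F)}(\tau\rtimes\sigma,1)$, so $\tau\rtimes\sigma$ is not $\Sp_{2n}(F)$-distinguished, contradicting the assumption on $\pi$ unless $m=0$. In effect, the whole proof hinges on (i) having the symmetric-space geometric lemma in a sufficiently explicit form to read off the stabilizers, and (ii) the cuspidal base case, which is taken as input.
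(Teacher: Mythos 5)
Your proposal follows precisely the route the paper itself indicates: the paper cites this as a main result of \cite{DP} and remarks that it is also immediate from the cuspidal vanishing theorem of \cite{MR3590280} combined with the double coset analysis of \S\ref{s_geo_lem}, which is exactly your argument — pull the invariant form back to $\tau\rtimes\pi_\cusp$, use cuspidality to discard every orbit whose associated Levi meets the $U_{2m}$-block in a proper Levi, and note that on the remaining orbits the stabilizer contains the factor $\Sp_{2m}(F)$ acting on the $\sigma$-component (as in \S\ref{sss stabx}), forcing $\pi_\cusp$ to be $\Sp_{2m}(F)$-distinguished, contradicting the cuspidal case unless $m=0$. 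So the proposal is correct and takes essentially the same approach as the paper.
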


\section{The geometrical lemma for the symmetric space}\label{s_geo_lem}
Throughout this section let $G_n=U_{2n}$ and $H_n=\Sp_{2n}$ for $n\in \N$. Fix $n$ and let $G=G_n$ and $H=H_n$.  
Let
\[
X=\{g\in G \mid g\bar g=e\}
\]
be considered as a $G$-space with action given by twisted conjugation $g\cdot x=gx\bar g^{-1}$, $g\in G$ and $x\in X$. Recall that by Lemma \ref{lemma: cohom} we have that $X=G\cdot e$. The map $g\mapsto g\bar g^{-1}=g\cdot e$ defines a homeomorphism $G/H\simeq X$.

Let $P=M\ltimes U$ be a standard parabolic subgroup of $G$ with its standard Levi decomposition. 
We write $P=Q_{(n_1,\dots,n_k;m)}$ (so that $M=L_{(n_1,\dots,n_k;m)}$ and $U=V_{(n_1,\dots,n_k;m)}$).

We have a bijection $P\bs G/H\simeq P\bs X$. 
This set of orbits gives rise to a filtration of the restriction to $H$ of a representation of $G$ parabolically induced from $P$. 

\subsection{The filtration}

Since $P$ is stable under Galois conjugation we have $P\cdot x\subseteq PxP$, $x\in X$. Let 
\[
\iota_P:P\bs X\rightarrow W[2]\cap {}_MW_M
\] 
be the map defined by the Bruhat decomposition. That is, $\iota_P(P\cdot x)=w$ where $PxP=PwP$.
Then $L=M\cap wMw^{-1}$ is a standard Levi subgroup of $M$. Let $Q$ be the standard parabolic subgroup of $G$ with Levi subgroup $L$. Then $Q$ (resp. $L$) is the parabolic (resp. Levi) subgroup of $G$ associated to the $P$-orbit $P\cdot x$. 

Let $\sigma\in \Pi(M)$.
The restriction $\pi|_H$ to $H$ of the induced representation $\pi=\ip_{G,M}(\sigma)$ admits a natural filtration parameterized by $P\bs G/H$.

The decomposition factor associated to a double coset $P\eta H$ is isomorphic to
\[
I_\eta=\ind_{H_\eta^P}^H(\delta_{H_\eta^P}^{-1/2}\cdot(\delta_P^{1/2}\sigma)^{\eta})
\]
where $H_\eta^P=H\cap \eta^{-1}P\eta$. Let $x=\eta\bar\eta^{-1}\in X$. 
\begin{definition}\label{def co}
We say that $P\cdot x$ (or $w$) is \emph{relevant} to $\sigma$ if $I_\eta$ is $H$-distinguished.
\end{definition}
We say that $x$ is a \emph{good} representative for its $P$-orbit if $x\in N_G(L)$ (in this case $xL=wL$ and $L=M\cap xMx^{-1}$).
A good representative exists for every $P$-orbit (see \cite[Lemma 3.2]{MR3541705}).

Assume that $x$ is a good representative of its $P$-orbit.
By \cite[Proposition 4.1]{MR3541705} we have that $I_\eta$ is $H$-distinguished if and only if $r_{L,M}(\sigma)$ is $(L_x,\delta_{Q_x}\delta_Q^{-1/2})$-distinguished.

By \cite[Theorem 4.2]{MR3541705} we have
\begin{theorem}\label{thm geoml}
With the above notation, if $\pi$ is $H$-distinguished then there exists a $P$-orbit $\mathcal{O}$ in $X$ that is relevant to $\sigma$. 
In particular, $r_{L,M}(\sigma)$ is $(L_x,\delta_{Q_x}\delta_Q^{-1/2})$-distinguished for a good representative $x\in \mathcal{O}$.
\qed
\end{theorem}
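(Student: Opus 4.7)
The plan is to exploit a Mackey-style filtration of $\pi|_H$ afforded by the finitely many $P$-$H$ double cosets in $G$, then extract a distinguished layer and translate it into a Jacquet-module statement. Since $gH \mapsto g\bar g^{-1}$ intertwines the right $P$-action on $G/H$ with twisted conjugation on $X$, the map $\iota_P$ presents $P\bs G/H\simeq P\bs X$ as a finite set indexed by involutions in the Weyl group. First I would order the orbits $\mathcal O_1,\dots,\mathcal O_r$ so that each $Z_i := \mathcal O_1\cup\cdots\cup \mathcal O_i$ is open in $X$; this is possible because the image of $\iota_P$ carries a Bruhat-type closure order.

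Next, realizing $\pi$ as smooth sections of the $G$-equivariant line bundle on $P\bs G$ associated to $\delta_P^{1/2}\sigma$, pullback along $G/H\xrightarrow{\sim} X$ identifies $\pi|_H$ with smooth $H$-equivariant sections on $P\bs X$. Restriction to the open strata $Z_i$ gives an increasing filtration $V_1\subseteq\cdots\subseteq V_r = \pi|_H$ of $H$-subrepresentations with $V_i/V_{i-1}$ supported on the single orbit $\mathcal O_i$. Applying Mackey's theorem to $\mathcal O_i$ with representative $\eta_i$ (so $x_i = \eta_i\bar\eta_i^{-1}$), the successive quotient is identified with
\[
I_{\eta_i} = \ind_{H^P_{\eta_i}}^H\bigl(\delta_{H^P_{\eta_i}}^{-1/2}\cdot(\delta_P^{1/2}\sigma)^{\eta_i}\bigr),
\]
the $\delta_{H^P_{\eta_i}}^{-1/2}$ factor coming from the passage between normalized induction and compactly supported sections on an orbit.

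Now suppose $\ell$ is a nonzero $H$-invariant form on $\pi$. I would select the smallest $i$ with $\ell|_{V_i}\ne 0$. Then $\ell$ vanishes on $V_{i-1}$ and descends to a nonzero $H$-invariant form on $V_i/V_{i-1}\simeq I_{\eta_i}$, so $\mathcal O_i$ is relevant to $\sigma$ in the sense of Definition \ref{def co}. This establishes the first assertion.

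For the second assertion I would fix a good representative $x\in \mathcal O_i$, so that $L = M\cap xMx^{-1}$ is a standard Levi with associated standard parabolic $Q = L\ltimes V$. Frobenius reciprocity converts $H$-distinction of $I_\eta = \ind_{H^P_\eta}^H(\cdots)$ into the existence of an $H^P_\eta$-equivariant form on $(\delta_P^{1/2}\sigma)^\eta$ with a specific modular twist. The key structural input is that the good representative $x$ forces $H^P_\eta$ to project onto $L_x$ with unipotent kernel contained in $U\cap xUx^{-1}$; on this unipotent kernel the functional must factor through the coinvariants, i.e.\ through the Jacquet module $r_{L,M}(\sigma)$. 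The hardest step — and the main technical obstacle — is the bookkeeping of the four modular characters $\delta_P$, $\delta_Q$, $\delta_{H_\eta^P}$, $\delta_{Q_x}$ through these successive adjunctions, verifying that they combine into exactly $\delta_{Q_x}\delta_Q^{-1/2}$. This is a delicate but standard normalization computation in the symmetric-space Mackey theory, and once carried out it completes the proof.
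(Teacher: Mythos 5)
Your first assertion is argued correctly and in essentially the standard way: the finite filtration of $\pi|_H$ by $P$-orbits in $X$ with subquotients $I_{\eta_i}$, together with choosing the minimal $i$ for which the invariant form is non-zero on $V_i$, is exactly the mechanism behind the result (which the paper itself delegates to \cite[Theorem 5.2]{MR0579172} as adapted in \cite{MR3541705}). The problem is in your treatment of the second assertion.

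The passage from ``$I_\eta$ is $H$-distinguished'' to ``$r_{L,M}(\sigma)$ is $(L_x,\delta_{Q_x}\delta_Q^{-1/2})$-distinguished'' is precisely \cite[Proposition 4.1]{MR3541705}, and your sketch of it contains a genuine gap. After the duality for compact induction and conjugation by $\eta$, what you obtain is a functional on the space of $\sigma$ that is equivariant under $P_x$ (with an explicit positive character). But the Jacquet module $r_{L,M}(\sigma)$ is the space of coinvariants of $\sigma$ with respect to $U(w)=M\cap wVw^{-1}$, a unipotent subgroup of $M$, not of $U$; invariance under the kernel of $H^P_\eta\to L_x$ that you place inside $U\cap xUx^{-1}$ is vacuous, since $\sigma$ is trivial on $U$ to begin with. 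Moreover, the stabilizer gives you no invariance under $U(w)$ at all: for a good representative one has $\theta_x(U(w))=xMx^{-1}\cap V\subseteq V$, so $U(w)\cap\theta_x(U(w))\subseteq M\cap V=\{e\}$ and hence $U(w)_x$ is trivial. Consequently the functional on $\sigma$ does not ``automatically factor through the coinvariants''; showing that an $(L_x,\chi)$-equivariant (more precisely $P_x$-equivariant) functional on $\sigma$ descends to a functional on $\sigma_{U(w)}$ with the character $\delta_{Q_x}\delta_Q^{-1/2}$ is the actual content of the cited proposition, and it requires a Casselman-type descent argument (in the spirit of the canonical lift of invariant functionals to Jacquet modules for $\theta$-split parabolics, as in the work of Lagier and Kato--Takano used in \cite{MR3541705}), not merely the bookkeeping of the four modulus characters, which is where you locate the main difficulty. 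As written, your argument proves only that some subgroup of the stabilizer acts trivially, which falls well short of distinction of $r_{L,M}(\sigma)$; either invoke \cite[Proposition 4.1]{MR3541705} as the paper does, or supply the descent argument.
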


In what follows, we make the condition that a $P$-orbit is relevant more explicit.
With the above notation, for $x\in X$ a good representative of its $P$-orbit set 
\[
\delta_x=\delta_{Q_x}\delta_Q^{-1/2}|_{L_x}.
\]

Let 
\[
[M]=\{L_{(n_{\sigma(1)},\dots,n_{\sigma(k)};m)}: \sigma\in S_k\}
\]
be the set of standard Levi subgroups of $G$ that are conjugate to $M$.
For $M'\in[M]$ let $W(M,M')$ be the set of $w\in W$ such that $w$ is of minimal length in $wW^M$ and $wMw^{-1}=M'$.
Let 
\[
W(M)=\sqcup_{M'\in[M]} W(M,M')
\]
and note that for $w\in W(M)$ and $w'\in W(wMw^{-1})$ we have that $w'w\in W(M)$.

Although $W(M)$ is not a group it is in a natural bijection with the signed permutation group $\weyl_k$. 
The set $\Delta_M$ of simple roots with respect to $M$ is in natural bijection with the set $\Delta_k$ of simple roots associated with $\weyl_k$ in \S \ref{sec weyl}. This identifies the set of elementary symmetries in $W(M)$ (in the sense of \cite[\S I.1.7]{MR1361168}) and the set of simple reflections in $\weyl_k$ and defines a unique bijection $\jmath_M:W(M)\rightarrow \weyl_k$ that satisfies
\[
\jmath_M(w'w)=\jmath_{wMw^{-1}}(w')\jmath_M(w),\ \ \ w\in W(M),\ w'\in W(wMw^{-1}).
\]
Explicitly, if $w\in W(M)$ and $\jmath_M(w)=\tau\set$ then there exists an element $t_w\in N_G(T)$, unique up to multiplication by an element of the center of $M$, representing the Weyl element $w$ and satisfying 
\[
t_w \inj(g_1,\dots,g_k,h) t_w^{-1}=\inj(g_1',\dots,g_k';h),\ \ \ g_i\in \GL_{n_i}(E),\ i=1,\dots,k,\ h\in G_m
\]
where
\[
g_i'=\begin{cases} g_{\tau^{-1}(i)}^* & i\in \set \\ g_{\tau^{-1}(i)} & i\not\in\set. \end{cases}
\]

Let 
\[
\NN_M=\{g\in G: gMg^{-1}\in [M]\}=\sqcup_{w\in W(M)} t_w M. 
\]
Then $\kappa_M:\NN_M/M \rightarrow W(M)$ given by $\kappa_M(t_w M)=w$ is a bijection satisfying 
\[
\kappa_{wMw^{-1}}(g')\kappa_M(g)=\kappa_M(g'g),\,g\in \NN_M,\,g'\in \NN_{wMw^{-1}}. 
\]
Let $\proj_M:\NN_M\rightarrow \weyl_k$ be the composition $\proj_M=\jmath_M\circ \kappa_M$.

Similar maps can be defined with $G$ replaced by $H$ and in particular, each $t_w$ can be chosen in $H$, that is, we may further assume that $\bar t_w=t_w$. Since also $\overline{M}=M$ it follows that
\[
\bar g^{-1}g\in M,\ g\in \NN_M.
\]
Note that restricted to $N_G(M)$, $\kappa_M$ defines a group isomorphism $N_G(M)/M \cong W(M,M)$.
Furthermore, for $w\in W(M)$ such that $\jmath_M(w)=\tau\set\in \weyl_k$ we have $w\in W(M,M)$ if and only if $n_{\tau(i)}=n_i$, $i=1,\dots,k$. 

\begin{remark}\label{rmk grph}
Let $x\in N_G(M)\cap X$ and $\rho=\proj_M(x)\in \weyl_k[2]$. If $\sigma\in\weyl_k$, $\rho'\in \weyl_k[2]$ and $g\in \NN_M$ are such that $\rho \overset{\sigma}\curvearrowright \rho'$ and $\proj_M(g)=\sigma$ then $(M,x) \overset{g}\curvearrowright (M',x')$ is a path in the graph defined in \cite[\S 6]{MR3541705} where $M'=gMg^{-1}=gM\bar g^{-1}$ and $x'=gx\bar g^{-1}$.
\end{remark}

\subsection{Minimal involutions}

In light of Lemma \ref{lem springer} the key for the study of $M$-orbits in $N_G(M)\cap X$ is to first understand the orbits that project under $\proj_M$ to minimal involutions.

\begin{definition}
An element $w\in W(M,M)$ is called $M$-minimal if $w=w_M^L$ for some standard Levi subgroup $L\supseteq M$ and $w\alpha=-\alpha$ for $\alpha\in\Delta_M^L$.         
\end{definition}
Note that $w\in W(M,M)$ is $M$-minimal if and only if $\jmath_M(w)\in \weyl_k$ is a minimal involution.

Let $w\in W(M,M)$ be $M$-minimal.
Let $\ell\in \{0,1,\dots,k\}$ and $\rho\in S_\ell$ a minimal involution so that $\jmath_M(w)=\rho\set_{\ell,k}$ (see \eqref{eq min inv} and Example \ref{ex min}). Let 
\[
S=\set_-(\jmath_M(w))\ \ \  {\rm and} \ \ \ R=\set_<(\jmath_M(w)).
\]

\begin{lemma}\label{lem Mmin}
With the above notation, $wM\cap X$ is not empty if and only if $n_i$ is even for all $\ell+1\le i\le k$ and in this case $wM\cap X$ is a unique $M$-orbit.
\end{lemma}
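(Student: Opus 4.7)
The plan is to reduce the description of $wM\cap X$ modulo the $M$-action to a collection of independent block-wise problems indexed by the orbits of $\jmath_M(w)$ on $\{1,\dots,k\}$ together with the $U_{2m}$-block. First I would fix a good representative $t_w\in U_{2n}$ with $\bar t_w = t_w$, expressed as a commuting product $t_w = \prod_{j\in R} t_{s_j}\cdot \prod_{i\in\set_{\ell,k}} t_{\sigma_i}$ in which each factor represents a simple component of $\jmath_M(w)$ and is supported on the corresponding block(s) of $M$. A short calculation patterned on the Siegel-Levi case in $U_{2n_i}$, where the condition $\bar t_{\sigma_i} = t_{\sigma_i}$ is precisely what forces a sign, gives $t_w^2 = \inj(c_1,\dots,c_k,I_{2m})$ with $c_i = -I_{n_i}$ for $i\in\set_{\ell,k}$ and $c_i = I_{n_i}$ otherwise.

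Writing $m = \inj(g_1,\dots,g_k,h)$ and using the identity $x\bar x = t_w^2\cdot {}^w\!m\cdot \bar m$, valid because $w^2 = e$, the equation $x\bar x = e$ decomposes into the independent system: $g_i\bar g_i = I_{n_i}$ for $i\in S$; $g_i^*\bar g_i = -I_{n_i}$ for $i\in\set_{\ell,k}$; $g_{i+1} = \bar g_i^{-1}$ for each pair $(i,i+1)$ with $i\in R$; and $h\bar h = e$ in the $U_{2m}$-block. The $M$-action $m\mapsto {}^w\!m'\cdot m\cdot \bar{m'}^{-1}$ preserves this block decomposition, so existence and uniqueness of $M$-orbits reduce to the individual blocks. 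The blocks in $S$ contribute a single orbit by Example~\ref{exm transitive}\eqref{dxm gal}; the $U_{2m}$-block does so by Lemma~\ref{lemma: cohom}; and the pairs in $R$ are freely parameterized by $g_i\in\GL_{n_i}(E)$ with a transitive induced action (one may take $u_i = e$ and $u_{i+1} = g_i'g_i^{-1}$).

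The heart of the argument, and the step I expect to be the main obstacle, is the analysis of the singleton blocks $i\in\set_{\ell,k}$. I plan to substitute $B_i = w_{n_i}\bar g_i$, which is a bijection of $\GL_{n_i}(E)$ with itself, and observe that the condition $g_i^*\bar g_i = -I_{n_i}$ is equivalent to $B_i^t = -B_i$, i.e., to $B_i$ being a non-degenerate skew-symmetric $n_i\times n_i$ matrix over $E$. Such $B_i$ exists if and only if $n_i$ is even, which isolates the parity condition. When $n_i$ is even, the $M$-action $g_i\mapsto u_i^* g_i\bar u_i^{-1}$ translates to the congruence action $B_i\mapsto u_i^{-t}B_iu_i^{-1}$, which is transitive on non-degenerate skew-symmetric forms by their standard classification, giving a single orbit. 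Assembling the block-wise conclusions proves the lemma. The delicate points in this step are the identification of the sign $-I_{n_i}$, which depends on the choice $\bar t_w = t_w$, and the verification that the induced $M$-action on skew-symmetric matrices is the full congruence action.
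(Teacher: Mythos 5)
Your proposal is correct and follows essentially the same route as the paper: fix an explicit Galois-fixed representative $t_w$ supported blockwise, reduce $x\bar x=e$ to the blockwise system $g_i\bar g_i=I$ ($i\in S$), $g_{i+1}=\bar g_i^{-1}$ ($i\in R$), $g_i^*\bar g_i=-I$ ($i\in\set_{\ell,k}$), $h\bar h=e$, and conclude transitivity block by block via Hilbert 90, the free $R$-pairs, Lemma \ref{lemma: cohom}, and the transitivity of $\GL_{n_i}(E)$ on non-degenerate alternating forms (your substitution $B_i=w_{n_i}\bar g_i$ is exactly the paper's observation that $g_iw_{n_i}$ must be alternating, hence $n_i$ even, with the paper citing Example \ref{exm transitive}\eqref{dxm alt} where you invoke the classification of symplectic forms directly). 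The only caution is your parenthetical that reality of $t_{\sigma_i}$ "forces the sign": it does not in general (a Galois-fixed representative can square to $+I$ on an even block), but since you compute with one explicit choice, as the paper does, this does not affect the argument.
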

\begin{proof}
Assume that $x\in wM\cap X$. We can choose
\[
t_w=\inj(t_1,\dots,t_{r+s};t)\in w
\]
where 
\begin{equation}\label{eq ti}
t_j=\begin{cases} I_{n_{i(j)}} & i(j)\in S \\  \sm{0}{I_{n_{i(j)}}}{I_{n_{i(j)}}}{0} & i(j)\in R \end{cases}, \ \ \ i(1)=1, \ \ \ i(j+1)=\begin{cases} i(j)+1 & i(j)\in S\\ i(j)+2 & i(j)\in R\end{cases}
\end{equation}
for $j=1,\dots,r+s$ (note that $i(j)\in R\sqcup S$ for all $j$) and
\[
t=
\begin{pmatrix}
 & & & & & & I_{n_{\ell+1}} \\ & & & & & \iddots & \\ & & & & I_{n_k} & &  \\ & & & I_{2m} & & & \\ & & -I_{n_k} & & & & \\ & \iddots & & & & &
 \\ -I_{n_{\ell+1}} & & & & & &
\end{pmatrix}.
\]
Then, by definition, $x\in t_w M\cap X$. Let $x=t_w \inj(g_1,\dots,g_k;h)$ with $g_i\in \GL_{n_i}(E)$ and $h\in G_m$.
The condition $x\in X$ is equivalent to
\begin{equation}\label{eq min orb cond}
\begin{cases} 
g_i \bar g_i=I_{n_i} & i\in S \\
g_i \bar g_{i+1}=I_{n_i} & i\in R \\
g_i \bar g_i^*=-I_{n_i} & \ell+1\le i\le k \\
h \bar h=I_{2m}.\end{cases}
\end{equation}
In particular, for $\ell+1\le i\le k$ the condition is that $g_i w_{n_i}$ is an alternating matrix and therefore $n_i$ is even.
For $d_i\in \GL_{n_i}(E)$, $i=1,\dots,k$ and $d'\in G_m$ let $d=\inj(d_1,\dots,d_k;d')\in M$. Then
\[
dx\bar d^{-1}=t_w \inj(g_1',\dots,g_k';h')
\]
where
\[
\begin{cases} 
g_i'=d_i g_i\bar d_i^{-1} & i\in S \\
g_i'= d_{i+1} g_i\bar d_i^{-1},\ g_{i+1}'= d_i g_{i+1}\bar d_{i+1}^{-1}=\bar{g_i'}^{-1}& i\in R \\
g_i'=d_i^* g_i \bar d_i^{-1} & \ell+1\le i\le k \\
h'=d' h\bar{d'}^{-1}.\end{cases}
\]
It follows from Example \ref{exm transitive} and Lemma \ref{lemma: cohom} that $t_wM\cap X$ is a unique $M$-orbit.

On the other hand, assuming that $n_i$ is even whenever $\ell+1\le i\le k$, with the above choice of $t_w$ we have
\begin{equation}\label{eq min rep}
x_w=t_w\inj(I_{n_1+\cdots+n_\ell},\epsilon_{n_{\ell+1}},\dots,\epsilon_{n_k};I_{2m})\in wM\cap X
\end{equation}
where $\epsilon_{2n}=\sm{-I_n}{0}{0}{I_n}$.
\end{proof}

Next we compute the stabilizer and relevant modulus function for a minimal orbit.
\subsubsection{}\label{sss min stab}

Let $x\in N_G(M)\cap X$ be $M$-minimal. Applying the notation of Lemma \ref{lem Mmin} and its proof we write $x=t_w  \inj(g_1,\dots,g_k;h)$ so that \eqref{eq min orb cond} holds. Following the proof of the lemma we see that $M_x$ consists of elements of the form $d=\inj(d_1,\dots,d_k;d')$ where
\[
\begin{array}{lll} 
d_i\in \GL_{n_i}(E) & g_i=d_i g_i\bar d_i^{-1} & i\in S \\
d_i\in \GL_{n_i}(E) & d_{i+1}= g_i\bar d_i g_i^{-1} & i\in R \\
d_i\in \GL_{n_i}(E) & g_i=d_i^* g_i \bar d_i^{-1} & \ell+1\le i\le k \\
d'\in G_m & h=d' h\bar{d'}^{-1} & 
\end{array}
\]
and in particular
\[
M_x \cong \left[ \prod_{i\in S}\GL_{n_i}(F)\right]\times \left[ \prod_{i\in R}\GL_{n_i}(E)\right]\times\left[ \prod_{i=\ell+1}^k \Sp_{n_i}(E)\right] \times H_m.
\]

With the above notation, for $x=x_w$ (see \eqref{eq min rep}) it is easy to see that $M_x$ consists of elements of the form $\inj(g_1,\dots,g_k;h)$ where 
\begin{itemize}
\item $g_i\in \GL_{n_i}(F)$, $i\in S$;
\item $g_{i+1}=\bar g_i\in \GL_{n_i}(E)$, $i\in R$;
\item $g_i\in \Sp_{n_i}(E)$, $\ell+1\le i\le k$; 
\item $h\in H_m$.  
\end{itemize}

\subsubsection{}\label{sss min mod}
We explicate the modulus functions $\delta_x$ associated to $M$-minimal elements when $M$ is contained in the Siegel Levi subgroup. We freely use the notation of \S \ref{sss min stab}.
\begin{lemma}\label{lem modulus comp}
Let $x\in N_G(M)\cap X$ be $M$-minimal and assume that $M$ is contained in the Siegel Levi subgroup of $G$ (i.e., that with the above notation $m=0$). Then,  
\[
\delta_x(\inj(d_1,\dots,d_k))=\prod_{i\in S} \abs{\det d_i}_F \cdot \prod_{i\in R} \abs{\det d_i}_E, \ \ \ \inj(d_1,\dots,d_k)\in M_x.
\]
\end{lemma}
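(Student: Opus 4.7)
The plan is to exploit the $M$-minimality of $w$: since $w\in W(M,M)$ is $M$-minimal we have $wMw^{-1}=M$, hence $L=M\cap wMw^{-1}=M$ and $Q=P$, so that
\[
\delta_x=\delta_{P_x}\delta_P^{-1/2}\big|_{M_x}.
\]
The modulus $\delta_P$ is given by the standard formula
\[
\delta_P(\inj(d_1,\ldots,d_k))=\prod_{i=1}^k\abs{\det d_i}_E^{n+\sum_{j>i}n_j-\sum_{j<i}n_j},
\]
and I will compute $\delta_{P_x}(m)=\abs{\det_F\mathrm{Ad}(m)|_{\mathfrak u_{x_w}}}_F$, where $\mathfrak u_{x_w}$ is the $+1$-eigenspace in $\mathfrak u$ of the involution $d\theta_{x_w}\colon X\mapsto\mathrm{Ad}(x_w)(\bar X)$. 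Since $\Sp_{n_i}(E)$ is its own derived subgroup, any character of $M_x$, and in particular $\delta_x$, is trivial on the symplectic factors indexed by $i>\ell$; this reduces the problem to the $S$- and $R$-components of $M_x$.

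The next step is a block decomposition. Write $\mathfrak u=\mathfrak u_1\oplus\mathfrak u_2$, where $\mathfrak u_1$ is the Lie algebra of the Siegel unipotent and $\mathfrak u_2$ is the upper-triangular-within-Siegel-Levi piece, and decompose further into $M$-equivariant sub-blocks indexed by pairs of Levi labels. The involution $d\theta_{x_w}$ permutes these sub-blocks by the underlying permutation $\sigma=\jmath_M(w)$, combined with Galois conjugation on entries and sign changes coming from the factors $\epsilon_{n_i}$ for $i>\ell$. For an orbit $\{(i,j),(i',j')\}$ of $d\theta_{x_w}$ with $(i,j)\ne(i',j')$, the $+1$-eigenspace is the diagonal $\{(X,d\theta_{x_w}X)\}$, isomorphic as an $M_x$-module to a single block, and since $d\theta_{x_w}$ is $M_x$-equivariant the two blocks contribute equal $F$-determinants to $\delta_P$. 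Hence their joint contribution to $\delta_{P_x}\delta_P^{-1/2}|_{M_x}$ cancels, and $\delta_x$ receives contributions only from $d\theta_{x_w}$-fixed sub-blocks.

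What remains is an atomic calculation at each fixed sub-block. For $i\in S$, the relevant fixed block is $B_{ii}$ inside the Siegel unipotent; setting $C=B_{ii}w_{n_i}$, the hermitian-like condition $w_nBw_n=\bar{{}^tB}$ together with the Galois-fixed condition reduces the $+1$-eigenspace to symmetric $n_i\times n_i$ matrices over $F$, on which $d_i\in\GL_{n_i}(F)$ acts by congruence $C\mapsto d_iC\,{}^td_i$ with $F$-determinant $\abs{\det d_i}_F^{n_i+1}$. The corresponding block inside $\mathfrak u_1$ (hermitian over $E$) contributes $\abs{\det d_i}_F^{2n_i}$ to $\delta_P$, hence $\abs{\det d_i}_F^{n_i}$ to $\delta_P^{1/2}$; the ratio is $\abs{\det d_i}_F$. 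For $i\in R$, an analogous analysis of the fixed orbit glueing positions $(i,i+1)$ and $(i+1,i)$ under the swap combined with Galois conjugation yields the factor $\abs{\det d_i}_E$; alternatively, the multiplicativity of $\delta_x$ as a character reduces this case to the atomic situation $k=2$, $R=\{1\}$, $S=\emptyset$, which can be verified by a short direct computation. The main obstacle will be the careful bookkeeping of how $d\theta_{x_w}$ orbits the sub-blocks of $\mathfrak u_1$ and $\mathfrak u_2$ (especially the mixed pieces coupling $S$, $R$, and symplectic indices) and the verification that the cross-block contributions always cancel; this is significantly streamlined by the multiplicativity of $\delta_x$, which permits reduction to the three atomic cases of a single $S$-block, a single $R$-pair, and a single symplectic block.
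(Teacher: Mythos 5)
Your overall strategy is essentially the paper's: reduce to $\delta_x=\delta_{P_x}\delta_P^{-1/2}|_{M_x}$ for the standard representative and compute it block by block on the unipotent radical, with the same atomic identifications ($\SS_{n_i}(F)$ inside the hermitian block giving $\abs{\det d_i}_F$ for $i\in S$, the glued $(i,i+1)$ block giving $\abs{\det d_i}_E$ for $i\in R$, triviality on the symplectic factors); the paper organizes the same computation at the group level via $U=Z\ltimes V$, $V=V^{(1)}V^{(2)}$ and the moduli \eqref{eq mod sh}. However, two steps of your plan need repair. First, your cancellation dichotomy (fixed sub-blocks versus two-element orbits) is incomplete, because $\theta_{x_w}$ does not preserve $U$: the minimal involution $w$ sends the roots $\varepsilon_i-\varepsilon_{i+1}$ for $i\in R$, and all roots supported on the indices $\ell+1,\dots,k$ other than the mixed ones, to negative roots, so the corresponding sub-blocks of $\mathfrak u$ are carried into the opposite unipotent. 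Such blocks contribute to $\delta_P^{1/2}$ but nothing at all to $\delta_{P_x}$ (in the paper's computation this is the condition $z_{i,i+1}=0$ in $Z_x$), and one must check separately that the leftover factor is trivial on $M_x$ — it is, since $\abs{\det d_{i+1}}_E=\abs{\det \bar d_i}_E=\abs{\det d_i}_E$ for $i\in R$ and $\det=1$ on $\Sp_{n_i}(E)$ — but your stated mechanism, read literally, silently omits this third class of blocks.

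Second, the proposed streamlining "multiplicativity of $\delta_x$ permits reduction to the atomic cases" is not a valid reduction: knowing that $\delta_x$ is a character of $\prod_{i\in S}\GL_{n_i}(F)\times\prod_{i\in R}\GL_{n_i}(E)\times\prod_{i>\ell}\Sp_{n_i}(E)$ only tells you it is a product of powers of $\abs{\det d_i}$; the exponents are a priori sums of contributions from all cross-blocks $\varepsilon_i\pm\varepsilon_j$ with $j\ne i$, which simply do not exist in the low-rank models $k=1$ or $k=2$, so the rank-one computation cannot be transplanted. One must actually verify that every cross-block contributes net exponent zero: the genuine two-element orbits cancel as you argue, and the fixed cross-blocks (e.g.\ $\varepsilon_i+\varepsilon_j$ with $i,j\in S$) contribute trivially because their fixed subspace has half the $F$-dimension of the full block — this is precisely the bookkeeping the paper performs in \eqref{eq Z} and \eqref{eq V}. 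Finally, a one-line reduction is missing at the start: the lemma concerns an arbitrary $M$-minimal $x$, while you compute only for $x_w$; you need the observation that the asserted identity holds for $x$ if and only if it holds for $gx\bar g^{-1}$, $g\in M$, together with the uniqueness of the $M$-orbit from Lemma \ref{lem Mmin}.
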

\begin{proof}
For $g\in M$ it is easy to see that the above equation holds for $x$ if and only if it holds for $gx\bar g^{-1}$. By Lemma \ref{lem Mmin} we may therefore assume that $x=x_w$ as defined by \eqref{eq min rep}. For this case we carry the computation out explicitly. 

For an automorphism $\alpha$ of a group $Q$ we denote by $\mod_Q(\alpha)$ the associated modulus function.
For example, for $z\in \GL_a(F)$ and $y\in \GL_b(F)$ we have
\begin{equation}\label{eq mm}
\mod_{M_{a\times b}(F)}(z)=\abs{\det z}_F^b, \ \ \ \mod_{M_{a\times b}(F)}(y)=\abs{\det y}_F^a
\end{equation}
where the respective automorphism is given by left or right matrix multiplication. Let $\SS_n(F)=\{z\in M_n(F): {}^t z=z\}$
and $\HH_n(E/F)=\{z\in M_n(E): {}^t \bar z=z\}$. Then, with respect to the natural actions $g\cdot z=gz\,{}^t g$, $g\in \GL_n(F)$, $z\in \SS_n(F)$ and $g\cdot z=gz{}^t \bar g$, $g\in \GL_n(E)$, $z\in \HH_n(E/F)$ we have
\begin{equation}\label{eq mod sh}
\mod_{\SS_n(F)}(g)=\abs{\det g}_F^{n+1},\ g\in \GL_n(F), \ \ \ \ \ \mod_{\HH_n(E/F)}(g)=\abs{\det g}_E^n,\ g\in \GL_n(E).
\end{equation}
In our modulus function computations below we freely use, without further mention, the fact that $\det g=1$ for $g\in \Sp_{2m}(E)$, $m\in \N$.

Recall (see \eqref{eq ti}) that $x=\inj(t_1,\dots,t_{r+s};x')$ where
\[
x'=
-\begin{pmatrix}
 & & & & &  \epsilon_{n_{\ell+1}} \\ & & & &  \iddots & \\ & & &  \epsilon_{n_k} & &   \\ & & \epsilon_{n_k}  & & & \\ & \iddots  & & & &
 \\ \epsilon_{n_{\ell+1}}  & & & & &
\end{pmatrix}.
\]
and that $M_x$ consists of elements of the form $\inj(g_1,\dots,g_k)$ where
\begin{equation}\label{eq Mx min}
\begin{cases} g_i\in \GL_{n_i}(F) & i\in S \\ g_{i+1}=\bar{g_i}\in \GL_{n_i}(E) & i\in R\\ g_i\in \Sp_{n_i}(E) & i=\ell+1,\dots,k. \end{cases}
\end{equation}

Let $N_1=n_1+\cdots+n_\ell$, $N_2=n_{\ell+1}+\cdots+n_k$ and $Q=L\ltimes V$ the maximal parabolic subgroup of $G$ with $L=M_{(N_1;N_2)}$ and $V=U_{(N_1;N_2)}$. Let $Z=L\cap U$ so that $U=Z\ltimes V$ and consider the involution $\inv_x(g)=x\bar g x^{-1}$, $g\in G$. Note that $g\cdot x=x$ if and only if $\theta_x(g)=g$. Since $x\in L$, it normalizes $V$ and therefore $\theta_x(V)=V$. It is easy to see that $U\cap \theta_x(U)=(Z\cap \theta_x(Z))\ltimes V$ and since this decomposition is $\theta_x$-stable we get that $U_x=Z_x \ltimes V_x$. Furthermore, each of the groups $Z$, $V$, $Z_x$ and $V_x$ is normalized by $M_x$. It follows that 
 \begin{equation}\label{eq mod comp}
 \delta_P(h)=\mod_Z(h) \mod_{V}(h) \text{ and similarly }\delta_{P_x}(h)=\mod_{Z_x}(h) \mod_{V_x}(h),\ \ \ h\in M_x.
\end{equation}
We write a matrix $z\in Z$ in block form as $z=\inj(z_1;z_2)$ where $z_1=(z_{i,j})$, with $1\le i,\,j\le \ell$ and $z_{i,j}\in M_{n_i\times n_j}(E)$ satisfying
\[
\begin{array}{ll}
z_{i,j}=0  &  i>j  \\   z_{i,i}=I_{n_i} & 
\end{array}
\]
and $z_2\in U_{(n_{\ell+1},\dots,n_k)}$.

It is then easy to verify that $z\in Z_x$ if and only if $z_2=I_{N_2}$ and whenever $1\le i<j\le \ell$ we have
\[
\begin{array}{ll}
z_{i,i+1}=0 & i\in R \\
z_{i,j}\in M_{n_i\times n_j}(F) & i,\,j\in S \\
z_{i,j+1}=\bar z_{i,j} & i\in S,\,j\in R \\
z_{i+1,j}=\bar z_{i,j} & i\in R,\,j\in S \\
z_{i+1,j+1}=\bar z_{i,j} \text{ and } z_{i+1,j}=\bar z_{i,j+1} & i,\,j\in R.
\end{array}
\]
Furthermore, for $h=\inj(g_1,\dots,g_k)\in M_x$ as in \eqref{eq Mx min} and $z\in Z$ we have
\[
hzh^{-1}=\inj(z_1';z_2')
\]
where $z_1'=(g_i z_{i,j} g_j^{-1})$ (and similarly $z_2'\in U_{(n_{\ell+1},\dots,n_k)}$ is the conjugate of $z_2$ by $\iota(g_{\ell+1},\dots,g_k)$).

Taking \eqref{eq mm} into consideration, it is now easy to see that
\begin{equation}\label{eq Z}
\mod_Z^{1/2}|_{M_x}=\mod_{Z_x}|_{M_x}.
\end{equation}
Next, let
\[
V^{(1)}=\{v^{(1)}(s)=\begin{pmatrix} I_{N_1} & s & \\  & I_{2N_2} & s' \\ & & I_{N_1}\end{pmatrix}\mid s\in M_{N_1\times 2N_2}(E),\ s'=J_{N_2} {}^t\bar s \,w_{N_1}\}
\]
and 
\[
V^{(2)}=\{v^{(2)}(y)=\begin{pmatrix} I_{N_1} &   & y \\  & I_{2N_2} &  \\ & & I_{N_1}\end{pmatrix}\mid yw_{N_1}\in \HH_{N_1}(E/F)\}. 
\]
Note that $V^{(2)}$ is a normal subgroup of $V$ and $V^{(1)}$ is a set of representatives for $V/V^{(2)}$. Thus, 
\[
V=\{v(s,y)=v^{(1)}(s)v^{(2)}(y)\mid s\in M_{N_1\times 2N_2}(E),\ yw_{N_1}\in \HH_{N_1}(E/F)\}.
\]
Note that if $v=v_1v_2$ with $v_i\in V^{(i)}$ then also $\theta_x(v_i) \in V^{(i)}$, $i=1,2$. By the uniqueness of decomposition it follows that $v\in V_x$ if and only if $v_1,\,v_2\in V_x$. For $v=v(r,s)\in V$ write $s=(s_1,s_2)$ with $s_1,\,s_2\in M_{N_1\times N_2}(E)$ and $yw_{N_1}=(y_{i,j})$ with $y_{i,j}\in M_{n_i\times n_j}(E)$, $1\le i,\,j\le \ell$. Then explicitly,
\[
\begin{array}{ll}
y_{j,i}={}^t\bar y_{i,j} \in M_{n_j\times n_i}(E) & i\ne j \\
y_{i,i}\in \HH_{n_i}(E/F).& 
\end{array}
\]
We then have $v\in V_x$ if and only if
\[
s_2=-\diag(t_1,\dots,t_{r+s})\,\overline{s_1} \begin{pmatrix}  & & \epsilon_{n_{\ell+1}} \\ & \iddots & \\ \epsilon_{n_k} & &   \end{pmatrix}
\]
and $y$ satisfies
\[
\begin{array}{ll}
y_{i,i}\in\SS_{n_i}(F) & i\in S\\
y_{i+1,i+1}=\bar y_{i,i} \text{ and }y_{i+1,i}=\bar y_{i,i+1}\in \SS_{n_i}(E)&i \in R \\
\text{and for all }i<j & \\
y_{i,j+1}=\bar y_{i,j} & i\in S,\,j\in R \\
y_{i+1,j}=\bar y_{i,j} & i\in R,\,j\in S \\
y_{i+1,j+1}=\bar y_{i,j} \text{ and } y_{i+1,j}=\bar y_{i,j+1} & i,\,j\in R.
\end{array}
\]

Furthermore, for $h=\inj(g_1,\dots,g_k)\in M_x$ as in \eqref{eq Mx min} we have
\[
hv^{(1)}(s)h^{-1}=v^{(1)}(s'')
\]
where 
\[
s''=\diag(g_1,\dots,g_\ell) \,s \,  \iota(g_{\ell+1},\dots,g_k)^{-1}.
\]
and
\[
hv^{(2)}(y)h^{-1}=v^{(2)}(y''') \text{ where }(y''')_{i,j}=g_iy_{i,j}{}^t \bar g_j. 
\]
Thus,
\[
hv(s,y)h^{-1}=h(s'',y''').
\]

Taking \eqref{eq mm} and \eqref{eq mod sh} into consideration, it is now easy to see that
\begin{equation}\label{eq V}
\mod_V^{-1/2}(h)\mod_{V_x}(h)=\mod_{V^{(2)}}^{-1/2}(h)\mod_{V^{(2)}_x}(h)=\left[\prod_{i\in S}\abs{\det g_i}_F \right]\left[\prod_{i\in R} \abs{\det g_i}_E  \right].
\end{equation}
The lemma is now immediate from \eqref{eq Z}, \eqref{eq V} and \eqref{eq mod comp}.
\end{proof}

\subsubsection{}\label{sss min diet} As an immediate consequence we formulate the following.
\begin{corollary}
Assume that $M$ is contained in the Siegel Levi subgroup of $G$. Let $x\in N_G(M)\cap X$ be $M$-minimal and $\proj_M(x)=\rho\set_{\ell,k}$, $\rho\in S_\ell$ (see \eqref{eq min inv}). With the above notation let $\pi_i$ be a representation of $\GL_{n_i}(E)$, $i=1,\dots,k$. The representation $\pi=\pi_1\otimes\cdots\otimes\pi_k$ of $M$ is $(M_x,\delta_x)$-distinguished if and only if 
\[
\begin{cases} \nu^{-1/2}\pi_i \text{ is }\GL_{n_i}(F)\text{-distinguished} & i\in S \\ \pi_{i+1} \cong \nu\bar\pi_i^\vee & i\in R \\ \pi_i \text{ is }\Sp_{n_i}(E)-\text{distinguished} & \ell+1\le i\le k.   \end{cases}
\] \qed
\end{corollary}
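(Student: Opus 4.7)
The plan is to put together two pieces already in hand: the explicit description of $M_x$ in \S\ref{sss min stab} (specialized to $m=0$, since $M$ sits in the Siegel Levi) and the formula for $\delta_x$ in Lemma \ref{lem modulus comp}. With those in place, the corollary is a factorization statement: the group $M_x$ is a direct product over the three index sets $S$, $R$, and $\{\ell+1,\dots,k\}$, the character $\delta_x$ is a product of characters on these factors, and the representation $\pi=\pi_1\otimes\cdots\otimes\pi_k$ is a tensor product, so $(M_x,\delta_x)$-distinction splits into three independent conditions.

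First I would record that, by \S\ref{sss min stab} with $m=0$, the stabilizer
\[
M_x\cong \Bigl[\prod_{i\in S}\GL_{n_i}(F)\Bigr]\times \Bigl[\prod_{i\in R}\GL_{n_i}(E)\Bigr]\times \Bigl[\prod_{i=\ell+1}^{k}\Sp_{n_i}(E)\Bigr],
\]
where the middle factor is embedded diagonally in $\GL_{n_i}(E)\times \GL_{n_{i+1}}(E)$ by $g_i\mapsto (g_i,\bar g_i)$. By Lemma \ref{lem modulus comp}, the character $\delta_x$ restricts to $\abs{\det}_F$ on each $\GL_{n_i}(F)$-factor ($i\in S$), to $\abs{\det}_E$ on each $\GL_{n_i}(E)$-factor ($i\in R$), and is trivial on each $\Sp_{n_i}(E)$-factor. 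The standard fact that $\Hom_{A\times B}(\pi\otimes\sigma,\chi_A\boxtimes\chi_B)\cong \Hom_A(\pi,\chi_A)\otimes \Hom_B(\sigma,\chi_B)$ then reduces the proof to the three factor-wise claims.

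For $i\in S$, the condition is $\Hom_{\GL_{n_i}(F)}(\pi_i,\abs{\det}_F)\ne 0$. Since $\abs{a}_E=\abs{a}_F^2$ for $a\in F$, the character $\nu^{1/2}=\abs{\det}_E^{1/2}$ restricts to $\abs{\det}_F$ on $\GL_{n_i}(F)$, so this is equivalent to $\nu^{-1/2}\pi_i$ being $\GL_{n_i}(F)$-distinguished. For $\ell+1\le i\le k$ there is no character, so the condition is plainly that $\pi_i$ be $\Sp_{n_i}(E)$-distinguished.

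The mildly subtle case is $i\in R$, and this is the only place a small unpacking is needed. The relevant space is
\[
\Hom_{\GL_{n_i}(E)}\bigl(\pi_i\otimes\pi_{i+1},\nu\bigr),
\]
where $g\in\GL_{n_i}(E)$ acts as $\pi_i(g)\otimes \pi_{i+1}(\bar g)$. Adjointing and using that Galois conjugation commutes with contragredient, this is isomorphic to $\Hom_{\GL_{n_i}(E)}(\bar\pi_{i+1},\nu\pi_i^\vee)$, where $\bar\pi_{i+1}(g):=\pi_{i+1}(\bar g)$. For irreducible $\pi_i,\pi_{i+1}$ this is non-zero iff $\bar\pi_{i+1}\cong \nu\pi_i^\vee$; applying Galois conjugation to both sides (and noting $\nu$ is Galois-invariant) gives the stated condition $\pi_{i+1}\cong \nu\bar\pi_i^\vee$. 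The only real work is this last bookkeeping step; everything else is a direct citation. There is no serious obstacle, as the heavy lifting was done in Lemma \ref{lem modulus comp}.
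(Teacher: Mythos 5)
Your proposal is correct and follows exactly the route the paper intends: the corollary is stated as an immediate consequence of the stabilizer description in \S\ref{sss min stab} and the modulus computation of Lemma \ref{lem modulus comp}, and your factorization of the $(M_x,\delta_x)$-distinction condition over the index sets $S$, $R$, $\{\ell+1,\dots,k\}$, together with the observations that $\nu^{1/2}$ restricts to $\abs{\det}_F$ on $\GL_{n_i}(F)$ and that the $i\in R$ case unwinds to $\pi_{i+1}\cong\nu\bar\pi_i^\vee$, is precisely the unwritten argument.
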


\subsection{Admissible orbits}
Let 
\[
\WR_M=\proj_M(N_G(M)\cap X).
\]
\begin{lemma}
We have
\[
\WR_M=\{w\in\weyl_k[2]\mid n_i\text{ is even for all }i\in \set_+(w)\}.
\]
Furthermore, $p_M$ defines a bijection
\[
M\bs (N_G(M)\cap X)\simeq\WR_M.
\]
\end{lemma}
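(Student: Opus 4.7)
The plan is to combine Springer's structure theorem for involutions (Lemma \ref{lem springer}) with Lemma \ref{lem Mmin} to reduce both assertions to the minimal case already treated there. First, I would observe that for $x\in N_G(M)\cap X$ the identity $x\bar x=e$, combined with the fact that the representatives $t_w$ may be chosen with $\bar t_w=t_w$, forces $\kappa_M(x)\in W(M,M)[2]$, hence $\proj_M(x)\in\weyl_k[2]$. Since $\proj_M$ is manifestly constant on $M$-orbits, it descends to a map $M\bs(N_G(M)\cap X)\to\weyl_k[2]$, and the task splits into identifying the image with $\WR_M$ and establishing injectivity.

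Fix $w\in\weyl_k[2]$ and apply Lemma \ref{lem springer} to obtain a minimal involution $w_0$ together with $\sigma\in\weyl_k$ satisfying $w\path{\sigma}w_0$. I would lift this combinatorial path to the geometric graph of Remark \ref{rmk grph} step by step. Concretely, choose $g\in\NN_M$ with $\proj_M(g)=\sigma$ and set $M_0=gMg^{-1}\in[M]$. Since $\bar g^{-1}g\in M$, twisted conjugation $x\mapsto g\cdot x=gx\bar g^{-1}$ sends $wM\cap X$ to $w_0M_0\cap X$ and descends to a bijection between $M$-orbits in the former and $M_0$-orbits in the latter, independent of the choice of $g$ within its coset modulo $M$.

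I would then read off both assertions from Lemma \ref{lem Mmin} applied to $w_0$ and $M_0$. The composition associated with $M_0$ is $(n_{\tau^{-1}(1)},\dots,n_{\tau^{-1}(k)};m)$, where $\tau\in S_k$ is the permutation part of $\sigma$, so Lemma \ref{lem Mmin} gives that $w_0M_0\cap X$ is non-empty exactly when $n_{\tau^{-1}(i)}$ is even for each $i\in\set_+(w_0)$, and in this case forms a single $M_0$-orbit. Lemma \ref{lem w inv}\eqref{part +} yields $\set_+(w_0)=\sigma\set_+(w)\sigma^{-1}=\tau(\set_+(w))$, so the parity condition translates verbatim into $n_j$ being even for every $j\in\set_+(w)$. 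Combined with the orbit-level bijection of the previous paragraph, this simultaneously identifies the image with $\WR_M$ and shows that each non-empty fibre is a single $M$-orbit.

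The main obstacle I anticipate is rigorously justifying the orbit-level lift of Springer's path in Remark \ref{rmk grph}: one must check that each elementary step $w\path{s_\alpha}w'$ (where $s_\alpha$ is a simple reflection with $w\alpha\ne\pm\alpha$) really induces a bijection between $M$-orbits in $wM\cap X$ and $M'$-orbits in $w'M'\cap X$ after twisted conjugation by a representative of $s_\alpha$, and that the resulting correspondence on orbits is independent of these choices. Once this compatibility is in place, the parity condition propagates automatically along simple reflections via Lemma \ref{lem w inv}\eqref{part +} and the proof is complete.
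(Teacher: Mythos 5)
Your proposal is correct and follows essentially the same route as the paper: reduce to a minimal involution via Springer's Lemma \ref{lem springer}, transport the question by twisted conjugation with a single $g\in\NN_M$ satisfying $\proj_M(g)=\sigma$, and conclude from Lemma \ref{lem Mmin} together with Lemma \ref{lem w inv}\eqref{part +}. The obstacle you flag at the end is not actually there: no edge-by-edge lifting of the path is required, since for any such $g$ the map $x\mapsto gx\bar g^{-1}$ intertwines the twisted $M$-action with the twisted $gMg^{-1}$-action (as $\overline{gmg^{-1}}=\bar g\bar m\bar g^{-1}$) and hence already gives the bijection on orbits in one step, which is exactly how the paper argues.
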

\begin{proof}
Let $w\in\weyl_k[2]$ and let $\ell$ be such that $k-\ell=\abs{\set_+(w)}$. It follows from Lemma \ref{lem springer}, \eqref{eq min inv} and Lemma \ref{lem w inv} \eqref{part +} that there exist $\sigma\in \weyl_k$ and $\rho\in S_\ell$ a minimal involution such that $w \overset{\sigma}\curvearrowright \rho\set_{\ell,k}$.

For $g\in \NN_M$ let $M'=gM\bar g^{-1}=gMg^{-1}\in[M]$. Then the map $x\mapsto gx\bar g^{-1}$ defines a bijection between $M$-orbits in $N_G(M)\cap X$ and $M'$-orbits in $N_G(M')\cap X$. 
By choosing $g$ so that $\proj_M(g)=\sigma$ we see that $w\in \WR_M$ if and only if $\rho\set_{\ell,k}\in \WR_{M'}$.
The lemma now follows from Lemma \ref{lem Mmin}.
\end{proof}

\subsubsection{}\label{sss stabx}
Let $x\in N_G(M)\cap X$ and $w=\proj_M(x)\in\WR_M$. Let $\tau\in S_k$ and $\set\in \Xi_k$ be such that $w=\tau\set$.
Based on the above analysis we can now describe $M_x$ as follows:
\[
M_x \cong \left[ \prod_{i\in \set_+(w)} \GL_{n_i}(F)   \right]\times\left[\prod_{i\in \set_<(w)} \GL_{n_i}(E)    \right]\times\left[\prod_{i\in \set_+(w)}\Sp_{n_i}(E)    \right]\times H_m.
\]
More explicitly, a representative $x$ as above can be chosen so that $M_x$ consists of elements $\inj(g_1,\dots,g_k;h)$ such that
\begin{equation}\label{eq adm stab}
\begin{array}{ll}
g_i\in\GL_{n_i}(F) & i\in \set_-(w) \\
g_{\tau(i)}=\bar g_i\in \GL_{n_i}(E) & i\in \set_<(w)\setminus \set \\
g_i\in \Sp_{n_i}(E) & i\in \set_+(w) \\
g_{\tau(i)}=\bar g_i^*\in \GL_{n_i}(E) & i\in \set_<(w)\cap \set \\
h\in H_m.& 
\end{array}
\end{equation}

\subsubsection{}\label{sss xmod}
The following proposition follows from \cite[Corollary 6.5]{MR3541705} in light of Remark \ref{rmk grph}. It is a consequence of Springer's theory of twisted involutions as further developed in \cite{MR2010737}.

\begin{proposition}\label{prop grph}
Let $x\in N_G(M)\cap X$ with $\proj_M(x)=\rho\in \weyl_k[2]$ and let $\sigma\in \weyl_k$ and $\rho'\in \weyl_k[2]$ be such that $\rho \overset{\sigma}\curvearrowright \rho'$. For $g\in \NN_M$ such that $\proj_M(g)=\sigma$ set $M'=gMg^{-1}$ and $x'=gx\bar g^{-1}$ (so that $\proj_{M'}(x')=\rho'$). Then  
\[
\delta_{x'}(gmg^{-1})=\delta_x(m), \ \ \ m\in M_x.
\]\qed
\end{proposition}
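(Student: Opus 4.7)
The plan is to translate the path $\rho \overset{\sigma}\curvearrowright \rho'$ in the combinatorial graph $\grph_{\weyl_k}$ into a path in the geometric graph of pairs $(L,y)$ introduced in \cite[\S 6]{MR3541705}, and then invoke the modulus-function compatibility from \cite[Corollary 6.5]{MR3541705}. The bridge between the two pictures is Remark \ref{rmk grph}, which tells us that the projection $\proj_M$ sends edges of the geometric graph to edges of $\grph_{\weyl_k}$; in particular, for any simple reflection $s_\alpha \in \weyl_k$ with $\rho \xrightarrow{\alpha} \rho'$, choosing $g \in \NN_M$ with $\proj_M(g) = s_\alpha$ (and, as in \S\ref{sss min stab}, arranged so that $\bar g = g$) produces an edge $(M,x) \overset{g}\curvearrowright (M',x')$ of the graph of \cite[\S 6]{MR3541705}.

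First I would reduce to a single edge by induction on the length of the word defining $\sigma$. Decompose $\rho = \rho_0 \xrightarrow{\alpha_1} \rho_1 \xrightarrow{\alpha_2} \cdots \xrightarrow{\alpha_t} \rho_t = \rho'$ so that $\sigma = s_{\alpha_t}\cdots s_{\alpha_1}$. Pick successively $g_i \in \NN_{M_{i-1}}$ with $\proj_{M_{i-1}}(g_i)=s_{\alpha_i}$, set $M_i = g_i M_{i-1} g_i^{-1}$ and $x_i = g_i x_{i-1} \bar g_i^{-1}$, and let $g = g_t \cdots g_1$. By uniqueness of $t_w$ modulo the center of $M$ (and the fact that such center elements act trivially on modulus characters restricted to $M_x$), any other representative of $\sigma$ produces the same identity on modulus characters. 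Hence if the identity $\delta_{x_i}(g_i m g_i^{-1}) = \delta_{x_{i-1}}(m)$ holds at every single edge, composing the resulting conjugations yields $\delta_{x'}(gmg^{-1}) = \delta_x(m)$ for $m \in M_x$.

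It therefore remains to establish the single-edge case, $t=1$. Here $s_\alpha$ corresponds under $\jmath_M$ to an elementary symmetry in $W(M,M')$ for some $M' \in [M]$, so the edge $(M,x) \overset{g}\curvearrowright (M',x')$ lies in the graph of \cite[\S 6]{MR3541705}. The content of \cite[Corollary 6.5]{MR3541705}, which in turn rests on Springer's theory of twisted involutions developed further in \cite{MR2010737}, is exactly the assertion that conjugation by $g$ intertwines the parabolic data $(Q_x, L_x)$ with $(Q'_{x'}, L'_{x'})$ in such a way that the quotient $\delta_{Q_x}\delta_Q^{-1/2}$ is preserved. This gives $\delta_{x'}(gmg^{-1}) = \delta_x(m)$ for $m \in M_x$ and completes the proof.

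The main obstacle is essentially notational, namely verifying that the combinatorial edge relation in $\grph_{\weyl_k}$ (defined purely on involutions via the rule $w \xrightarrow{\alpha} s_\alpha w s_\alpha$ with $w\alpha \ne \pm \alpha$) matches the geometric edge relation of \cite[\S 6]{MR3541705} under $\proj_M$. Once this dictionary is in place, as observed in Remark \ref{rmk grph}, the proposition is a formal propagation along a path of a single-edge modulus-function identity that is already available in the literature.
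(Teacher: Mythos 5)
Your proposal is correct and follows essentially the same route as the paper: the paper gives no argument beyond observing (as in Remark \ref{rmk grph}) that the combinatorial path $\rho \overset{\sigma}\curvearrowright \rho'$ lifts to a path $(M,x)\overset{g}\curvearrowright (M',x')$ in the graph of \cite[\S 6]{MR3541705} and then quoting \cite[Corollary 6.5]{MR3541705}. Your added single-edge induction and the remark about representatives of $\sigma$ differing by central elements of $M$ are harmless elaborations of the same citation-based argument.
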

\begin{corollary}\label{cor adm mod}
Assume that $m=0$ (i.e. $M$ is contained in the Siegel Levi subgroup of $G$) and let $w=\tau\set\in \WR_M$ with $\tau\in S_k$ and $\set\in \Xi_k$. Then, for $x\in N_G(M)\cap X$ such that $\proj_M(x)=w$ and $\inj(g_1,\dots,g_k)\in M_x$ we have
\begin{equation}\label{eq adm mod}
\delta_x(\inj(g_1,\dots,g_k))=\prod_{i\in \set_-(w)}\abs{\det g_i}_F \cdot \prod_{i\in\set_< (w)} \abs{\det g_i}_E=\abs{\det g}_F
\end{equation}
where $g=\diag(g_1,\dots,g_k)$.
\end{corollary}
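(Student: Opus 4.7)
The plan is to reduce to the minimal involution case (Lemma \ref{lem modulus comp}) by exploiting the invariance of $\delta_x$ along the graph $\grph_\weyl$ furnished by Proposition \ref{prop grph}. First I would apply Lemma \ref{lem springer} to obtain a minimal involution $w' \in \weyl_k[2]$ and $\sigma \in \weyl_k$ with $w \path{\sigma} w'$, and then lift $\sigma$ to $y \in \NN_M$ with $\proj_M(y) = \sigma$ and $\bar y = y$ (possible since the representatives $t_w$ may be chosen in $H$). Setting $M' = yMy^{-1}$ and $x' = yxy^{-1}$ yields an $M'$-minimal element $x' \in N_G(M') \cap X$ with $\proj_{M'}(x') = w'$, and $M'$ again lies in the Siegel Levi subgroup. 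Proposition \ref{prop grph} then gives
\[
\delta_x(m) = \delta_{x'}(y m y^{-1}) \qquad \text{for every } m \in M_x.
\]

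For $m = \inj(g_1, \dots, g_k) \in M_x$ the conjugate $y m y^{-1} = \inj(g'_1, \dots, g'_k) \in M'_{x'}$ is computed from the explicit action of $\sigma$ on $M$: the $S_k$-component of $\sigma$ permutes the indices while the sign component applies the $\ast$-involution to the corresponding blocks. Applying Lemma \ref{lem modulus comp} at the minimal $w'$ yields
\[
\delta_{x'}(\inj(g'_1, \dots, g'_k)) = \prod_{j \in \set_-(w')} \abs{\det g'_j}_F \cdot \prod_{j \in \set_<(w')} \abs{\det g'_j}_E.
\]
Using the transports $\sigma \set_-(w) \sigma^{-1} = \set_-(w')$ (Lemma \ref{lem w inv}\eqref{part -}) and $\sigma \set_<(w) \sigma^{-1} = \set_<(w')$ (Lemma \ref{lem <}) to re-index by $i = \sigma^{-1}(j)$, and matching absolute values factor by factor, recovers the first asserted equality. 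The concluding equality with $\abs{\det g}_F$ then follows directly from the structure of $M_x$ described in \S\ref{sss stabx}: the $\Sp_{n_i}(E)$-factors contribute trivially and the remaining blocks assemble into $\det g$ under the paper's normalization.

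The principal technical obstacle is the factor-by-factor matching in the preceding step, since the $\ast$-operation inverts absolute values of determinants rather than preserving them. The resolution is a case analysis on simple reflections along the Springer path: a swap $s_{\alpha_i}$ for $i < k$ merely permutes two blocks without applying $\ast$, so the formula is transparently preserved; the sign change $s_{\alpha_k}$ is admissible only when $\tau(k) \neq k$, in which case the pair $\{\tau(k), k\}$ has smaller element $\tau(k) < k$ which lies outside the starred position, so again the relevant factor is preserved. Induction on the length of the Springer path then completes the reduction.
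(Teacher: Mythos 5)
Your proof is essentially the paper's own: Springer's Lemma \ref{lem springer} reduces to an $M'$-minimal element, Lemma \ref{lem modulus comp} gives the modulus formula there, and Proposition \ref{prop grph} together with Lemmas \ref{lem w inv} and \ref{lem <} transports it back to $x$, with the last equality read off from the stabilizer description \eqref{eq adm stab}. Your extra edge-by-edge analysis of how the $*$-involution interacts with the determinant factors merely spells out a detail the paper leaves implicit in the phrase ``now follows from Proposition \ref{prop grph} and Lemmas \ref{lem w inv} and \ref{lem <}'', so the two arguments coincide.
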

\begin{proof}
The second equality of \eqref{eq adm mod} is straightforward from \eqref{eq adm stab}. We prove the first equality. 
By Lemma \ref{lem springer} there exist $w'\in \weyl_k[2]$ a minimal involution and $\sigma\in \weyl$ such that $w\overset{\sigma}\curvearrowright w'$.
Let $g\in \NN_M$ be such that $\sigma=\proj_M(g)$ and let $x'=gx\bar g^{-1}$ and $M'=gMg^{-1}$. Then $x'$ is $M'$-minimal. 
It follows from Lemma \ref{lem modulus comp} that \eqref{eq adm mod} holds for $x'$. That it holds for $x$ now follows from Proposition \ref{prop grph} and Lemmas \ref{lem w inv} and \ref{lem <}.
\end{proof}

\subsubsection{}\label{sss xdist}
As a consequence of the explication of the stabilizer in \eqref{eq adm stab} and Corollary \ref{cor adm mod} we have the following.

\begin{corollary}\label{cor dist adm}
Assume that $M$ is contained in the Siegel Levi subgroup of $G$  and let $x\in N_G(M)\cap X$ and $\proj_M(x)=w$. Let $\tau\in S_k$ and $\set\in \Xi_k$ be such that $w=\tau\set$. For representations $\pi_i$ of $\GL_{n_i}(E)$, $i=1,\dots,k$, the representation $\pi=\pi_1\otimes\cdots\otimes\pi_k$ of $M$ is $(M_x,\delta_x)$-distinguished if and only if 
\[
\begin{cases} \nu^{-1/2}\pi_i \text{ is }\GL_{n_i}(F)\text{-distinguished} & i\in \set_-(w) \\\pi_i \text{ is }\Sp_{n_i}(E)\text{-distinguished} & i\in \set_+(w) \\ \pi_{\tau(i)}\cong \nu\bar\pi_i^\vee & i\in\set_{\ne}(w) \setminus\set \\  \pi_{\tau(i)}\cong \nu^{-1} (\pi_i^\iota)^\vee & i\in \set_<(w)\cap \set \end{cases}
\]
where $\iota$ is the involution $g\mapsto g^\iota=w_n {}^tg^{-1}w_n$ on $\GL_n(E)$. 
\qed
\end{corollary}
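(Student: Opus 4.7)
The plan is to directly unwind the definition of $(M_x,\delta_x)$-distinction using the explicit structure of the stabilizer given in \eqref{eq adm stab} and the modulus formula from Corollary \ref{cor adm mod}. Since $M$ is contained in the Siegel Levi subgroup there is no $H_m$-factor, and $M_x$ decomposes as a direct product of groups indexed by the $\tau$-orbits on $\{1,\dots,k\}$: a copy of $\GL_{n_i}(F)$ for each $i\in\set_-(w)$, a copy of $\Sp_{n_i}(E)$ for each $i\in\set_+(w)$, and a copy of $\GL_{n_i}(E)$ (parametrizing $g_i$, with $g_{\tau(i)}$ then determined) for each two-element orbit $\{i,\tau(i)\}$ with $i\in\set_<(w)$. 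By \eqref{eq adm mod}, $\delta_x$ factors as a product of characters, one per orbit. The restriction of $\pi=\pi_1\otimes\cdots\otimes\pi_k$ to $M_x$ then becomes an outer tensor product of representations of the factor groups, so the existence of a non-zero $(M_x,\delta_x)$-equivariant linear form on $\pi$ is equivalent to the conjunction of analogous equivariance conditions on each orbit factor.

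For each of the four orbit types I would then read off the condition. For $i\in\set_+(w)$ the modulus is trivial on $\Sp_{n_i}(E)$ (determinant is $1$), yielding $\Sp_{n_i}(E)$-distinction of $\pi_i$. For $i\in\set_-(w)$ the modulus restricts to $|\det\cdot|_F=\nu^{1/2}$ on $\GL_{n_i}(F)$, equivalent to $\nu^{-1/2}\pi_i$ being $\GL_{n_i}(F)$-distinguished. For an orbit $\{i,\tau(i)\}$ with $i\in\set_<(w)\setminus\set$, the group $\GL_{n_i}(E)$ acts on $\pi_i\otimes\pi_{\tau(i)}$ by $g\mapsto\pi_i(g)\otimes\pi_{\tau(i)}(\bar g)$ with twist $\nu(g)$; the standard fact that an invariant pairing between representations of a given group exists precisely when one is (character-twist) dual to the other gives $\pi_{\tau(i)}(\bar{\,\cdot\,})\cong\nu\,\pi_i^\vee$, i.e.\ $\pi_{\tau(i)}\cong\nu\,\bar\pi_i^\vee$ (using that $\nu\circ\bar{\,\cdot\,}=\nu$). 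The last case, $i\in\set_<(w)\cap\set$, is analogous with $\bar g$ replaced by $g^\iota$.

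The one point that genuinely deserves verification—and which is the main pitfall in the argument—is that $\iota\colon g\mapsto w_n\,{}^tg^{-1}w_n$ is a \emph{group automorphism} of $\GL_n(E)$, not an anti-automorphism, so that $\pi^\iota:=\pi\circ\iota$ really is a representation (and hence the invariant-pairing analysis applies verbatim). A direct computation shows
\[
(gh)^\iota = w_n\,{}^t(gh)^{-1}w_n = w_n\,({}^tg^{-1})({}^th^{-1})\,w_n = (w_n\,{}^tg^{-1}w_n)(w_n\,{}^th^{-1}w_n) = g^\iota h^\iota,
\]
and $\iota^2=\mathrm{id}$. One likewise has $\nu\circ\iota=\nu^{-1}$ since $\det(g^\iota)=\det(g)^{-1}$. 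Applying $\iota$ to both sides of the relation $\pi_{\tau(i)}^\iota\cong\nu\,\pi_i^\vee$ then converts it into the stated $\pi_{\tau(i)}\cong\nu^{-1}(\pi_i^\iota)^\vee$, completing the argument.
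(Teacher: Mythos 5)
Your argument is correct and is exactly the paper's intended (unwritten) proof: the corollary is stated there as an immediate consequence of the explicit stabilizer description \eqref{eq adm stab} and the modulus formula of Corollary \ref{cor adm mod}, decomposing $M_x$ along the $\tau$-orbits and reading off the equivariance condition for each orbit type, including the verification that $\iota$ is an automorphism with $\nu\circ\iota=\nu^{-1}$ so that $\pi^\iota_{\tau(i)}\cong\nu\pi_i^\vee$ becomes $\pi_{\tau(i)}\cong\nu^{-1}(\pi_i^\iota)^\vee$. No genuine gap; the only implicit point (shared with the paper's formulation) is that the duality criterion for the paired orbits is used at the level of irreducible, or at least suitably rigid, representations, which is how the corollary is applied.
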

\begin{remark}\label{rmk gk}
By \cite{MR0404534} we have $\pi^\vee\cong \pi^\iota$ if $\pi$ is irreducible. Thus, if the $\pi_i$'s are assumed to be irreducible then the condition above for $i\in  \set_<(w)\cap \set$ becomes $\pi_{\tau(i)}\cong \nu^{-1} \pi_i$.
\end{remark}

\subsection{General orbits}\label{ss gen orb}
Assume that $M$ is contained in the Siegel Levi ($m=0$). We consider a general $P$-orbit $P\cdot x$ in $X$. Assume without loss of generality that $x$ is a good representative. Let $w\in{}_MW_M\cap W[2]$ be such that $PxP=PwP$ so that $x\in wL\cap X$, where $L=M\cap wMw^{-1}$ is the standard Levi subgroup of $M$ associated with the orbit $P\cdot x$. Let $Q=L\ltimes V$ be the associated standard parabolic subgroup.

Write $L=M_{(\gamma_1,\dots,\gamma_k;0)}$ where $\gamma_i=(a_{i,1},\dots,a_{i,j_i})$ is a partition of $n_i$ and let 
\[
\Index=\{(i,j):i=1,\dots,k,\ j=1,\dots, j_i\}.
\]
Via lexicographic order on $\Index$ (i.e. $(i,j)\prec (i',j')$ if either $i<i'$, or $i=i'$ and $j<j'$) we identify $\weyl_{\abs{\Index}}$ with $S_\Index\ltimes \Xi_\Index$ where $S_\Index$ is the permutation group on $\Index$ and $\Xi_\Index$ the group of subsets of $\Index$ with respect to symmetric difference.
Then $x\in N_G(L)\cap X$ and we can set $\proj_L(x)=\tau\set$ with $\tau\in S_\Index$ and $\set\subseteq \Index$.

Note that since $w$ is $L$-admissible it acts as an involution on the set of roots 
\[
\Sigma_L=R(G,T_L).
\] 
The fact that $L=M\cap wMw^{-1}$ is equivalent to saying that $w\alpha\not\in\Sigma_L^M:=R(M,T_L)$ for $\alpha\in \Sigma_L^M$. This can be explicated as follows. 
\begin{itemize}
\item If $(i,j),\,(i,j') \in \set$ with $j\ne j'$, $\tau(i,j)=(a,b)$ and $\tau(i,j')=(c,d)$ then $a\ne c$.
\item If $(i,j),\,(i,j')\not\in \set$ with $j\ne j'$, $\tau(i,j)=(a,b)$ and $\tau(i,j')=(c,d)$ then $a\ne c$.
\end{itemize}

The fact that $w$ is $M$-reduced means that $w\alpha>0$ for every $\alpha\in \Delta_L^M$. This can be explicated as
\begin{itemize}
\item For $j\ne j'$ if $(i,j)\not\in\set$ and $(i,j')\in \set$ then $j<j'$. 
\item if $(i,j),\,(i,j')\in \set$ with $j<j'$ then $\tau(i,j')\prec\tau(i,j)$.
\item if $(i,j),\,(i,j')\not\in \set$ with $j<j'$ then $\tau(i,j)\prec\tau(i,j')$.
\end{itemize}

Combined, we get that for every $i$ there is $s_i\in \{0,1,\dots,j_i\}$ such that
\begin{equation}\label{eq: c form}
\set=\{(i,j): s_i<j\le j_i \}
\end{equation}
and if $\tau(\imath)=(c_{\imath},d_{\imath})$, $\imath\in \Index$ then 
\begin{equation}\label{orbit_cond}
\begin{cases} c_{(i,j)}<c_{(i,j')} & \ {\rm for\ } j<j'\le s_i \\ c_{(i,j)}>c_{(i,j')} &  \ {\rm for\ } s_i<j<j'. \end{cases}
\end{equation}

\section{A graph of signs}\label{app_s:signs}
In this section we describe a graph that underlies the combinatorics behind the classification of M{\oe}glin and Tadi\'c of the discrete series of classical groups \cite{MR1896238}. The material of the section is purely combinatorial and can be read independently from the rest of this article. We hope that the combinatorial framework that we develop here would have applications more generally in questions where the M{\oe}glin-Tadi\'c classification plays a role.
 
Let $V=\sqcup_{k=0}^\infty \{\pm 1\}^k$ be the set of ordered tuples of signs. We consider the directed, labeled graph $\E$ with vertices $V$ and labeled edges $\e \go{i} \e'$ whenever $\e=(e_1,\dots,e_k)\in \{\pm1\}^k$, $i\in \{1,\dots,k-1\}$ is such that $e_i=e_{i+1}$ and $\e'=(e_1,\dots,e_{i-1},e_{i+2},\dots,e_k)\in \{\pm1\}^{k-2}$ is obtained from $\e$ by deleting the $i$th and the $(i+1)$th entries.

For $\e,\,\e'\in V$, a path from $\e$ to $\e'$ is labeled by the sequence $\imath=(i_1,\dots,i_t)$ and denoted by $\e \path{\imath}\e'$ if there exists $\e_1,\dots,\e_t\in V$ and edges
on the graph such that
\[
\mbf{e}=\mbf{e}_1 \go{i_1} \mbf{e}_2 \go{i_2}\cdots \go{i_t} \mbf{e}_t=\mbf{e'}.
\]
We write $\e\path{}\e'$ if there exists a path in $\E$ from $\e$ to $\e'$. The $t-tuple$ of natural numbers $\imath$ is the \emph{pattern} 
of the path 
 $\e\path{\imath}\e'$.
 
To make some of the basic arguments more formal it will also be convenient to introduce the coordinate \emph{history} of the path $\e \path{\imath}\e'$. For $\e\in \{\pm 1\}^k$ and pattern $\imath=(i_1,\dots,i_t)$ it is a sequence of $t$ pairs of indices in $\{1,\dots,k\}$ that keeps track of the coordinates of $\e$ that are deleted at each edge of the path. More explicitly, it is the sequence 
\[
((x_1,y_1),\dots,(x_t,y_t))
\] 
where $x_i<y_i$ are the coordinates of $\e$ that are deleted in the $j$th edge, $j=1,\dots,t$.
The coordinate history is defined recursively as follows. 
Set $(x_1,y_1)=(i_1,i_1+1)$. For $1< s\le t$ let $z_1<\dots <z_{k-2(s-1)}$ be such that 
\[
\{z_1,\dots,z_{k-2(s-1)}\}=\{1,\dots,k\}\setminus\{x_1,\,y_1,\dots,x_{s-1},\,y_{s-1}\}.
\]
Then we define
\[
x_s=z_{i_s} \ \ \ \text{and}\ \ \ y_s=z_{i_s+1}.
\]

For future reference we record the following basic properties of the history of a path. They are straightforward from the definitions.
\begin{equation}\label{eq history}
\text{For }1\le i,\,j\le t \text{ the inequalities } x_i<x_j<y_i<y_j \text{ cannot hold simultaniously.}
\end{equation}
\begin{equation}\label{eq between}
[x_i+1,y_i-1]\subseteq \{x_j,\,y_j: j=1,\dots,i-1\} \text{ for }i=2,\dots,t.
\end{equation}
Here and henceforth, for integers $a\le b$ we denote by $[a,b]=\{i\in \Z\mid a\le i\le b\}$ the associated interval of integers.

For $t\in \Z_{\ge 0}$ let $\f_t=(1,-1,\dots,(-1)^{t-1})\in \{\pm 1\}^t$ where $\f_0\in V$ is the empty tuple of signs. 
\begin{definition}\label{def:vi}
We define the subsets of vertices  $V_t$ and $V_{-t}$ by
\[
V_{\pm t}=\{\e\in V:\e\path{}\pm\f_t\}
\] 
and let $\E_{\pm t}$  be the full subgraph of $\E$ with vertices $V_{\pm t}$.
\end{definition}

Clearly, $\e\in V$ is not the origin of any edge in $\E$ if and only if $\e=\pm\f_t$ for some $t\in \Z_{\ge 0}$. Since the number of coordinates decreases by two with every edge it follows that 
\[
V=\cup_{t\in \Z} V_t.
\]
As we will soon see, this union is disjoint.

\begin{lemma}\label{lem conn}
Let $\e \go{i} \e'$ be an edge in $\E$ and $t\in \Z$. Then $\e\in V_t$ if and only if $\e'\in V_t$.
\end{lemma}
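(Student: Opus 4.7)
The forward implication $\e'\in V_t\Rightarrow\e\in V_t$ is immediate: concatenating the given edge with a path $\e'\path{}\pm\f_{|t|}$ yields $\e\path{}\pm\f_{|t|}$. For the reverse direction, my plan is to exhibit a canonical invariant of the graph. Define $\rho:V\to V$ by left-to-right stack reduction: process $\e=(e_1,\dots,e_k)$ one entry at a time while maintaining a stack, and on entry $e_j$ pop the top if it equals $e_j$, otherwise push $e_j$. Since the push rule forbids two consecutive equal entries on the stack, the final stack, read from bottom to top, is alternating, hence equal to $\pm\f_s$ for some $s\ge 0$.

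The key claim is that $\rho(\e)=\rho(\e')$ whenever $\e\go{i}\e'$. Let $S$ denote the stack after processing $e_1,\dots,e_{i-1}$. By the alternating invariant, if $S$ is nonempty its top is a single sign $\sigma$. If $\sigma=e_i$, processing $e_i$ pops $S$, leaving a stack whose new top (if any) is $-e_i$, and then processing $e_{i+1}=e_i$ pushes and restores $S$. If $\sigma\ne e_i$ or $S$ is empty, processing $e_i$ pushes $e_i$, and then processing $e_{i+1}=e_i$ pops the fresh $e_i$, again restoring $S$. In either case the stack after positions $i,i+1$ coincides with $S$, which is exactly the stack one would have at the corresponding stage of the processing of $\e'$. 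The remaining entries are common to $\e$ and $\e'$, so the two processings run in lockstep thereafter and $\rho(\e)=\rho(\e')$.

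To conclude, one identifies each $V_t$ as a fiber of $\rho$: (i) the base case $\rho(\pm\f_s)=\pm\f_s$ holds since an already-alternating input triggers only pushes; (ii) combined with the key claim, induction on path length gives $\e\path{}\pm\f_s\Rightarrow\rho(\e)=\pm\f_s$; (iii) conversely, if $\rho(\e)$ equals some target $\pm\f_s$ and $\e$ is not already alternating, then $\e$ contains an adjacent equal pair, and applying the corresponding edge shortens $\e$ while preserving $\rho$, so induction on length produces a path $\e\path{}\rho(\e)$. Hence the condition $\e\in V_t$ is equivalent to $\rho(\e)$ taking one particular value in $\{\pm\f_s:s\ge 0\}$, and by the key claim this value is the same for $\e$ and $\e'$; the lemma follows. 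The only nontrivial step is the case analysis for the key claim, but the alternating property of the stack confines it to two short cases and rules out any pathological cascading cancellation.
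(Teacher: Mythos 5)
Your proof is correct, and it takes a genuinely different route from the paper. The paper proves the nontrivial direction by surgery on the \emph{history} of a given path $\e\path{}\pm\f_t$: it shows, through a case analysis on how the deleted pair $(i,i+1)$ meets the pairs $(x_j,y_j)$ of the history (using the structural facts \eqref{eq history} and \eqref{eq between}), that the history can be rearranged into one beginning with the edge $\e\go{i}\e'$, whence truncation gives a path from $\e'$. You instead construct a canonical invariant: the stack reduction $\rho(\e)$, which is the unique normal form of $\e$ under deletion of adjacent equal pairs, and your two-case check that a single deletion does not change the final stack is in effect a confluence argument for this rewriting system. Identifying each $V_t$ as a fiber of $\rho$ then gives both directions of the lemma at once, and in fact yields immediately the uniqueness of the alternating tuple reachable from $\e$, hence the disjointness of the $V_t$ (the paper's Corollary \ref{cor conn}) without further work; it is shorter and more conceptual. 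What the paper's heavier approach buys is facility with explicit histories and patterns of paths, which is the formalism reused later (in \S\ref{sss_ds_cons} and in Lemmas \ref{lem v0} and \ref{lem v1}, where paths with prescribed patterns must be exhibited); your invariant does not by itself produce such explicit paths, though it is fully sufficient for the statement at hand.
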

\begin{proof}
If $\e'\path{\jmath}\pm \f_t$ then clearly also $\e\path{(i,\jmath)}\pm f_t$. The if part follows.

Let $((x_1,y_1),\dots,(x_k,y_k))$ be the history of a path $\e=(e_1,\dots,e_t)\path{}\pm\f_t$. Note that 
$\{i,i+1\}\cap\{x_1,y_1,\dots,x_k,y_k\}$ cannot be empty. We separate the proof into several cases.

Assume first that $(i,i+1)=(x_j,y_j)$ for some $j$.
Then it is a simple consequence of \eqref{eq history} and \eqref{eq between} that
\[
((x_j,y_j), (x_1,y_1),\dots,(x_{j-1},y_{j-1}),(x_{j+1},y_{j+1}),\dots,(x_k,y_k))
\] 
is the history of a path $\e\path{}\pm\f_t$ that starts with the edge $\e\go{i}\e'$. Truncating this edge gives a path $\e'\path{}\pm\f_t$.

Next, assume that there exist $a\ne b$ such that $i\in \{x_a,y_a\}$ and $i+1\in \{x_b,y_b\}$. It follows from \eqref{eq history} and \eqref{eq between} that there are three cases to consider:
\begin{enumerate}
\item\label{case separate}
$y_a=i$ and $x_b=i+1$;
\item\label{case left}
 $y_a=i$, $y_b=i+1$, $x_b<x_a$ and $a<b$;
\item\label{case right}
$x_a=i$, $x_b=i+1$, $y_b<y_a$ and $b<a$.
\end{enumerate}
In case \eqref{case separate}, let 
\begin{itemize}
\item $A=\{j\in \{1,\dots,k\}: x_a<x_j,\,y_j<i\}$, 
\item $B=\{j\in \{1,\dots,k\}: i+1<x_j,\,y_j<y_b\}$ and 
\item $C=\{j\in \{1,\dots,k\}: x_j,\,y_j\not\in [x_a,y_b]\}$. 
\end{itemize}

In case \eqref{case left} let 
\begin{itemize}
\item $A=\{j\in \{1,\dots,k\}: x_b<x_j,\,y_j<x_a\}$, 
\item $B=\{j\in \{1,\dots,k\}: x_a<x_j,\,y_j<i\}$ and 
\item$C=\{j\in \{1,\dots,k\}: x_j,\,y_j\not\in [x_b,i+1]\}$. 
\end{itemize}

In case \eqref{case right} let 
\begin{itemize}
\item $A=\{j\in \{1,\dots,k\}: y_b<x_j,\,y_j<y_a\}$, 
\item $B=\{j\in \{1,\dots,k\}: i+1<x_j,\,y_j<y_b\}$ and 
\item $C=\{j\in \{1,\dots,k\}: x_j,\,y_j\not\in [i,y_a]\}$. 
\end{itemize}

It then follows from \eqref{eq history} and \eqref{eq between} that with the standard linear order on $A$, on $B$, and on $C$ the sequence
$((x_j,y_j)_{j\in A},(x_j,y_j)_{j\in B}, (i,i+1), (z,w), (x_j,y_j)_{j\in C})$ is the history of a path $\e\path{}\pm\f_t$ where
\[
(z,w)=\begin{cases} (x_a,y_b) & \text{in case } \eqref{case separate} 
\\ (x_b,x_a) & \text{in case } \eqref{case left}
\\ (y_b,y_a) & \text{in case } \eqref{case right}.
 \end{cases}
\] 
Since $(i,i+1)$ is in the history of this path, it follows from the case considered first that $\e'\path{}\pm\f_t$.

The only case left is where $\{i,i+1\}\cap\{x_1,y_1,\dots,x_k,y_k\}$ is a singleton set. It follows from \eqref{eq history} and \eqref{eq between} that if the intersection is $\{i\}$ then $i=y_a$ for some $a$, if the intersection is $\{i+1\}$ then $i=x_a$ for some $a$ and that in either case, for this $a$ the sequence
\[
((x_1,y_1),\dots,(x_{a-1},y_{a-1}),(i,i+1),(x_{a+1},y_{a+1}),\dots,(x_k,y_k))
\] 
is the history of another path $\e\path{}\pm\f_t$. We are reduced again to the first case and the lemma follows.

\end{proof}
As an immediate consequence of Lemma \ref{lem conn}, by induction on the length of the path $\e\path{}\e'$, we have the following
\begin{corollary}\label{cor conn}
For a path $\e\path{}\e'$ in $\E$ and $t\in \Z$ we have $\e\in V_t$ if and only if $\e'\in V_t$. 
In particular, $\E_t$, $t\in \Z$ are the (non-directed) connected components of $\E$ and the union
\[
V=\sqcup_{t\in \Z} V_t
\]
is disjoint.
\qed
\end{corollary}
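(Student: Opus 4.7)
The plan is to prove the first assertion by induction on the length $t$ of the path $\e\path{\imath}\e'$, where $\imath=(i_1,\ldots,i_t)$. The base case $t=0$ is vacuous since then $\e=\e'$. For the inductive step, decompose the path as $\e\go{i_1}\e_1\path{\imath'}\e'$ where $\imath'=(i_2,\ldots,i_t)$. Lemma \ref{lem conn} applied to the single edge $\e\go{i_1}\e_1$ gives $\e\in V_t$ if and only if $\e_1\in V_t$, and the induction hypothesis applied to $\e_1\path{\imath'}\e'$ completes the step.

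For the second assertion I would first verify that $V=\cup_{t\in\Z} V_t$. Given $\e\in V$, iterate edges starting at $\e$; since every edge strictly decreases the length of the tuple by two, after finitely many steps we arrive at a vertex that is not the origin of any edge. Such vertices are exactly the $\pm\f_s$, $s\in\Z_{\ge 0}$, so $\e\in V_{\pm s}$ for some sign and some $s$.

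Next I would verify disjointness. Suppose $\e\in V_s\cap V_t$ for distinct indices (or matching index but different signs). Then $\e$ admits a directed path to the representative of $V_s$ (say $\pm\f_s$) and to the representative of $V_t$. By the first assertion applied to the path $\e\path{}\pm\f_s$, the vertex $\pm\f_s$ lies in $V_t$, i.e.\ there is a directed path $\pm\f_s\path{}\pm\f_t$. But $\pm\f_s$ is not the origin of any edge of $\E$, so this path has length zero, forcing $s=t$ with matching signs, a contradiction.

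Finally, to identify the $\E_t$ with the undirected connected components: any two vertices of $V_t$ are connected in the undirected graph through the common endpoint $\pm\f_t$, so $V_t$ is contained in a single undirected component. Conversely, any undirected path is a concatenation of edges traversed in either direction, and Lemma \ref{lem conn} (which is symmetric in the two endpoints of each edge) shows that membership in $V_t$ is preserved across each such edge; hence an undirected path cannot cross between different $V_t$'s. No step here is a real obstacle — the substance of the argument is entirely contained in Lemma \ref{lem conn}, and the corollary is essentially a bookkeeping consequence.
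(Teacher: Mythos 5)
Your argument is correct and is essentially the paper's: the main assertion is obtained by induction on the length of the path, applying Lemma \ref{lem conn} edge by edge, and the remaining statements (the union $V=\cup_t V_t$, its disjointness, and the identification of the $\E_t$ with the undirected components) are exactly the bookkeeping the paper treats as immediate, which you have simply written out. No gaps.
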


Clearly, every $\e\in V\setminus\{\f_0\}$ is of the form $\e=\pm(\triv_{s_1},-\triv_{s_2},\dots,(-1)^{m-1}\triv_{s_m})$ for unique $m,\,s_1,\dots,s_m\in \N$ where $\triv_s=(1,\dots,1)\in\{\pm 1\}^s$, $s\in \N$. For our purposes it will be more convenient to represent elements of $V$ differently, in terms of the $\f_t$. 

For $m,\,t_1,\dots,t_m\in \N$ let
\[
\f_{t_1,\dots,t_m}=(\f_{t_1},(-1)^{\tau_2}\f_{t_2},\dots,(-1)^{\tau_{m}}\f_{t_m})
\]
where $\tau_{2i}=t_1+\cdots +t_{2i-1}+1$ and $\tau_{2i+1}=t_1+\cdots+t_{2i}$, $i=1,\dots,[m/2]$.
For example, 
\[
\f_{3,1,1,2}=(1,-1,1,1,1,1,-1). 
\]
It is a simple observation that every $\e\in V\setminus\{\f_0\}$ is of the form $\e=\pm\f_{t_1,\dots,t_m}$ for unique $m,\,t_1,\dots,t_m\in \N$.
Indeed, for $\e=(e_1,\dots,e_k)\in V\setminus\{\f_0\}$ we have $\e=e_1 \f_{t_1,\dots,t_m}$ where 
\[
\{i\in\{1,\dots,k\}: e_i=e_{i+1}\}=\{t_1+\cdots+t_i:i=1,\dots,m-1\}
\]
and $t_1+\cdots+t_m=k$.

For $\e=\pm \f_{t_1,\dots,t_m}\in V\setminus\{\f_0\}$ let
\[
\tau(\e)=\pm (-1)^m\sum_{i=1}^m(-1)^it_i.
\]
We also set $\tau(\f_0)=0$.
\begin{lemma}\label{lem nonsplt}
For $\e\in V$ we have $\e\in V_{\tau(\e)}$.
\end{lemma}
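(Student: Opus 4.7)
My plan is to prove the lemma by exhibiting a combinatorial invariant of paths in $\E$ whose value is $\pm t$ at each terminal vertex $\pm\f_t$ and that, on the block presentation $\e=\pm\f_{t_1,\dots,t_m}$, reduces to the formula defining $\tau(\e)$.

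For $\e=(e_1,\dots,e_k)\in\{\pm 1\}^k$, I would set
\[
S(\e) = \sum_{j=1}^k (-1)^{j-1}e_j.
\]
The first step is to check that $S$ is preserved under every edge $\e \go{i} \e'$ of $\E$. Since $e_i=e_{i+1}$, the contribution of the two deleted coordinates is $(-1)^{i-1}e_i+(-1)^i e_{i+1}=(-1)^{i-1}(e_i-e_{i+1})=0$, and the positions of all subsequent coordinates decrease by $2$, leaving their sign factors $(-1)^{j-1}$ unchanged. By induction along paths, $S$ is constant on each connected component of $\E$, hence on each $V_t$ in view of Corollary \ref{cor conn}.

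A direct evaluation gives $S(\pm\f_t)=\pm t$. Since every edge strictly decreases the length, each $\e\in V$ lies on a path terminating at some $\pm\f_t$, and by the invariance above this identifies the component of $\e$ as $V_{S(\e)}$ (with the convention that $V_0$ corresponds to $S(\e)=0$).

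It remains to verify $S(\e)=\tau(\e)$ when $\e=\pm\f_{t_1,\dots,t_m}$. Writing $T_j=t_1+\cdots+t_j$ with $T_0=0$ and letting $s_j\in\{\pm 1\}$ be the leading sign of the $j$-th block (so that $s_1$ is the overall sign of $\e$ and $s_{j+1}=s_j(-1)^{t_j-1}$ from the equality of signs across each block boundary), the contribution of block $j$ to $S(\e)$ is
\[
\sum_{i=1}^{t_j}(-1)^{T_{j-1}+i-1}\cdot s_j(-1)^{i-1} \;=\; s_j(-1)^{T_{j-1}}t_j.
\]
Iterating the recursion $s_j=s_1(-1)^{T_{j-1}-(j-1)}$ cancels the factor $(-1)^{T_{j-1}}$ and leaves $s_1(-1)^{j-1}t_j$; summing over $j$ and matching parities against the $(-1)^m$ and $(-1)^i$ appearing in the stated formula for $\tau(\e)$ yields the identification $S(\e)=\tau(\e)$. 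The main work is this sign-bookkeeping comparison at the end; the conceptual content of the proof — the invariance of $S$ under edge reductions — is immediate from the defining property of the edges of $\E$.
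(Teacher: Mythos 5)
Your core argument is sound and takes a genuinely different route from the paper's. The paper proves the lemma by induction on $t_1+\cdots+t_m$: it follows the single edge $\e\go{t_1}\e'$, computes $\e'$ in block coordinates through the four cases of \eqref{eq: edge}, checks that $\tau$ is unchanged, and concludes via Lemma \ref{lem conn}. You instead introduce the coordinatewise alternating sum $S(\e)=\sum_j(-1)^{j-1}e_j$, which is visibly conserved along every edge and equals $\pm t$ at the sinks $\pm\f_t$; this eliminates the block case analysis entirely, and in fact you do not even need Corollary \ref{cor conn} (only that every maximal path terminates at some $\pm\f_t$), while the same invariant also yields the disjointness of the sets $V_t$ as a byproduct.

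The one step that does not go through as written is the final identification $S(\e)=\tau(\e)$ against the printed formula. Your own block computation gives $S(\pm\f_{t_1,\dots,t_m})=\pm\sum_{j=1}^m(-1)^{j-1}t_j$, whereas the stated definition is $\tau(\e)=\pm(-1)^m\sum_{i=1}^m(-1)^i t_i=\pm\sum_{i=1}^m(-1)^{m-i}t_i$; these coincide exactly when $m$ is odd and are negatives of each other when $m$ is even, so ``matching parities'' cannot produce the asserted identity. Concretely, $\f_{3,1}=(1,-1,1,1)\go{3}(1,-1)=\f_2$, so $\f_{3,1}\in V_2$ in accordance with $S(\f_{3,1})=2$, while the printed formula gives $\tau(\f_{3,1})=-2$. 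What your argument actually proves is the lemma with $\tau(\e)=\pm\sum_{i=1}^m(-1)^{i-1}t_i$, and this is evidently the intended reading: with the printed sign the paper's own induction step breaks down as well (in the case $t_1=1<t_2$ of \eqref{eq: edge} one finds $\tau(\e')=-\tau(\e)$), whereas the forward alternating sum is preserved in every case. So rather than asserting the identification with the printed $(-1)^m$, you should record the corrected formula (or flag the sign discrepancy) explicitly; with that correction your invariant argument is complete and, if anything, cleaner than the paper's.
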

\begin{proof}
Let $m,\,t_1,\dots,t_m\in\N$ and $\e=\pm\f_{t_1,\dots,t_m}$.
Since $\tau(\pm\f_{t_1})=\pm t_1$ the case when $m=1$ is straightforward.  For $m>1$ we proceed by induction on $t_1+\cdots+t_m$. Note that $\e\go{t_1}\e'$ where
\begin{equation}\label{eq: edge}
\e'=\begin{cases}  \pm\f_{t_1-1,t_2-1,t_3,\dots,t_m} & t_1,\,t_2>1 \\ \mp \f_{t_2-1,t_3,\dots,t_m} & t_1=1<t_2 \\ \pm f_{t_1-1+t_3,t_4,\dots,t_m} & t_1>t_2=1 \\ \pm \f_{t_3,\dots,t_m} & t_1=t_2=1.\end{cases} 
\end{equation}
In all cases we have $\tau(\e)=\tau(\e')$. By the induction hypothesis we have $\e'\in V_{\tau(\e)}$ and by Lemma \ref{lem conn}, we get that $\e\in V_{\tau(\e)}$.
\end{proof}

The combinatorial description of discrete series representations that have trivial partial cuspidal support is based on the graphs $\E_0$ and $\E_1$.
We finish this section with two simple lemmas concerning these subgraphs.
\begin{lemma}\label{lem v0}
Let $\f_0\ne \e=\pm\f_{t_1,\dots,t_m}\in V_0$. Then there exists a path $\e\path{\imath}\f_0$ with pattern $\imath$ of the form $(i_1,\dots,i_T,t_1,\dots,2,1)$ where $i_j>t_1+1$, $j=1,\dots,T$. 

If in addition $\e=(e_1,\dots,e_k)$ with $e_{t_1+2}=e_{t_1+1}(=e_{t_1})$ then there exists a path $\e\path{\jmath}\f_0$ with pattern $\jmath$ of the form $(i_1,\dots,i_S,t_1+1,t_1,\dots,2,1)$ where $i_j>t_1+2$, $j=1,\dots,S$.
\end{lemma}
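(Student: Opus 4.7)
The plan is to prove both parts by strong induction on $|\e|$, using Lemma \ref{lem conn} (each edge preserves the connected component $V_0$) and the recursion \eqref{eq: edge} describing the effect of the edge labeled $t_1$ on $\pm\f_{t_1,\dots,t_m}$.

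For the first part, $\e\in V_0\setminus\{\f_0\}$ forces $m\ge 2$ (otherwise $\e\in V_{\pm t_1}\ne V_0$ by Lemma \ref{lem nonsplt}). If $m=2$, then $\tau(\e)=0$ forces $t_1=t_2$, and iterating \eqref{eq: edge} (the cases $t_1=t_2>1$ and $t_1=t_2=1$) yields
\[
\pm\f_{t_1,t_1}\go{t_1}\pm\f_{t_1-1,t_1-1}\go{t_1-1}\cdots\go{1}\f_0,
\]
so the pattern $(t_1,\dots,2,1)$ works with $T=0$. If $m\ge 3$, the match positions $t_1,\ t_1+t_2,\ t_1+t_2+t_3,\dots$ include some $j>t_1+1$: take $j=t_1+t_2$ if $t_2\ge 2$; if $t_2=1$ then $\tau(\e)=0$ rules out $m=3$ (it would give $t_3=1-t_1\le 0$), so $m\ge 4$ and $j=t_1+1+t_3$ works. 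For such $j$, the edge $\e\go{j}\e^{(1)}$ preserves the first $t_1+1$ entries of $\e$, so the alternating prefix and the match at $t_1$ survive, giving $t_1^{(1)}=t_1$. By Lemma \ref{lem conn}, $\e^{(1)}\in V_0$ with $|\e^{(1)}|<|\e|$, and strong induction provides a pattern $(i_1',\dots,i_{T'}',t_1,\dots,1)$ for $\e^{(1)}$ with each $i_k'>t_1+1$; prepending $j$ gives the desired pattern for $\e$.

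For the second part, the hypothesis $e_{t_1+2}=e_{t_1+1}$ is exactly $t_2=1$. A parity computation with $\tau(\e)=0$ and $t_2=1$ rules out $m\le 3$; in the base case $|\e|=2t_1+2$, it pins down $m=4,\ t_3=1,\ t_4=t_1$, hence $\e=\pm\f_{t_1,1,1,t_1}$. A direct sign count then shows $e_{t_1}=e_{t_1+1}=e_{t_1+2}=e_{t_1+3}$, so the edge at $t_1+1$ sends $\e$ to $\pm\f_{t_1,t_1}$, which is reduced by $(t_1,\dots,1)$ via the first part; concatenating yields $(t_1+1,t_1,\dots,1)$ with $S=0$. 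For $|\e|>2t_1+2$ one similarly locates a match at some $j>t_1+2$ (at $t_1+1+t_3$ when $t_3\ge 2$, and at $t_1+2+t_4$ when $t_3=1$ with $m\ge 5$); the edge $\e\go{j}\e^{(1)}$ preserves the first $t_1+2$ entries, so $\e^{(1)}$ still satisfies the hypothesis, and strong induction completes the step.

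The only subtle step is the second-part base case: verifying that $\tau(\e)=0$, $t_2=1$, and $|\e|=2t_1+2$ force $\e=\pm\f_{t_1,1,1,t_1}$, and then tracking signs to see that the prescribed terminal edges $(t_1+1,t_1,\dots,1)$ actually peel this $\e$ all the way down to $\f_0$. Everything else is straightforward bookkeeping of match positions, with Lemma \ref{lem conn} ensuring each intermediate vertex remains in $V_0$.
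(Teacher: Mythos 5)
Your proof is correct, and its induction step takes a genuinely different route from the paper's. The paper also inducts on $t_1+\cdots+t_m$ using Lemma \ref{lem conn} and \eqref{eq: edge}, but it always contracts the leftmost available edge: the edge labeled $t_1$ for the first part, and the edge labeled $t_1+1$ for the second part, so that the second part becomes an immediate consequence of the first. The price there is a re-assembly step: the pattern supplied by the induction hypothesis for the smaller vertex has its terminal descending run re-partitioned and its remaining labels shifted by $2$, so that the contracted edge is in effect postponed until just before the descending tail. You instead contract an edge at a match position strictly to the right of $t_1+1$ (resp. $t_1+2$), which leaves the prefix of $\e$ untouched, so $t_1$ (and, in part two, the condition $e_{t_1+2}=e_{t_1+1}$) is inherited by $\e^{(1)}$ and the pattern for $\e$ is obtained by mere concatenation with no relabeling; the cost is the case analysis guaranteeing such a match exists, which bottoms out in the explicit configurations $\pm\f_{t_1,t_1}$ and $\pm\f_{t_1,1,1,t_1}$ handled by direct descending chains. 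Your bookkeeping does check out: under the hypothesis of part two, $\tau(\e)=0$ and $t_2=1$ give $\sum_{i\ge 3}(-1)^it_i=t_1-1$, and if $\abs{\e}=2t_1+2$ then $\sum_{i\ge 3}t_i=t_1+1$, so $\sum_{i\ge 3,\ i\ \mathrm{odd}}t_i=1$, forcing $m=4$, $t_3=1$, $t_4=t_1$ as you claim; the only cosmetic slip is that $m\ge 3$ comes from the hypothesis itself (the entry $e_{t_1+2}$ must exist), while parity is what excludes $m=3$. In short, your argument trades the paper's label-shifting and the reduction of part two to part one for a separate induction with explicit base cases and a search for far-right matches; both are complete proofs.
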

\begin{proof}
We prove the first part by induction on $t_1+\cdots+t_m$. Let $\e\go{t_1}\e'$. It follows from Lemma \ref{lem conn} that $\e'\in V_0$. If $t_1=1$ it clearly follows that a path $\imath$ as required exists. This case includes the basis of induction. Assume now that $t_1>1$. By \eqref{eq: edge} we have that $\e'=\pm\f_{s_1,\dots,s_k}$ with $s_1\ge t_1-1$ (in particular $\e'\ne \f_0$). By induction there exists a path $\e'\path{\imath'}\f_0$ with pattern $\imath'$ of the form $(j_1,\dots,j_S,s_1,\dots,2,1)$ where $j_k>s_1+1$ for $k=1,\dots,S$. Thus, $\imath'$ is also of the form $(j_1,\dots,j_{S+s_1-t_1+1},t_1-1,\dots,2,1)$ where $j_k>t_1-1$ for $j=1,\dots,S+s_1-t_1+1$.
It is then straightforward that $\e\path{\imath}\f_0$ with $\imath=(j_1+2,\dots,j_{S+s_1-t_1+1}+2,t_1,\dots,2,1)$.
The first part follows.

 Assume now that $e_{t_1}=e_{t_1+2}$. Equivalently, assume that $m>2$ and $t_2=1$. Applying Lemma \ref{lem conn} let $\e'\in V_0$ be such that $\e\go{t_1+1} \e'$. Note that then $\e'=\pm\f_{s_1,\dots,s_k}$ with $s_1\ge t_1$. It therefore follows from the first part of the lemma that there exists a path $\e'\path{\jmath'}\f_0$ with pattern of the form $\jmath'=(j_1,\dots,j_S,s_1,\dots,2,1)$ and $j_k>s_1+1$ for $k=1,\dots,S$. Writing  $\jmath'=(j_1,\dots,j_{S+s_1-t_1},t_1,\dots,2,1)$ we have $j_k>t_1$ for $k=1,\dots,S+s_1-t_1$. It is then straightforward that $\e\path{\jmath}\f_0$ with $\jmath=(j_1+2,\dots,j_{S+s_1-t_1}+2,t_1+1,t_1,\dots,2,1)$. The lemma follows. 
\end{proof}

\begin{lemma}\label{lem v1}
Let $\f_1\ne \e=\pm\f_{t_1,\dots,t_m}\in V_1$. Then there exists a path $\e\path{\imath}\f_1$ with pattern $\imath$ of the form $(i_1,\dots,i_T,t_1,t_1-1,\dots,x)$ where $x\in \{1,2\}$, $x=1$ if $t_1=1$ and $i_j>t_1+1$, $j=1,\dots,T$. 

If in addition $\e=(e_1,\dots,e_k)$ with $e_{t_1+2}=e_{t_1+1}(=e_{t_1})$ then there exists a path $\e\path{\jmath}\f_1$ with pattern $\jmath$ of the form $(i_1,\dots,i_S,t_1+1,t_1,t_1-1,\dots,x)$ where $x\in \{1,2\}$ and $i_j>t_1+2$, $j=1,\dots,S$.
\end{lemma}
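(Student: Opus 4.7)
My plan is to prove both assertions by induction on $t_1+\cdots+t_m$, closely paralleling the argument of Lemma \ref{lem v0}. For the first assertion, I will consider the edge $\e \go{t_1} \e'$, which lies in $V_1$ by Lemma \ref{lem conn}. If $\e' = \f_1$, then $\e$ has exactly three coordinates, which forces $t_1 \in \{1,2\}$, and the single-step pattern $(t_1)$ is already of the required form with $T = 0$ and $x = t_1$. Otherwise, I will write $\e' = \pm \f_{s_1,\dots,s_k}$ using the case analysis \eqref{eq: edge}; direct inspection of the four cases will show that $s_1 \geq t_1 - 1$ in cases (i), (iii), (iv), while $t_1 = 1$ in case (ii). The inductive hypothesis will then produce a path $\e' \path{\imath'} \f_1$ with $\imath' = (j_1,\dots,j_S, s_1, s_1-1, \dots, y)$, $y \in \{1,2\}$, $y = 1$ if $s_1 = 1$, and $j_k > s_1 + 1$.

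When $t_1 \geq 2$, I will absorb the descending indices $s_1, s_1-1, \dots, t_1$ (all exceeding $t_1 - 1$) into the initial list to rewrite $\imath'$ as $(j_1',\dots,j_{S'}', t_1-1, \dots, y)$ with $j_k' > t_1 - 1$, then shift each $j_k'$ by $+2$ (accounting for the two coordinates removed when passing from $\e$ to $\e'$) and append the tail $(t_1, t_1-1, \dots, y)$, producing the pattern
\[
\imath = (j_1' + 2,\, \dots,\, j_{S'}' + 2,\, t_1,\, t_1-1,\, \dots,\, y)
\]
of the required form with $x = y$. When $t_1 = 1$ (case (ii)) the tail collapses to $(1)$ with $x = 1$ and the same construction applies. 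The validity of $\imath$ will be verified exactly as in Lemma \ref{lem v0}: the shifted initial edges only affect positions $> t_1 + 1$ of $\e$, so positions $1, \dots, t_1 + 1$ remain intact until the tail is applied, and the tail then successively peels off adjacent equal pairs from the front of the $\f_{t_1}$-prefix of $\e$.

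For the second assertion, I will use the hypothesis $e_{t_1 + 2} = e_{t_1 + 1}$, which forces $m \geq 3$ and $t_2 = 1$, to apply the edge $\e \go{t_1 + 1} \e''$; a direct computation from the definition of $\f_{t_1,\dots,t_m}$ shows that $\e'' = \pm \f_{t_1 + t_3 - 1, t_4, \dots, t_m}$, so the first block of $\e''$ has length $s_1 = t_1 + t_3 - 1 \geq t_1$. Applying the first assertion to $\e''$ and then absorbing the indices $s_1, s_1-1, \dots, t_1 + 2$ (all exceeding $t_1 + 1$) into the initial list, followed by the $+2$ shift and concatenation, will yield the desired pattern
\[
\jmath = (j_1' + 2,\, \dots,\, j_{S'}' + 2,\, t_1+1,\, t_1,\, t_1-1,\, \dots,\, y).
\]
The main technical obstacle throughout will be verifying that the reordered sequence of edges is in fact a valid path on $\e$; this is handled by the key observation that edges whose indices strictly exceed $t_1 + 1$ (respectively $t_1 + 2$) commute with the initial edge at $t_1$ (respectively $t_1 + 1$), so they can be performed first on $\e$ without obstruction, and the current index of each such edge is simply its index in the inductive pattern on $\e'$ (resp.\ $\e''$) shifted by $+2$.
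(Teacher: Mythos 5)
Your overall strategy is exactly the paper's: induct on $t_1+\cdots+t_m$, pass to the edge $\e\go{t_1}\e'$ (resp.\ $\e\go{t_1+1}\e''$ for the second assertion), read off from \eqref{eq: edge} that $s_1\ge t_1-1$ (resp.\ $s_1=t_1+t_3-1\ge t_1$), and convert the inductive pattern by absorbing the upper part of the descending run into the initial list, shifting by $+2$, and appending the low tail. Your treatment of the first assertion is correct and coincides with the paper's construction; the handling of $\e'=\f_1$ and of $t_1=1$ is fine (only note that $t_1=1$ also occurs in case (iv), not just case (ii), but the same argument applies there).

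The second assertion, however, contains an off-by-one error that breaks the exhibited path. You absorb only the run entries $s_1,\dots,t_1+2$ and propose $\jmath=(j_1'+2,\dots,j_{S'}'+2,t_1+1,t_1,\dots,y)$. This pattern has exactly as many entries as the inductive pattern on $\e''$; but $\e$ has two more coordinates than $\e''$, so any path $\e\path{}\f_1$ needs one more edge, and even if every step of your $\jmath$ were legal it would end at a tuple of length three, not at $\f_1$. Concretely, the edge of the inductive pattern at index $t_1+1$ is neither absorbed (you stop at $t_1+2$) nor performable later, since your single entry $t_1+1$ is being used for the junction deletion of $e_{t_1+1},e_{t_1+2}$. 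The source is the commutation threshold in your closing remark: an edge of the $\e'$-path (resp.\ $\e''$-path) at index $i$ can be pulled in front of the deletion at $t_1$ (resp.\ $t_1+1$) as soon as $i\ge t_1$ (resp.\ $i\ge t_1+1$), because it then acts at positions $i+2,i+3\ge t_1+2$ (resp.\ $\ge t_1+3$) of $\e$; demanding $i>t_1+1$ (resp.\ $i>t_1+2$) is one more than necessary, and indeed your own first-part construction correctly absorbs down to $t_1$. The fix is to absorb the part of the run lying above $t_1$, i.e.\ $s_1,\dots,t_1+1$ (vacuous when $s_1=t_1$); the shifted entries $s_1+2,\dots,t_1+3$ still exceed $t_1+2$ as required, and one obtains $\jmath=(j_1+2,\dots,j_{S+s_1-t_1}+2,t_1+1,t_1,\dots,y)$, which is precisely the paper's pattern. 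With this correction (and allowing the degenerate case $\e''=\f_1$, where $\jmath=(t_1+1)$ already works) your argument agrees with the paper's proof.
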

\begin{proof}
We prove the first part by induction on $t_1+\cdots+t_m$. Let $\e\go{t_1}\e'$. It follows from Lemma \ref{lem conn} that $\e'\in V_1$. If $t_1=1$ it clearly follows that a path $\imath$ as required exists. This case includes the basis of induction. Assume now that $t_1>1$. By \eqref{eq: edge} we have that $\e'=\pm\f_{s_1,\dots,s_k}$ with $s_1\ge t_1-1$. If $\e'=\f_1$ then necessarily $\e=(1,-1,-1)$ and the path $\e\path{(2)} \f_1$ satisfies the required condition with $x=2$. Assume now that $\e'\ne \f_1$.
By induction there exists a path $\e'\path{\imath'}\f_1$ with pattern $\imath'$ of the form $(j_1,\dots,j_S,s_1,s_1-1,\dots,x)$ where $x\in \{1,2\}$ and$j_k>s_1+1$ for $k=1,\dots,S$. If we write $\jmath=(j_1,\dots,j_{S+s_1-t_1+1},t_1-1,\dots,x)$ then $j_k>t_1-1$ for $j=1,\dots,S+s_1-t_1+1$.
It is then straightforward that $\e\path{\imath}\f_1$ with $\imath=(j_1+2,\dots,j_{S+s_1-t_1+1}+2,t_1,\dots,x)$.
The first part follows.

 Assume now that $e_{t_1}=e_{t_1+2}$. Applying Lemma \ref{lem conn} let $\e'\in V_0$ be such that $\e\go{t_1+1} \e'$. Note that then\ $\e'=\pm\f_{s_1,\dots,s_k}$ with $s_1\ge t_1$. It therefore follows from the first part of the lemma that there exists a path $\e'\path{\jmath'}\f_1$ with pattern of the form $\jmath'=(j_1,\dots,j_S,s_1,\dots,x)$ and $j_k>s_1+1$ for $k=1,\dots,S$. Writing  $\jmath'=(j_1,\dots,j_{S+s_1-t_1},t_1,\dots,2,1)$ we have $j_k>t_1$ for $k=1,\dots,S+s_1-t_1$. It is then straightforward that $\e\path{\jmath}\f_0$ with $\jmath=(j_1+2,\dots,j_{S+s_1-t_1}+2,t_1+1,t_1,\dots,x)$. The lemma follows. 
\end{proof}

\section{Vanishing in the discrete series class}\label{s_van_ds}

Let 
\[
\Irr_\U=\sqcup_{n=0}^\infty \Irr(U_{2n})
\]
where $\Irr(U_0)=\{\triv_0\}$ consists of the trivial representation of the trivial group.
In light of Proposition \ref{prop dp} we introduce the following class of representations.
Let $\Pi_\disc^\circ\subset \Irr_\U$ be the set of discrete series representations with trivial partial cuspidal support. We first provide an interpretation of the M{\oe}glin-Tadi\'c classification for this particular class of irreducible discrete series representations.

\subsection{Preliminaries on classification of discrete series representations}\label{ss_dsc}

\subsubsection{}\label{sec zseg} Let 
\[
\Cusp_{\GL}= \sqcup_{n=1}^\infty \Cusp(\GL_n(E)),\ \ \ \Irr_{\GL}=\sqcup_{n=0}^\infty \Irr(\GL_n(E))
\] 
and $\nu(g)=\abs{\det g}_E$ for any $g\in \GL_n(E)$ and $n\in \N$. A Zelevinsky segment is a subset of $\Cusp_{\GL}$ of the form
\[
[a,b]_{(\rho)}=\{\nu^i \rho\mid i=a,a+1,\dots,b\}
\]
where $\rho\in\Cusp_{\GL}$, $a,\,b\in \R$ and $b-a+2\in \N$ (so that $[a,a-1]_{(\rho)}$ is empty).

For a Zelevinsky segment $\Delta=[a,b]_{(\rho)}$ we associate the representation $L(\Delta)\in\Irr_{\GL}$, the unique irreducible quotient of $\nu^a\rho\times \nu^{a+1}\rho\times\cdots\times \nu^b\rho$. It is an essentially square integrable representation and all essentially square integrable representations in $\Irr_{\GL}$ are of this form.

For $\rho\in \Cusp_{\GL}$ and $a\in \N$ let 
\[
\delta(\rho,a)=L([\frac{1-a}2,\frac{a-1}2]_{(\rho)}).
\]
For a Zelevinsky segment $\Delta$ we also set
\[
\Delta^\vee=\{\rho^\vee\mid \rho\in \Delta\},\ \ \ \overline\Delta=\{\overline\rho\mid \rho\in \Delta\}\ \ \ {\rm and} \ \ \ \nu^a\Delta=\{\nu^a\rho\mid \rho\in \Delta\}, \ \ \ a\in \R. 
\]

\subsubsection{Cuspidal reducibility points}\label{ss_l_fn}
Let $\rho\in \Cusp_{\GL}$. If $\rho$ is unitary and $\rho^{\vee}\not\cong \overline{\rho}$ then $\nu^x\rho\rtimes \triv_0$ is irreducible for all $x\in \R$. If $\rho$ is conjugate self-dual, that is, such that $\rho^{\vee}\cong \overline{\rho}$ then there exists a unique $x\in \R_{\ge 0}$ such that $\nu^{x}\rho\rtimes \triv_0$ is reducible (and so does $\nu^{-x}\rho\rtimes \triv_0$ by Proposition \ref{prop_basic_ind} (3)). 
In fact, it is proved in \cite[Theorem 3.1]{MR1266747} that this $x$ is either $0$ or $\frac12$. This dichotomy for conjugate self-dual representations in $\Cusp_\GL$ is characterized in the literature in various different ways that we collect here together.
\begin{theorem}\label{thm pairity}
Let $\rho\in\Cusp_\GL$ be conjugate self-dual. The following are equivalent:
\begin{enumerate}
\item\label{pt red} $\nu^{\frac12}\rho\rtimes \triv_0$ is reducible;
\item \label{pt AS}The Asai-Shahidi $L$-function associated with $\rho$ (see e.g. \cite[\S 3]{MR1266747}) has a pole at $s=0$;
\item \label{pt fl}The twisted tensor $L$-function associated to $\rho$ by Flicker in the Appendix to \cite{MR1241802} has a pole at $s=0$;
\item \label{pt dist}$\rho$ is $\GL(F)$-distinguished.
\end{enumerate}
\end{theorem}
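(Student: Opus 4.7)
The theorem is a compilation of well-known results; my plan is to establish each equivalence by invoking the appropriate theorem from the literature on $L$-functions, intertwining operators, and distinguished representations.

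For (1) $\Leftrightarrow$ (2), I would appeal to Shahidi's analysis of reducibility via the Langlands-Shahidi method for quasi-split unitary groups. For a conjugate self-dual $\rho\in\Cusp_\GL$, the Plancherel measure of the family $\nu^s\rho\rtimes\triv_0$ is expressed as a ratio of Asai-Shahidi $L$-functions (this is the rank-one case of Shahidi's general formula), and the unique reducibility point in $\R_{\geq 0}$ --- which by \cite{MR1266747} is either $0$ or $1/2$ --- is controlled by whether this $L$-function has a pole at $s=0$; explicitly, reducibility at $s=1/2$ corresponds precisely to the existence of such a pole, whereas reducibility at $s=0$ corresponds to a pole of the partner Asai factor of opposite parity.

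For (2) $\Leftrightarrow$ (3), I would invoke the comparison between Shahidi's Asai-Shahidi $L$-function and Flicker's twisted tensor $L$-function. These are defined by a priori different means --- the former via the Langlands-Shahidi method and the latter via an integral representation --- but their local factors coincide. This identification follows from the compatibility of both constructions with the local Langlands correspondence for $\GL_n(E)$, and can be extracted from work of Anandavardhanan-Rajan and Matringe that directly matches the two local factors at every place.

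For (3) $\Leftrightarrow$ (4), the relevant result is the local Flicker-Rallis dichotomy, conjectured by Flicker and proved in full generality by Kable in ``Asai $L$-functions and Jacquet's conjecture'' and further elaborated by Anandavardhanan-Kable-Tandon: a conjugate self-dual $\rho\in\Cusp_\GL$ is $\GL(F)$-distinguished if and only if its twisted tensor $L$-function has a pole at $s=0$, with the complementary distinction (by the non-trivial character of $F^*/N_{E/F}(E^*)$) corresponding to the other parity. The main obstacle in assembling all of this is verifying that the various normalization conventions align --- in particular, that the Asai $L$-functions compared in the second and third steps agree as opposed to differing by a twist by the quadratic character associated to $E/F$, which would flip the dichotomy and invalidate the entire chain. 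Once these conventions are pinned down, the theorem follows by concatenation.
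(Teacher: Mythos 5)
Your proposal is correct and follows essentially the same route as the paper: the proof there is likewise a concatenation of citations, using Goldberg's Langlands--Shahidi-method results \cite[Theorems 2.7, 2.8, 2.9]{MR1266747} for the equivalence of (1) and (2), the Anandavardhanan--Rajan identification of the Asai--Shahidi and Flicker twisted tensor factors \cite[Theorem 1.6]{MR2146859} for (2) and (3), and \cite[Corollary 1.5]{MR2063106} for (3) and (4). Your caution about normalization conventions is sensible but is already resolved by the cited comparison results, so no further argument is needed.
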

\begin{proof}
The equivalence of \eqref{pt red} and \eqref{pt AS} follows from \cite[Theorems 2.7, 2.8 and 2.9]{MR1266747}. The equivalence of  \eqref{pt AS} and  \eqref{pt fl} is immediate from \cite[Theorem 1.6]{MR2146859} where it is proved that the Asai-Shahidi and Flicker's twisted tensor $L$-functions coincide. The equivalence of  \eqref{pt fl} and  \eqref{pt dist} is \cite[Corollary 1.5]{MR2063106}).
\end{proof}

As we later observe, this dichotomy is also related to the parity of the $L$-parameter of $\rho$ (for instance see \cite[Theorem 4.9]{MR3739332}). With this in mind we make the following definition. 
\begin{definition}\label{def even}
Let $\rho\in \Cusp_{\GL}$ be conjugate self-dual. We say that $\rho$ is \emph{even} if the equivalent conditions \eqref{pt red}-\eqref{pt dist} of Theorem \ref{thm pairity} hold and \emph{odd} otherwise. 
\end{definition}
We will later see that the $L$-parameter of an even $\rho$ is conjugate orthogonal and of an odd $\rho$ is conjugate symplectic.

\subsubsection{}\label{sss_Admissible_datum}
Define an admissible datum to be a pair of the form $(\J,({\mbf{a_\rho}},\epsilon_\rho)_{\rho\in \J})$ where
\begin{itemize}
\item $\J$ is a finite set of conjugate self-dual representations in $\Cusp_{\GL}$. 
\item For each $\rho\in \J$ there is a natural number $k_\rho\in \N$, that is necessarily even if $\rho$ is odd, such that ${\mbf{a_\rho}}=(a_1,\dots,a_{k_{\rho}})$  with $a_1>\cdots>a_{k_{\rho}}\ge 0$ and such that $a_1,\dots,a_{k_{\rho}}$ are all in $\Z$ if $\rho$ is odd and all in $\frac{1}{2}+\mathbb Z$ if $\rho$ is even.
\item $\epsilon_\rho$ is a $k_{\rho}$-tuple of signs in $V_0\sqcup V_1$ (see Definition \ref{def:vi} for the notation).
\end{itemize}

\subsubsection{}

For $\pi\in \Pi_\disc^\circ$ and any $\rho\in \Cusp_{\GL}$ conjugate self-dual, let $\Jord_\rho(\pi)$ be the set of integers $a\in \N$ such that
\begin{itemize}
\item $a$ is even if $\rho$ is even and odd if $\rho$ is odd,
\item $\delta(\rho,a)\rtimes \pi$ is irreducible.
\end{itemize}
Let  $k_\rho(\pi)=\abs{\Jord_\rho(\pi)}$. Let 
\[
\J_\pi=\{\rho:k_\rho(\pi)>0\}
\]
and for $\rho\in \J_\pi$ let $\mbf{a_\rho}(\pi)=(a_1,\dots,a_{k_\rho(\pi)})$ where $a_1>\cdots>a_{k_\rho(\pi)}$ and 
\[
\Jord_\rho(\pi)=\{2a_i+1:i=1,\dots,k_\rho(\pi)\}.
\]
Set $\Jord(\pi)=\{(\rho,a)\mid \rho\in \J_\pi, a\in \Jord_{\rho}\}$.

M{\oe}glin defined the function $\epsilon_\pi:\Jord(\pi)\rightarrow \{\pm 1\}$ for $\pi\in \Pi_\disc^\circ$ (see \cite[\S 2]{MR1896238}) which we now recall. 

First suppose that $\rho$ is odd so that $\rho\rtimes \triv_0$ is reducible. It is also known that $\rho\rtimes \triv_0$ is a direct sum of two inequivalent tempered representations of which exactly one is generic. Call the generic component $\tau_{1}$. Now define $\epsilon_{\pi}((\rho,2a_{1}+1))=1$ if and only if there exists a representation $\tau\in \Irr_{\GL}$ such that $\pi\hookrightarrow \tau\times L([1,a_{1}]_{(\rho)})\rtimes \tau_{1}$. 

Suppose now that $\rho$ is even. In this case define $\epsilon_{\pi}((\rho,2a_{k_{\rho}(\pi)}+1))=1$ if and only if there exists a representation $\sigma \in \Irr_\U$ such that $\pi\hookrightarrow L([\frac{1}{2},a_{k_{\rho}(\pi)}]_{(\rho)})\rtimes \sigma$. 

In either of the two cases we now define $\epsilon_\pi$ on $\Jord_\rho(\pi)$ in the following recursive manner. For any $i\in \{1,\dots,k_{\rho}(\pi)-1\}$, we require that $\epsilon_{\pi}((\rho,2a_{i}+1))=\epsilon_{\pi}((\rho,2a_{i+1}+1))$ if and only if there exists a representation $\sigma \in \Irr_\U$ such that $\pi\hookrightarrow L([a_{i+1}+1,a_{i}]_{(\rho)})\rtimes \sigma$. 

Finally, we define $\epsilon_\rho(\pi)=(e_1,\dots,e_{k_\rho(\pi)})$ by setting $e_i=\epsilon_\pi((\rho,2a_i+1))$.

\subsubsection{}\label{sss_mt_dsc}
The M{\oe}glin-Tadi\'c classification implies that $\pi\mapsto (\J_\pi,({\mbf{a_\rho(\pi)}},\epsilon_\rho(\pi))_{\rho\in \J_\pi})$ is a bijection from $\Pi_\disc^\circ$ to the set of admissible data (see \cite[Theorem 6.1]{MR1896238}). 

\subsubsection{}\label{sss_ds_cons}
Next we explain how to extract from an admissible datum corresponding to a representation $\pi\in \Pi_\disc^\circ$, ways to realize $\pi$ as a quotient of a representation induced from an essentially square integrable representation of a standard Levi subgroup of the Siegel Levi. We apply the results in \cite[\S 7 and \S 9]{MR1896238}. We freely use below notation introduced in \S \ref{app_s:signs}.

M{\oe}glin and Tadi\'c realized the discrete series representations as subrepresentations of certain induced representations. For distinction problems it is more convenient to realize representations as quotients. We translate their results via contragredient (recall \eqref{eq cont ind}). Note that $\Pi_\disc^\circ$ is preserved by $\pi\mapsto \pi^\vee$ and $\J_{\pi^{\vee}}=\{\rho^\vee \mid \rho\in \J_\pi\}$, $\pi\in \Pi_\disc^\circ$.

Fix $\pi\in \Pi_\disc^\circ$ and $\rho\in \J_{\pi}$ and let $\mbf{a}={\mbf{a}}_{\rho^\vee}(\pi^{\vee})=(a_1,\dots,a_k)$ and $\epsilon=\epsilon_{\rho^\vee}(\pi^{\vee})\in V_x$ with $x\in\{0,\ 1\}$. For any path $\epsilon\path{\imath} \f_x$ we associate a representation $I_\rho(\mbf{a},\epsilon,\iota)$ as follows. Let $((x_1,y_1),\dots,(x_m,y_m))$ be the history of the path $\imath$ (in particular $m=[k/2]$). If $k=2m$ (i.e., $x=0$) we set
\[
I_\rho(\mbf{a},\epsilon,\iota)=L([-a_{x_1},a_{y_1}]_{(\rho)})\times \cdots\times L([-a_{x_m},a_{y_m}]_{(\rho)})
\]
  and if $k=2m+1$ (i.e., $x=1$) and $\{z\}=\{1,\dots,k\}\setminus \{x_1,\dots,x_m,y_1,\dots,y_m\}$ then 
\[
I_\rho(\mbf{a},\epsilon,\iota)=L([-a_{x_1},a_{y_1}]_{(\rho)})\times \cdots\times L([-a_{x_m},a_{y_m}]_{(\rho)}) \times L([-a_z,-1/2]_{(\rho)}).
\]

Now let $\pi\in \Pi_\disc^\circ$ and for any $\rho\in \J_{\pi}$ choose a path $\epsilon_{\rho^\vee}(\pi^\vee)\path{\imath_\rho}\f_{x(\rho)}$ where $x(\rho)=x(\rho,\pi)$ is either zero or one. Let $\J_{\pi}=\{\rho_1,\dots,\rho_n\}$ be ordered arbitrarily. Then $\pi$ is a quotient of the representation
\[
I_\pi(\imath_{\rho_1},\dots,\imath_{\rho_n})=I_{\rho_1}(\mbf{a_{\rho_1^\vee}(\pi^\vee)},\epsilon_{\rho_1^\vee}(\pi^\vee),\iota_{\rho_1})\times\cdots\times I_{\rho_n}(\mbf{a_{\rho_n^\vee}(\pi^\vee)},\epsilon_{\rho_n^\vee}(\pi^\vee),\iota_{\rho_n})\rtimes \triv_0.
\]

\subsection{Statement and proof of the vanishing result}
\subsubsection{}
One of our main applications of the geometric lemma is the following.
\begin{proposition}\label{prop main}
Let $\Delta_1,\dots,\Delta_m$ be segments satisfying the following property. There exists $k\le m$, such that 
\begin{itemize}
\item for some $\rho\in \Cusp_{\GL}$ conjugate self-dual we have $\Delta_{m+1-i}=[-a_i,b_i]_{(\rho)}$ for all $i=1,\dots,k$ where the difference between any two elements in $\{a_1,\dots,a_k,b_1,\dots,b_k\}$ is an integer, and $a_1>\cdots>a_k>b_k>\cdots>b_1$; 
\item for $i\le m-k$, $\Delta_{i}$ is of the form $[-a,b]_{(\rho')}$ where $\rho'\in \Cusp_{\GL}$ is unitary and either $\rho'\not\cong\rho$ or $b<a<b_1$. 
\end{itemize}
If the representation $\sigma=L(\Delta_{1})\otimes\cdots \otimes L(\Delta_{m})\otimes \triv_0$ (of the relevant Levi subgroup $M$ of $U_{2n}$) admits a relevant orbit (see Definition \ref{def co}) then $k$ is even and $a_{2i}=a_{2i-1}-1$, $i=1,\dots,k/2$.
\end{proposition}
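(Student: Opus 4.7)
The plan is to apply the geometric lemma (Theorem~\ref{thm geoml}) through its explicit form Corollary~\ref{cor dist adm}, focusing on the leftmost cuspidal support $\nu^{-a_1}\rho$. First I would set up: if $\sigma$ admits a relevant orbit, then there exist a refinement $L=M_{(\gamma_1,\dots,\gamma_m;0)}$ of $M$, an admissible involution $w=\tau\set$ on the index set $\Index=\{(i,j): 1\le i\le m,\,1\le j\le j_i\}$, and an irreducible subquotient $\bigotimes_{(i,j)} \pi_{(i,j)}$ of $r_{L,M}(\sigma)$ that is $(L_x,\delta_x)$-distinguished, where each $\pi_{(i,j)}=L(\Delta'_{i,j})$ is a sub-segment of a consecutive cutting of $\Delta_i$.

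The key observation is that $\nu^{-a_1}\rho$ appears in the cuspidal support of exactly one $\Delta_i$, namely $\Delta_m=[-a_1,b_1]_{(\rho)}$: the other nested $\Delta_{m+1-i}$ for $i\ge 2$ have strictly larger left endpoints $-a_i>-a_1$, and the smaller segments have $-a>-b_1\ge -a_1+1$ strictly (using that exponent differences are integers and $b_1\le a_k-1\le a_1-1$). Hence the piece $\pi_{(m,1)}=L([-a_1,c_1]_{(\rho)})$ is the unique Jacquet factor containing this cuspidal, for some $c_1\in[-a_1,b_1]$.

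I would then show that three of the four cases in Corollary~\ref{cor dist adm} for the index $(m,1)$ are impossible: (i) $\Sp(E)$-distinction of $\pi_{(m,1)}=\nu^{(c_1-a_1)/2}\delta(\rho, c_1+a_1+1)$ forces a trivial twist, i.e.\ $c_1=a_1$, contradicting $c_1<a_1$; (ii) $\nu^{-1/2}$-twisted $\GL(F)$-distinction forces conjugate self-duality of the segment $[-a_1-\tfrac{1}{2},c_1-\tfrac{1}{2}]$, giving $c_1=a_1+1$, impossible; (iii) pairing via $\nu\bar\pi^\vee$ would require support including $\nu^{a_1+1}\rho$, exceeding the maximum exponent $b_k$; (iv) pairing via $\nu^{-1}$ would require $\nu^{-a_1-1}\rho$, below the minimum. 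The only surviving possibility is $\pi_{\tau(m,1)}=\nu\pi_{(m,1)}=L([-a_1+1,c_1+1]_{(\rho)})$ with $(m,1)\in\set_>(w)\cap\set$. Crucially, the admissibility condition in Section~\ref{ss gen orb}---that $\tau$ cannot send two distinct indices of the same row, both belonging to $\set$, into indices of a common row---rules out $\tau(m,1)=(m,j)$ for any $j\ne 1$. Since no smaller segment contains $\nu^{-a_1+1}\rho$, the partner must be the first piece of a nested $\Delta_{j'}$ with $-a_{j'}=-a_1+1$, and by strict monotonicity of $a_1>\dots>a_k$ together with $j'\ne m$ this forces $j'=m-1$ and $a_2=a_1-1$, giving $\tau(m,1)=(m-1,1)$.

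Iterating on the residual structure---after removing the paired first pieces of $\Delta_m$ and $\Delta_{m-1}$ and passing to the next leftmost cuspidal in the remaining data---analogous reasoning forces $a_4=a_3-1$, and so on, yielding $a_{2i}=a_{2i-1}-1$ for each $i\le k/2$. If $k$ were odd, the leftmost cuspidal of the final unpaired segment $\Delta_{m-k+1}$ would admit no admissible partner, contradicting the existence of the relevant orbit and forcing $k$ to be even. The main obstacle is the induction step: the residual data (partially consumed segments $[c_1+1,b_1]_{(\rho)}$ and $[c_1+2,b_2]_{(\rho)}$ together with the untouched $\Delta_{m-2},\dots,\Delta_{m-k+1}$ and the smaller segments) is no longer of the original form, so careful combinatorial bookkeeping---possibly using the graph-theoretic framework of Section~\ref{app_s:signs}---is required to reformulate it so that the same leftmost-cuspidal argument reapplies cleanly at each stage.
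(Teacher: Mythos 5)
Your step at the leftmost cuspidal $\nu^{-a_1}\rho$ is essentially sound and close in spirit to the paper's base case (two small repairs: the Jacquet factor of $L([-a_1,b_1]_{(\rho)})$ containing the beginning $-a_1$ is the \emph{last} piece $(m,j_m)$, not $(m,1)$, in the convention of \cite[\S 9.5]{MR584084} used throughout \S \ref{ss gen orb}, and this matters when you invoke \eqref{eq: c form}--\eqref{orbit_cond}; and the exclusion of $\Sp$-distinction should simply quote \cite[Theorem 3.2.2]{MR1078382} -- no essentially square integrable, indeed no generic, representation is $\Sp$-distinguished, so there is no ``trivial twist'' case to discuss). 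The genuine gap is the iteration, which you yourself flag as the main obstacle and which is where the real work lies. Your two key exclusions do not survive to later steps: at stage $s\ge 2$ the cuspidal $\nu^{-a_{2s-1}}\rho$ is in general contained in several of the segments $\Delta_m,\dots,\Delta_{m+2-2s}$ (and hence may occur in leftover Jacquet pieces of already-treated rows), so ``unique factor containing the leftmost cuspidal'' fails; and the $\nu^{-1}$-pairing case is no longer ruled out by a support bound, since $\nu^{-a_{2s-1}-1}\rho$ can perfectly well lie inside $\Delta_m=[-a_1,b_1]_{(\rho)}$. So ``analogous reasoning'' does not reapply cleanly to the residual data, and the suggested recourse to \S \ref{app_s:signs} is a red herring: that section concerns the sign sequences $\epsilon_\pi$ in the M{\oe}glin--Tadi\'c data, not the orbit combinatorics.

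The paper closes exactly this gap by a different organization. First it shows, uniformly for all $i=1,\dots,k$, that the beginning-piece $(m+1-i,j_{m+1-i})$ lies in $\set$: this uses only that $\set_+(w)=\emptyset$ (Heumos--Rallis) and that $\nu\overline{L([-a_i,x_i]_{(\rho)})}^\vee=L([1-x_i,a_i+1]_{(\rho)})$ cannot occur among the Jacquet factors because $a_i+1$ exceeds the maximal exponent $b_k$ on the $\rho$-line. Then it runs an induction over pairs of rows from the top: since $\tau(\set)=\set$, the partner of $(m+2-2s,j_{m+2-2s})$ must be a $\set$-element, and the induction hypothesis (the $\set$-parts of rows $m,\dots,m+3-2s$ are exactly their last indices, already paired among themselves) together with the admissibility constraints \eqref{eq: c form} and \eqref{orbit_cond} force that partner into a strictly lower row; this settles the direction of the twist (it must be $\nu$, not $\nu^{-1}$) and then the segment match forces the partner to be the last piece of row $m+1-2s$ with $a_{2s}=a_{2s-1}-1$, and in particular $2s\le k$, whence $k$ is even. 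This ``all beginning-pieces in $\set$ first, then pair rows from the top using the order constraints'' mechanism is precisely the bookkeeping your plan is missing; without it, or an equivalent substitute, your argument only establishes $a_2=a_1-1$ and does not yield the full conclusion.
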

\begin{proof}
We apply the notation of \S \ref{ss gen orb} for the relevant orbit. 
Applying \cite[\S9.5]{MR584084} write   
\[
r_{L,M}(\sigma)=\otimes_{\imath\in\Index} L(\Delta_{\imath})\otimes \triv_0
\] 
and recall that 
\[
\Delta_{(m+1-i,j_{m+1-i})}=[-a_i,x_i]_{(\rho)}
\] 
for some $x_i\le b_i$, $i=1,\dots,k$. 
Since the representation $L(\Delta)$ is generic it is not $\Sp$-distinguished for any Zelevinsky segment $\Delta$ (see \cite[Theorem 3.2.2]{MR1078382}). It therefore follows from Corollary \ref{cor dist adm} that $\set_+(w)$ is empty.
By assumption 
\[
\nu L(\overline{\Delta}_{(m+1-i,j_{m+1-i})}^{\vee})=L([1-x_i,a_i+1]_{(\rho)})\ne L(\Delta_\imath)
\] 
for any $\imath\in \Index$. We may therefore deduce from the second part of Corollary \ref{cor dist adm} that 
\[
(m+1-i,j_{m+1-i})\in \set,\ \ \ i=1,\dots,k.
\]
To complete the proof of this proposition we prove by induction that if $2i-1\le k$ then also $2i\le k$, $\tau(m+1-2i,j_{m+1-2i})=(m+2-2i,j_{m+2-2i})$ and $a_{2i}=a_{2i-1}-1$. 

It follows from Corollary \ref{cor dist adm} that 
\[
L(\Delta_{\tau(m,j_m)})=\nu L(\Delta_{(m,j_m)})=L([1-a_1,1+x_1]_{(\rho)}). 
\]
By the assumptions we must have $\tau(m,j_m)=(m-1,j_{m-1})$ and $a_2=a_1-1$. It also follows from the order constraints on $\tau$ (see \eqref{eq: c form} and \eqref{orbit_cond}) that $(n,j)\in \set$ if and only if $j=j_n$ for $n\in\{m-1,m\}$. 
Now assume that $\tau(m+1-2i,j_{m+1-2i})=(m+2-2i,j_{m+2-2i})$ and $a_{2i}=a_{2i-1}-1$ for $i=1,\dots,s-1$, that $(n,j)\in \set$ if and only if $j=j_n$ for $m+3-2s\le n\le m$ and that $2s-1\le k$. As above, by Corollary \ref{cor dist adm} we have
\[
L(\Delta_{\tau(m+2-2s,j_{m+2-2s})})=\nu L(\Delta_{(m+2-2s,j_{m+2-2s})})=L([1-a_{2s-1},1+x_{2s-1}]_{(\rho)}). 
\]
By induction hypothesis $\tau(m+2-2s,j_{m+2-2s})=(i,j)$ with $i\le m+2-2s$ and by our assumption this implies that $\tau(m+1-2s,j_{m+1-2s})=(m+2-2s,j_{m+2-2s})$, $a_{2s}=a_{2s-1}-1$ and $2s\le k$. The constraints \eqref{eq: c form} and \eqref{orbit_cond} on $\tau$ also imply that $(n,j)\in \set$ if and only if $j=j_n$ for $m+1-2s\le n\le m$. The proposition follows.
\end{proof}

\subsubsection{}We recall that the graph of signs is defined is \S \ref{app_s:signs}. We freely use the notation introduced there.
\begin{theorem}\label{thm:main_res_ds}
Let $\pi\in \Pi_\disc^\circ$. Suppose that there exists $\rho\in \J_{\pi}$, with $\epsilon_{\pi^\vee}(\rho^\vee)=(e_1,\dots,e_k)=\pm\f_{t_1,\dots,t_m}$ and ${\mbf{a_{\pi^\vee}(\rho^\vee)}}=(a_1,\dots,a_k)$, that satisfies at least one of the following three conditions:
\begin{enumerate}
\item \label{case odd}$t_1$ is odd,
\item \label{case jord}$a_{2i-1}>a_{2i}+1$ for some $i\le t_1/2$,
\item \label{case even}$e_{t_1+2}=e_{t_1+1}(=e_{t_1})$.
\end{enumerate}
Then $\pi$ is not $\Sp$-distinguished.
\end{theorem}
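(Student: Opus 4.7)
By Corollary \ref{cor cont}, it suffices to prove that $\pi^\vee$ is not $\Sp$-distinguished. My plan is to realize $\pi^\vee$ as a quotient of a parabolically induced representation $I$ via the construction of \S \ref{sss_ds_cons}, and then invoke Proposition \ref{prop main} to show that $I$ admits no relevant orbit and is therefore not $\Sp$-distinguished.

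First I would order $\J_{\pi^\vee}$ so that the specified $\rho^\vee$ appears last. For each $\rho'\in\J_{\pi^\vee}\setminus\{\rho^\vee\}$ I pick an arbitrary path $\imath_{\rho'}$ from $\epsilon_{\pi^\vee}(\rho')$ to $\f_{x(\rho')}$, while for $\rho^\vee$ I invoke Lemma \ref{lem v0} if $\epsilon_{\pi^\vee}(\rho^\vee)\in V_0$ and Lemma \ref{lem v1} if $\epsilon_{\pi^\vee}(\rho^\vee)\in V_1$, using the first part of the applicable lemma in cases (1) and (2) and the second part in case (3). The resulting path has its pattern concluding with $(t_1,t_1-1,\dots,x)$ for some $x\in\{1,2\}$ in cases (1), (2), and with $(t_1+1,t_1,t_1-1,\dots,x)$ in case (3). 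By \S \ref{sss_ds_cons}, $\pi^\vee$ is a quotient of $I:=I_{\pi^\vee}(\imath_{\rho_1^\vee},\dots,\imath_{\rho_n^\vee})$.

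Next I would trace through the history of $\imath_{\rho^\vee}$. A direct calculation shows that the final steps of its pattern produce a nested chain of $\rho$-segments
\[
L([-a_{t_1},a_{t_1+1}]_{(\rho)})\subset L([-a_{t_1-1},a_{j_1}]_{(\rho)})\subset\cdots\subset L([-a_1,a_{j_{t_1-1}}]_{(\rho)})
\]
for some increasing indices $t_1+1<j_1<\cdots<j_{t_1-1}\le k$, occupying the final $t_1$ slots of the segment list of $I$; in case (3) an additional outermost segment $L([-a_{t_1+1},a_{t_1+2}]_{(\rho)})$ precedes them, so the chain has length $t_1+1$. The remaining segments of $I$ are either $\rho'$-segments with $\rho'\ne\rho$ (which trivially meet the hypothesis of Proposition \ref{prop main}) or cleanup $\rho$-segments from the earlier part of $\imath_{\rho^\vee}$, which Proposition \ref{prop main} requires to satisfy $b<a<b_1=a_{j_{t_1-1}}$. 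The hard part will be to verify that the path can be chosen (by a careful unpacking of the recursive construction in the proofs of Lemmas \ref{lem v0} and \ref{lem v1}, possibly relying on the collapsibility in the graph $\E$ of the tail sub-tuple of $\epsilon_{\pi^\vee}(\rho^\vee)$) so that all cleanup original indices exceed $j_{t_1-1}$; this is a purely combinatorial point about the graph of \S \ref{app_s:signs}.

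With Proposition \ref{prop main} applicable, its conclusion then yields the desired contradiction in each case: in (1) the nested count is $t_1$, which is odd; in (3), assuming $t_1$ is even (case (1) handles the odd subcase), the nested count $t_1+1$ is odd; and in (2) the nested count is $t_1$, which is even under the negation of (1), but the identity $a_{2i}=a_{2i-1}-1$ forced by Proposition \ref{prop main} fails by hypothesis. Consequently $I$ admits no relevant orbit; by Theorem \ref{thm geoml}, $I$ is not $\Sp$-distinguished, and hence neither is its quotient $\pi^\vee$, which completes the proof by Corollary \ref{cor cont}.
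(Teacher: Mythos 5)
Your strategy is the same as the paper's: realize the representation as a quotient of $I_\pi(\imath_1,\dots,\imath_n)$ with the $\rho$-path supplied by Lemmas \ref{lem v0} and \ref{lem v1}, and rule out relevant orbits via Proposition \ref{prop main} and Theorem \ref{thm geoml}. However, the step you label ``the hard part'' --- choosing the path so that every cleanup coordinate exceeds $j_{t_1-1}$, so that the cleanup $\rho$-segments satisfy the literal bound $b<a<b_1$ --- is not a detail that can be postponed: it is unachievable in general, by any admissible path. Take $\epsilon=\f_{2,1,1,2}=(1,-1,-1,-1,-1,1)$, so $t_1=2$, $k=6$: any path whose pattern ends with $(t_1,\dots,2,1)$ and whose earlier indices exceed $t_1+1$ must make its single early deletion at the coordinates $(4,5)$ (the pair $(5,6)$ carries unequal signs), while coordinate $6$ survives into the final chain; the cleanup segment is then $[-a_4,a_5]_{(\rho)}$ with $a_4>a_6=a_{j_{t_1-1}}$. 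What actually rescues the argument is the weaker fact that the pattern conditions $i_j>t_1+1$ (resp.\ $i_j>t_1+2$) of Lemmas \ref{lem v0} and \ref{lem v1} force all cleanup deletions to occur at original coordinates $\ge t_1+2$, so every cleanup $\rho$-segment has both exponents at most $a_{t_1+2}$, hence strictly below $a_{t_1+1}$ and below $a_{t_1-1}-1$; this bound is what the orbit analysis underlying Proposition \ref{prop main} needs in order to show the cleanup segments cannot interfere with the forced pairing along the chain, and one must check this (rather than the stronger positional statement you propose). You also leave untreated the Lemma \ref{lem v1} ending $x=1$, in which the unpaired coordinate contributes a factor $L([-a_z,-1/2]_{(\rho)})$ placed after the chain; since this segment is contained in each chain segment it can be commuted across them, and this commutation is needed before Proposition \ref{prop main} applies.

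A second, smaller problem is the opening reduction. The construction of \S\ref{sss_ds_cons}, fed with precisely the data in the hypothesis, namely $\mbf{a}_{\rho^\vee}(\pi^\vee)$ and $\epsilon_{\rho^\vee}(\pi^\vee)$, realizes $\pi$ itself (not $\pi^\vee$) as a quotient of $I_\pi(\imath_1,\dots,\imath_n)$; realizing $\pi^\vee$ as a quotient would instead require paths starting from $\epsilon_\rho(\pi)$, which is not the data the theorem supplies, and no comparison between $\epsilon_\rho(\pi)$ and $\epsilon_{\rho^\vee}(\pi^\vee)$ is established. So the appeal to Corollary \ref{cor cont} should be dropped and the argument run for $\pi$ directly; that corollary enters only afterwards, in deducing Theorem \ref{main intro} from the present theorem.
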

\begin{proof}
In order to show that $\pi$ is not $\Sp$-distinguished we realize it as a quotient of an induced representation $I$ that is not $\Sp$-distinguished. 
We separate into several cases, and in each case, writing $I=\ip_{G,M}(\sigma)$, we apply Theorem \ref{thm geoml} and show that $I$ is not $\Sp$-distinguished by showing that $\sigma$ has no relevant orbit.

Write $\J_{\pi}=\{\rho_1,\dots,\rho_n\}$ with $\rho=\rho_n$ and choose an arbitrary path $\epsilon_{\pi^\vee}(\rho_k^\vee)\path{\imath_k} \f_{x_k}$ with $x_k\in \{0,1\}$ for $k=1,\dots,n-1$. Let $\e=\epsilon_{\pi^\vee}(\rho^\vee)$. If $\e\in V_0$ then choose a path $\e\path{\imath_n}\f_0$ to satisfy the requirement of $\imath$ in Lemma \ref{lem v0} if either \eqref{case odd} or \eqref{case jord} above hold and $\jmath$ if \eqref{case even} holds. 
Then $I_\pi(\imath_1,\dots,\imath_n)$ is of the form $\ip_{G,M}(\sigma)$ where $\sigma$ satisfies the assumptions of Proposition \ref{prop main} with $k=t_1$ in Cases \eqref{case odd} or \eqref{case jord} and with $k=t_1+1$ in Case \eqref{case even}. It therefore follows from the proposition that $I_\pi(\imath_1,\dots,\imath_n)$ and therefore also $\pi$ is not $\Sp$-distinguished.

Assume now that $\e\in V_1$. Choose a path $\e\path{\imath_n}\f_1$ to satisfy the requirement of $\imath$ in Lemma \ref{lem v1} if either \eqref{case odd} or \eqref{case jord} above hold and $\jmath$ if \eqref{case even} holds. If the pattern of the path $\imath_n$ ends at $x=2$ then as in the previous case it follows that $I_\pi(\imath_1,\dots,\imath_n)$  is of the form $\ip_{G,M}(\sigma)$ where $\sigma$ satisfies the assumptions of Proposition \ref{prop main} with $k=t_1$ in Cases \eqref{case odd} or \eqref{case jord} and with $k=t_1+1$ in Case \eqref{case even}. It follows from the proposition that $\sigma$ has no contributing orbit and hence $\pi$ is not $\Sp$-distinguished. Assume now that $\imath_n$ ends at $x=1$. Then
\[
I_\pi(\imath_1,\dots,\imath_n)=L(\Delta_1)\times\cdots\times L(\Delta_m)\times L(\Delta)\rtimes \triv_0
\]
where $\sigma=L(\Delta_1)\otimes\cdots\otimes L(\Delta_m) \otimes \triv_0$ satisfies the assumptions of Proposition \ref{prop main} with $k=t_1$ in Cases \eqref{case odd} or \eqref{case jord} and with $k=t_1+1$ in Case \eqref{case even}. Furthermore $L(\Delta)=L([-a,-1/2]_{(\rho)})$ where in the notation of Proposition \ref{prop main} we have $0<a<b_1$ and $-1/2<b_1$. In other words, $[-a_i,b_i]_{(\rho)}$ contains $[-a,-1/2]_{(\rho)}$ and therefore $L(\Delta)\times L(\Delta_{m+1-i})\cong L(\Delta_{m+1-i})\times L(\Delta)$ for $i=1,\dots,k$. 
It follows that $I_\pi(\imath_1,\dots,\imath_n)$ is isomorphic to
\[
 L(\Delta_1)\times\cdots\times L(\Delta_{m-k})\times L(\Delta)\times L(\Delta_{m+1-k})\times\cdots\times L(\Delta_m)\rtimes \triv_0
\]
which is of the form $\ip_{G,M}(\sigma)$ where $\sigma$ satisfies the assumptions of Proposition \ref{prop main} with $k=t_1$ in Cases \eqref{case odd} or \eqref{case jord} and with $k=t_1+1$ in Case \eqref{case even}. Again it follows from the proposition that $\sigma$ has no contributing orbit and hence $\pi$ is not $\Sp$-distinguished.
\end{proof}

\section{Vanishing in the tempered class}\label{s_van_temp}
In this section we obtain a sufficient condition for a tempered representation to be not $\Sp$-distinguished.  
\subsection{Tempered representations of $U_{2n}$}\label{s_prelim_temp}
Given an irreducible tempered representation $\pi$ of $U_{2n}$, it is known due to Harish-Chandra that there exist irreducible discrete series representations $\pi_{\ds}\in \Irr(U_{2m})$ (for some $m\leq n$) and $\delta_{1},\dots,\delta_{t}\in \Irr_{\GL}$ such that $\pi$ is a quotient (in fact a direct summand) of the unitary representation $\delta_{1}\times \cdots\times \delta_{t}\rtimes \pi_{{\rm ds}}$. Furthermore, $\pi_{{\rm ds}}$ and the multi-set $\{\delta_1,\overline{\delta_1}^\vee,\dots,\delta_t,\overline{\delta_t}^\vee\}$ are uniquely determined by $\pi$ (see \cite[Proposition III.4.1]{MR1989693}). 
We remark further that $\delta_1,\dots,\delta_t$ can be reordered arbitrarily and each $\delta_i$ can be replaced by $\overline{\delta_i}^\vee$.

Following \cite[Definition 5.4]{MR3123571} we define the Jordan set of the tempered representation $\pi$ in the following manner. Let $\delta_{i}=\delta(\rho_i,a_i)$ where $\rho_{i}\in \Cusp_{\GL}$ is unitary $i=1,\dots,t$. Then define $\Jord(\pi)$ to be the multi-set
\[
\{(\rho_{1},a_{1}),\dots, (\rho_{t},a_{t}), (\overline{\rho_{1}}^{\vee},a_{1}),\dots,(\overline{\rho_{t}}^{\vee},a_{t})\}\cup \Jord(\pi_{{\rm ds}}).
\]
Note that, unlike in the case of discrete series representations, $\Jord(\pi)$ can have multiplicities.

Define $\Pi_{{\rm temp}}^\circ\subset \Irr_\U$ to be the class of tempered representations with trivial partial cuspidal support.  Thus, $\pi\in \Pi_\temp^\circ$ if and only if $\pi_{{\rm ds}}\in \Pi_\disc^\circ$. 

If $\pi\in \Irr_\U$ is tempered and not in $\Pi_{{\rm temp}}^\circ$ then $\pi$ is not $\Sp$-distinguished by Proposition \ref{prop dp}. We can also exclude distinction for many representations in $\Pi_\temp^\circ$.

\subsection{Vanishing result on tempered representations}

For $\rho\in \cusp_{\GL}$ let $\exp(\rho)\in \R$ be the unique real number $s$ such that $\nu^{-s}\rho$ is unitary.
For a segment $\Delta=[a,b]_{\rho}$ let $\exp(\Delta)=(a+b)/2+\exp(\rho)$. (Thus, $L(\Delta)$ is unitary if and only if $\exp(\Delta)=0$.)

\begin{proposition}\label{prop_van_temp}
Let $\pi\in \Pi_{\temp}^\circ$ be such that there exists $(\rho,a)\in\Jord(\pi)$ which satisfies at least one of the following conditions:
\begin{enumerate}
\item $\rho$ is not conjugate self-dual.
\item $\rho$ is conjugate self-dual and for $(\rho^\vee,b)\in \Jord_{\rho^{\vee}}(\pi_{{\rm ds}}^{\vee})$ we have that $a\not\equiv b\ \mod \ 2$.
\item $\rho$ is conjugate self-dual and for $(\rho^\vee,b)\in \Jord_{\rho^{\vee}}(\pi_{{\rm ds}}^{\vee})$ we have that $b\le a$.
\end{enumerate}
Then $\pi$ is not $\Sp$-distinguished. 
\end{proposition}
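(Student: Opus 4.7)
The plan mirrors Theorem \ref{thm:main_res_ds}. By Corollary \ref{cor cont} it suffices to show that $\pi^\vee$ is not $\Sp$-distinguished. I realize $\pi^\vee$ as a quotient of an induced representation $\ip_{G,M}(\sigma)$ and use Theorem \ref{thm geoml} together with Corollary \ref{cor dist adm} to rule out every potentially relevant $P$-orbit. Since $\pi\in\Pi_{\temp}^\circ$ is a direct summand (hence a quotient) of $\delta_1\times\cdots\times\delta_t\rtimes\pi_{\rm ds}$ with $\pi_{\rm ds}\in\Pi_\disc^\circ$ and each $\delta_i$ unitary essentially square integrable, combining with the quotient realizations of $\pi_{\rm ds}^\vee$ from \S\ref{sss_ds_cons} presents $\pi^\vee$ as a quotient of
\[
\delta_1^\vee\times\cdots\times\delta_t^\vee\times I_{\pi_{\rm ds}^\vee}(\imath_{\rho_1},\dots,\imath_{\rho_r})\rtimes\triv_0
\]
for every choice of paths in the sign graph. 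The freedoms to reorder the $\delta_i$, to replace any $\delta_i$ by $\overline{\delta_i}^\vee$ (Proposition \ref{prop_basic_ind}(3)), and to choose the paths will all be exploited.

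In Cases (1), (2), and the subcase of (3) where some $\delta_i=\delta(\rho,a)$, I arrange $\delta_1=\delta(\rho,a)$ so that $\pi_1=\delta(\rho^\vee,a)$ is the leftmost factor. Existence of such a $\delta_i$ is forced by the hypothesis: in Case (1) because $\Jord(\pi_{\rm ds})$ has only conjugate self-dual first coordinates, and in Case (2) because the parity of $a$ forbids $(\rho,a)\in\Jord(\pi_{\rm ds})$. Assuming some $w=\tau\set\in\WR_M$ is relevant, I examine the four possibilities for index $1$ in Corollary \ref{cor dist adm}: (i) $1\in\set_-(w)$ would force $\nu^{-1/2}\delta(\rho^\vee,a)$ to be $\GL(F)$-distinguished, impossible since a $\GL(F)$-distinguished essentially square integrable representation must be unitary; (ii) $1\in\set_+(w)$ would force $\delta(\rho^\vee,a)$ to be $\Sp$-distinguished, impossible since it is the Speh $U(\delta(\rho^\vee,a),1)$ with $m=1$ odd; (iii) $1\in\set_<(w)\setminus\set$ would require a partner segment $\nu\delta(\bar{\rho},a)$, ruled out in Case (1) by its non conjugate self-dual cuspidal support and in Case (2) by the decreasing-order constraint $a_x>a_y$ on $I_{\pi_{\rm ds}^\vee}$-segment parameters; (iv) $1\in\set_<(w)\cap\set$ would require the partner $\nu^{-1}\delta(\rho^\vee,a)=L([-(a+1)/2,(a-3)/2]_{(\rho^\vee)})$, corresponding to $a+2\in\Jord_\rho(\pi_{\rm ds})$, contradicting the parity mismatch of Case (2) and the bound $b\le a$ of Case (3).

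When Case (3) holds but no $\delta_i$ equals $\delta(\rho,a)$, necessarily $(\rho,a)\in\Jord(\pi_{\rm ds})$ with $a=2a_1+1$ and $a_1$ the top of $\mbf{a}_\rho(\pi_{\rm ds})$. Here I apply Proposition \ref{prop main} directly: choose the path $\imath_{\rho^\vee}$ whose pattern begins by pairing the top two entries $a_1>a_2$, so that $L([-a_1,a_2]_{(\rho^\vee)})$ appears as the leftmost factor of $I_{\rho^\vee}$, and reorder to place it innermost. With $\rho_0=\rho^\vee$ and $k=1$, the conclusion of Proposition \ref{prop main} that $k$ is even contradicts $k=1$. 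Its outer hypothesis is verified for the other $I_{\rho^\vee}$-segments (whose endpoints lie strictly inside $[-a_1,a_2]$ by the strict decrease of $\mbf{a}_\rho(\pi_{\rm ds})$) and for any $\delta_j^\vee$ with $\rho_j\ne\rho$. The principal obstacle and the most delicate combinatorial step is to accommodate any $\delta_j^\vee$ with $\rho_j=\rho$ (necessarily $a_j\ne a$): these are degenerate segments with equal endpoints that violate the strict condition $b<a<b_1$ of Proposition \ref{prop main}. One handles them via the pair $(\rho,a_j)\in\Jord(\pi)$---applying Case (2) directly to $(\rho,a_j)$ whenever the parity of $a_j$ is opposite, and otherwise using a modified path choice in the sign graph (in the spirit of Lemmas \ref{lem v0}--\ref{lem v1}) to enlarge the inner $k$-configuration and absorb the degeneracy.
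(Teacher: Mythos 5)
Your overall strategy (realize the representation as a quotient of an induction from the Siegel parabolic, then kill all relevant orbits via Theorem \ref{thm geoml} and Corollary \ref{cor dist adm}) is the paper's strategy, but two essential ingredients are missing and the case analysis breaks because of it. First, the conditions of Corollary \ref{cor dist adm} apply to the Jacquet-module sub-blocks $\Delta_{(i,j)}$ attached to the finer Levi $L=M\cap wMw^{-1}$ of the orbit, not to the inducing factors as wholes; your "four possibilities for index $1$" must be run on the pieces $\Delta_{(1,1)},\dots,\Delta_{(1,j_1)}$ of $\delta(\rho^\vee,a)$. Second, and more seriously, you never choose $a$ maximal, so your exclusion of possibilities (iii) and (iv) is incomplete: the partner segment need not come from the $I_{\pi_{{\rm ds}}^\vee}$-part, it can be a sub-block of another GL-factor $\delta_j$ supported on the same or the conjugate cuspidal line with larger parameter. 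Concretely, in Case (1) take $\rho$ not conjugate self-dual and $\pi\hookrightarrow\delta(\rho,a)\times\delta(\rho,a+2)\rtimes\pi_{{\rm ds}}$: possibility (iv) is not addressed in Case (1) at all, and the sub-block $[\tfrac{1-a}2,y]_{(\rho^\vee)}$ of your first factor can be matched (via $\pi_{\tau(\imath)}\cong\nu^{-1}\pi_\imath$) with the sub-block $[-\tfrac{a+1}2,y-1]_{(\rho^\vee)}$ of $\delta(\rho^\vee,a+2)$, so index $1$ yields no contradiction; the same phenomenon defeats Case (2) when the GL-part contains $\delta(\rho,a)$ and $\delta(\rho,a+2)$ of the same (wrong) parity. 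The paper avoids all of this by choosing $a$ maximal and arguing only with the last sub-block $\Delta_{(1,j_1)}=[-x,y]_{(\rho)}$, $x=(a-1)/2$: it has exponent $\le 0$ (excluding $\set_-(w)$), it is generic (excluding $\set_+(w)$), and both potential partners $\nu^{-1}\Delta_{(1,j_1)}$ and $\nu\overline{\Delta_{(1,j_1)}}^\vee$ involve $\nu^{-x-1}\rho$ or $\nu^{x+1}\overline{\rho^\vee}$, which maximality forces to lie outside the support of the chosen induced representation.

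The remaining subcase of (3), where $(\rho,a)\in\Jord(\pi_{{\rm ds}})$, is also not established. Your reduction to Proposition \ref{prop main} with $k=1$ fails exactly at the point you flag: GL-factors on the line $(\rho^\vee)^{\Z}$ are centered segments $[-\tfrac{c-1}2,\tfrac{c-1}2]_{(\rho^\vee)}$ violating the strict hypothesis $b<a<b_1$, and the proposed remedies are not proofs --- "apply Case (2) to $(\rho,a_j)$" merely returns you to the branch whose gap is described above, while "a modified path choice to enlarge the inner $k$-configuration" is unspecified and is precisely the combinatorial work that would have to be carried out. The paper sidesteps this entirely: it treats all three cases uniformly with the single induced representation $L(\Delta_1)\times\cdots\times L(\Delta_k)\rtimes\triv_0$, $\Delta_1=[\tfrac{1-a}2,\tfrac{a-1}2]_{(\rho)}$, every other segment contained in $\Delta_1$ or off its lattice, and the support argument above. (Incidentally, since $\pi$ is a direct summand, hence a quotient, of the tempered induction, the passage to $\pi^\vee$ via Corollary \ref{cor cont} is unnecessary; that detour is harmless, but the missing maximality and the whole-factor rather than sub-block analysis are genuine gaps.)
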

\begin{proof}
Without loss of generality we may choose $a$ maximal so that one of the three assumptions hold. Thus, if $(\rho,b)\in \Jord(\pi)$ but $(\rho^\vee,b)\not\in\Jord(\pi_{\ds}^\vee)$ then $b\le a$. By interchanging $\rho$ and $\overline{\rho^{\vee}}$ if necessary, we can further assume that $a\ge b$ whenever $(\overline{\rho^{\vee}},b)\in \Jord(\pi)$ and $(\overline{\rho},b)\not\in\Jord(\pi_{\ds}^\vee)$.

With our assumption, it follows from the discussion in \S \ref{s_prelim_temp} and \S\ref{sss_ds_cons} that $\pi$ can be written as an irreducible quotient of an induced representation of the form
\[
I_{\pi}=L(\Delta_{1})\times \cdots\times  L(\Delta_{k})\rtimes \triv_0
\] 
where 
\[
\Delta_1=[\frac{1-a}2,\frac{a-1}2]_{(\rho)}
\] 
and either $\Delta_i\subseteq \Delta_1$ or $\Delta_i$ is disjoint from $\{\nu^{j+(a-1)/2}\rho:j\in \Z\}$ for $i=2,\dots,k$.

Assume by contradiction that $\pi$ is $\Sp$-distinguished. Then $I_\pi$ is also $\Sp$-distinguished and by Theorem \ref{thm geoml} the representation $\sigma=L(\Delta_{1})\otimes \cdots\otimes  L(\Delta_{k})\otimes \triv_0$ has a relevant orbit.

We apply the notation of \S \ref{ss gen orb} for the relevant orbit.  Let $x=(a-1)/2$.
By \cite[\S9.5]{MR584084} write 
\[
r_{L,M}(L(\Delta_{1})\otimes \cdots \otimes L(\Delta_{k})\otimes \triv_0)=\otimes_{\imath\in\Index} L(\Delta_{\imath})\otimes \triv_0
\] 
and recall that 
\[
\Delta_{(1,j_1)}=[-x,y]_{(\rho)}
\] 
for some $y\in \{-x,-x+1,\dots,x\}$ and that by assumption 
\[
\nu^{-x-1}\rho,\nu^{1+x}\overline{\rho^{\vee}}\notin \cup_{i=1}^{k}\Delta_{i}.
\]
By \cite[Theorem 3.2.2]{MR1078382} and Corollary \ref{cor dist adm} the set $\set_+(w)$ is empty. But we now have
\[
\nu^{-1}\Delta_{(1,j_1)}=[-x-1,y-1]_{(\rho)}\ne \Delta_\imath, \ \ \ \imath\in \Index
\]
and
\[
\nu \overline{\Delta_{(1,j_1)}}^\vee=[1-y,1+x]_{(\overline\rho^\vee)}\ne \Delta_\imath, \ \ \ \imath\in \Index
\]
which contradicts Corollary \ref{cor dist adm}. The proposition follows.

\end{proof}

\section{Representation theory of general linear groups}\label{s_rep_gln}

Before we continue with further applications of the geometric lemma to $\Sp$-distinction, we need to introduce some further notation and recall the Langlands and Zelevinsky classifications of $\Irr_{\GL}$. 

Zelevinsky segments (henceforth, simply segments) were defined in \S \ref{sec zseg}. For a segment $\Delta=[a,b]_{(\rho)}$, we denote by $b(\Delta)=\nu^{a}\rho$ its beginning, by $e(\Delta)=\nu^{b}\rho$ its end and by $\ell(\Delta)=b-a+1$ its length.

\subsection{Classification of $\Irr_\GL$}\label{ss_irr_class}

We refer to \cite{MR584084} for the results stated in this section.

\subsubsection{Segment representations}
Let $\Delta=[a,b]_{(\rho)}$ be a Zelevinsky segment. 
The representation $\nu^{a}\rho \times \nu^{a+1}\rho \times \cdots\times \nu^{b}\rho$ has a unique irreducible subrepresentation which we denote by $Z(\Delta)$. 
It is also the unique irreducible quotient of $\nu^b\rho\times \nu^{b-1}\rho\times\cdots\times \nu^a\rho$. 
By convention, if the segment $\Delta$ is empty then $Z(\Delta)$ is the trivial representation of the trivial group.

The representations $Z(\Delta)$ are the building blocks in the Zelevinsky classification of $\Irr_\GL$.

\subsubsection{}
Two segments $\Delta_{1}$ and $\Delta_{2}$ are \emph{linked} if $\Delta_{1}\cup \Delta_{2}$ is a segment different from $\Delta_1$ and from $\Delta_2$. If $\Delta_{1}$ and $\Delta_{2}$ are linked and $b(\Delta_{2})=\nu^{a}b(\Delta_{1})$ for some $a\in \N$, then we say that $\Delta_{1}$ precedes $\Delta_{2}$ and write $\Delta_{1}\prec \Delta_{2}$.

\subsubsection{}
Let $\mathcal O$ be the set of finite multi-sets of segments. For $\mult\in \cO$ we write $\mult=\{\Delta_1,\dots,\Delta_t\}$ as an unordered $t$-tuple. 
(Here, we may have $\Delta_i=\Delta_j$ for $i\ne j$.) When choosing a specific order, by abuse of notation we write $\mult=(\Delta_1,\dots,\Delta_t)$ as an ordered $t$-tuple.

We say that $\mult$ is ordered in \emph{standard form} if $\Delta_i\not\prec\Delta_j$ for all $1\le i <j\le t$. For example, by requiring that $b(\Delta_i)\ge b(\Delta_{i+1})$ for $i=1,\dots,t-1$ we obtain a standard form.

We also let $\mult^\vee=\{\Delta_1^\vee,\dots,\Delta_t^\vee\}$ and similarly define the multi-sets $\overline{\mult}$ and $\nu^a\mult$, $a\in \C$.
\subsubsection{}
Let $\mathfrak m=\{\Delta_1,\dots,\Delta_t\}\in\cO$ be ordered in standard form. The representation
\begin{equation*}
Z(\Delta_1) \times\cdots \times Z(\Delta_t)
\end{equation*}
is independent of the choice of order of standard form. It has a unique irreducible submodule that we denote by $Z(\mathfrak m)$.

\subsubsection{} The Zelevinsky classification says that the map $\mathfrak m\mapsto Z(\mathfrak m):\cO\rightarrow \Irr_{\GL}$ is a bijection.

\subsubsection{}
The representation
\[\tilde\zeta(\mult)=Z(\Delta_t) \times\cdots \times Z(\Delta_1)
\]
is also independent of the choice of standard order on $\mult$ and $Z(\mult)$ is the unique irreducible quotient of $\tilde\zeta(\mult)$.

\subsubsection{} For a segment $\Delta$ the representation $L(\Delta)$ is defined in \S \ref{sec zseg}. We remark that $\Delta\mapsto L(\Delta)$ is a bijection between the set of segments and the subset of essentially square-integrable representations in $\Irr_{\GL}$.

\subsubsection{}
Let $\mathfrak m=(\Delta_1,\dots,\Delta_t)\in\cO$ be ordered in standard form. The representation
\begin{equation*}
L(\Delta_1)\times\cdots\times L(\Delta_t)
\end{equation*}
is independent of the choice of order of standard form. It has a unique irreducible quotient that we denote by $L(\mathfrak m)$.

\subsubsection{}\label{sss: Langlands_class}
The Langlands classification says that the map $\mathfrak m\mapsto L(\mathfrak m) :\cO\rightarrow \Irr_{\GL}$ is a bijection.

\subsubsection{}\label{mwalgo}
It follows from the two classifications above that for any $\mathfrak m\in \cO$ there exists a unique $\mathfrak m^t\in \cO$ such that $Z(\mathfrak m)=L(\mathfrak m^t)$. The function $\mathfrak m\mapsto \mathfrak m^t$ is an involution on $\cO$. Given a multi-set $\mathfrak m$, an algorithm to compute $\mathfrak m^{t}$ is provided in \cite{MR863522}. 

For $\pi=Z(\mathfrak m)\in \Irr_{\GL}$, let $\pi^t=L(\mathfrak m)$. The map $\pi\mapsto \pi^t$ is the Zelevinsky involution on $\Irr_{\GL}$.

\subsubsection{} For an irreducible cuspidal $\rho\in \Cusp_{\GL}$ define its {\it cuspidal line}
\begin{equation*}
\rho^\Z=\{\nu^{m}\rho\mid m\in \mathbb Z\}.
\end{equation*}
To $\rho^\Z$ we transfer the standard order $\le$ on $\Z$. That is, for $\rho,\,\rho'\in \Cusp_{\GL}$ we write $\rho\le \rho'$ if $\rho'=\nu^n\rho$ for some $n\in \N$.
\subsubsection{}\label{def: cusp supp}
For every $\pi\in \Irr_{\GL}$ there exist $\rho_1,\dots,\rho_k\in \Cusp_{\GL}$, unique up to rearrangement, so that $\pi$ is isomorphic to a subrepresentation of $\rho_1\times \cdots \times \rho_k$. Let the multi-set of cuspidal representations 
\[
\supp(\pi)=\{\rho_i\mid i=1,\dots,k\}
\] 
be the cuspidal support of $\pi$.

\subsubsection{} For $\mult\in\OO$ define the multi-set 
\[
\supp(\mult)=\{\rho\in \Cusp_{\GL}\mid \rho\in\Delta\text{ for some }\Delta\in \mult\}
\] 
to be the cuspidal support of $\mult$.
We then have $\supp(\mult)=\supp(Z(\mult))=\supp(L(\mult))$.

\begin{definition}\label{def: cusp supp_1}
We say that a representation $\pi\in \Irr_\GL$ (resp. multi-set $\mult\in \cO$) is \emph{rigid} if $\Supp(\pi)\subseteq \rho^\Z$ (resp. $\Supp(\mult)\subseteq \rho^\Z$) for some $\rho\in \Cusp_\GL$.
We then also say that $\pi$ (resp. $\mult$) is supported on $\rho^\Z$.
\end{definition}

\subsubsection{Exponent of a representation}\label{sss exp}
For a representation $\pi\in \Irr_{\GL}$ with central character $\omega_{\pi}$ let $\alpha=\exp(\pi)\in \R$ be the exponent of $\pi$. It is the unique real number such that $\nu^{-\alpha}\omega_\pi$ is a unitary character. 

For a segment $\Delta$ we have 
\[
\exp(\Delta)=\exp(Z(\Delta))=\exp(L(\Delta)).
\]

\subsection{Langlands parameters}
Let $W_E$ denote the Weil group of $E$ and 
\[
W'_E= W_E\times {\rm SL}_{2}(\mathbb C)
\] 
the Weil-Deligne group. The Langlands parameter (henceforth, $L$-parameter) of an irreducible representation of $\GL_{n}(E)$ is an $n$-dimensional continuous semi-simple complex representation of the group $W'_{E}$. Let $\Phi(\GL_{n}(E))$ be the set of all equivalence classes of $L$-parameters for the group $\GL_{n}(E)$. The Langlands reciprocity map, is the bijection 
\[
{\rm rec_n}:\Irr(\GL_{n}(E))\rightarrow\Phi(\GL_{n}(E)) 
\]
established in \cite{MR1876802}. We denote by ${\rm rec_\GL}:\sqcup_{n=1}^\infty \Irr(\GL_{n}(E))\rightarrow \sqcup_{n=1}^\infty \Phi(\GL_{n}(E))$ the union of these bijections for all $n$.
The map ${\rm rec_\GL}$ restricts to a bijection from $ \Cusp_\GL$ to the set of irreducible, complex representations of $W_{E}$. 

In \cite{MR584084} Zelevinsky reduced the Langlands reciprocity map to its restriction to $\Cusp_\GL$ as follows. Let $\Delta$ be a segment such that $\ell(\Delta)=x$. Write $\Delta=[{-\frac{(x-1)}{2}},{\frac{(x-1)}{2}}]_{(\rho)}$ where $\rho\in \Cusp_{\GL}$. Then we have that 
\[
{\rm rec_\GL}(L(\Delta))={\rm rec_\GL}(\rho)\otimes {\rm Spc}(x)
\] 
where ${\rm Spc}(x)$ denotes the unique irreducible algebraic $x$-dimensional representation of ${\rm SL}_2(\mathbb C)$. Clearly, the parameter ${\rm rec_\GL}(L(\Delta))$ is an irreducible representation of $W_{E}'$. 

Finally if $\pi=L(\mathfrak m)$ where $\mathfrak m=\{\Delta_{1},\dots,\Delta_{t}\}$, then we have that
\begin{equation}\label{eq_lp_gl}
{\rm rec_\GL}(\pi)=\oplus_{i=1}^{t}{\rm rec_\GL}(L(\Delta_{i})).
\end{equation}

For an $L$-parameter $\phi$, denote by ${}^{c}\phi^{\vee}$ its conjugate-dual parameter (see for instance \cite[\S 2.2]{MR3338302} for the precise definition). We will use several times in this article the following fact (\cite[Lemma~VII.1.6]{MR1876802}):
\begin{theorem}\label{thm:rec_gal}
 We have
  \[
    {\rm rec_\GL}(\overline{\pi}^{\vee}) = {}^{c}({\rm rec_\GL}(\pi))^{\vee}.
  \]
\qed
\end{theorem}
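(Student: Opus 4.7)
My plan is to reduce the identity to the case of irreducible cuspidal representations, using the Langlands classification and the segment formula for ${\rm rec_\GL}$ recalled in \S \ref{ss_irr_class}. For $\pi = L(\mathfrak{m}) \in \Irr_\GL$ with $\mathfrak{m} = \{\Delta_1, \ldots, \Delta_t\}$, both $\pi \mapsto \overline{\pi}$ and $\pi \mapsto \pi^\vee$ are exact auto-equivalences of the category of smooth representations that commute with parabolic induction, so from $L(\Delta)^\vee = L(\Delta^\vee)$ and $\overline{L(\Delta)} = L(\overline{\Delta})$ one deduces $\overline{\pi}^\vee = L(\overline{\mathfrak{m}}^\vee)$, where $\overline{\mathfrak{m}}^\vee = \{\overline{\Delta_i}^\vee : i = 1, \ldots, t\}$. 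Since the conjugate-dual operation $\phi \mapsto {}^c\phi^\vee$ commutes with direct sums, the additivity in \eqref{eq_lp_gl} reduces the identity to the case $\pi = L(\Delta)$ for a single segment.

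For $\Delta = [-\tfrac{x-1}{2}, \tfrac{x-1}{2}]_{(\rho)}$ one checks directly that $\overline{\Delta}^\vee = [-\tfrac{x-1}{2}, \tfrac{x-1}{2}]_{(\overline{\rho}^\vee)}$. Since ${\rm Spc}(x)$ is self-dual as an $\SL_2(\mathbb{C})$-representation and the Galois twist entering ${}^c(\cdot)^\vee$ acts trivially on the $\SL_2$-factor, the formula ${\rm rec_\GL}(L(\Delta)) = {\rm rec_\GL}(\rho) \otimes {\rm Spc}(x)$ reduces the problem to proving
\[
{\rm rec_\GL}(\overline{\rho}^\vee) = {}^c\, {\rm rec_\GL}(\rho)^\vee
\]
for $\rho \in \Cusp_\GL$.

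The cuspidal case is the heart of the matter. I would decompose it into the composition of two compatibilities of the reciprocity map: with the contragredient, ${\rm rec_\GL}(\rho^\vee) = {\rm rec_\GL}(\rho)^\vee$ (Henniart's theorem on compatibility with duality), and with Galois conjugation, ${\rm rec_\GL}(\overline{\rho}) \cong {\rm rec_\GL}(\rho) \circ {\rm Int}(\sigma^{-1})$ for any lift $\sigma \in W_F$ of the nontrivial element of $\Gal(E/F)$, which is precisely the definition of ${}^c(\cdot)$ on Weil-group representations. The main obstacle is this second compatibility: it is a genuine property of the Harris-Taylor construction of the LLC, established in loc. cit.\ either by tracing through the geometric realization on the cohomology of Shimura varieties (where Galois conjugation of $E/F$ corresponds to a natural geometric involution) or, more slickly, by invoking the $\epsilon$-factor characterization of ${\rm rec_\GL}$ and checking that both sides of the desired identity produce matching $\epsilon$-factors of pairs against arbitrary auxiliary parameters. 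The two preceding reduction steps, by contrast, are purely formal consequences of the classification and the segment formula.
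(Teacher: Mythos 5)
Your argument is correct, but it is worth noting that the paper does not actually prove this statement: it is quoted as a known fact with a bare citation to \cite[Lemma VII.1.6]{MR1876802}, so there is no internal proof to compare against. What you have written is essentially the standard way one would deduce the general statement from the cuspidal case, and it correctly isolates the only non-formal input: the reduction via $\overline{L(\mathfrak m)}^\vee\cong L(\overline{\mathfrak m}^\vee)$, the additivity \eqref{eq_lp_gl}, the self-duality of ${\rm Spc}(x)$ and the fact that the Galois twist only touches the $W_E$-factor are all formal, while the compatibilities ${\rm rec_\GL}(\rho^\vee)={\rm rec_\GL}(\rho)^\vee$ and ${\rm rec_\GL}(\overline{\rho})={}^c{\rm rec_\GL}(\rho)$ for $\rho\in\Cusp_\GL$ are genuine theorems about the Harris--Taylor/Henniart correspondence (provable, as you say, from the $\epsilon$-factor characterization together with its uniqueness, since both $\epsilon$-factors of pairs and Galois-side $\epsilon$-factors are insensitive to an automorphism of $E$ applied simultaneously to the representations and the additive character), and this is precisely what the cited lemma supplies. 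One small imprecision: $\pi\mapsto\pi^\vee$ is contravariant and is only well behaved on admissible (finite-length) representations, so it is not an ``auto-equivalence commuting with parabolic induction'' in the naive sense; but the consequence you actually use, $L(\mathfrak m)^\vee\cong L(\mathfrak m^\vee)$ (and $\overline{L(\mathfrak m)}\cong L(\overline{\mathfrak m})$), is standard from the Langlands classification, so this does not affect the argument.
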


We say that a parameter $\phi\in \Phi(\GL_n(E))$ is conjugate self-dual if ${}^c\phi\cong\phi^\vee$. We refer to \cite[\S 2.2]{MR3338302} for the notion of a conjugate self-dual parameter with parity $\pm1$  and we also apply the terminology of \cite[\S3]{MR3202556} to say that $\phi\in \Phi(\GL_n(E))$ conjugate self-dual is conjugate orthogonal if it has parity $1$ and conjugate symplectic if it has parity $-1$.

A conjugate self-dual cuspidal representation always has parity.
The results of Mok \cite[Lemma 2.2.1 and Theorem 2.5.1]{MR3338302} combined with \cite[Theorems 2.7 and 2.8]{MR1266747} imply the following.
\begin{theorem}\label{thm evod}
Let $\rho\in \Cusp_\GL$ be conjugate self dual. Then $\rho$ is even (see Definition \ref{def even}) if and only if $\rec_\GL(\rho)$ is conjugate orthogonal. \qed
\end{theorem}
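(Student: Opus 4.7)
The plan is to reduce the theorem to an L-function comparison, then translate the L-function pole condition into a parity condition on the Galois side. By Theorem \ref{thm pairity}, $\rho$ being even is equivalent to the Asai-Shahidi L-function $L(s,\rho,r_A)$ having a pole at $s=0$; this uses Goldberg's Theorems 2.7--2.9 in \cite{MR1266747}, which relate the reducibility point of $\nu^{1/2}\rho\rtimes\triv_0$ on $U_{2n}$ to poles of Shahidi's L-function attached to the Siegel Levi. The task thus reduces to showing that this pole occurs precisely when $\rec_\GL(\rho)$ is conjugate orthogonal.

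Next I would invoke Mok's Lemma 2.2.1, which provides the bridge from the analytic to the Galois side. Applied to the Siegel Levi of $U_{2n}$ with cuspidal datum $\rho$, it identifies Shahidi's L-function with the Galois-theoretic L-function $L(s,\mathrm{As}^+\circ\rec_\GL(\rho))$, where $\mathrm{As}^+$ is one of the two Asai (twisted tensor) representations of the L-group of $\mathrm{Res}_{E/F}\GL_n$. The choice of $\mathrm{As}^+$ (versus the alternative twist $\mathrm{As}^-$) is fixed by Mok's Theorem 2.5.1 to be the one characterizing parameters in the image of stable base change from $U_{2n}$.

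Finally, a standard calculation with Weil-Deligne representations shows that for an irreducible conjugate self-dual parameter $\phi$, the Asai L-function $L(s,\mathrm{As}^+\circ\phi)$ has a pole at $s=0$ if and only if $\phi$ is conjugate orthogonal: explicitly, the order of this pole equals the dimension of the space of $W_F$-invariants in $\mathrm{As}^+\circ\phi$, which for irreducible $\phi$ is nonzero exactly when $\phi$ admits a symmetric Galois-invariant pairing. Since $\rec_\GL(\rho)$ is irreducible, chaining the three equivalences yields the theorem. The principal obstacle is aligning sign conventions across three different frameworks (Shahidi's Langlands-Shahidi method, Flicker's twisted tensor L-function, and Mok's Galois-theoretic Asai); this alignment is precisely what Mok's Lemma 2.2.1 and Theorem 2.5.1 together secure, which is why the statement is deduced from the combination of these references rather than from any one of them individually.
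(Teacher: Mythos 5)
There is a genuine gap at the central link of your chain. You attribute to Mok's Lemma 2.2.1 (with the choice of twist pinned down by Theorem 2.5.1) the identification of Shahidi's analytic Asai $L$-function of $\rho$ with the Galois-theoretic $L$-function $L(s,\mathrm{As}^+\circ\rec_\GL(\rho))$. That is not what those results say: Lemma 2.2.1 of \cite{MR3338302} is a purely parameter-theoretic statement about conjugate self-dual parameters and the two $L$-embeddings of ${}^L U_N$ into ${}^L(\mathrm{Res}_{E/F}\GL_N)$ (the parity dichotomy and which parity factors through which embedding), and Theorem 2.5.1 is the local classification for $U_N$ together with its compatibility with base change of packets; neither compares the Langlands--Shahidi (equivalently, by \cite{MR2146859}, Flicker's) Asai $L$-function with an Artin $L$-function attached to $\rec_\GL(\rho)$. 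That comparison of analytic and arithmetic Asai factors is a nontrivial theorem in its own right, not among the inputs you (or the paper) cite, so as written your chain of equivalences breaks exactly at the analytic-to-Galois bridge. Your final step is fine: for an irreducible conjugate self-dual $\phi$ with bounded image, the order of the pole of the Artin $L$-function of $\mathrm{As}^+\phi$ at $s=0$ is $\dim(\mathrm{As}^+\phi)^{W_F}$, which is nonzero precisely when the (one-dimensional) space of invariant pairings is conjugate-symmetric, i.e.\ when $\phi$ is conjugate orthogonal.

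The paper's deduction, as its citation pattern indicates, goes around the $L$-function comparison altogether. Goldberg's Theorems 2.7--2.8 in \cite{MR1266747} tie the evenness of $\rho$ to the reducibility pattern of $\nu^{x}\rho\rtimes\triv_0$ on $U_{2m}$ (reducibility at $x=\tfrac12$ versus $x=0$), and Mok's Lemma 2.2.1 and Theorem 2.5.1 then determine which parity of $\rec_\GL(\rho)$ is compatible with that pattern on the Galois side: for instance, reducibility at $x=\tfrac12$ produces a square-integrable constituent whose stable base change is $\delta(\rho,2)$, with parameter $\rec_\GL(\rho)\otimes{\rm Spc}(2)$, and this parameter must be conjugate symplectic (the parity allowed for $U_{2m}$ by Lemma 2.2.1), forcing $\rec_\GL(\rho)$ to be conjugate orthogonal; the converse direction runs the same bookkeeping backwards. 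If you want to keep your $L$-function route, you must replace the appeal to Mok by a genuine reference for the equality (at least of poles at $s=0$, for conjugate self-dual cuspidal $\rho$) of the Shahidi/Flicker Asai factor with the Galois Asai factor of $\rec_\GL(\rho)$; otherwise, the argument should be recast along the parameter-theoretic lines above, which is what the cited results of \cite{MR3338302} and \cite{MR1266747} actually support.
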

\begin{remark}\label{rmk csq}
As a consequence, a conjugate self-dual discrete series representation in $\Irr_\GL$ always has parity determined as follows. Let $\rho\in \Cusp_\GL$ be conjugate self-dual and $a\in \N$. Then $\delta(\rho,a)$ is conjugate self-dual with parity $(-1)^{a-1}\eta_\rho$ where $\eta_\rho$ is the parity of $\rho$.
\end{remark}

\subsection{Ladder representations}\label{ss:ladder}
The class of ladder representations was introduced in \cite{MR3163355}. The Jacquet modules of a ladder representation are calculated explicitly in \cite[Corollary 2.2]{MR2996769}. Moreover, this class is preserved by the Zelevinsky involution and the algorithm provided in \cite{MR863522} to compute the Zelevinsky involution of an irreducible representation takes a much simpler form when the representation is a ladder (see \cite[\S 3]{MR3163355}). Some of these results and structural properties make this class more approachable in comparison to the entire admissible dual for the purpose of distinction problems (for instance see \cite{MR3628792}). We will now recall their definition. 

\subsubsection{Definition of ladder representations}\label{sss:ladder}
\begin{definition} 
An (ordered) multi-set $\mult\in \cO$ is called  a \emph{ladder} if there exist $\rho\in \Cusp_{\GL}$ and integers
\[
a_{1}>\cdots>a_{k}\ \ \  \text{and} \ \ \ b_{1}>\cdots>b_{k} 
\]
such that $\mult=(\Delta_1,\dots,\Delta_k)$ where $\Delta_i=[a_i,b_i]_{\rho}$.
A representation $\pi\in \Irr_{\GL}$ is called a ladder representation if $\pi=L(\mathfrak m)$ (or equivalently, if $\pi=Z(\mathfrak m)$) where $\mathfrak m$ is a ladder.
\end{definition}

\subsubsection{}\label{sss:speh}
In the above notation, the ladder representation $\pi=L(\mathfrak m)$ is called a Speh representation if 
\begin{equation}\label{eq_def_speh}
a_{i}=a_{i+1}+1 \ {\rm and} \ b_{i}=b_{i+1}+1 \ {\rm for}\  i=1,\dots,k-1. 
\end{equation}

We recall the fact that any unitary representation in $\Irr_{\GL}$ can be obtained as a parabolic induction of Speh representations (see \cite[Theorem D]{MR870688}).

\section{Further applications of the geometrical lemma to distinction}\label{s_geo_app}

Let $P=M \ltimes U$ be a standard parabolic subgroup of $U_{2n}$, contained in the Siegel parabolic, with its standard Levi decomposition and fix a double coset in $P\bs U_{2n}/\Sp_{2n}$. We will now collect some simple consequences of the geometric lemma (described in \S \ref{s_geo_lem}) that we use later in the article. Throughout this section we freely apply the notation of \S \ref{ss gen orb} for the chosen orbit. 
\begin{lemma}\label{aux_geo_lem}
Let $\imath=(c,d)\in \Index$ be the minimal index in $\set$ such that $\tau(\imath)\neq \imath$. Then $\tau(\imath)=(a,j_{a})$ for some $a>c$.
\end{lemma}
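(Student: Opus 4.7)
The plan is to exploit three structural facts about the pair $(\tau, \set)$: that $\tau$ preserves $\set$, the constraints from $L = M \cap wMw^{-1}$ recalled in \S\ref{ss gen orb}, and the monotonicity encoded in \eqref{eq: c form} and \eqref{orbit_cond}. To see that $\tau$ stabilizes $\set$ (and hence also $\Index \setminus \set$), I will expand $(\tau\set)^2 = e$ in $\weyl = S_\Index \ltimes \Xi_\Index$, which forces $\tau^2 = e$ and, reading off the $\Xi_\Index$-component, $\tau^{-1}(\set) = \set$.

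Write $\tau(c,d) = (a,b)$. First I will show $a > c$ by eliminating $a \le c$. If $a < c$ and $(a,b) \in \set$, then $(a,b) \prec (c,d)$ lies in $\set$ and is not fixed by $\tau$, contradicting the minimality of $(c,d)$. If $a < c$ and $(a,b) \notin \set$, then $\tau$-stability of $\Index \setminus \set$ contradicts $\tau(a,b) = (c,d) \in \set$. Finally, if $a = c$ then necessarily $b \ne d$ and $\set$-stability gives $(c,b) \in \set$; but then $(c,d), (c,b) \in \set$ have different second coordinates while $\tau(c,d), \tau(c,b)$ both have first coordinate $c$, violating the first of the two bullets in \S\ref{ss gen orb} that encode $L = M \cap wMw^{-1}$.

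With $a > c$ established, $(a,b) \in \set$ forces $b > s_a$ by \eqref{eq: c form}, and I claim $b = j_a$. Assuming otherwise, $(a,b)$ and $(a, j_a)$ both sit in $\{(a,j) : s_a < j \le j_a\}$, so \eqref{orbit_cond} yields that the first coordinate of $\tau(a, j_a)$ is strictly smaller than that of $\tau(a,b) = (c,d)$. Writing $\tau(a, j_a) = (e,f)$, this gives $e < c$, hence $(e,f) \prec (c,d)$ in lexicographic order; since $a > c > e$ we have $\tau(e,f) = (a, j_a) \ne (e,f)$, and $\set$-stability of $\tau$ puts $(e,f) \in \set$, again contradicting the minimality of $(c,d)$. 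The argument is essentially a bookkeeping exercise on the combinatorial data attached to the orbit; the main risk is to confuse which of the constraints in \S\ref{ss gen orb} applies at each step, so I will carefully track membership in $\set$ versus $\Index \setminus \set$ throughout and appeal to the involutive nature of $\tau$ whenever I need to swap roles.
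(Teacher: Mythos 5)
Your argument is correct and follows essentially the same route as the paper's proof: after noting that $\tau$ is an involution stabilizing $\set$, you rule out $a\le c$ via minimality of $\imath$ and the constraints of \S\ref{ss gen orb}, and then derive $b=j_a$ by applying \eqref{eq: c form} and \eqref{orbit_cond} to produce an element of $\set$ strictly smaller than $\imath$ and not fixed by $\tau$, contradicting minimality. The only difference is that you spell out details the paper leaves implicit (the stability $\tau(\set)=\set$ and the case analysis behind ``$c<a$''), which is harmless.
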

\begin{proof}
Write $\tau(\imath)=(a,b)$. By the minimality of $\imath$ and (\ref{orbit_cond}), it follows easily that $c<a$. Assume if possible that $b<j_{a}$. By \eqref{eq: c form} we have $(a,j_{a})\in \set$ and by \eqref{orbit_cond} we have $\tau(a,j_{a})\prec\tau(a,b)=\imath$ which is contradicting the minimality of $\imath$. 
\end{proof}

\begin{lemma}\label{aux_geo_lem2}
Let $\rho\in \Cusp_\GL$ and $\mathfrak m=(\Delta_{1},\dots,\Delta_{t})\in \mathcal O$ be an ordered multi-set with $\Delta_i=[a_i,b_i]_{(\rho)}$, $i=1,\dots,t$ and $b_1\le \cdots\le b_t$. Suppose that $w\in {}_{M}W_{M}\cap W[2]$ is relevant to $Z(\Delta_{1})\otimes\cdots\otimes Z(\Delta_{t})\otimes \triv_0$. For the orbit corresponding to $w$ we have $\set=\set_+(w)$. In particular, $\set$ is of the form 
\[
\set=\{(a,j_a):a\in A\}
\]
for some $A\subseteq \{1,\dots,t\}$. 
\end{lemma}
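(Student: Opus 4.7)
The approach is to combine the explicit form of the Jacquet module of a segment representation with the distinction criteria recorded in Corollary \ref{cor dist adm}. First I would invoke \cite[\S 9.5]{MR584084} to see that the Jacquet module $r_{L,M}(Z(\Delta_1)\otimes\cdots\otimes Z(\Delta_t)\otimes\triv_0)$ is irreducible and equals $\bigotimes_{\iota\in\Index}Z(\Delta'_\iota)\otimes\triv_0$, where for each $i$ the sub-segments $\Delta'_{(i,1)},\dots,\Delta'_{(i,j_i)}$ decompose $\Delta_i$ consecutively from beginning to end with lengths $a_{i,1},\dots,a_{i,j_i}$. In particular, $e(\Delta'_{(i,j_i)})=e(\Delta_i)=\nu^{b_i}\rho$, while $e(\Delta'_{(i,d)})=\nu^{x}\rho$ with $x<b_i$ whenever $d<j_i$.

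Next I would argue by contradiction that $\tau$ fixes every element of $\set$. Since $w=\tau\set$ is an involution in $\weyl_{\abs{\Index}}$, the set $\set$ is $\tau$-stable. Suppose the set of $\iota\in\set$ with $\tau(\iota)\ne\iota$ is non-empty, and let $\iota=(c,d)$ be its minimal element; $\tau$-stability together with minimality force $\tau(\iota)>\iota$, hence $\iota\in\set_<(w)\cap\set$, and Lemma \ref{aux_geo_lem} yields $\tau(\iota)=(a,j_a)$ with $a>c$. Corollary \ref{cor dist adm} together with Remark \ref{rmk gk} then forces $Z(\Delta'_{(a,j_a)})\cong\nu^{-1}Z(\Delta'_{(c,d)})$, i.e., $\Delta'_{(a,j_a)}=\nu^{-1}\Delta'_{(c,d)}$. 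Comparing the endpoints of these segments gives $\nu^{b_a}\rho=\nu^{x-1}\rho$ for some $x\le b_c$, so $b_a=x-1<b_c$, contradicting $b_c\le b_a$ (which follows from the assumption $b_1\le\cdots\le b_t$ and $c<a$). Therefore $\set\subseteq\{\iota\mid\tau(\iota)=\iota\}$, and combined with the trivial inclusion $\set_+(w)\subseteq\set$ we conclude $\set=\set_+(w)$.

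Finally, for the ``in particular'' statement I would invoke the $L$-admissibility constraints recorded in \S \ref{ss gen orb}: whenever $(i,j),(i,j')\in\set$ with $j\ne j'$, the first coordinates of $\tau(i,j)$ and $\tau(i,j')$ must differ. Since $\tau$ acts trivially on $\set$, two distinct elements of $\set$ sitting in the same row $i$ would have $\tau$-images both with first coordinate $i$, a contradiction. Thus each row contributes at most one element to $\set$, and combined with the ``tail interval'' description \eqref{eq: c form} that element must be the terminal pair $(i,j_i)$. Setting $A=\{a\mid(a,j_a)\in\set\}$ gives the claimed form. The substantive step is the endpoint comparison in the second paragraph; it rests on the interplay between the fixed ordering $b_1\le\cdots\le b_t$ and the $\nu^{-1}$-shift imposed by the distinction condition, while the remaining content is combinatorial bookkeeping with the constraints of \S \ref{ss gen orb}.
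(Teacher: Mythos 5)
Your proof is correct and follows essentially the same route as the paper's: take the minimal element of $\set$ not fixed by $\tau$, use Lemma \ref{aux_geo_lem} and Corollary \ref{cor dist adm} (with Remark \ref{rmk gk}) to force a $\nu^{-1}$-shifted segment identity, contradict $b_1\le\cdots\le b_t$ by comparing segment ends, and finish the ``in particular'' claim with the combinatorial constraints of \S\ref{ss gen orb}. (Only a cosmetic point: the Jacquet-module decomposition of the $Z(\Delta_i)$ is \cite[Proposition 3.4]{MR584084}; \S 9.5 of that reference is the analogous statement for $L(\Delta_i)$.)
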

\begin{proof}
Let $L=M\cap w^{-1}Mw$ and applying \cite[Proposition 3.4]{MR584084}, in the above notation write 
\[
r_{L,M}(Z(\Delta_{1})\otimes\cdots\otimes Z(\Delta_{t})\otimes \triv_0)=\otimes_{\iota\in\Index} Z(\Delta_\iota)\otimes \triv_0.
\]
Then $\Delta_{(i,j_i)}=[y_i,b_i]_{(\rho)}$ for some $y_i\ge a_i$ for $i=1,\dots,t$.
Assume by contradiction that there exists an index in $\set$ not fixed by the involution $\tau$ and denote the minimal such index by $\imath=(c,d)$. By Lemma \ref{aux_geo_lem}, there exists $a\in \{c+1,\dots,t\}$ such that $\tau(\imath)=(a,j_{a})$. Thus, by Corollary \ref{cor dist adm} we have  $\nu^{-1}Z(\Delta_{\imath})=Z(\Delta_{(a,j_{a})})$. This implies that $\Delta_\imath=[y_a+1,b_a+1]_{(\rho)}$ which contradicts the hypothesis on $\mult$ and the fact that $c<a$. Thus, $\set=\set_+(w)$. The last part of the lemma follows from (\ref{orbit_cond}).
\end{proof}

\begin{lemma}\label{hered}
Let $\pi_{1}\in \Alg(\GL_{p_{1}}(E))$ and $\pi_{2}\in \Alg(\GL_{2p_{2}}(E))$ be such that $\nu^{-1/2}\pi_{1}$ is $\GL(F)$-distinguished and $\pi_{2}$ is $\Sp$-distinguished ($p_{1},p_{2}\geq 0$). Then $\pi_1\times \pi_2\rtimes 1$ is $\Sp$-distinguished.
\end{lemma}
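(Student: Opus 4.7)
My plan is to construct an explicit $\Sp_{2n}(F)$-invariant linear form on $\pi_1\times\pi_2\rtimes\triv_0$ by identifying a relevant $P$-orbit via Corollary~\ref{cor dist adm} and then lifting the subquotient-level invariance to a global one, using transitivity of parabolic induction together with an open-orbit argument.

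Let $G=U_{2n}$ and $H=\Sp_{2n}(F)$. By Proposition~\ref{prop_basic_ind} we have $\pi_1\times\pi_2\rtimes\triv_0\cong\pi_1\rtimes(\pi_2\rtimes\triv_0)$, and I would argue in two steps.

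First, for the base case $p_1=0$, I would show that $\sigma:=\pi_2\rtimes\triv_0\in\Alg(U_{4p_2})$ is $\Sp_{4p_2}(F)$-distinguished. In the orbit analysis of $Q_{(2p_2;0)}\bs U_{4p_2}/\Sp_{4p_2}(F)$, consider the orbit corresponding to $w=\{1\}\in\weyl_1[2]$, which is admissible by Lemma~\ref{lem Mmin} since $2p_2$ is even; by \S\ref{sss stabx} and Corollary~\ref{cor adm mod} its stabilizer in $M=\GL_{2p_2}(E)$ is $\Sp_{2p_2}(E)$ and its modulus $\delta_x$ is trivial. Corollary~\ref{cor dist adm} then asserts relevance to $\pi_2$ via the hypothesis. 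The required $\Sp_{4p_2}(F)$-invariant form on $\sigma$ is produced by an explicit convergent integral along this orbit using a fixed $\Sp_{2p_2}(E)$-invariant form on $\pi_2$.

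Second, I would analyze the orbits on $Q_{(p_1;2p_2)}\bs U_{2n}/\Sp_{2n}(F)$ and take the orbit corresponding to $w=e\in\weyl_1[2]$. Its stabilizer is $\GL_{p_1}(F)\times\Sp_{4p_2}(F)$ with modulus $\abs{\det g_1}_F$ on the $\GL$-factor, by the same sort of computation as in \S\ref{sss min stab} and Lemma~\ref{lem modulus comp} (extended straightforwardly to the case $m>0$). Hence relevance to $\pi_1\otimes\sigma$ is equivalent to the combination of the hypothesis that $\nu^{-1/2}\pi_1$ is $\GL_{p_1}(F)$-distinguished and the conclusion of the previous step that $\sigma$ is $\Sp_{4p_2}(F)$-distinguished, both of which hold. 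Since this orbit has the smallest stabilizer dimension among the admissible $P$-orbits on $X$, it is the \emph{open} orbit, so the distinguished subquotient of $(\pi_1\rtimes\sigma)|_H$ appears as a quotient, and the invariant form pulls back to give the desired $\Sp_{2n}(F)$-invariant form on $\pi_1\rtimes\sigma$.

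The main obstacle is the first step: the orbit $w=\{1\}$ is closed rather than open in $Q_{(2p_2;0)}\bs U_{4p_2}/\Sp_{4p_2}(F)$, so the distinguished piece appears as a subrepresentation, not a quotient, of $(\pi_2\rtimes\triv_0)|_{\Sp_{4p_2}(F)}$. The invariant form therefore does not automatically extend; one must construct it directly by an integral formula over a transversal to the orbit and verify convergence and invariance using standard techniques for $p$-adic symmetric spaces.
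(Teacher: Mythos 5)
Your two-step skeleton (first show $\sigma=\pi_2\rtimes\triv_0$ is $\Sp_{4p_2}(F)$-distinguished, then lift through the orbit with stabilizer $\GL_{p_1}(F)\times\Sp_{4p_2}(F)$) is the same as the paper's, which disposes of the two steps by citing \cite[Propositions 7.2 and 7.1]{MR3541705} respectively. However, your open/closed bookkeeping is reversed, and so is the general principle you rely on. For $P=Q_{(2p_2;0)}$ in $U_{4p_2}$ there are exactly two $P$-orbits on $X$: the orbit of $e$, whose image in $P\bs U_{4p_2}$ is $\Sp_{4p_2}(F)/(\Sp_{4p_2}(F)\cap P)$, a compact flag variety of $H$, hence the \emph{closed} orbit; and the orbit attached to $w=\{1\}$ with stabilizer $\Sp_{2p_2}(E)$, for which $U_x$ is trivial and a dimension count shows it is the \emph{open} orbit --- not the closed one, as you assert. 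Similarly, in your second step the orbit $w=e$, with stabilizer $\GL_{p_1}(F)\times\Sp_{4p_2}(F)$, is the closed orbit (again $H/(H\cap P)$ is compact), not the open one, and its stabilizer is far from minimal. Moreover, in the Mackey filtration of the restriction it is the \emph{closed} orbit whose piece is a quotient (restriction of functions to a closed $H$-stable subset of $P\bs G$), so that an invariant form on it pulls back, while the open orbit contributes a subrepresentation, where the extension problem lives; you use the opposite rule. With this corrected, your second step is sound and is exactly \cite[Proposition 7.1]{MR3541705}: the modulus $\abs{\det}_F$ on the $\GL$-factor accounts for the twist $\nu^{-1/2}$, and compactness of $H/(H\cap P)$ produces the invariant form on the quotient, hence on $\pi_1\rtimes\sigma$.

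The genuine gap is your first step. There the relevant orbit is open, its piece is a subrepresentation of $(\pi_2\rtimes\triv_0)|_{\Sp_{4p_2}(F)}$, and Theorem \ref{thm geoml} together with Corollary \ref{cor dist adm} give only a \emph{necessary} condition for distinction; they do not produce a functional on the full induced representation. Your proposed remedy --- ``an explicit convergent integral along this orbit'' --- is not justified: the open orbit is not compact modulo $H$, and for an arbitrary $\pi_2\in\Alg(\GL_{2p_2}(E))$ (no unitarity or positivity of exponents is assumed) such an integral need not converge, nor does an invariant form on the open-orbit subrepresentation automatically extend to the whole restriction. This open-orbit lifting is precisely the content of \cite[Proposition 7.2]{MR3541705}, which the paper invokes at this point; without that result, or an argument of comparable substance replacing it, your proof of the base case, and hence of the lemma, is incomplete.
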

\begin{proof}
This follows from a combination of an open orbit and a closed orbit argument for lifting distinguished representations via parabolic induction. Indeed, it follows from \cite[Proposition 7.2]{MR3541705} that the representation $\pi_2\rtimes 1$ is $\Sp$-distinguished and now the lemma follows from \cite[Proposition 7.1]{MR3541705}.
\end{proof}

\begin{lemma}\label{lem:first_case}
Let $\rho_1,\dots,\rho_k\in \Cusp_{\GL}$ be unitary representations such that $\rho_{i}\ncong \rho_{j},\overline{\rho_{j}}^{\vee}$ if $i\neq j$. Let 
\[
\tau=L([a_1,b_1]_{(\rho_{1})})\times \cdots\times L([a_k,b_k]_{(\rho_{k})})
\]
where $a_{i},b_{i}\in \mathbb R$ are such that $b_{i}-a_{i}\in \mathbb Z_{\geq 0}$, $i=1,\dots,k$. Then the representation $\tau\rtimes \triv_0\in \Alg(U_{2n})$ is $\Sp$-distinguished if and only if $\nu^{-1/2}\tau$ is $\GL(F)$-distinguished. In particular, if $\tau\rtimes \triv_0$ is $\Sp$-distinguished then $\exp([a_i,b_i]_{(\rho_{i})})=\frac{1}{2}$ for all $i=1,\dots,k$.
\end{lemma}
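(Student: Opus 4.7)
The ``if'' direction is immediate from Lemma \ref{hered} applied with $\pi_1=\tau$, $p_2=0$, and $\pi_2=\triv_0$, which is trivially $\Sp$-distinguished. For the ``only if'' direction, I would write $\tau\rtimes\triv_0=\ip_{G,M}(\sigma)$ with $\sigma=L(\Delta_1)\otimes\cdots\otimes L(\Delta_k)\otimes\triv_0$ and $\Delta_i=[a_i,b_i]_{(\rho_i)}$, invoke Theorem \ref{thm geoml} to produce an orbit relevant to $\sigma$, and adopt the setup of \S\ref{ss gen orb}: refined Levi $L_{(\gamma_1,\dots,\gamma_k;0)}$ with $\gamma_i=(a_{i,1},\dots,a_{i,j_i})$ a partition of $n_i$, associated involution $w=\tau'\set$, and, by \cite[\S 9.5]{MR584084}, the decomposition $r_{L,M}(\sigma)=\bigotimes_{\iota\in\Index}L(\Delta_\iota)\otimes\triv_0$ with subsegments $\Delta_{(i,j)}=[c_{i,j-1}+1,c_{i,j}]_{(\rho_i)}$ satisfying $a_i-1=c_{i,0}<c_{i,1}<\cdots<c_{i,j_i}=b_i$.

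\textbf{Orbit constraints.} Each $L(\Delta_\iota)$ is essentially square integrable, hence generic, and therefore not $\Sp$-distinguished by \cite[Theorem 3.2.2]{MR1078382}; Corollary \ref{cor dist adm} thus forces $\set_+(w)=\emptyset$. If $\tau'(i,j)=(i',j')$ with $i\ne i'$, the corresponding condition from Corollary \ref{cor dist adm} (using Remark \ref{rmk gk} when $\iota\in\set_<\cap\set$) combined with the unitarity of the $\rho$'s forces either $\rho_{i'}\cong\overline{\rho_i}^\vee$ or $\rho_{i'}\cong\rho_i$, both excluded by the hypothesis. Hence $\tau'$ preserves each block $\{(i,1),\dots,(i,j_i)\}$.

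\textbf{Within-block analysis (the main obstacle).} The heart of the argument is to show $j_i=1$ for every $i$ by ruling out both nontrivial permutations and refinements in each block. Combining (a) $\set_+(w)=\emptyset$, so every $\tau'$-fixed index lies outside $\set$, (b) the $\tau'$-invariance of $\set$ (as $w^2=e$), (c) the shape $\set\cap\{(i,\cdot)\}=\{(i,j):s_i<j\le j_i\}$ from \eqref{eq: c form}, and (d) the order conditions \eqref{orbit_cond}, I would analyse each orbit of $\tau'$ on a block. An identity action on a block of size $\ge 2$ forces $s_i=j_i$, and then \eqref{orbit_cond} with the constant $c_{(i,\cdot)}=i$ gives the impossible inequality $i<i$. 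A transposition $(i,a)\leftrightarrow(i,b)$ with $a<b$ forces $\{(i,a),(i,b)\}$ to be either disjoint from or entirely contained in $\set$, by (b) and (c); in the disjoint case, \eqref{orbit_cond} applied to $j=a$, $j'=b\le s_i$ again yields $i<i$, while in the containment case Corollary \ref{cor dist adm} gives $L(\Delta_{(i,b)})\cong\nu^{-1}L(\Delta_{(i,a)})$, which translates into $c_{i,b-1}=c_{i,a-1}-1$, contradicting the strict monotonicity $c_{i,a-1}<c_{i,b-1}$. Since every involution is a product of fixed points and transpositions, $\tau'$ must be the identity, $\set=\emptyset$, and $j_i=1$ for every $i$.

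\textbf{Finishing up.} With $j_i=1$ and $\set=\emptyset$ every $(i,1)\in\set_-(w)$, so Corollary \ref{cor dist adm} yields that $\nu^{-1/2}L(\Delta_i)$ is $\GL(F)$-distinguished for each $i$. A standard closed-orbit (Mackey) argument for $P\bs\GL_n(E)/\GL_n(F)$---the diagonal orbit of the Levi being closed---then lifts the factorwise $\GL(F)$-distinction to the parabolic induction, so $\nu^{-1/2}\tau=\nu^{-1/2}L(\Delta_1)\times\cdots\times\nu^{-1/2}L(\Delta_k)$ is $\GL(F)$-distinguished. Finally, $\GL(F)$-distinction of the essentially discrete series $\nu^{-1/2}L(\Delta_i)$ forces it to be unitary, whence $\exp(\Delta_i)=1/2$, proving the ``in particular'' assertion.
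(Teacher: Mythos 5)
Your overall route is the same as the paper's: Lemma \ref{hered} for the ``if'' direction, and for the ``only if'' direction the geometric lemma (Theorem \ref{thm geoml}) together with Corollary \ref{cor dist adm}, the constraints \eqref{eq: c form}--\eqref{orbit_cond}, the non-$\Sp$-distinction of essentially square integrable representations, and the hypothesis on the $\rho_i$'s, forcing $j_i=1$, $\set=\emptyset$ and the trivial involution; the paper compresses this combinatorial step into one sentence, while you spell it out, and your cross-block exclusion, the identity-on-a-block case and the disjoint transposition case are all correct. The conclusion ($\GL(F)$-distinction of each $\nu^{-1/2}L(\Delta_i)$, heredity of Galois distinction, and exponent $\tfrac12$) also matches the paper, which cites \cite[Proposition 26]{MR1194271} and \cite[Proposition 12]{MR1111204} for the last two points.

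There is, however, one flawed step: your treatment of a within-block transposition with both indices in $\set$. You parametrize the Jacquet pieces as $\Delta_{(i,j)}=[c_{i,j-1}+1,c_{i,j}]_{(\rho_i)}$ with $c_{i,1}<\cdots<c_{i,j_i}$, i.e.\ with $\Delta_{(i,1)}$ at the bottom of $\Delta_i$; but in \cite[\S9.5]{MR584084} (and as used throughout the paper, e.g.\ in the proofs of Proposition \ref{prop main} and Theorem \ref{thm:gen_lq}) the first factor is the top piece, $\Delta_{(i,1)}=[x_i,b_i]_{(\rho_i)}$, and $\Delta_{(i,j_i)}=[a_i,y_i]_{(\rho_i)}$. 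With the correct convention the relation $L(\Delta_{(i,b)})\cong\nu^{-1}L(\Delta_{(i,a)})$ coming from Corollary \ref{cor dist adm} is \emph{not} self-contradictory: it is realized, for instance, by two adjacent length-one pieces, so the contradiction you claim from monotonicity of the $c_{i,j}$'s evaporates. The gap is harmless because this sub-case is killed by the same mechanism you use in the disjoint sub-case: since $\set$ is $\tau'$-stable, both $(i,a)$ and $(i,b)$ lie in $\set$, i.e.\ $s_i<a<b$, and then the second line of \eqref{orbit_cond} requires $c_{(i,a)}>c_{(i,b)}$, which reads $i>i$. With that replacement your argument is complete and agrees with the paper's.
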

\begin{proof}
Let $\Delta_{i}=[a_i,b_i]_{(\rho_{i})}$ for $i=1,\dots,k$. If $\nu^{-1/2}\tau$ is $\GL(F)$-distinguished, then the representation $\tau\rtimes \triv_0$ is $\Sp$-distinguished by Lemma \ref{hered}. To prove the `only if' part, suppose that $\tau\rtimes \triv_0$ is $\Sp$-distinguished. Let $\sigma=L(\Delta_{1})\otimes\cdots\otimes L(\Delta_{k})\otimes \triv_0$ and $M$ be the standard Levi subgroup of $U_{2n}$ corresponding to $\sigma$. Now $\Sp$-distinction of $\tau\rtimes \triv_0$ implies that there exists a $w\in {}_{M}W_{M}\cap W[2]$ which is relevant to $\sigma$. Let $L=M\cap w^{-1}Mw$ and applying \cite[\S9.5]{MR584084} write 
\[
r_{L,M}(L(\Delta_{1})\otimes\cdots\otimes L(\Delta_{k})\otimes \triv_0)=\otimes_{\imath\in\Index} L(\Delta_\imath)\otimes \triv_0
\]
(in the notation of \S \ref{ss gen orb}). By the hypothesis, Corollary \ref{cor dist adm} and the fact that no essentially square integrable representation in $\Irr_\GL$ is $\Sp$-distinguished (by \cite[Theorem 3.2.2]{MR1078382}), we obtain that $j_{i}=1$, $i=1,\dots,k$, $\set=\emptyset$ and the involution $\tau$ on $\Index$ is trivial. Thus $\nu^{-1/2}L(\Delta_{i})$ is $\GL(F)$-distinguished for each $i\in \{1,\dots, k\}$. By \cite[Proposition 12]{MR1111204} $\exp(\Delta_{i})=\frac{1}{2}$. The representation $\nu^{-1/2}\tau$ is $\GL(F)$-distinguished by \cite[Proposition 26]{MR1194271}. 
\end{proof}

\section{$L$-parameters for the unitary groups and the stable base change map}\label{s_rep_u_n}
We now collect some preliminaries relevant to this article on the $L$-parameters of quasi-split unitary groups and the stable base change map. We begin by recalling the Langlands quotient theorem for these groups.
\subsection{Langlands quotient theorem}

\subsubsection{Standard modules of $U_{2n}$}
\begin{definition} 
A standard module $\lambda \in \Alg(U_{2n})$ is a representation of the form
\[
\nu^{x_{1}}\tau_{1}\times \cdots\times \nu^{x_{k}}\tau_{k}\rtimes \tau_{\temp}
\]
where $k\geq 0$, $\tau_{i}\ (1\leq i \leq k)$ are tempered representations in $\Irr_{\GL}$, $\tau_{\temp}$ is a tempered representation in $\Irr_\U$ and $x_{1}>\cdots > x_{k}>0$. 
\end{definition}
The representation $\nu^{x_{1}}\tau_{1}\times \cdots\times \nu^{x_{k}}\tau_{k}$ is the $\GL$-part of $\lambda$.  

\subsubsection{} We now state the Langlands quotient theorem for $U_{2n}$ (see \cite{MR0507262} for a proof). 
\begin{theorem}\label{thm_lqt}
A standard module $\lambda \in \Alg(U_{2n})$ has a unique irreducible (Langlands) quotient that we denote by $LQ(\lambda)$. The map $\lambda\mapsto LQ(\lambda)$ from standard modules in $\Alg(U_{2n})$ to $\Irr(U_{2n})$ is a bijection.    
\qed
\end{theorem}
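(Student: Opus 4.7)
The statement is the Langlands classification theorem for the $p$-adic reductive group $U_{2n}$, and I would derive it from the general Langlands classification, as cited in \cite{MR0507262}, following Borel--Wallach and Silberger. The argument naturally splits into three parts: (i) each standard module has a unique irreducible quotient; (ii) every irreducible representation of $U_{2n}$ arises in this way; (iii) the standard module is determined by its Langlands quotient.

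For (i), given $\lambda=\ip_{G,M}(\sigma)$ with $\sigma=\nu^{x_1}\tau_1\otimes\cdots\otimes\nu^{x_k}\tau_k\otimes\tau_\temp$ and $x_1>\cdots>x_k>0$, let $w=w_M^G$ and $M'=wMw^{-1}$. I would consider the standard intertwining operator
\[
J(\sigma):\ip_{G,M}(\sigma)\to \ip_{G,M'}(w\cdot\sigma)
\]
defined by the integral over the opposite unipotent radical, convergent in the Langlands positive cone. The core step is to show $\operatorname{Im} J(\sigma)$ is irreducible by computing the Jacquet module of $\operatorname{Im} J(\sigma)$ along every standard parabolic containing $P$ and invoking Casselman's criterion: along $P$ itself the exponents match those of $\sigma$, and along strictly larger parabolics the exponents lie in the open positive Weyl chamber. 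Uniqueness of the irreducible quotient then follows from Frobenius reciprocity \eqref{eq: 1st adj}: any irreducible quotient of $\lambda$ has $\sigma$ occurring in its Jacquet module along $P$, and the strict positivity $x_1>\cdots>x_k>0$ together with the temperedness of $\tau_i$ and $\tau_\temp$ isolates a single such quotient.

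For (ii), given $\pi\in\Irr(U_{2n})$, choose a cuspidal pair $(M_0,\rho_0)$ with $\pi\hookrightarrow \ip_{G,M_0}(\rho_0)$. Group the coordinates of $\Re\exp(\rho_0)$ by equal value, arrange them in strictly decreasing order, and place the tempered coordinates in the $U_{2m}$-block. This yields a standard Levi $M\supseteq M_0$ and an essentially tempered representation $\sigma=\nu^{x_1}\tau_1\otimes\cdots\otimes\nu^{x_k}\tau_k\otimes\tau_\temp$ of $M$ with $x_1>\cdots>x_k>0$ such that $\sigma\hookrightarrow \ip_{M,M_0}(\rho_0)$; Casselman's criterion is what guarantees that the $\tau_i$ and $\tau_\temp$ are tempered. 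Transitivity of parabolic induction then realizes $\pi$ as a quotient of $\ip_{G,M}(\sigma)$, and by (i) we have $\pi=LQ(\lambda)$ for $\lambda=\ip_{G,M}(\sigma)$. For (iii), the cuspidal support of $LQ(\lambda)$ recovers $(M_0,\rho_0)$ up to $W^G$-conjugacy, and the grouping and ordering described above are unique, so $\lambda$ is determined by $LQ(\lambda)$.

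The main obstacle is the irreducibility of $\operatorname{Im} J(\sigma)$ in step (i): this is the analytic heart of the Langlands classification and requires the meromorphic continuation of the intertwining integral together with a detailed control of Jacquet modules via Casselman's criterion. For $U_{2n}$ this is a direct instance of the general theory, so in practice I would simply invoke \cite{MR0507262} rather than reproduce the argument.
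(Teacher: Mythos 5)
The paper offers no argument of its own here: it states the theorem with a reference to Silberger \cite{MR0507262}, and since your proposal ultimately does exactly that (invoking the general Langlands classification for $p$-adic groups, of which the $U_{2n}$ statement is a direct instance), you are taking essentially the same route, with a reasonable sketch of the standard proof layered on top. One caveat about your step (iii): the cuspidal support of $LQ(\lambda)$ does not determine the tempered data, since non-isomorphic tempered representations $\tau_i$ or $\tau_{\temp}$ can share a cuspidal support, so injectivity of $\lambda\mapsto LQ(\lambda)$ requires the full uniqueness-of-Langlands-data argument (recovering the exponents and the tempered pieces from Jacquet modules of the quotient); this is, however, part of what the cited general theorem supplies.
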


\subsection{Stable base change}\label{ss sbc}
We will now recall the definition and some preliminary facts on the stable base change map from $\Irr(U_{2n})$ to $\Irr(\GL_{2n}(E))$.
\subsubsection{L-parameter for $U_{2n}$}
Let $W_F$ denote the Weil group and $W'_F:= W_F\times {\rm SL}_{2}(\mathbb C)$ the Weil-Deligne group of $F$. The Langlands dual group of $U_{2n}$ is the semi-direct product ${}^{L}U_{2n}=\GL_{2n}(\mathbb C)\rtimes W_{F}$ where the action of  $W_{F}$ factors through the Galois group ${\rm Gal}(E/F)$. An $L$-parameter of $U_{2n}$ is a map $W_{F}'\to {}^{L}U_{2n}$ satisfying several properties (see, for instance, \cite[\S 8]{MR3202556} or \cite[\S 2.2]{MR3338302}). Let $\Phi(U_{2n})$ denote the set of all equivalence classes of $L$-parameters of the group $U_{2n}$. The Langlands reciprocity map for this case was established by Mok (see \cite[Theorem 2.5.1]{MR3338302}). We denote by ${\rm rec_\U}$ the union over all $n\in \N$ of the finite to one surjective maps from $\Irr(U_{2n})$ to $\Phi(U_{2n})$ defined by Mok. 

Given $\pi\in \Irr(U_{2n})$, the fiber of the map ${\rm rec_\U}$ containing $\pi$ is the $L$-packet of $\pi$. 

\subsubsection{The restriction map $\xib'$}
For $\phi\in \Phi(U_{2n})$ the restriction $\phi|_{W_{E}'}$ lies in $\Phi(\GL_{2n}(E))$. Denote this restriction map $(\phi\mapsto \phi|_{W_{E}'}):\sqcup_{n=1}^\infty\Phi(U_{2n})\to \sqcup_{n=1}^\infty\Phi(\GL_{2n}(E))$ by $\xib'$. The map $\xib'$ is injective (see \cite[Theorem 8.1 (ii)]{MR3202556} or \cite[\S 2.2]{MR3338302}).

\subsubsection{} Stable base change $\xib$ is the functorial transfer from $\Irr(U_{2n})$ to $\Irr(\GL_{2n}(E))$ defined by
\[
\xib'(\rec_\U(\pi))=\rec_\GL(\xib(\pi)).
\]

\subsubsection{Image of $\xib$} The next result follows from \cite[Lemma 2.2.1 and Theorem 2.5.1]{MR3338302} (see also \cite[Theorem 8.1]{MR3202556}). 
\begin{proposition}\label{prop:im_bc}
A representation $\pi\in \Irr(\GL_{2n}(E))$ is in the image of the map $\xib$ if and only if $\pi^{\vee}\cong \overline{\pi}$ and $\rec_\GL(\pi)$ is conjugate symplectic. 
\qed
\end{proposition}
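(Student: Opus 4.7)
The plan is to unwind the definition of $\xib$ in terms of the bijections $\rec_\GL$ and $\rec_\U$ and the restriction map $\xib'$, thereby reducing the statement to a characterization of the image of $\xib'$ at the level of $L$-parameters.

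First I would observe that, by the defining relation $\rec_\GL(\xib(\sigma)) = \xib'(\rec_\U(\sigma))$ for $\sigma\in\Irr(U_{2n})$, together with surjectivity of $\rec_\U$ onto $\Phi(U_{2n})$ and injectivity of $\rec_\GL$, an irreducible $\pi\in\Irr(\GL_{2n}(E))$ lies in the image of $\xib$ if and only if its $L$-parameter $\rec_\GL(\pi)\in\Phi(\GL_{2n}(E))$ lies in the image of $\xib'$, i.e., extends to an $L$-parameter of $U_{2n}$. Thus the proposition is equivalent to the claim that the image of $\xib'$ consists exactly of those parameters $\phi\in\Phi(\GL_{2n}(E))$ such that $\phi$ is conjugate self-dual and conjugate symplectic.

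Next I would invoke the cited results of Mok, namely \cite[Lemma 2.2.1 and Theorem 2.5.1]{MR3338302}, which identify the restrictions $\phi|_{W'_E}$ of parameters $\phi\in\Phi(U_{2n})$ as precisely the conjugate self-dual parameters of parity $(-1)^{2n-1}=-1$. In the terminology of \cite[\S3]{MR3202556} these are the conjugate symplectic parameters in $\Phi(\GL_{2n}(E))$. This is the one essential input; the rest is formal.

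Finally I would translate the two resulting conditions on $\rec_\GL(\pi)$ back to conditions on $\pi$ itself. By Theorem \ref{thm:rec_gal}, $\rec_\GL(\bar\pi^\vee) = {}^c\rec_\GL(\pi)^\vee$, so conjugate self-duality of $\rec_\GL(\pi)$ is equivalent to $\pi^\vee\cong\bar\pi$ via the injectivity of $\rec_\GL$. The remaining parity condition is then precisely that $\rec_\GL(\pi)$ be conjugate symplectic, and the proposition follows. The only potential obstacle lies in the reliance on Mok's characterization of $\operatorname{im}\xib'$; once that is accepted, the argument is a short unwinding.
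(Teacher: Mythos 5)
Your proposal is correct and is essentially the paper's own argument: the paper proves this proposition simply by citing Mok's Lemma 2.2.1 and Theorem 2.5.1 (and GGP Theorem 8.1), and your write-up just makes explicit the formal unwinding — surjectivity of $\rec_\U$, injectivity of $\rec_\GL$, the characterization of the image of $\xib'$ as the conjugate symplectic parameters, and Theorem \ref{thm:rec_gal} to convert conjugate self-duality of the parameter into $\pi^\vee\cong\overline{\pi}$ — that those citations are meant to supply.
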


In what follows we provide some information about the stable base change fibers ($L$-packets). 
First, we remark that this task is reduced to tempered $L$-packets as follows.

\subsubsection{}\label{sss rtt} The map $\xib$ takes tempered representations to tempered representations. 
If $\pi\in \Irr_\GL$ lies in the image of the base change map then it is the Langlands quotient of a standard module of the form 
\[
\nu^{x_1}\tau_1\times \cdots\times \nu^{x_k}\tau_k \times \pi_\temp \times \nu^{-x_k}\overline{\tau_k}^\vee\times\cdots\times\nu^{-x_1}\overline{\tau_1}^\vee
\]
for unique real numbers $x_1>\cdots>x_k>0$ and tempered representations $\tau_1,\dots,\tau_k$ and $\pi_\temp$ in $\Irr_\GL$ so that $\rec_\GL(\pi_\temp)$ is conjugate symplectic. We then have
\[
\xib^{-1}(\pi)=\{\LQ(\nu^{x_1}\tau_1\times \cdots\times \nu^{x_k}\tau_k \rtimes \tau_\temp)\mid \tau_\temp\in \xib^{-1}(\pi_\temp)\}.
\]

\subsubsection{} \label{sss tpl}
An $L$-packet of tempered representations in $\Irr_\U$ either consists entirely of discrete series representations or contains none (see \cite[Theorem 5.7]{MR2366373}). M{\oe}glin further explicated in  \cite[\S 5]{MR2366373} the $L$-packets consisting of discrete series representations. 

A tempered representation $\pi\in \Irr_\GL$ is called \emph{stable $F$-discrete} if it has the form 
\begin{equation}\label{eq fd}
\pi\cong\delta_1\times\cdots\times\delta_k
\end{equation}
where $\delta_1,\dots,\delta_k\in\Irr_\GL$ are discrete series representations such that $\rec_\GL(\delta_i)$ is conjugate symplectic for $i=1,\dots,k$ and $\delta_i\not\cong\delta_j$ for $i\ne j$.

Recall from Remark \ref{rmk csq} that for $\rho\in \Cusp_\GL$ and $a\in \N$ we have that $\rec_\GL(\delta(\rho,a))$ is conjugate symplectic if and only if $\rho$ is conjugate self-dual of parity $(-1)^a$.

For $\pi\in \Irr_\GL$ tempered and in the image of stable base change we have that $\xib^{-1}(\pi)$ consists of discrete series representations if and only if $\pi$ is stable $F$-discrete.

Let $\pi$ be stable $F$-discrete as in \eqref{eq fd} and write $\delta_i=\delta(\rho_i,a_i)$, $i=1,\dots,k$. Then the $L$-packet
$\xib^{-1}(\pi)$ consists of discrete series representation $\pi'\in \Irr_\U$ such that $\Jord(\pi')=\{(\rho_i,a_i),\mid i=1,\dots,k\}$.
It further follows from \cite[Theorem 7.1]{MR2366373} that the stable base change fiber is of cardinality 
\[
\abs{\xib^{-1}(\pi)}=2^{k-1}.
\]
This observation, that a discrete series $L$-packet is determined by the Jordan set, is a consequence of the fact that the extended cuspidal support of a discrete series $\tau\in\Irr_\U$ as defined in \cite[\S 5.4]{MR2366373} is, in fact, the cuspidal support of $\times_{(\rho,a)\in\Jord(\tau)} \delta(\rho,a)$. This, in turn, follows from the fact that the basic assumption (BA) of \cite{MR1896238} (and hence also \cite[(2-3)]{MR1896238}) is now a theorem (see \cite[Proposition 3.1]{MR2366373} and \cite[\S 12]{MR1896238}). 

\subsubsection{}\label{sss tlp} Let $\pi\in \Irr_\GL$ be tempered and in the image of stable base change. Then there is a unique stable $F$-discrete representation $\pi_0\in \Irr_\GL$ and a tempered representation $\tau\in \Irr_\GL$ such that 
\[
\pi\cong\tau\times \pi_0\times \overline{\tau}^\vee.
\]
Furthermore, $\tau\times \overline{\tau}^\vee$ is uniquely determined by $\pi$. We have 
\[
\xib^{-1}(\pi)=\{\tau'\in \Irr_\U \mid \tau'\hookrightarrow \tau\rtimes \pi_0' \text{ for some } \pi_0'\in \xib^{-1}(\pi_0)\}. 
\]
Write $\tau=\delta_1\times\cdots\times\delta_t$ where $\delta_i\in \Irr_\GL$ is a discrete series representation $i=1,\dots,t$.
In fact, it follows from \cite[Theorem 13.1]{MR1896238} (and the fact that $\Jord(\pi_0')$ is independent of $\pi_0'\in\xib^{-1}(\pi_0)$) that 
\[
\abs{\xib^{-1}(\pi)}=2^m \abs{\xib^{-1}(\pi_0)}
\] 
where $m$ is the number of pairs $(\rho,a)$ such that $\rho\in \Cusp_\GL$ is conjugate self-dual with parity $(-1)^a$, $\delta(\rho,a)\rtimes \pi_0'$ is reducible for one (and hence all) $\pi_0'\in\xib^{-1}(\pi_0)$ and $\delta(\rho,a)\cong\delta_i$ for some $i\in \{1,\dots,t\}$.
Furthermore, in the above notation, write $\delta_i=\delta(\sigma_i,b_i)$, with $\sigma_i\in \Cusp_\GL$ and $b_i\in \N$, $i=1,\dots,t$ and $\pi_0=\delta(\rho_1,a_1)\times\cdots\times\delta(\rho_k,a_k)$ as in  \eqref{eq fd}. Then $\xib^{-1}(\pi)$ consists of the tempered representations $\pi'\in\Irr_\U$ such that $\Jord(\pi')$ is the multi-set
\[
\{(\sigma_i,b_i), (\overline{\sigma_i}^\vee, b_i)\mid i=1,\dots,t\}\cup\{(\rho_i,a_i)\mid i=1,\dots,k\}.
\]

\subsubsection{}\label{sss tpl_1} Next we make more explicit the $L$-packets of discrete series representations that contain a single member.

For $\rho\in \Cusp_\GL$ even and $a\in 2\N$ let $\tau^+(\rho,a)\in \Pi_\disc^\circ$ be the representation associated in \S\ref{sss_mt_dsc} to the admissible data
$(\J,({\mbf{a}},\epsilon))$ where $\J=\{\rho\}$, ${\mbf{a}}=((a-1)/2)$ and $\epsilon=(1)$ (so that $k_\rho=1$). In other words, $\tau^+(\rho,a)$ is the unique irreducible quotient of 
\[
L([\frac{1-a}2,-\frac12]_{(\rho)})\rtimes \triv_0
\] 
and is a strongly positive discrete series representation (see for example \cite[\S 7]{MR1896238}).
\begin{lemma}\label{lem mog}
Let $\delta\in\Irr(\GL_{2n}(E))$ be a discrete series representation in the image of stable base change. Write $\delta=\delta(\rho,a)$ where $\rho\in \Cusp_\GL$ is conjugate self-dual with parity $(-1)^a$ and let $\tau\in \Irr_\U$ be the discrete series representation such that $\{\tau\}=\xib^{-1}(\delta)$.
\begin{enumerate}
\item If $a$ (and hence $\rho$) is odd then $\tau\not\in\Pi_\disc^\circ$.
\item If $a$ (and hence $\rho$) is even then $\tau=\tau^+(\rho,a)$.
\end{enumerate}
\end{lemma}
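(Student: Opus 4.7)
The plan is to leverage the cardinality and Jordan-set description of tempered $L$-packets recalled in \S\ref{sss tpl}. Since $\delta=\delta(\rho,a)$ is itself a discrete series, it is already stable $F$-discrete in the sense of \eqref{eq fd} with $k=1$; applying \S\ref{sss tpl}, the fiber $\xib^{-1}(\delta)$ has cardinality $2^{k-1}=1$, and any $\tau'$ in this fiber satisfies $\Jord(\tau')=\{(\rho,a)\}$. In particular $\Jord(\tau)=\{(\rho,a)\}$, which will be the sole combinatorial input for both parts.

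For part (1), I would argue by contradiction. Assume $\tau \in \Pi_\disc^\circ$. By the M{\oe}glin--Tadi\'c classification recalled in \S\ref{sss_mt_dsc}, $\tau$ then corresponds to an admissible datum $(\J_\tau,(\mbf{a_\sigma}(\tau),\epsilon_\sigma(\tau))_{\sigma \in \J_\tau})$ in the sense of \S\ref{sss_Admissible_datum}. From $\Jord(\tau)=\{(\rho,a)\}$ one reads off $\J_\tau=\{\rho\}$ and $k_\rho(\tau)=1$. But \S\ref{sss_Admissible_datum} forces $k_\rho$ to be even whenever $\rho$ is odd, contradicting our hypothesis and yielding $\tau\notin\Pi_\disc^\circ$.

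For part (2), I would exhibit the answer directly. By its definition in \S\ref{sss tpl_1}, $\tau^+(\rho,a)$ is the element of $\Pi_\disc^\circ$ attached to the admissible datum $(\{\rho\},((a-1)/2),(1))$, which is well-formed since $a$ even forces $(a-1)/2\in \tfrac{1}{2}+\Z$, in agreement with $\rho$ being even. In particular $\Jord(\tau^+(\rho,a))=\{(\rho,a)\}$, so $\tau^+(\rho,a)\in\xib^{-1}(\delta)$ by the characterization in \S\ref{sss tpl}, and the cardinality-$1$ conclusion above forces $\tau=\tau^+(\rho,a)$.

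No real obstacle is anticipated: both parts are immediate consequences of the recalled descriptions of discrete series $L$-packets together with the admissibility constraint on the multiplicities $k_\rho$. The only step that merits attention is verifying that \S\ref{sss tpl} applies with $k=1$ when $\delta$ is itself a single discrete series, which is transparent from \eqref{eq fd}; one could also remark (though it is not needed for the argument) that length-one tuples in $V_0\sqcup V_1$ consist solely of $(1)$, confirming independently that $\tau^+(\rho,a)$ is the unique member of $\Pi_\disc^\circ$ with Jordan set $\{(\rho,a)\}$.
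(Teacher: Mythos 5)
Your proof is correct, but it takes a somewhat different route from the paper's. The paper disposes of both parts by a direct appeal to M{\oe}glin's analysis of extended cuspidal supports in \cite[\S 5]{MR2366373}: for $a$ odd the extended cuspidal support of $\tau$ contains $\rho$ with multiplicity one, which is incompatible with trivial partial cuspidal support, while for $a$ even the extended cuspidal support of $\tau^+(\rho,a)$ is the cuspidal support of $\delta$, so that \cite[Theorem 5.7]{MR2366373} identifies $\tau$ with $\tau^+(\rho,a)$. You instead stay within the facts already recalled in the paper: the Jordan-set description of discrete $L$-packets in \S\ref{sss tpl} gives $\Jord(\tau)=\{(\rho,a)\}$; part (1) then follows from the constraint in \S\ref{sss_Admissible_datum} that $k_\rho$ must be even when $\rho$ is odd (your putative datum would have $k_\rho(\tau)=1$), and part (2) follows by reading off $\Jord(\tau^+(\rho,a))=\{(\rho,a)\}$ from its defining admissible datum and using that the packet is determined by the Jordan set, together with $\abs{\xib^{-1}(\delta)}=1$. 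Both routes rest on the same input from \cite{MR2366373} (which is exactly what \S\ref{sss tpl} summarizes), so neither is more general or more elementary; yours buys self-containedness relative to the statements already quoted in the paper, at the cost of routing through the M{\oe}glin--Tadi\'c bijection with admissible data, whereas the paper's is a shorter citation-level argument. Two points you use implicitly and could make explicit: that $\delta(\rho,a)$ is stable $F$-discrete with $k=1$ requires $\rec_\GL(\delta(\rho,a))$ to be conjugate symplectic, which is precisely the parity hypothesis $(-1)^a$ via Remark \ref{rmk csq}; and the Jordan set appearing in \S\ref{sss tpl} is the general M{\oe}glin--Tadi\'c one, which for a representation in $\Pi_\disc^\circ$ coincides with the definition given in \S\ref{ss_dsc}, an identification your part (1) tacitly relies on.
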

\begin{proof}
This follows from \cite[\S 5]{MR2366373}. In particular, when $a$ is odd the extended cuspidal support of $\tau$ in the sense of loc. cit. contains $\rho$ with multiplicity one while when $a$ is even, the extended cuspidal support  of $\tau^+(\rho,a)$ is the cuspidal support of $\delta$. The lemma therefore follows from \cite[Theorem 5.7]{MR2366373}.
\end{proof}

\subsubsection{} It is observed in \cite[Proposition 12]{DP} that if an $L$-packet in $\Irr_\U$ has an associated Arthur parameter and its stable base change is $\Sp$-distinguished then the $L$-packet consists of a single representation. We notice here that this observation holds for all unitary representations in $\Irr_\GL$. 

\begin{proposition}\label{prop DP}
Let $\pi\in \Irr_\GL$ be unitary, conjugate self-dual and $\Sp$-distinguished. Then $\pi$ is in the image of stable base change and its base change fiber is a singleton, i.e., $\abs{\xib^{-1}(\pi)}=1$.  
\end{proposition}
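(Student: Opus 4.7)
The plan is to combine the classification of unitary $\Sp$-distinguished irreducible representations of $\GL_{2n}(E)$ with an analysis of the Langlands parameter and standard module of $\pi$.

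First, I would invoke the classification of unitary $\Sp$-distinguished representations of $\GL_{2n}(E)$, extending the Speh characterization recalled in the introduction (following work of Matringe and Offen): any such $\pi$ is a product of even Speh representations $U(\delta_1,2m_1)\times\cdots\times U(\delta_r,2m_r)$, where each $\delta_i\in\Irr_{\GL}$ is a discrete series and $m_i\in\N$. Consequently, by \eqref{eq_lp_gl} together with the fact that $\rec_\GL$ of a Langlands quotient is the direct sum of the parameters of the terms of its standard module, $\rec_\GL(\pi)$ decomposes as a direct sum of irreducible subrepresentations $\phi_{\delta_i}\otimes\nu^j$, where $\phi_{\delta_i}=\rec_\GL(\delta_i)$ and $j$ ranges over the non-zero half-integers $\pm\frac{1}{2},\pm\frac{3}{2},\dots,\pm\frac{2m_i-1}{2}$.

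To see that $\pi\in\im\xib$, note that each such exponent $j$ is non-zero. Hence no irreducible summand of $\rec_\GL(\pi)$ is conjugate self-dual: by Theorem \ref{thm:rec_gal} the conjugate-dual of $\phi_{\delta_i}\otimes\nu^j$ is $\overline{\phi_{\delta_i}}^\vee\otimes\nu^{-j}$, which has $\nu$-exponent $-j\ne j$. The conjugate self-duality of $\pi$ therefore renders the multi-set of these summands invariant under a fixed-point-free conjugate-dual involution, so $\rec_\GL(\pi)$ decomposes into conjugate-dual pairs $V\oplus V^c$ with $V\not\cong V^c$. Each such pair admits a non-degenerate $W_E'$-invariant conjugate-symplectic bilinear form (obtained from the canonical pairing of $V$ with $V^c$), so $\rec_\GL(\pi)$ is conjugate symplectic. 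Proposition \ref{prop:im_bc} then gives $\pi\in\im\xib$.

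For the singleton statement, I would apply the reduction of \S\ref{sss rtt}: $|\xib^{-1}(\pi)|=|\xib^{-1}(\pi_\temp)|$, where $\pi_\temp$ is the tempered middle of the Langlands standard module of $\pi$. Since every Langlands exponent appearing in the decomposition above is a non-zero half-integer, this tempered middle must be $\triv_0$, whence $|\xib^{-1}(\pi)|=|\xib^{-1}(\triv_0)|=1$. The principal obstacle is the first step---invoking the classification of unitary $\Sp$-distinguished representations of $\GL_{2n}(E)$ as products of even Speh representations---after which the parameter and standard-module analysis is routine.
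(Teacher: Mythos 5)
Your proof follows essentially the same route as the paper: invoke the classification of $\Sp$-distinguished irreducible unitary representations of $\GL_{2n}(E)$, observe that no Langlands exponent vanishes, and conclude via Proposition \ref{prop:im_bc} and the fiber reduction of \S\ref{sss rtt} (your hyperbolic-pairs argument for conjugate-symplecticity just makes explicit what the paper leaves implicit). The one point to fix is your statement of the classification. The result the paper cites (\cite[Corollary 2]{MR2414223}) says that such a $\pi$ is a product of factors that are either even Speh representations $U(\delta,2m)$ with $\delta$ a unitary discrete series, \emph{or} complementary-series factors $\nu^{\alpha}U(\delta,2m)\times\nu^{-\alpha}U(\delta,2m)$ with $0<\alpha<\tfrac12$; your formulation omits the latter (or absorbs them only if the $\delta_i$ are allowed to be non-unitary essentially square integrable, contrary to your "discrete series"). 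Consequently your claim that the exponents of the irreducible summands of $\rec_\GL(\pi)$ are non-zero \emph{half-integers} is not correct in general: for the complementary-series factors they are half-integers shifted by $\pm\alpha$. Fortunately your argument never uses half-integrality, only non-vanishing, and since $0<\alpha<\tfrac12$ all exponents remain non-zero; so after correcting the quoted classification and weakening "non-zero half-integer" to "non-zero", your proof goes through and agrees with the paper's.
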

\begin{proof}
Write $\pi=L(\mult)$. In light of \S \ref{sss rtt} it is enough to show that no segment in $\mult$ has exponent zero. 
For a segment $\Delta$ and $a\in \N$ let $\mult(\Delta,a)=\{\nu^{\frac{1-a}2+i}\Delta\mid i=0,1,\dots,a-1\}$. It follows from \cite[Corollary 2]{MR2414223} that $\pi=\pi_1\times\cdots\times\pi_k$ where each $\pi_i=L(\mult_i)$ is such that $\mult_i$ is either of the form $\mult(\Delta,2a)$ or of the form $\nu^\alpha\mult(\Delta,2a)+\nu^{-\alpha}\mult(\Delta,2a)$ where $\exp(\Delta)=0$, $a\in \N$ and $0<\alpha<\frac12$. It easily follows that no segment in $\mult_i$ has exponent zero and since $\mult=\mult_1+\cdots+\mult_k$ the proposition follows. (Here, we view multi-sets as functions of finite support, hence addition is defined.)
\end{proof}

\subsubsection{}
Note that Proposition \ref{prop:im_bc} implies that for any rigid $\pi$ in the image of the map $\xib$, there exists a unique conjugate self-dual representation $\rho\in \Cusp_{\GL}$ such that either $\Supp(\pi)\subseteq \rho^{\mathbb Z}$ or $\Supp(\pi)\subseteq (\nu^{\frac{1}{2}}\rho)^{\mathbb Z}$. We make the following definition.

\begin{definition}\label{def:im_bc}
We say that a conjugate self-dual rigid representation $\pi\in \Irr_{\GL}$ is {\it centered} at $\rho\in \Cusp_{\GL}$, if $\overline{\rho}\cong \rho^{\vee}$ and $\Supp(\pi)\subset \rho^{\mathbb Z}\sqcup  (\nu^{\frac{1}{2}}\rho)^{\mathbb Z}$. 
\end{definition}

\section{Distinction and the stable base change map}\label{s_stab_dist}

Dijols and Prasad conjectured in \cite[Conjecture 2]{DP} that an $L$-packet of Arthur type in $\Irr_U$ contains an $\Sp$-distinguished member if and only if the stable base change of the $L$-packet is $\Sp$-distinguished.
In this section we study a family of representations that indicate that the Arthur type assumption is necessary. Namely, we show that for some $\Sp$-distinguished ladder representations in $\Irr_\GL$ the unique element of its base change fiber is not $\Sp$-distinguished.

\subsection{Reducibility, distinction and $L$-packets associated to ladders}
Let $\pi=L(\mult)$ be a ladder representation. We recall the equivalent conditions for $\pi$ to be $\Sp$-distinguished and for $\pi$ to be in the image of stable base change. 

\subsubsection{}\label{sss spd} It follows from \cite[Theorem 10.3]{MR3628792} that the representation $\pi$ is $\Sp$-distinguished if and only if the ladder $\mathfrak m$ is of the form $(\Delta_{1},\Delta_{2},\dots,\Delta_{2s-1},\Delta_{2s})$ where $\Delta_{2i-1}=\nu\Delta_{2i}$, $i=1,\dots,s$.

\subsubsection{}\label{sss lbcf} Assume that $\pi$ is conjugate self dual and write $\mult=(\Delta_1,\dots,\Delta_t)$ so that $\Delta_{t+1-i}=\overline{\Delta_i}^\vee$, $i=1,\dots,t$. Then according to \S \ref{ss sbc} (and in particular Lemma \ref{lem mog}) 
$\pi$ is in the image of stable base change if and only if either $t$ is even or $\rec_\GL(L(\Delta_{[(t+1)/2]}))$ is conjugate symplectic. When this is the case, $\pi=\xib(\tau)$ for a unique $\tau\in \Irr_\U$ that is determined as follows.
We have
\begin{equation}\label{eq tau}
\tau=\begin{cases} \LQ(L(\Delta_1) \times \cdots \times L(\Delta_{t/2}) \rtimes \triv_0) & t\text{ is even} \\  \LQ(L(\Delta_1) \times \cdots \times L(\Delta_{(t-1)/2}) \rtimes \sigma) & t\text{ is odd} \end{cases}
\end{equation}
where for $t$ odd we have $\{\sigma\}=\xib^{-1}(L(\Delta_{[(t+1)/2]}))$. Recall that by Lemma \ref{lem mog} if $\ell(\Delta_{[(t+1)/2]})$ is odd then $\sigma$ has a non-trivial partial cuspidal support while if 
$\ell(\Delta_{[(t+1)/2]})$ is even then $\sigma=\tau^+(\rho,a)$ where $\Delta_{[(t+1)/2]}=[(1-a)/2,(a-1)/2]_{(\rho)}$.

 \subsection{The case of an even ladder} In this section we study the relation between $\Sp$-distinction of a ladder representation and of its stable base change fiber in the case that the ladder has even number of segments.
  
\subsubsection{An auxiliary result on irreducibility} Define 
\[
\mathcal S=\{\rho\in \Cusp_{\GL}\mid \rho \rtimes \triv_0\ {\rm is \ reducible}\}.
\]
We state below a special case of \cite[Theorems 1.1 and 1.2]{1703.09475} that we are going to use. Here and otherwise, for a multi-set $\mathfrak n=\{\Delta_1,\dots,\Delta_t\}$ of segments let $\mathfrak n_{>0}$ be the multi-set defined by
\[
\mathfrak n_{>0} =\{\Delta_i\mid 1\le i\le t,\ \exp(\Delta_i)>0\}. 
\]

\begin{theorem}\label{thm:l_t}
Let $\pi=L(\mathfrak m)$ be a ladder representation. Then $\pi\rtimes \triv_0$ is irreducible if and only if $\Supp(\pi)\cap \mathcal S=\emptyset$ and $L(\mathfrak m_{>0})\times L((\overline{\mathfrak m}^{\vee})_{>0})$ is irreducible.
\qed
\end{theorem}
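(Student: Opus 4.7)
The plan is to analyze reducibility through a Tadi\'c-type structure formula for the Jacquet modules of the parabolic induction $\pi \rtimes \triv_0$, combined with the explicit computation of Jacquet modules of ladder representations from \cite[Corollary 2.2]{MR2996769}. As a preliminary reduction, I would invoke Proposition \ref{prop_basic_ind}(3): for each segment $\Delta \in \mathfrak m$ with $\exp(\Delta) < 0$ one may replace $L(\Delta)$ by $L(\overline{\Delta}^{\vee})$ (which has positive exponent) without changing the semisimplification of $\pi \rtimes \triv_0$. This is precisely what motivates the appearance of $L(\mathfrak m_{>0}) \times L((\overline{\mathfrak m}^\vee)_{>0})$ in the statement: the semisimplification of $\pi \rtimes \triv_0$ agrees with that of $L(\mathfrak m_{>0}) \times L(\mathfrak m_0) \times L((\overline{\mathfrak m}^\vee)_{>0}) \rtimes \triv_0$.

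For the necessity direction I would argue contrapositively in two steps. First, suppose some $\rho \in \Supp(\pi) \cap \mathcal S$. Then $\rho \rtimes \triv_0$ already decomposes into two inequivalent irreducible constituents, one generic and one not. By extracting $\rho$ from the cuspidal support of $\pi$ via a suitable Jacquet functor and decomposing $\pi \rtimes \triv_0$ in stages, this internal reducibility propagates upward, producing at least two non-isomorphic irreducible subquotients of $\pi \rtimes \triv_0$. Second, suppose $L(\mathfrak m_{>0}) \times L((\overline{\mathfrak m}^\vee)_{>0})$ is reducible as a $\GL$-representation. Then the semisimplification equality from the preliminary reduction, together with the Langlands quotient theorem applied to standard modules built from distinct $\GL$-constituents, forces $\pi \rtimes \triv_0$ to contain at least two distinct irreducible components.

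For the sufficiency direction, assuming both conditions, I would identify $\pi \rtimes \triv_0$ with its unique Langlands quotient $\sigma$ by a Jacquet module comparison. The explicit formula for the Jacquet module $\mu^*(L(\mathfrak m))$ of a ladder, combined with the classical-group structure formula $\mu^*(\pi \rtimes \triv_0) = M^*(\pi) \rtimes \mu^*(\triv_0)$, expresses $\mu^*(\pi \rtimes \triv_0)$ as an explicit sum indexed by pairs of sub-ladders of $\mathfrak m$. Under hypothesis (i), no term acquires extra multiplicity via the $s=0$ cuspidal reducibility of some $\rho \in \Supp(\pi)$; under hypothesis (ii), no further reducibility enters from linkages among segments of $\mathfrak m$ and $\overline{\mathfrak m}^\vee$. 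Matching the resulting multiplicities against those of $\sigma$ term by term then forces $\pi \rtimes \triv_0 = \sigma$.

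The main obstacle is the combinatorial bookkeeping in this Jacquet module calculation. The sum indexed by sub-ladders is large, and one must show that exactly one term produces the Langlands quotient while both hypotheses together preclude contributions from all others. This calls for a delicate case analysis tracking segments by the sign of their exponent, their linkage along each cuspidal line $\rho^{\mathbb Z}$, and the precise cuspidal reducibility locus of $\rho \rtimes \triv_0$ as $\rho$ ranges over $\Supp(\pi)$. Setting up this case analysis in a way that uniformly covers ladders with segments of mixed sign (and in particular handles the central segment in $\mathfrak m_0$, if present) is the core technical step.
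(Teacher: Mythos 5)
The first thing to note is that the paper does not prove Theorem \ref{thm:l_t} at all: it is quoted as a special case of Lapid--Tadi\'c \cite[Theorems 1.1 and 1.2]{1703.09475}, so any self-contained argument would have to reproduce a substantial part of that paper. Your sketch does not do this, and its foundational step is in fact false. Proposition \ref{prop_basic_ind}(3) allows you to replace the \emph{entire} $\GL$-part $\pi_1$ by $\overline{\pi_1}^{\vee}$ in the Grothendieck group; it does not allow you to replace individual segments inside the Langlands data of the irreducible representation $\pi=L(\mathfrak m)$, and the claimed identity $[\pi\rtimes\triv_0]=[L(\mathfrak m_{>0})\times L(\mathfrak m_0)\times L((\overline{\mathfrak m}^{\vee})_{>0})\rtimes\triv_0]$ fails already in the simplest mixed-sign example: take $\rho$ conjugate self-dual and $\mathfrak m=\{[\tfrac12,\tfrac12]_{(\rho)},[-\tfrac12,-\tfrac12]_{(\rho)}\}$. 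Then $\pi=Z([-\tfrac12,\tfrac12]_{(\rho)})$, while $[\nu^{1/2}\rho\times\nu^{1/2}\rho\rtimes\triv_0]=[\nu^{1/2}\rho\times\nu^{-1/2}\rho\rtimes\triv_0]=[\pi\rtimes\triv_0]+[L([-\tfrac12,\tfrac12]_{(\rho)})\rtimes\triv_0]$, which is strictly larger. Since both your necessity argument (second step) and your sufficiency argument lean on this semisimplification equality, they collapse with it.

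Beyond that, the sufficiency direction --- which is the genuinely hard half of the theorem --- is not argued but deferred: "matching multiplicities term by term" in $\mu^*(\pi\rtimes\triv_0)$ computed via the structure formula and \cite[Corollary 2.2]{MR2996769} is precisely the delicate content you acknowledge as the "main obstacle", and no mechanism is given for why the two hypotheses kill all extraneous constituents (note also that $\pi\rtimes\triv_0$ is not a standard module when $\mathfrak m$ has segments of mixed sign, so speaking of "its unique Langlands quotient" already requires an argument). The first step of your necessity direction (propagating reducibility from $\rho\rtimes\triv_0$ with $\rho\in\Supp(\pi)\cap\mathcal S$ up through the induction) is plausible but likewise only asserted. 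As it stands the proposal is a research plan with a false reduction at its core, not a proof; the honest route, and the one the paper takes, is to invoke \cite{1703.09475}.
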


\subsubsection{} In \eqref{eq tau} (and in the notation of \S \ref{sss lbcf}) in the case that $t$ is even, we express $\tau$, the unique member of the stable base change fiber of the ladder representation $\pi$ as the Langlands quotient of a representation induced from a ladder on the Siegel parabolic. The following corollary tells us when this induced representation is irreducible. 
\begin{corollary}\label{corr:irrd}
Let $\mathfrak m=(\Delta_{1},\dots,\Delta_{t})$ be a ladder and let $\pi=L(\mathfrak m)$. Moreover, suppose that $\pi^{\vee}=\overline{\pi}$. Let $k=[\frac{t+1}{2}]$, $1\leq k'\leq k$, $\mathfrak n=(\Delta_{k'},\dots,\Delta_{k})$ and $\pi'=L(\mathfrak n)$. Then the following are equivalent 
\begin{enumerate}
\item $\pi \rtimes \triv_0$ is irreducible
\item $\Delta_k\cap \mathcal S$ is empty 
\item $\pi'\rtimes \triv_0$ is irreducible. 
\end{enumerate}  
\end{corollary}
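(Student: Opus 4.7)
The plan is to apply Theorem \ref{thm:l_t} to both $\pi$ and $\pi'$ and to show that in each application the second condition (irreducibility of a product of ladders) is automatic, so that the substantive content is the cuspidal-support condition, and then to match the three cuspidal-support conditions using the conjugate self-duality of $\mathfrak m$.

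First I would extract the structure forced by $\pi^\vee \cong \overline{\pi}$. Writing $\Delta_i = [a_i, b_i]_{(\rho)}$ in ladder form (so $a_1 > \cdots > a_t$ and $b_1 > \cdots > b_t$), conjugate self-duality gives $\overline{\rho}^\vee \cong \rho$, $a_{t+1-i} = -b_i$ and $b_{t+1-i} = -a_i$, whence $\exp(\Delta_{t+1-i}) = -\exp(\Delta_i)$. Combined with the strict decrease of the sequence of exponents this yields $\exp(\Delta_i) > 0$ for $i < k$, while $\exp(\Delta_k) > 0$ if $t$ is even and $\exp(\Delta_k) = 0$ (equivalently $a_k = -b_k$) if $t$ is odd.

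Next I would dispose of the second condition of Theorem \ref{thm:l_t} in both applications. Every segment $\Delta_i$ with $i \le k$ has non-negative exponent, so $(\overline{\mathfrak n}^\vee)_{>0}$ is empty and the second condition for $\pi'$ reduces to the automatic irreducibility of $L(\mathfrak n_{>0})$. For $\pi$, conjugate self-duality yields $(\overline{\mathfrak m}^\vee)_{>0} = \mathfrak m_{>0}$, so the condition becomes irreducibility of $L(\mathfrak m_{>0}) \times L(\mathfrak m_{>0})$, which holds because $\sigma \times \sigma$ is irreducible for every ladder representation $\sigma$ (Lapid--M\'inguez). Theorem \ref{thm:l_t} thus reduces (1) to $\Supp(\pi) \cap \mathcal S = \emptyset$ and (3) to $\Supp(\pi') \cap \mathcal S = \emptyset$.

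Finally I would match these with (2). Since $\mathcal S$ consists of conjugate self-dual cuspidals while every cuspidal in $\Supp(\pi)$ has the form $\nu^j \rho$, only $\rho = \nu^0 \rho$ can contribute, so each of the three intersections equals $\{\rho\} \cap \mathcal S$ if $\rho$ lies in the respective subset and is empty otherwise. The index set $\{i : a_i \le 0 \le b_i\}$ is an interval, and conjugate self-duality forces it to be symmetric about $(t+1)/2$, so it contains $k$ whenever it is non-empty; moreover if any $j \in [k', k]$ lies in it then its partner $t+1-j$, which is $\ge k$, also lies in it, forcing $k$ into the interval. This identifies $\rho \in \Delta_k$ with both $\rho \in \Supp(\pi)$ and $\rho \in \Supp(\pi')$, completing the equivalences. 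The only nontrivial external input is the square-irreducibility of ladder representations used to clear the second condition for $\pi$; everything else is a careful bookkeeping of exponents.
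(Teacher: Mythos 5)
Your reduction of (1) and (3) is the same as the paper's: conjugate self-duality gives $\overline{\Delta_i}^{\vee}=\Delta_{t+1-i}$, hence $(\overline{\mathfrak m}^{\vee})_{>0}=\mathfrak m_{>0}$, so the second condition of Theorem \ref{thm:l_t} for $\pi$ follows from the square-irreducibility of ladder representations (Lapid--M\'inguez, as cited in the paper), while $(\overline{\mathfrak n}^{\vee})_{>0}=\emptyset$ disposes of it for $\pi'$. The gap is in your final matching step. The assertion that $\mathcal S$ consists of conjugate self-dual cuspidal representations is false: by the dichotomy recalled in \S\ref{ss_l_fn} (Goldberg, \cite[Theorem 3.1]{MR1266747}), if the unitary conjugate self-dual $\rho$ is \emph{even} then its reducibility point is $\tfrac12$, so $\nu^{\pm 1/2}\rho\in\mathcal S$ while $\rho\notin\mathcal S$, and $\nu^{\pm1/2}\rho$ is not conjugate self-dual. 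Consequently your claim that only $\nu^{0}\rho$ can contribute, and hence that each of the three intersections equals $\{\rho\}\cap\mathcal S$, breaks down exactly when $\Supp(\pi)\subseteq(\nu^{1/2}\rho)^{\Z}$, i.e.\ when the exponents relative to the conjugate self-dual center are half-integral. This case genuinely occurs and is the one the corollary is applied to later (Proposition \ref{prop:bc_dist}): for instance $t=2$, $\Delta_1=\{\nu^{1/2}\rho\}$, $\Delta_2=\{\nu^{-1/2}\rho\}$ with $\rho$ even gives $\Delta_k\cap\mathcal S\neq\emptyset$ and $\pi\rtimes\triv_0$ reducible, whereas your bookkeeping (noting $\rho\notin\Supp(\pi)$) would declare all three intersections empty and all three conditions true.

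The repair is to run your interval/symmetry argument not only at the point $\rho$ but at the actual possible reducibility points $\nu^{\pm x_0}\rho$ with $x_0\in\{0,\tfrac12\}$: using $a_{t+1-i}=-b_i$, $b_{t+1-i}=-a_i$ together with $\exp(\Delta_k)>0$ when $t$ is even (resp.\ $a_k=-b_k$ when $t$ is odd), one checks that if some $\Delta_i$ (resp.\ some $\Delta_i$ with $k'\le i\le k$) contains $\nu^{x_0}\rho$ or $\nu^{-x_0}\rho$, then so does $\Delta_k$. This is precisely what the paper's one-line appeal to \cite[Theorem 3.1]{MR1266747} encapsulates; with that amendment your proof coincides with the paper's.
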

\begin{proof}
 By assumption $\mult=\overline{\mathfrak m}^{\vee}$. Since $\mult$ is a ladder this means that $\Delta_i^\vee=\overline{\Delta_{t+1-i}}$, $i=1,\dots,t$.
 It therefore follows from \cite[Theorem 3.1]{MR1266747} (see \S \ref{ss_l_fn} above) that the following conditions are equivalent
 \begin{itemize}
\item $\Supp(\pi)\cap \mathcal S$ is empty
 \item $\Delta_k\cap \mathcal S$ is empty 
\item $\Supp(\pi')\cap \mathcal S$ is empty.
\end{itemize}  
It follows from  \cite[Proposition 6.20 and Lemma 6.21]{MR3573961} that $L(\mathfrak m_{>0})\times L(\mathfrak m_{>0})$ is irreducible. Thus, by Theorem \ref{thm:l_t} we have that $\pi \rtimes \triv_0$ is irreducible if and only if $\Supp(\pi)\cap \mathcal S$ is empty.

Note further that $\mathfrak{n}$ is a ladder and $(\overline{\mathfrak n}^{\vee})_{>0}$ is the empty set. It therefore follows from Theorem \ref{thm:l_t} that $\pi'\rtimes \triv_0$ is irreducible if and only if $\Supp(\pi')\cap \mathcal S$ is empty. The corollary follows.
\end{proof}

\subsubsection{}
In the case of a ladder with even number of segments, $\Sp$-distinction is preserved by the stable base change map as the following result shows.
\begin{proposition}\label{prop:bc_dist_1}
Let $\pi\in\Irr_\GL$ be a ladder representation that is conjugate self-dual. Suppose that $\pi=L(\mathfrak m)$ where $|\mathfrak m|$ is an even integer. Let $\tau\in \Irr_\U$ be the unique representation such that $\xib(\tau)=\pi$ (see \eqref{eq tau}). If $\tau$ is $\Sp$-distinguished then $\pi$ is $\Sp$-distinguished.
\end{proposition}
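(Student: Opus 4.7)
My plan is to transfer $\Sp$-distinction from $\tau$ to the induced standard module, apply the geometric lemma to produce a relevant $P$-orbit, and extract from the orbit structure the pairing condition of \S\ref{sss spd} that characterizes $\Sp$-distinction of ladder representations. Writing $\mathfrak m = (\Delta_1,\dots,\Delta_{2s})$ in ladder order on a cuspidal line $\rho^{\mathbb Z}$, the conjugate self-duality of $\pi$ gives $\Delta_{2s+1-i} = \overline{\Delta_i}^\vee$, and combined with the strictly decreasing exponent along the ladder this forces $\exp(\Delta_i) > 0$ for all $i \le s$. By \eqref{eq tau}, $\tau$ is the Langlands quotient of $I = L(\Delta_1)\times\cdots\times L(\Delta_s)\rtimes\triv_0$; since any $\Sp$-invariant functional on $\tau$ lifts along the surjection $I\twoheadrightarrow\tau$, the $\Sp$-distinction of $\tau$ implies that of $I$.

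Applying Theorem \ref{thm geoml} then yields a $P$-orbit relevant to $\sigma = L(\Delta_1)\otimes\cdots\otimes L(\Delta_s)\otimes\triv_0$. Decomposing the Jacquet module $r_{L,M}(\sigma)$ as a tensor product of representations $L(\Delta_{(i,j)})$ indexed by sub-segments $\Delta_{(i,j)}$ obtained by partitioning each $\Delta_i$, I use the three constraints provided by Corollary \ref{cor dist adm}: (a) $\set_+(w) = \emptyset$, because no essentially square-integrable representation is $\Sp$-distinguished (\cite[Theorem 3.2.2]{MR1078382}); (b) each index in $\set_-(w)$ imposes $\exp(\Delta_{(i,j)}) = 1/2$ (by \cite[Proposition 12]{MR1111204}); (c) indices in $\set_{\ne}(w)$ are paired by the underlying involution with explicit matching conditions of the form $L(\Delta_{(i',j')}) \cong \nu \overline{L(\Delta_{(i,j)})}^\vee$ or $\cong \nu^{-1}L(\Delta_{(i,j)})$.

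The core of the argument is a combinatorial analysis, in the spirit of the proof of Proposition \ref{prop main}, that combines the ladder rigidity (strictly decreasing beginnings and ends on a single cuspidal line) with positivity of the exponents to determine the orbit uniquely. I expect to show by induction on $s$ that the Jacquet decomposition must be trivial (every $j_i = 1$), that the involution must pair the first $\lfloor s/2\rfloor$ consecutive pairs $(2i-1,2i)$ with $\Delta_{2i} = \nu^{-1}\Delta_{2i-1}$, and that when $s$ is odd the remaining index $s$ is a fixed point in $\set_-(w)$, forcing $\exp(\Delta_s) = 1/2$ (equivalently $\Delta_s = \nu\overline{\Delta_s}^\vee$). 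The main obstacle is ruling out alternative pairings or nontrivial subdivisions of the $\Delta_i$ that are a priori allowed by the orbit geometry but incompatible with a ladder of positive exponent. Conjugate self-duality then propagates the pairing through the full ladder to yield $\Delta_{2i-1} = \nu\Delta_{2i}$ for all $i = 1,\dots,s$, and the criterion of \S\ref{sss spd} concludes that $\pi = L(\mathfrak m)$ is $\Sp$-distinguished.
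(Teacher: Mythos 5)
Your strategy is exactly the paper's: pass the invariant functional from $\tau$ to the standard module $L(\Delta_1)\times\cdots\times L(\Delta_s)\rtimes\triv_0$, invoke Theorem \ref{thm geoml} to produce a relevant orbit for $\sigma=L(\Delta_1)\otimes\cdots\otimes L(\Delta_s)\otimes\triv_0$, and then pin the orbit down using Corollary \ref{cor dist adm} together with the ladder structure and positivity of exponents. The reduction steps you do carry out (distinction of the induced representation, $\exp(\Delta_i)>0$ for $i\le s$, the three constraints (a)--(c) from Corollary \ref{cor dist adm}) are all correct. The problem is that the decisive part of the argument is only announced: you write that you ``expect to show'' every $j_i=1$, that the involution pairs $(2i-1,1)$ with $(2i,1)$, and that for $s$ odd the last row is a fixed point outside $\set$, and you explicitly flag ruling out alternative pairings and nontrivial subdivisions as ``the main obstacle'' without resolving it. That obstacle is the proof; as it stands the proposal has a genuine gap precisely where the work lies.

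To close it you need the following chain, which is what the paper does. First, from $\exp(\Delta_s)>0$ and the ladder inequality $b_s\le b_{s-1}-1$ one gets $1-b_{s-1}<a_s$, hence $\nu^{1-b_i}\rho\notin\cup_{j=1}^s\Delta_j$ for $i=1,\dots,s-1$; since the first piece of the Jacquet module in row $i$ is $[x_i,b_i]_{(\rho)}$, both alternatives available to an index of row $i\le s-1$ outside $\set$ (a fixed point, forcing $x_i=1-b_i$, or a partner isomorphic to $\nu\overline{L([x_i,b_i]_{(\rho)})}^\vee$, whose support contains $\nu^{1-b_i}\rho$) are impossible, so all indices of rows $1,\dots,s-1$ lie in $\set$ (this is \eqref{eq:even}). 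Second, an induction (Claim \ref{claim:even}) using genericity of the $L(\Delta_{(i,j)})$ (\cite[Theorem 3.2.2]{MR1078382}), the $\nu^{-1}$-pairing condition for indices in $\set$, the ladder rigidity and the order constraints \eqref{eq: c form}, \eqref{orbit_cond} shows $j_{2i-1}=j_{2i}=1$ and $\eta(2i-1,1)=(2i,1)$ whenever $2i-1<s$; note the pairing is forced by support considerations, not by positivity alone. Third, when $s$ is odd these constraints force $j_s=1$, $(s,1)\notin\set$ and $\eta(s,1)=(s,1)$, so $\nu^{-1/2}L(\Delta_s)$ is $\GL(F)$-distinguished and $\exp(\Delta_s)=\tfrac12$ by \cite[Proposition 12]{MR1111204}, which together with $\Delta_s^\vee\cong\overline{\Delta_{s+1}}$ gives $\Delta_s=\nu\Delta_{s+1}$. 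Only after these three steps does conjugate self-duality propagate the pairing to all of $\mathfrak m$ and yield the criterion of \S\ref{sss spd}. Until you supply arguments of this kind, the proposal is a correct plan rather than a proof.
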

\begin{proof}
Write $\pi=L(\mathfrak m)$ where $\mathfrak m=(\Delta_{1},\dots,\Delta_{2s})$ such that $\Delta_{i}^{\vee}=\overline{\Delta_{2s-i+1}}$, $i=1,\dots,2s$. 
By \eqref{eq tau} $\tau$ is the unique irreducible quotient of the representation
\[
L(\Delta_{1})\times\cdots\times L(\Delta_{s})\rtimes \triv_0.
\]
This induced representation is $\Sp$-distinguished since $\tau$ is. Thus it follows from Theorem \ref{thm geoml} that $\sigma=L(\Delta_{1}) \otimes \cdots \otimes L(\Delta_{s})\otimes \triv_0$ admits a relevant orbit. We use the notation of \S \ref{ss gen orb} for this orbit except that (since the symbol $\tau$ is already taken) we denote by $\eta$ the involution on $\Index$ associated with the contributing orbit. Applying \cite[\S9.5]{MR584084} write   
\[
r_{L,M}(\sigma)=\otimes_{\imath\in\Index} L(\Delta_{\imath})\otimes \triv_0.
\] 
Let $\Delta_{i}=[a_{i},b_{i}]_{(\rho)}$, $i=1,\dots,2s$ for some $\rho\in \Cusp_{\GL}$ conjugate self-dual. By assumption, we have $(a_i+b_i)/2=\exp(\Delta_{i})>0$, $i=1,\dots,s$. Therefore, $0<a_s+b_s\le a_s+b_{s-1}-1$ (the second inequality follows from the definition of a ladder). That is, $1-b_{s-1}<a_{s}$, and therefore
\[
\nu^{1-b_{i}}\rho\notin \cup_{j=1}^{s}\Delta_{j}
\] 
for any $i=1,\dots,s-1$. Recalling that 
\begin{equation*}
\Delta_{(i,1)}=[x_{i},b_{i}]_{(\rho)}
\end{equation*} 
for some $x_i\le b_i$, $i=1,\dots,s$, by Corollary \ref{cor dist adm} and \eqref{eq: c form} it follows that
\begin{equation}\label{eq:even}
\{(i,j)\in \Index \mid i=1,\dots,s-1, j=1,\dots, j_{i}\}\subset \set.
\end{equation}  
\begin{claim}\label{claim:even}
If $2i-1<s$ then $j_{2i-1}=j_{2i}=1$ and $\eta(2i-1,1)=(2i,1)$.
\end{claim}
We prove the claim by induction on $i$. Suppose that the claim is true for $1,2,\dots,i-1$. For the induction step, let $\eta(2i-1,1)=(i',j')$. It follows from the induction hypothesis that $i'\geq 2i-1$. Note that since the representation $L(\Delta)$ is generic it is not $\Sp$-distinguished for any segment $\Delta$ (see \cite[Theorem 3.2.2]{MR1078382}). It therefore follows from Corollary \ref{cor dist adm} and \eqref{orbit_cond} that in fact $i'>2i-1$ and we have $\Delta_{(i',j')}=\nu^{-1}\Delta_{(2i-1,1)}$. In particular, $\nu^{b_{2i-1}-1}\rho\in\Delta_{(i',j')}\subseteq \Delta_{i'}$. Since, $(\Delta_{1},\dots,\Delta_{s})$ is a ladder this implies that $(i',j')=(2i,1)$. By the induction hypothesis, \eqref{orbit_cond} and \cite[Theorem 3.2.2]{MR1078382}, it follows that $j_{2i-1}=1$. By \eqref{eq: c form}, \eqref{orbit_cond} and the induction hypothesis it follows that $j_{2i}=1$. The claim follows.

If $s$ is even then by \eqref{eq:even}, Claim \ref{claim:even} and Corollary \ref{cor dist adm} the ladder $(\Delta_{1},\dots,\Delta_{s})$, and hence $\mathfrak m$, is of the form described in \S \ref{sss spd} and thus the proposition follows. So suppose now that $s$ is odd. As above the ladder $(\Delta_{1},\dots,\Delta_{s-1})$ is of the form described in \S \ref{sss spd}. By Claim \ref{claim:even}, \eqref{eq: c form} and \eqref{orbit_cond} we get that $j_{s}\leq 2$ and $\eta(s,i)=(s,i)$, $i=1,\dots,j_{s}$. By Corollary \ref{cor dist adm} and \cite[Theorem 3.2.2]{MR1078382} we further get that $j_{s}=1$, $(s,1)\in \Index \setminus \set$ and $\nu^{-1/2}L(\Delta_{s})$ is $\GL(F)$-distinguished. By \cite[Proposition 12]{MR1111204} $\exp(\Delta_{s})=\frac{1}{2}$. Coupled with the fact that $\Delta_{s}^{\vee}\cong \overline{\Delta_{s+1}}$, we get that $\Delta_{s}=\nu\Delta_{s+1}$, and the multi-set $\mathfrak m$ is again of the form described in \S \ref{sss spd}. This proves the proposition.

\end{proof}

\subsubsection{}
The converse of  Proposition \ref{prop:bc_dist_1} is not true and we have the following result in that direction.
\begin{proposition}\label{prop:bc_dist}
Let $\pi\in\Irr_\GL$ be a ladder representation that is conjugate self-dual and $\Sp$-distinguished. Write $\pi=L(\mathfrak m)$ and $\mult=(\Delta_1,\Delta_2,\dots,\Delta_{2s-1},\Delta_{2s})$ as in \S \ref{sss spd}. 
Let $\tau'=L(\Delta_1,\dots,\Delta_s)\rtimes \triv_0$ and let $\tau\in \Irr_\U$ be the unique representation such that $\xib(\tau)=\pi$ (see \eqref{eq tau}).
We have
\begin{enumerate}
\item $\tau$ is the unique irreducible quotient of $\tau'$ and $\tau=\tau'$ if and only if $\Delta_s\cap \mathcal{S}$ is empty. 
\item If $s$ is even then $\tau'$ is $\Sp$-distinguished.
\item If $s$ is odd and $\Delta_s\cap \mathcal{S}$ is empty then $\tau$ is not $\Sp$-distinguished.
\end{enumerate}
\end{proposition}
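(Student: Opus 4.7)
The plan is to handle the three parts sequentially, with (1) essentially bookkeeping around the formula \eqref{eq tau} and Corollary \ref{corr:irrd}, part (2) being a direct lifting from ladder distinction on $\GL$ via Lemma \ref{hered}, and part (3) requiring an explicit orbit analysis in the spirit of Proposition \ref{prop:bc_dist_1} together with a parity argument.

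For part (1), I would apply \eqref{eq tau} with $t=2s$ to identify $\tau$ as the Langlands quotient of the standard module $L(\Delta_1)\times\cdots\times L(\Delta_s)\rtimes\triv_0$. Since $L(\mathfrak n)$, for $\mathfrak n=(\Delta_1,\ldots,\Delta_s)$, is the unique irreducible quotient of $L(\Delta_1)\times\cdots\times L(\Delta_s)$, the exactness of parabolic induction shows that $\tau'=L(\mathfrak n)\rtimes\triv_0$ is a quotient, and hence $\tau$ is the unique irreducible quotient of $\tau'$. The equivalence $\tau=\tau'\Longleftrightarrow \Delta_s\cap\mathcal S=\emptyset$ is then immediate from Corollary \ref{corr:irrd} (applied with $t=2s$, $k=s$, $k'=1$, so that $\pi'=L(\mathfrak n)$).

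For part (2), with $s$ even the ordered ladder $\mathfrak n=(\Delta_1,\ldots,\Delta_s)$ already has the pairing $\Delta_{2i-1}=\nu\Delta_{2i}$ for $i=1,\ldots,s/2$, so by \S\ref{sss spd} (i.e., \cite[Theorem 10.3]{MR3628792}) the $\GL$-representation $L(\mathfrak n)$ is $\Sp(E)$-distinguished. Note that the degree of $L(\mathfrak n)$ is even since segments come in matched pairs of equal length. Lemma \ref{hered} applied with $p_1=0$ (so $\pi_1$ is trivial and the distinction condition on $\pi_1$ is vacuous) and $\pi_2=L(\mathfrak n)$ then immediately gives $\Sp$-distinction of $\tau'=L(\mathfrak n)\rtimes\triv_0$.

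For part (3), assume toward a contradiction that $\tau$ is $\Sp$-distinguished. By part (1) we have $\tau=\tau'=L(\mathfrak n)\rtimes\triv_0$, and since $\tau$ is a quotient of $I:=L(\Delta_1)\times\cdots\times L(\Delta_s)\rtimes\triv_0$, the representation $I$ is $\Sp$-distinguished as well. Theorem \ref{thm geoml} then produces a relevant orbit for the Levi representation $\sigma=L(\Delta_1)\otimes\cdots\otimes L(\Delta_s)\otimes\triv_0$. I would now repeat the induction from the proof of Proposition \ref{prop:bc_dist_1}: the ladder structure of $(\Delta_1,\ldots,\Delta_s)$ together with genericity of each $L(\Delta_i)$ and Corollary \ref{cor dist adm} force $\{(i,j):i\le s-1\}\subseteq\set$ and the pairing $\eta(2i-1,1)=(2i,1)$, $j_{2i-1}=j_{2i}=1$ for $2i-1<s$ (Claim \ref{claim:even}). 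The leftover middle index $(s,1)$ must then be $\eta$-fixed and, by Corollary \ref{cor dist adm} combined with \cite[Theorem 3.2.2]{MR1078382}, satisfies $j_s=1$ and $(s,1)\in\Index\setminus\set$, forcing $\nu^{-1/2}L(\Delta_s)=\delta(\rho_0,\ell(\Delta_s))$ to be $\GL(F)$-distinguished (here $\rho_0$ is the unitary conjugate self-dual cuspidal with $\Delta_s=\nu^{1/2}\cdot(\text{unitary segment centered at }\rho_0)$, which is forced by $\exp(\Delta_s)=1/2$, itself automatic from the pairing $\Delta_s=\nu\Delta_{s+1}$ and conjugate self-duality of $\mathfrak m$).

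The last step, which I expect to be the main obstacle, is to convert the condition that $\delta(\rho_0,\ell(\Delta_s))$ is $\GL(F)$-distinguished into a contradiction with $\Delta_s\cap\mathcal S=\emptyset$. Using Theorem \ref{thm evod} and Remark \ref{rmk csq}, $\GL(F)$-distinction of $\delta(\rho_0,\ell)$ is equivalent to the parity condition $(-1)^{\ell-1}\eta_{\rho_0}=+1$, i.e., $\ell$ and $\eta_{\rho_0}$ have opposite parity. When $\ell(\Delta_s)$ is even, the segment $\Delta_s$ contains $\rho_0$, so $\Delta_s\cap\mathcal S=\emptyset$ forces $\rho_0$ even ($\eta_{\rho_0}=+1$), contradicting the required oddness. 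The subtle case is when $\ell(\Delta_s)$ is odd, where $\Delta_s\cap\mathcal S=\emptyset$ is automatic and the parity condition only forces $\rho_0$ to be even; here the additional contradiction must come either from a finer use of the geometric lemma (exploiting more orbits than the middle one, or using that no Jacquet module subquotient of $L(\Delta_s)$ other than itself can satisfy the required $(M_x,\delta_x)$-distinction constraints) or from the fact that $\tau$ is then forced to be generic (when $s=1$) and hence non-distinguished by \cite{MR1447416}. I would handle the general $s$ odd case by an inductive reduction, realizing $\tau$ alternatively as a quotient of an induced representation in which the middle segment has been peeled off and reduces to an analogous problem with fewer segments, where the base case is $s=1$. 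This is the delicate portion of the argument.
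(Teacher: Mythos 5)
Parts (1) and (2) of your proposal are correct and coincide with the paper's argument: (1) follows from \eqref{eq tau}, exactness of parabolic induction and Corollary \ref{corr:irrd}, and (2) is \S\ref{sss spd} plus Lemma \ref{hered}. The problem is part (3). Your orbit analysis \`a la Proposition \ref{prop:bc_dist_1} is legitimate and does force $j_s=1$, $(s,1)\notin\set$ and $\nu^{-1/2}L(\Delta_s)=\delta(\rho_0,\ell)$ to be $\GL(F)$-distinguished; but you never close the argument from there. First, the criterion you invoke -- that $\GL(F)$-distinction of the \emph{discrete series} $\delta(\rho_0,\ell)$ is equivalent to conjugate-orthogonality of its parameter -- is not supplied by Theorem \ref{thm evod} and Remark \ref{rmk csq}, which only concern parameters; Theorem \ref{thm pairity} gives the distinction--parity equivalence for \emph{cuspidal} representations only. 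You would need an external input (e.g.\ Anandavardhanan--Rajan \cite{MR2146859}) which you do not cite or prove. Second, your treatment of the case $\ell(\Delta_s)$ odd rests on a false claim: $\mathcal S$ is defined inside all of $\Cusp_\GL$, not just unitary cuspidals, so if $\rho_0$ is even then $\nu^{1/2}\rho_0\in\Delta_s\cap\mathcal S$; hence $\Delta_s\cap\mathcal S=\emptyset$ is \emph{not} automatic for odd length -- it forces $\rho_0$ odd, which (granted the discrete-series criterion) contradicts distinction of $\delta(\rho_0,\ell)$ with $\ell$ odd. In other words, with the correct reading of $\mathcal S$ and the Anandavardhanan--Rajan criterion the parity argument would in fact close \emph{both} parity cases, but as written your odd case is based on a wrong premise and your proposed fallbacks (``finer use of the geometric lemma'', an unspecified inductive peeling with base case $s=1$) are sketches, not proofs.

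For comparison, the paper's proof of (3) avoids any distinction criterion for non-cuspidal discrete series. It uses $\Delta_s\cap\mathcal S=\emptyset$ together with Corollary \ref{corr:irrd} to see that $L(\Delta_s)\rtimes\triv_0\cong L(\Delta_{s+1})\rtimes\triv_0$ (with $\Delta_{s+1}=\overline{\Delta_s}^\vee$ of exponent $-\tfrac12$), so $\tau$ is a quotient of $L(\Delta_1,\dots,\Delta_{s-1},\Delta_{s+1})\rtimes\triv_0$; it then passes to the Zelevinsky side, realizing this as a quotient of $\tilde\zeta(\mult')\rtimes\triv_0$ where the M{\oe}glin--Waldspurger algorithm produces a length-one segment of exponent $\le-\tfrac12$, and the geometric lemma (Lemma \ref{aux_geo_lem2}, Corollary \ref{cor dist adm}, \cite[Proposition 7.5]{MR3628792}) shows no relevant orbit can accommodate it; the case $s=1$ is settled separately by Lemma \ref{lem:first_case}. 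Your route could be repaired into a shorter proof, but only after fixing the two points above.
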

\begin{proof}
The first part is immediate from \S \ref{ss sbc} and Corollary \ref{corr:irrd}. For the second part, it follows from \S \ref{sss spd} that if $s$ is even then $L(\Delta_1,\dots,\Delta_s)$ is $\Sp$-distinguished and therefore from Lemma \ref{hered} that $\tau'$ is $\Sp$-distinguished. 

Suppose now that $s=2k+1$ for some $k\in \mathbb Z_{\geq 0}$ and $\Delta_s\cap \mathcal{S}$ is empty, and assume by contradiction that $\tau$ is $\Sp$-distinguished. By Corollary \ref{corr:irrd} (with $k'=s$) we have that $L(\Delta_{2k+1})\rtimes\triv_0$ is irreducible and by Proposition \ref{prop_basic_ind} we have that
\[
L(\Delta_{2k+1})\rtimes\triv_0 \cong  L(\overline{\Delta_{2k+1}})^{\vee}\rtimes\triv_0.
\]
Since $\pi$ is conjugate self-dual we have $\overline{\Delta_{2k+1}}^\vee=\Delta_{2k+2}$ and therefore
\[
L(\Delta_{2k+1})\rtimes\triv_0 \cong L(\Delta_{2k+2})\rtimes\triv_0.
\]
Thus $\tau$ is the unique irreducible quotient of 
\[
L(\Delta_{1}) \times \cdots \times L(\Delta_{2k})\times L(\Delta_{2k+2})\rtimes\triv_0. 
\]
Since $L(\Delta_{1},\dots,\Delta_{2k},\Delta_{2k+2})$ is the unique irreducible quotient of the representation $L(\Delta_{1}) \times \cdots \times L(\Delta_{2k})\times L(\Delta_{2k+2})$, we have that $\tau$ is the unique irreducible quotient of 
\[
 L(\Delta_{1},\dots,\Delta_{2k},\Delta_{2k+2})\rtimes\triv_0. 
\]

Since by assumption we have that $\Delta_{2k+1}=\nu\Delta_{2k+2}$ as well as $\Delta_{2k+1}=\overline{\Delta_{2k+2}}^{\vee}$ we deduce that $\exp(\Delta_{2k+1})=\frac{1}{2}$ and $\exp(\Delta_{2k+2})=-\frac{1}{2}$. Let $\rho\in \Cusp_{\GL}$ be such that $\pi$ is centered at $\rho$ (see Definition \ref{def:im_bc}) and $a\in \frac{1}{2}\mathbb Z$ such that $\Delta_{2k+1}=[-a,1+a]_{(\rho)}$. Then $\Delta_{2k+2}=[-(1+a),a]_{(\rho)}$. Note that $a\ge -1/2$ (otherwise $\Delta_{2k+1}$ would be empty).

Let us first treat the case when $k>0$. Let $\mathfrak m'=(\Delta_{1}',\dots,\Delta_{l}')$ denote the multi-set $\{\Delta_{1},\dots,\Delta_{2k},\Delta_{2k+2}\}^{t}$ ordered such that $e(\Delta_{1}') \leq \cdots\leq e(\Delta_{l}')$. We then have
\[
\tilde{\zeta}(\mathfrak m') \cong Z(\Delta_{1}')\times\cdots\times Z(\Delta_{l}').
\]
Since  $L(\Delta_{1},\dots,\Delta_{2k},\Delta_{2k+2})=Z(\mult')$ is the unique irreducible quotient of $\tilde{\zeta}(\mathfrak m')$ we deduce that
 $\tau$ is an irreducible quotient of $\tilde{\zeta}(\mathfrak m') \rtimes \triv_0$. 

By the M\oe{}glin-Waldspurger algorithm for ladder representations (provided in \cite[\S 3]{MR3163355}) it follows that $\Delta_{1}'=\{\nu^{-(1+a)}\rho\}$ is a segment of length one. Since the representation $\tilde{\zeta}(\mathfrak m')\rtimes \triv_0$ has $\tau$ as an irreducible quotient, it is $\Sp$-distinguished. 

We now apply the geometric lemma for $\tilde{\zeta}(\mathfrak m')\rtimes \triv_0$ viewed as induced from 
\[
\sigma=Z(\Delta_{1}')\otimes\cdots\otimes Z(\Delta_{l}')\otimes \triv_0.
\]
It follows from Theorem \ref{thm geoml} that $\sigma$ admits a relevant orbit. We use the notation of \S \ref{ss gen orb} for this orbit except that (since the symbol $\tau$ is already taken) we denote by $\eta$ the involution on $\Index$ associated with the contributing orbit.

Applying \cite[Proposition 3.4]{MR584084} write $r_{L,M}(\sigma)=\otimes_{\imath\in\Index} Z(\Delta_{\imath}')\otimes \triv_0$ and note that $j_1=1$ and $\Delta_{(1,1)}'=\Delta_1'=\{\nu^{-(1+a)}\rho\}$. If $(1,1)\in\set$, then by Lemma \ref{aux_geo_lem2} we have that $\eta(1,1)=(1,1)$ and by the first part of Corollary \ref{cor dist adm} we deduce that $Z(\Delta_{(1,1)}')=\nu^{-(1+a)}\rho\in \Cusp_\GL$ is $\Sp$-distinguished. This contradicts \cite[Theorem 3.2.2]{MR1078382}. 

Thus, $(1,1)\in\Index \setminus\set$. Since $\exp(Z(\Delta_{(1,1)}'))=-(1+a)\le -1/2$ the representation $\nu^{-1/2}Z(\Delta_{(1,1)}')$ is not $\GL(F)$-distinguished. It therefore follows from Corollary \ref{cor dist adm} that $\eta(1,1)\ne (1,1)$ and that $Z(\Delta_{\eta(1,1)})=\nu^{2+a}\rho$, and from \eqref{orbit_cond} that $\eta(1,1)=(\alpha,1)$ for some $\alpha\in \{2,\dots, l\}$. Let $e(\Delta_{2k})=\nu^{b}\rho$. Since $(\Delta_{1},\dots,\Delta_{2k+1})$ is a ladder we have that $b\geq 2+a$. Thus from the M\oe{}glin-Waldspurger algorithm it follows that $\Delta_{\alpha}'\in \{\Delta_{1},\dots,\Delta_{2k}\}^{t}$. By \cite[Theorem 10.3]{MR3628792}, the representation $L(\Delta_{1},\dots,\Delta_{2k})$ is $\Sp$-distinguished. By \cite[Proposition 7.5]{MR3628792} we get that $\ell(\Delta_{\alpha}')$ is even and in particular $j_\alpha\ge 2$. Applying \eqref{eq: c form}, Lemma \ref{aux_geo_lem2} and \cite[Proposition 7.5]{MR3628792}, we get that $(\alpha,2)\in \Index \setminus\set$. Note that $b(\Delta_{(\alpha,2)}')=\nu^{a+3}\rho$. Since $\nu^{-(2+a)}\rho\notin \Supp (\mathfrak m')$ this contradicts Corollary \ref{cor dist adm}. Thus we get that when $k>0$, $\tilde{\zeta}(\mathfrak m')\rtimes \triv_0$ and hence $\tau$ cannot be $\Sp$-distinguished.

Assume now that $k=0$. In this case $\tau$ is a quotient of $L(\Delta_{2})\rtimes \triv_0$ which is then $\Sp$-distinguished. As observed above, we have $\exp(\Delta_{2})=-\frac{1}{2}$. On the other hand, by Lemma \ref{lem:first_case} we get that $\exp(\Delta_{2})=\frac{1}{2}$ which gives us a contradiction. Thus $\tau$ cannot be $\Sp$-distinguished. This completes the proof of the proposition.

\end{proof}

\subsection{The case of an odd ladder}
Next we consider a ladder representation $\pi$ of the form $L(\Delta_{1},\dots,\Delta_{2k+1})$ in the image of the map $\xib$. 
In many cases we show below that the unique $\tau\in\Irr_\U$ such that $\xib(\tau)=\pi$ is not $\Sp$-distinguished.

Recall from \S \ref{sss spd} that $\pi$ is not $\Sp$-distinguished, and from \S \ref{ss sbc} that $\Delta_{2k+2-i}=\overline{\Delta_i}^\vee$, $i=1,\dots,2k+1$ and $L(\Delta_{k+1})=\delta(\rho,a)$ for some $\rho\in \Cusp_\GL$ and $a\in \N$ so that $\rho$ is conjugate self-dual of parity $(-1)^a$.

\begin{proposition}\label{prop:odd2} 
Let $\pi=L(\Delta_{1},\dots,\Delta_{2k+1})\in \Irr_{\GL}$ be a ladder representation in the image of $\xib$. Write $L(\Delta_{k+1})=\delta(\rho,a)$ as above and let $\tau\in \Irr_\U$ be such that $\{\tau\}=\xib^{-1}(\pi)$. If either $a$ is odd, $k=0$ or $b(\Delta_{k})\ne \nu^{3/2}\rho$ (this inequality holds, in particularly, if $b(\Delta_k)=\nu \,b(\Delta_{k+1})$) then $\tau$ is not $\Sp$-distinguished.
\end{proposition}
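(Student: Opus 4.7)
My plan is to treat the three disjunctive hypotheses in separate cases. If $a$ is odd, Lemma \ref{lem mog}(1) gives that the unique element $\sigma$ of $\xib^{-1}(\delta(\rho,a))$ lies outside $\Pi_{\disc}^\circ$ and hence has nontrivial partial cuspidal support. Since, by the recipe of \S\ref{sss lbcf}, $\tau$ is a quotient of $L(\Delta_1)\times\cdots\times L(\Delta_k)\rtimes\sigma$, the partial cuspidal support of $\tau$ agrees with that of $\sigma$ and is therefore nontrivial, and Proposition \ref{prop dp} yields that $\tau$ is not $\Sp$-distinguished. If $k=0$ (so $a$ is necessarily even, else the preceding case applies), Lemma \ref{lem mog}(2) gives $\tau=\tau^+(\rho,a)\in\Pi_{\disc}^\circ$, whose admissible datum has $k_\rho=1$, so $\epsilon_{\tau^\vee}(\rho^\vee)=(1)=\f_1$ with $t_1=1$ odd, and Theorem \ref{thm:main_res_ds} via its condition (1) gives that $\tau$ is not $\Sp$-distinguished.

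In the remaining case ($a$ even, $k\ge 1$, and $b(\Delta_k)\ne\nu^{3/2}\rho$), I would argue by contradiction, following the strategy of Propositions \ref{prop:bc_dist_1} and \ref{prop:bc_dist}. Suppose $\tau$ is $\Sp$-distinguished. Since $\tau^+(\rho,a)$ is the unique irreducible quotient of $L(\Delta_{k+1}')\rtimes\triv_0$ with $\Delta_{k+1}'=[(1-a)/2,-1/2]_{(\rho)}$, the induced representation
\[
I=L(\Delta_1)\times\cdots\times L(\Delta_k)\times L(\Delta_{k+1}')\rtimes\triv_0
\]
has $\tau$ as a quotient and is therefore $\Sp$-distinguished. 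Theorem \ref{thm geoml} then produces a relevant orbit for $\sigma_0=L(\Delta_1)\otimes\cdots\otimes L(\Delta_k)\otimes L(\Delta_{k+1}')\otimes\triv_0$, which I would analyze using Corollary \ref{cor dist adm}, the Jacquet module formula of \cite[\S9.5]{MR584084}, and the non-distinction of essentially square integrable representations from \cite[Theorem 3.2.2]{MR1078382} (which forces $\set_+(w)$ to be empty). The key input is the sign structure $\exp(\Delta_1)>\cdots>\exp(\Delta_k)>0>\exp(\Delta_{k+1}')=-a/4$: no subsegment of $\Delta_{k+1}'$ has exponent $\tfrac12$, so any such subsegment appearing in the Jacquet module must be paired nontrivially by the involution with a subsegment coming from some $\Delta_\alpha$ ($\alpha\le k$). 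Invoking \cite[Proposition 7.5]{MR3628792} and \cite[Theorem 10.3]{MR3628792} at each such pairing to constrain the admissible subsegment decomposition, the only configuration consistent with Corollary \ref{cor dist adm} forces $b(\Delta_k)=\nu^{3/2}\rho$, contradicting the hypothesis.

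The main obstacle is the combinatorial case analysis in the last step: one must exclude every configuration of the involution and subsegment decomposition on $\sigma_0$ other than the one pairing $\Delta_k$ with $\Delta_{k+1}'$ via $\nu\overline{(\cdot)}^\vee$-conjugation (which identifies $\Delta_k$ with $\nu\overline{\Delta_{k+1}'}^\vee=[3/2,(a+1)/2]_{(\rho)}$ and so pins down $b(\Delta_k)=\nu^{3/2}\rho$). This analysis is conceptually analogous to the one in the proof of Proposition \ref{prop:bc_dist}, but is more intricate because the truncated segment $\Delta_{k+1}'$ introduces extra potential pairings—coming from nontrivial subsegment decompositions $j_i>1$ or from pairings $(\alpha,\beta)$ with $\alpha<k$—that must each be individually ruled out using the ladder structure of $(\Delta_1,\dots,\Delta_k,\Delta_{k+1}')$ and the strict decrease of the $b(\Delta_i)$.
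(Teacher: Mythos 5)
Your treatment of the first two hypotheses is fine: the case $a$ odd is exactly the paper's argument (non-trivial partial cuspidal support via \eqref{eq tau} plus Proposition \ref{prop dp}), and for $k=0$ your appeal to Theorem \ref{thm:main_res_ds}(1) applied to $\tau^+(\rho,a)$ is a valid, if heavier, substitute for the paper's one-line use of Lemma \ref{lem:first_case} applied to $L([\frac{1-a}2,-\frac12]_{(\rho)})\rtimes\triv_0$.

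The problem is the main case ($a$ even, $k\ge 1$): there you only state a plan and explicitly defer ``the combinatorial case analysis in the last step,'' which is precisely the mathematical content of the proposition — it is where the hypothesis $b(\Delta_k)\ne\nu^{3/2}\rho$ must be used, and nothing in your sketch actually uses it. Moreover, as set up, your claim that ``the only configuration consistent with Corollary \ref{cor dist adm} forces $b(\Delta_k)=\nu^{3/2}\rho$'' is not substantiated and is not obviously true: working with the fully decomposed Levi $L(\Delta_1)\otimes\cdots\otimes L(\Delta_k)\otimes L(\Delta_{k+1}')\otimes\triv_0$ admits many more a priori relevant configurations than the one you single out, e.g.\ pairings $\Delta_{(j,\cdot)}=\nu^{-1}\Delta_{(i,\cdot)}$ between pieces of two segments $\Delta_i,\Delta_j$ with $i<j\le k$, or a piece $[x,-1/2]_{(\rho)}$ of $\Delta_{k+1}'$ pairing with an \emph{interior} Jacquet piece of some $\Delta_i$ with $i<k$ that begins at $\nu^{3/2}\rho$ (the hypothesis only forbids $b(\Delta_k)=\nu^{3/2}\rho$, not such cuts of earlier segments), all subject to \eqref{eq: c form} and \eqref{orbit_cond}; each of these must be excluded by an argument, and your cited tools (\cite[Proposition 7.5, Theorem 10.3]{MR3628792}) do not apply here since every Jacquet factor in your setup is essentially square integrable, hence generic. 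The paper sidesteps much of this by inducing instead from the two-block Levi carrying $L(\Delta_1,\dots,\Delta_k)\otimes L([-b,-\frac12]_{(\rho)})\otimes\triv_0$, so that the Jacquet modules of the first block are controlled by the ladder theorem \cite[Theorem 2.1]{MR2996769}: every factor there has exponent $\ge 1$ (using $b_k\ge b+1$, $a_k\ge 1-b$), which together with \cite[Proposition 12]{MR1111204} and \cite[Theorem 3.2.2]{MR1078382} kills all fixed-point options and reduces the analysis to two short claims, in the second of which $b(\Delta_k)\ne\nu^{3/2}\rho$ enters to derive the contradiction. As written, your proposal is a strategy outline for this case, not a proof, and the strategy as formulated would still require the very case analysis you leave out — likely a longer one than the paper's.
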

\begin{proof}
If $a$ is odd then it follows from \eqref{eq tau} that $\tau$ has non-trivial partial cuspidal support. Thus, $\tau$ is not $\Sp$-distinguished by Proposition \ref{prop dp}. 

Assume that $a$ is even.
It follows from \eqref{eq tau} that $\tau$ is the unique irreducible quotient of $L(\Delta_{1},\dots, \Delta_{k})\rtimes \tau^+(\rho,a)$. Let $b=(a-1)/2$ and recall further that $\tau^+(\rho,a)$ is the unique irreducible quotient of $ L([-b,-\frac{1}{2}]_{(\rho)})\rtimes \triv_0$ (see \S \ref{sss tpl_1}). This implies that $\tau$ is an irreducible quotient of the representation 
\begin{equation}\label{eq:odd2}
L(\Delta_{1},\dots, \Delta_{k})\times  L([-b,-1/2]_{(\rho)}) \rtimes \triv_0.
\end{equation}
To complete the proof of the proposition it is enough to prove that this induced representation is not $\Sp$-distinguished. If $k=0$ this is immediate from Lemma \ref{lem:first_case}.
Assume that $k>0$ and assume by contradiction that the representation in (\ref{eq:odd2}) is $\Sp$-distinguished. 

It follows from Theorem \ref{thm geoml} that $\sigma=L(\Delta_{1},\dots, \Delta_{k})\otimes  L([-b,-1/2]_{(\rho)}) \otimes \triv_0$ admits a relevant orbit. We use the notation of \S \ref{ss gen orb} for this orbit except that (since the symbol $\tau$ is already taken) we denote by $\eta$ the involution on $\Index$ associated with the contributing orbit. Let $\sigma'=\otimes_{(i,j)\in\Index} \pi_{i,j}'\otimes \triv_0$ be an irreducible subquotient of $r_{L,M}(\sigma)$ that admits the corresponding non-trivial  invariant functional (i.e., $\sigma'$ is $(L_x,\delta_x)$-distinguished).  

\begin{claim}\label{claim1}
If $(1,1)\notin \set$, then $\eta(1,1)\neq (1,1)$.
\end{claim}
Observe that $\exp(\Delta_{i})\geq 1$, $i=1,\dots,k$. By \cite[Theorem 2.1]{MR2996769}, we get that $\pi_{1,1}$ is a (ladder) representation of the form $L(\Delta_{1}',\dots,\Delta_{l}')$ where $\{d_1,\dots,d_l\}$ is a subset of $\{1,\dots,k\}$ such that $\Delta_i'\subseteq \Delta_{d_i}$ and $e(\Delta_i')=e(\Delta_{d_i})$, $i=1,\dots,l$. In particular, $\exp(\Delta_{i}')\geq 1$, $i=1,\dots,l$ and therefore also $\exp(\pi_{1,1})\geq 1$. Thus $\nu^{-1/2}\pi_{1,1}$ cannot be $\GL(F)$-distinguished (by \cite[Proposition 12]{MR1111204}) and Claim \ref{claim1} follows from Corollary \ref{cor dist adm}.  

\begin{claim}\label{claim2} 
We have $(2,j)\in \set$ for all $j=1,\dots,j_2$.
\end{claim}
By \eqref{eq: c form} it is enough to show that $(2,1)\in \set$.
Assume by contradiction that $(2,1)\in \Index\setminus \set$. It follows from \cite[Proposition 9.5]{MR584084} that $\pi_{2,1}$ is of the form $L([x,-\frac{1}{2}]_{(\rho)})$ where $-b\leq x\leq -\frac{1}{2}$ and in particular $\exp(\pi_{2,j})< 0$. As in Claim \ref{claim1}, it therefore follows from Corollary \ref{cor dist adm} and \cite[Proposition 12]{MR1111204} that $\eta(2,1)\ne (2,1)$. 

By (\ref{orbit_cond}) and Claim \ref{claim1} we have $\eta(2,1)=(1,1)$. It therefore follows from Corollary \ref{cor dist adm} that  $\pi_{1,1}=L([\frac{3}{2},1-x]_{(\rho)})$. By assumption, this means that $b(\Delta_k)\le \nu^{1/2}\rho$ and in particular $j_1>1$. 

In the notation of the proof of Claim \ref{claim1} we now have $l=1$ and $\exp(e(\Delta_{d_1}))=1-x\le 1+b$. Since $\exp(e(\Delta_i))>1+b$ for $i=1,\dots,k-1$ and $\exp(e(\Delta_k))\ge 1+b$ it follows that $d_1=k$ and $x=-b$. This forces $j_{2}=1$ and  $\pi_{2,1}=L([-b,-\frac{1}{2}]_{(\rho)})$. 

Moreover, from (\ref{orbit_cond}) we get that $j_{1}=2$, $(1,2)\in \set$ and $\eta(1,2)=(1,2)$. From \cite[Theorem 2.1]{MR2996769} we get that $\pi_{1,2}=L(\Delta_{1},\dots,\Delta_{k-1}, [y,\frac{1}{2}]_{(\rho)})$ where $b(\Delta_k)=\nu^y\rho$. It further follows from Corollary \ref{cor dist adm} that $\pi_{1,2}$ is $\Sp$-distinguished. By \S \ref{sss spd} we get that $e(\Delta_{k-1})=\nu^{3/2}\rho$. This contradicts the facts that $(\Delta_{k-1},\Delta_k,\Delta_{k+1})$ is a ladder, $e(\Delta_{k+1})=\nu^b\rho$ and $b\ge 1/2$. This proves Claim \ref{claim2}.

Note that Claims \ref{claim1} and \ref{claim2} together with (\ref{orbit_cond}) imply that $\Index = \set$. Since no essentially discrete series representation in $\Irr_{\GL}$ can be $\Sp$-distinguished (\cite[Theorem 3.2.2]{MR1078382}), it further follows that $j_{2}=1$, $(2,1)\in \set$, and $\eta(2,1)= (1,1)$. Thus, by Corollary \ref{cor dist adm} we have $\pi_{1,1}=L([1-b,\frac{1}{2}]_{(\rho)})$. This means again that $l=1$ and $e(\Delta_{d_1})=\nu^{1/2}\rho$ which is a contradiction (since $e(\Delta_i)\ge \nu^{1+b}\rho$, $i=1,\dots,k$). The proposition is now proved.
\end{proof}

\subsubsection{}
We summarize the results obtained in this section for the class of Speh representations in the image of the map $\xib$. By \S \ref{sss spd} a Speh representation $\pi=L(\mathfrak m)$ is $\Sp$-distinguished if and only if $|\mathfrak m|$ is an even integer. The following is a consequence of Propositions \ref{prop:bc_dist} and \ref{prop:odd2}.
\begin{theorem}\label{thm:speh_bc}
Let $\pi=L(\mathfrak m)\in \Irr_{\GL}$ be a Speh representation contained in the image of the stable base change map $\xib$ and let $\tau\in \Irr_\U$ be such that $\{\tau\}=\xib^{-1}(\pi)$. Then we have the following:
\begin{enumerate}
\item Suppose that $\pi$ is not $\Sp$-distinguished. Then $\tau$ is not $\Sp$-distinguished.
\item Suppose that $\pi$ is $\Sp$-distinguished and write $\mult=(\nu^{2k-1}\Delta,\nu^{2k-2}\Delta,\dots,\Delta)$ for a segment $\Delta$ and $k\in \N$. Suppose further that $\nu^k\Delta\cap \mathcal{S}$ is empty. Then $\tau$ is $\Sp$-distinguished if and only if $k$ is even.  
\end{enumerate}\qed
\end{theorem}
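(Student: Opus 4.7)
The plan is to deduce this theorem as a direct combination of Propositions \ref{prop:bc_dist} and \ref{prop:odd2}, exploiting the fact that a Speh multi-set is a ladder of a very rigid shape: consecutive segments differ by the twist $\nu$. By the criterion recalled in \S\ref{sss spd}, this means that $\pi = L(\mult)$ is $\Sp$-distinguished precisely when $|\mult|$ is even, so the two parts of the theorem correspond cleanly to the odd-ladder and the even-ladder regimes treated by Propositions \ref{prop:odd2} and \ref{prop:bc_dist} respectively.

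For part (1), I would assume $\pi$ is not $\Sp$-distinguished, so that $|\mult| = 2k+1$ is odd, and write $\mult = (\Delta_1, \dots, \Delta_{2k+1})$ with $\Delta_i = \nu^{2k+1-i}\Delta$ for some segment $\Delta$. Since $\pi$ lies in the image of $\xib$ it is conjugate self-dual, so the hypotheses of Proposition \ref{prop:odd2} are in force. The key observation is that for a Speh multi-set with $k \geq 1$ the relation $b(\Delta_k) = \nu\, b(\Delta_{k+1})$ holds tautologically from the definition, while the case $k=0$ is the essentially square integrable situation, covered by the separate $k=0$ alternative of the proposition. In either case Proposition \ref{prop:odd2} yields that $\tau$ is not $\Sp$-distinguished.

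For part (2), I would assume $\pi$ is $\Sp$-distinguished and write $\mult = (\nu^{2k-1}\Delta, \dots, \Delta)$, so that $|\mult| = 2k$ and $\mult$ already sits in the form required by Proposition \ref{prop:bc_dist} with $s = k$ and $\Delta_s = \nu^k\Delta$. The hypothesis $\nu^k \Delta \cap \mathcal{S} = \emptyset$ is precisely the condition $\Delta_s \cap \mathcal{S} = \emptyset$ of that proposition; under it, part (1) of the proposition identifies $\tau$ with the full parabolically induced representation $\tau' = L(\Delta_1) \times \cdots \times L(\Delta_s) \rtimes \triv_0$. If $k$ is even, part (2) of the proposition gives $\Sp$-distinction of $\tau' = \tau$; if $k$ is odd, part (3) gives directly that $\tau$ is not $\Sp$-distinguished. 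The only delicate point to highlight is that without the emptiness assumption on $\nu^k\Delta \cap \mathcal{S}$, $\tau$ is only a proper irreducible quotient of $\tau'$, so the distinction of $\tau'$ would not transfer automatically to $\tau$; it is exactly the irreducibility furnished by Corollary \ref{corr:irrd} that makes the even-$k$ implication go through, and everything else amounts to a bookkeeping match of the Speh indexing with the ladder indexing of the two propositions.
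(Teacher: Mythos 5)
Your proposal is correct and matches the paper's own (very terse) proof, which simply declares the theorem a consequence of Propositions \ref{prop:bc_dist} and \ref{prop:odd2}; your bookkeeping — identifying $\Delta_s=\nu^k\Delta$ so that the hypothesis $\nu^k\Delta\cap\mathcal{S}=\emptyset$ is exactly the condition $\Delta_s\cap\mathcal{S}=\emptyset$, noting that for a Speh ladder $b(\Delta_k)=\nu\,b(\Delta_{k+1})$ holds automatically, and isolating $\tau=\tau'$ as the point where irreducibility is needed to transfer distinction in the even-$k$ case — is exactly the verification the paper leaves implicit.
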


\begin{remark}\label{rem_dp_conj}
While part (1) of the theorem supports \cite[Conjecture 2]{DP}, part (2) does not contradict it. In the notation of part (2) of the theorem, if $k$ is odd and $\nu^k\Delta\cap \mathcal{S}$ is empty then $\tau$ does not have an Arthur parameter. We thank Dipendra Prasad for explaining to us this point. 
\end{remark}

\section{Distinction for standard modules with an irreducible $\GL$-part}\label{s_std_mod}
In this section we characterize distinction of standard modules with a generic $\GL$-part. 
\subsection{}\label{ss zel}
Let $\pi=L(\Delta_1,\dots,\Delta_t)\in\Irr_\GL$. By \cite[Theorem 9.7]{MR584084} the representation $\pi$ is generic if and only if $\pi$ is a standard module. That is, 
\[
\pi\cong L(\Delta_1)\times\cdots\times L(\Delta_t)
\]
(for some and therefore any order on the segments).

\subsection{}\label{ss matringe}  Let $\pi=L(\Delta_1,\dots,\Delta_t)\in\Irr_\GL$ be generic. By \cite[Theorem 5.2]{MR2755483} the representation $\pi$ is $\GL(F)$-distinguished if and only if there exists an involution $w\in S_t$ such that 
\begin{itemize}
\item $\Delta_{w(i)}=\overline{\Delta_i}^\vee$, $i=1,\dots,t$ and
\item $L(\Delta_i)$ is $\GL(F)$-distinguished if $w(i)=i$.
\end{itemize}
In fact, we will only apply the `if' part of this statement, which is a consequence of \cite[Theorem 4.2]{MR2592729} and \cite[Proposition 26]{MR1194271}.

\subsection{}The main result of the section is as follows.
\begin{theorem}\label{thm:gen_lq}
Let $\Delta_1,\dots,\Delta_t$ be segments such that $\exp(\Delta_1)\ge \cdots \ge \exp(\Delta_t)>0$ and $\pi=L(\Delta_1,\dots,\Delta_t)\in\Irr_\GL$ is generic. Then, the standard module $\pi\rtimes \triv_0$ is $\Sp$-distinguished if and only if $\nu^{-1/2}\pi$ is $\GL(F)$-distinguished. 
\end{theorem}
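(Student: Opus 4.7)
The plan is to use Lemma \ref{hered} for one implication, and the geometric lemma together with Matringe's criterion (\S \ref{ss matringe}) for the other.

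The ``if'' direction follows immediately from Lemma \ref{hered}: since $\pi$ is generic and irreducible, by \S \ref{ss zel} we may write $\pi\cong L(\Delta_1)\times\cdots\times L(\Delta_t)$, and applying Lemma \ref{hered} with $\pi_1=\pi$ and $\pi_2$ the trivial representation of the trivial group ($p_2=0$) gives the conclusion.

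For the ``only if'' direction, suppose $\pi\rtimes\triv_0$ is $\Sp$-distinguished. Realize it as the parabolic induction of $\sigma=L(\Delta_1)\otimes\cdots\otimes L(\Delta_t)\otimes\triv_0$ from the corresponding standard Levi $M$ contained in the Siegel Levi. Theorem \ref{thm geoml} produces a relevant double coset, with associated involution $\tau$ on the index set $\Index$ of \S \ref{ss gen orb} and subset $\set\subseteq\Index$ satisfying \eqref{eq: c form} and \eqref{orbit_cond}. Applying \cite[\S9.5]{MR584084}, I would write
\[
r_{L,M}(\sigma)=\bigotimes_{\imath\in\Index} L(\Delta_\imath)\otimes\triv_0,
\]
with each $\Delta_\imath$ a subsegment of the corresponding $\Delta_i$, and then invoke Corollary \ref{cor dist adm}. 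Since every $L(\Delta_\imath)$ is essentially square-integrable, \cite[Theorem 3.2.2]{MR1078382} forces $\set_+(w)=\emptyset$, and for $\imath\in\set_-(w)$, \cite[Proposition 12]{MR1111204} yields $\exp(\Delta_\imath)=1/2$ and the $\GL(F)$-distinction of $\nu^{-1/2}L(\Delta_\imath)$.

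The heart of the argument is to show that the relevant orbit is ``trivial'' on the $\GL$-decompositions, namely that $j_i=1$ for all $i$ and $\set\cap\set_<(w)=\emptyset$. I would proceed by induction in lex order on $\Index$, starting from the top subsegment $\Delta_{(i,1)}$ (which shares the endpoint $b_i$ with $\Delta_i$, so that $\exp(\Delta_{(i,1)})\ge\exp(\Delta_i)>0$). Combining the order constraint \eqref{orbit_cond} with the positivity $\exp(\Delta_j)>0$, the ordering $\exp(\Delta_1)\ge\cdots\ge\exp(\Delta_t)$, and the genericity of $\pi$ (so that the $\Delta_j$'s are pairwise unlinked), one rules out (a) any non-trivial decomposition $j_i\ge 2$ and (b) type-\emph{(c)} pairings $\Delta_{\tau(\imath)}=\nu^{-1}\Delta_\imath$ arising from $\set_<(w)\cap\set$. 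The main obstacle is precisely (b): such a pair forces a segment $\Delta_j$ to contain the shifted copy $\nu^{-1}\Delta_\imath$, and a contradiction must be extracted from the resulting linkage relations with $\Delta_i$ together with the exponent ordering.

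Once this combinatorial analysis is in hand, $\tau$ descends to an involution $w$ on $\{1,\dots,t\}$ whose fixed points $i$ satisfy the $\GL(F)$-distinction of $\nu^{-1/2}L(\Delta_i)$ and whose non-fixed pairs satisfy $\Delta_{w(i)}=\nu\bar\Delta_i^\vee$, equivalently $\nu^{-1/2}\Delta_{w(i)}=\overline{\nu^{-1/2}\Delta_i}^\vee$. The Matringe criterion of \S \ref{ss matringe} applied to the generic representation $\nu^{-1/2}\pi=\nu^{-1/2}L(\Delta_1)\times\cdots\times\nu^{-1/2}L(\Delta_t)$ then yields its $\GL(F)$-distinction, completing the proof.
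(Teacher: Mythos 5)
Your ``if'' direction and your overall setup for the ``only if'' direction (geometric lemma, Jacquet module via \cite[\S9.5]{MR584084}, Corollary \ref{cor dist adm}, exclusion of $\set_+(w)$ by \cite[Theorem 3.2.2]{MR1078382}, and the final appeal to \S \ref{ss matringe}) coincide with the paper's proof. But the proposal has a genuine gap at exactly the point you yourself flag as ``the heart of the argument'': you assert that one can rule out (a) nontrivial decompositions $j_i\ge 2$ and (b) pairings $\Delta_{\tau(\imath)}=\nu^{-1}\Delta_\imath$ coming from $\set_<(w)\cap\set$, and you say ``a contradiction must be extracted'' from the linkage relations and the exponent ordering --- but you never extract it. This is precisely the content of the paper's Claims \ref{claim_gen1} and \ref{claim_gen2}, which occupy most of the proof and involve a nontrivial case analysis; without it the argument is an outline, not a proof.

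Concretely, two ingredients you would need are missing. First, to kill the $\nu^{-1}$-pairings the paper does not work with the exponent ordering at all: it uses genericity to reorder the segments so that the beginnings satisfy $a_1\le\cdots\le a_t$ (with $b_i\ge b_{i+1}$ on ties). Then for a minimal $i$ with $(i,j_i)\in\set$ and $\tau(i,j_i)=(i',j')$, the relation $\nu^{-1}L(\Delta_{(i,j_i)})=L(\Delta_{(i',j')})$ forces $a_{i'}\le a_i-1$ with $i'>i$, an immediate contradiction --- because $\Delta_{(i,j_i)}$ shares its beginning $a_i$ with $\Delta_i$. With your lex-order induction on the exponent ordering and the top pieces $\Delta_{(i,1)}$ (which share ends, not beginnings), it is not clear how the contradiction arises, and you do not say. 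Second, proving $j_i=1$ is not a formality: the paper's induction uses that a pairing $\Delta_{(k',1)}=\nu\overline{\Delta_{(k,1)}}^\vee$ forces $\exp(\Delta_{(k,1)})+\exp(\Delta_{(k',1)})=1$ with both exponents in $(0,1)$ (here the hypothesis $\exp(\Delta_i)>0$ enters), splits into the cases exponent $=\frac12$ or not, bounds $j_k,j_{k'}\le 2$, and then uses genericity (unlinked segments, via \eqref{eq ord}) to rule out the leftover length-one subsegments. None of this appears in your sketch, so the step from ``relevant orbit exists'' to ``the orbit data is an involution on $\{1,\dots,t\}$ of the Matringe type'' is unjustified.
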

\begin{proof}
If $\nu^{-1/2}\pi$ is a $\GL(F)$-distinguished representation, then $\pi\rtimes \triv_0$ is $\Sp$-distinguished by Lemma \ref{hered}. Suppose that $\pi\rtimes \triv_0$ is $\Sp$-distinguished. As remarked in \S \ref{ss zel}, we may rearrange the segments $\Delta_1,\dots,\Delta_t$ in any order. Writing $\Delta_{i}=[a_{i},b_{i}]_{(\rho_{i})}$ where $\rho_{i}\in \Cusp_{\GL}$ is unitary we may assume that the following conditions are satisfied:
\begin{equation}\label{eq ord}
a_{1}\leq \cdots \leq a_{t} \ \ \  \text{and if} \ \ \ a_{i}=a_{i+1}\ \ \ \text{then}\ \ \ b_{i}\geq b_{i+1}. 
\end{equation}
It follows from Theorem \ref{thm geoml} that $\sigma=L(\Delta_{1})\otimes \cdots \otimes L(\Delta_{t})\otimes \triv_0$ admits a relevant orbit. We use the notation of \S \ref{ss gen orb} for this orbit. 
Applying \cite[\S9.5]{MR584084} write   
\[
r_{L,M}(\sigma)=\otimes_{\imath\in\Index} L(\Delta_{\imath})\otimes \triv_0
\] 
and recall that 
\[
\Delta_{(i,1)}=[x_i,b_i]_{(\rho_i)} \ \ \ \text{and}  \ \ \ \Delta_{(i,j_i)}=[a_i,y_i]_{(\rho_i)}
\] 
for some $a_i\le x_i\le y_i\le b_i$, $i=1,\dots,t$. 

\begin{claim}\label{claim_gen1}
The set $\set$ is empty.
\end{claim}
Assume by contradiction that $\set$ is not empty and let $i\in \{1,\dots,t\}$ be minimal such that $(i,j)\in \set$ for some $j$. By (\ref{eq: c form}) we have $(i,j_{i})\in \set$. Let $\tau(i,j_{i})=(i',j')$. Since no essentially square integrable representation is $\Sp$-distinguished (\cite[Theorem 3.2.2]{MR1078382}), it follows from Corollary \ref{cor dist adm} that $(i',j')\neq (i,j_{i})$. By minimality of $i$ and \eqref{orbit_cond} we have $i'>i$ and it further follows from  Corollary \ref{cor dist adm} that
\[
\nu^{-1}L(\Delta_{(i,j_{i})})=L(\Delta_{(i',j')}).
\]
In particular, $a_{i'}\le a_i-1$ which contradicts \eqref{eq ord}. This proves Claim \ref{claim_gen1}.

\begin{claim}\label{claim_gen2}
We have $j_{i}=1$ for every $i\in \{1,\dots, t\}$.
\end{claim}
To prove the claim we show, by induction on $k$, that for all $1\leq i \leq k$, we have:
\begin{enumerate}
\item $j_{i}=1$ 
\item if $\tau(i,1)=(i',j')$ then $j_{i'}=1$. 
\end{enumerate} 

Suppose that the induction hypothesis holds for $k-1$ and let $\tau(k,1)=(k',j')$. If $k'< k$, then the induction hypothesis implies the statement for $k$. Assume now that $k'\geq k$. It follows from (\ref{orbit_cond}), Claim \ref{claim_gen1} and the induction hypothesis that $j'=1$ and further from Corollary \ref{cor dist adm} that $\Delta_{(k',1)}=\nu\overline{\Delta_{(k,1)}}^\vee$.
Since $\exp(\Delta_{i})>0$ we get that $\exp(\Delta_{(i,1)})>0$ for every $i=1,\dots, t$. Since $\exp(\Delta_{(k,1)})=1-\exp(\Delta_{(k',1)})$, we have $0< \exp(\Delta_{(k,1)}), \ \exp(\Delta_{(k',1)})<1$.

If $\exp(\Delta_{(k,1)})= \exp(\Delta_{(k',1)})=\frac{1}{2}$ then we must have $j_k=j_{k'}=1$ since $\exp(\Delta_i)>0$ for all $i$. This proves the induction step in this case. 
Otherwise, i.e., if $\exp(\Delta_{(k,1)})\ne \exp(\Delta_{(k',1)})$, then the same argument shows that $j_k,\,j_{k'}\le 2$, if one of them equals $2$ the other must equal $1$, and $\ell(\Delta_i)\le \ell(\Delta_{(i,1)})+1$ for $i\in \{k,k'\}$. 

By assumption, since $\exp(\Delta_{(k,1)})\ne \frac12$, the representation $\nu^{-1/2}L(\Delta_{(k,1)})$ cannot be $\GL(F)$-distinguished and therefore, by Corollary \ref{cor dist adm} we have $k'> k$, $\Delta_{(k',1)}=[1-b_{k},b_{k'}]_{(\rho_{k'})}$
and $\overline\rho_{k}^{\vee}\cong \rho_{k'}$. 

If $j_{k'}=2$, then $\Delta_{(k',2)}=\{\nu^{-b_{k}}\rho_{k'}\}$. Let $\tau(k',2)=(l,l')$. By (\ref{orbit_cond}) we have $l>k$ and by Claim \ref{claim_gen1} and Corollary \ref{cor dist adm}, $\Delta_{(l,l')}=\{\nu^{b_{k}+1}\rho_{k}\}$. But now \eqref{eq ord} implies that $\Delta_k$ and $\Delta_{l}$  are linked which contradicts the fact that $\tau$ is generic (see \cite[Theorem 9.7]{MR584084}). Thus we conclude that $j_{k'}=1$. 

If $j_{k}=2$ then similarly $\Delta_{(k,2)}=\{\nu^{-b_{k'}}\rho_{k}\}$ while (since $j_{k'}=1$) 
\[
\Delta_{k'}=\Delta_{(k',1)}=[1-b_k,b_{k'}]_{(\rho_{k'})}.
\]
Let $\tau(k,2)=(l,l')$. By (\ref{orbit_cond}) we have that $l>k'$ and further by the induction hypothesis, that $l'=1$. By Claim \ref{claim_gen1} and Corollary \ref{cor dist adm} $\Delta_{(l,1)}=\{\nu^{b_{k'}+1}\rho_{k'}\}$. Again, \eqref{eq ord} implies that $\Delta_{l}$ and $\Delta_{k'}$ are linked which contradicts the fact that $\pi$ is generic. Thus we conclude that $j_{k}=1$ which finishes the proof of the induction. This completes the proof of Claim \ref{claim_gen2}.

The theorem is now immediate from Claims \ref{claim_gen1} and \ref{claim_gen2}, Corollary \ref{cor dist adm} and \S \ref{ss matringe}.
\end{proof}

\begin{remark}\label{rem:gen_lq}
In the notation of Theorem \ref{thm:gen_lq}, if $\nu^{-\frac12}\pi$ is $\GL(F)$-distinguished and furthermore $\exp(\Delta_i)\in\frac12\Z$, $i=1,\dots,t$ then it follows from \S \ref{ss matringe}, in its notation, that $w$ is trivial and therefore $\nu^{-\frac12}\pi$ is tempered. 

When $\pi\in \Irr(\GL_{2n}(E))$ for some $n\in \N$ is such that $\nu^{-\frac12}\pi$ is tempered and $\GL(F)$-distinguished, Morimoto proved in \cite{Mor} the stronger result that $\LQ(\pi\rtimes\triv_0)$ is $\Sp$-distinguished.
We expect this to be true more generally, for the setting of Theorem \ref{thm:gen_lq}. However, this will require methods different from the ones employed in this work.
\end{remark}


\begin{thebibliography}{MVW87}

\bibitem[AKT04]{MR2063106}
U.~K. Anandavardhanan, Anthony~C. Kable, and R.~Tandon, \emph{Distinguished
  representations and poles of twisted tensor {$L$}-functions}, Proc. Amer.
  Math. Soc. \textbf{132} (2004), no.~10, 2875--2883. \MR{2063106}

\bibitem[AR05]{MR2146859}
U.~K. Anandavardhanan and C.~S. Rajan, \emph{Distinguished representations,
  base change, and reducibility for unitary groups}, Int. Math. Res. Not.
  (2005), no.~14, 841--854. \MR{2146859}

\bibitem[Ber88]{MR1075727}
Joseph~N. Bernstein, \emph{On the support of {P}lancherel measure}, J. Geom.
  Phys. \textbf{5} (1988), no.~4, 663--710 (1989). \MR{1075727 (91k:22027)}

\bibitem[BZ76]{MR0425030}
I.~N. Bern{\v{s}}te{\u\i}n and A.~V. Zelevinski{\u\i}, \emph{Representations of
  the group {$GL(n,F),$} where {$F$} is a local non-{A}rchimedean field},
  Uspehi Mat. Nauk \textbf{31} (1976), no.~3(189), 5--70. \MR{0425030 (54
  \#12988)}

\bibitem[BZ77]{MR0579172}
I.~N. Bernstein and A.~V. Zelevinsky, \emph{Induced representations of
  reductive {${\germ p}$}-adic groups. {I}}, Ann. Sci. \'Ecole Norm. Sup. (4)
  \textbf{10} (1977), no.~4, 441--472. \MR{0579172 (58 \#28310)}

\bibitem[DP]{DP}
Sarah Dijols and Dipendra Prasad, \emph{Symplectic models for {U}nitary
  groups}, arXiv:1611.01621.

\bibitem[Fli91]{MR1111204}
Yuval~Z. Flicker, \emph{On distinguished representations}, J. Reine Angew.
  Math. \textbf{418} (1991), 139--172. \MR{1111204}

\bibitem[Fli92]{MR1194271}
\bysame, \emph{Distinguished representations and a {F}ourier summation
  formula}, Bull. Soc. Math. France \textbf{120} (1992), no.~4, 413--465.
  \MR{1194271}

\bibitem[Fli93]{MR1241802}
\bysame, \emph{On zeroes of the twisted tensor {$L$}-function}, Math. Ann.
  \textbf{297} (1993), no.~2, 199--219. \MR{1241802}

\bibitem[GGP12]{MR3202556}
Wee~Teck Gan, Benedict~H. Gross, and Dipendra Prasad, \emph{Symplectic local
  root numbers, central critical {$L$} values, and restriction problems in the
  representation theory of classical groups}, Ast\'erisque (2012), no.~346,
  1--109, Sur les conjectures de Gross et Prasad. I. \MR{3202556}

\bibitem[GK75]{MR0404534}
I.~M. Gelfand and D.~A. Kajdan, \emph{Representations of the group
  {${\rm GL}(n,K)$} where {$K$} is a local field}, Lie groups and their
  representations ({P}roc. {S}ummer {S}chool, {B}olyai {J}\'anos {M}ath.
  {S}oc., {B}udapest, 1971), Halsted, New York, 1975, pp.~95--118. \MR{0404534
  (53 \#8334)}

\bibitem[GMM18]{MR3739332}
Maxim Gurevich, Jia-Jun Ma, and Arnab Mitra, \emph{On two questions concerning
  representations distinguished by the {G}alois involution}, Forum Math.
  \textbf{30} (2018), no.~1, 141--157. \MR{3739332}

\bibitem[Gol94]{MR1266747}
David Goldberg, \emph{Some results on reducibility for unitary groups and local
  {A}sai {$L$}-functions}, J. Reine Angew. Math. \textbf{448} (1994), 65--95.
  \MR{1266747}

\bibitem[Han10]{MR2581039}
Marcela Hanzer, \emph{The generalized injectivity conjecture for classical
  {$p$}-adic groups}, Int. Math. Res. Not. IMRN (2010), no.~2, 195--237.
  \MR{2581039 (2011b:22025)}

\bibitem[HR90]{MR1078382}
Michael~J. Heumos and Stephen Rallis, \emph{Symplectic-{W}hittaker models for
  {${\rm Gl}_n$}}, Pacific J. Math. \textbf{146} (1990), no.~2, 247--279.
  \MR{1078382 (91k:22036)}

\bibitem[HT01]{MR1876802}
Michael Harris and Richard Taylor, \emph{The geometry and cohomology of some
  simple {S}himura varieties}, Annals of Mathematics Studies, vol. 151,
  Princeton University Press, Princeton, NJ, 2001, With an appendix by Vladimir
  G. Berkovich. \MR{MR1876802 (2002m:11050)}

\bibitem[KL12]{MR2996769}
Arno Kret and Erez Lapid, \emph{Jacquet modules of ladder representations}, C.
  R. Math. Acad. Sci. Paris \textbf{350} (2012), no.~21-22, 937--940.
  \MR{2996769}

\bibitem[Kum97]{MR1447416}
Ramanujachary Kumanduri, \emph{Distinguished representations for unitary
  groups}, Pacific J. Math. \textbf{178} (1997), no.~2, 293--306. \MR{1447416}

\bibitem[LM14]{MR3163355}
Erez Lapid and Alberto M{\'{\i}}nguez, \emph{On a determinantal formula of
  {T}adi\'c}, Amer. J. Math. \textbf{136} (2014), no.~1, 111--142. \MR{3163355}

\bibitem[LM16]{MR3573961}
\bysame, \emph{On parabolic induction on inner forms of the general linear
  group over a non-archimedean local field}, Selecta Math. (N.S.) \textbf{22}
  (2016), no.~4, 2347--2400. \MR{3573961}

\bibitem[LR03]{MR2010737}
Erez~M. Lapid and Jonathan~D. Rogawski, \emph{Periods of {E}isenstein series:
  the {G}alois case}, Duke Math. J. \textbf{120} (2003), no.~1, 153--226.
  \MR{2010737}

\bibitem[LT17]{1703.09475}
Erez Lapid and Marko Tadi{\'c}, \emph{Some results on reducibility of parabolic
  induction for classical groups}, 2017, arXiv:1703.09475.

\bibitem[Mat10]{MR2592729}
Nadir Matringe, \emph{Distinction of some induced representations}, Math. Res.
  Lett. \textbf{17} (2010), no.~1, 77--97. \MR{2592729 (2011a:22021)}

\bibitem[Mat11]{MR2755483}
\bysame, \emph{Distinguished generic representations of {${\rm GL}(n)$} over
  {$p$}-adic fields}, Int. Math. Res. Not. IMRN (2011), no.~1, 74--95.
  \MR{2755483 (2012f:22032)}

\bibitem[MO17]{MR3590280}
Arnab Mitra and Omer Offen, \emph{Vanishing of local symplectic periods for
  cuspidal representations of the unitary group}, C. R. Math. Acad. Sci. Paris
  \textbf{355} (2017), no.~1, 15--19. \MR{3590280}

\bibitem[M{\oe}g07]{MR2366373}
Colette M{\oe}glin, \emph{Classification et changement de base pour les
  s\'eries discr\`etes des groupes unitaires {$p$}-adiques}, Pacific J. Math.
  \textbf{233} (2007), no.~1, 159--204. \MR{2366373}

\bibitem[Mok15]{MR3338302}
Chung~Pang Mok, \emph{Endoscopic classification of representations of
  quasi-split unitary groups}, Mem. Amer. Math. Soc. \textbf{235} (2015),
  no.~1108, vi+248. \MR{3338302}

\bibitem[Mor]{Mor}
Kazuki Morimoto, \emph{Model transitions for representations of unitary type},
  Preprint.

\bibitem[MOS17]{MR3628792}
Arnab Mitra, Omer Offen, and Eitan Sayag, \emph{Klyachko models for ladder
  representations}, Doc. Math. \textbf{22} (2017), 611--657. \MR{3628792}

\bibitem[MT02]{MR1896238}
Colette M{\oe}glin and Marko Tadi{\'c}, \emph{Construction of discrete series
  for classical {$p$}-adic groups}, J. Amer. Math. Soc. \textbf{15} (2002),
  no.~3, 715--786 (electronic). \MR{1896238 (2003g:22020)}

\bibitem[MVW87]{MR1041060}
Colette M{\oe}glin, Marie-France Vign{\'e}ras, and Jean-Loup Waldspurger,
  \emph{Correspondances de {H}owe sur un corps {$p$}-adique}, Lecture Notes in
  Mathematics, vol. 1291, Springer-Verlag, Berlin, 1987. \MR{1041060
  (91f:11040)}

\bibitem[MW86]{MR863522}
C.~M{\oe}glin and J.-L. Waldspurger, \emph{Sur l'involution de {Z}elevinski},
  J. Reine Angew. Math. \textbf{372} (1986), 136--177. \MR{863522 (88c:22019)}

\bibitem[MW95]{MR1361168}
\bysame, \emph{Spectral decomposition and {E}isenstein series}, Cambridge
  Tracts in Mathematics, vol. 113, Cambridge University Press, Cambridge, 1995,
  Une paraphrase de l'{\'E}criture [A paraphrase of Scripture]. \MR{1361168
  (97d:11083)}

\bibitem[Off17]{MR3541705}
Omer Offen, \emph{On parabolic induction associated with a {$p$}-adic symmetric
  space}, J. Number Theory \textbf{170} (2017), 211--227. \MR{3541705}

\bibitem[OS08]{MR2414223}
Omer Offen and Eitan Sayag, \emph{Uniqueness and disjointness of {K}lyachko
  models}, J. Funct. Anal. \textbf{254} (2008), no.~11, 2846--2865.
  \MR{2414223}

\bibitem[Sil78]{MR0507262}
Allan~J. Silberger, \emph{The {L}anglands quotient theorem for {$p$}-adic
  groups}, Math. Ann. \textbf{236} (1978), no.~2, 95--104. \MR{0507262 (58
  \#22413)}

\bibitem[Spr85]{MR803346}
T.~A. Springer, \emph{Some results on algebraic groups with involutions},
  Algebraic groups and related topics ({K}yoto/{N}agoya, 1983), Adv. Stud. Pure
  Math., vol.~6, North-Holland, Amsterdam, 1985, pp.~525--543. \MR{803346
  (86m:20050)}

\bibitem[SV17]{MR3764130}
Yiannis Sakellaridis and Akshay Venkatesh, \emph{Periods and harmonic analysis
  on spherical varieties}, Ast\'erisque (2017), no.~396, viii+360. \MR{3764130}

\bibitem[Tad86]{MR870688}
Marko Tadi{\'c}, \emph{Classification of unitary representations in irreducible
  representations of general linear group (non-{A}rchimedean case)}, Ann. Sci.
  \'Ecole Norm. Sup. (4) \textbf{19} (1986), no.~3, 335--382. \MR{870688
  (88b:22021)}

\bibitem[Tad94]{MR1266251}
\bysame, \emph{Representations of {$p$}-adic symplectic groups}, Compositio
  Math. \textbf{90} (1994), no.~2, 123--181. \MR{1266251 (95a:22025)}

\bibitem[Tad13]{MR3123571}
\bysame, \emph{On tempered and square integrable representations of classical
  {$p$}-adic groups}, Sci. China Math. \textbf{56} (2013), no.~11, 2273--2313.
  \MR{3123571}

\bibitem[Wal03]{MR1989693}
J.-L. Waldspurger, \emph{La formule de {P}lancherel pour les groupes
  {$p$}-adiques (d'apr\`es {H}arish-{C}handra)}, J. Inst. Math. Jussieu
  \textbf{2} (2003), no.~2, 235--333. \MR{1989693 (2004d:22009)}

\bibitem[Zel80]{MR584084}
A.~V. Zelevinsky, \emph{Induced representations of reductive {${\germ p}$}-adic
  groups. {II}. {O}n irreducible representations of {${\rm GL}(n)$}}, Ann. Sci.
  \'Ecole Norm. Sup. (4) \textbf{13} (1980), no.~2, 165--210. \MR{584084
  (83g:22012)}


\end{thebibliography}
\end{document}